\newcommand{\union}{\cup}
\newcommand{\bigunion}{\bigcup}
\newcommand{\disjointunion}{\sqcup}
\newcommand{\freeprod}{\ast}
\newcommand{\intersection}{\cap}
\newcommand{\boundary}{\partial}
\DeclareMathOperator{\Ball}{\mathrm{Ball}}
\newcommand{\isom}{\cong}
\DeclareMathOperator{\shadow}{\mathcal{S}}
\DeclareMathOperator{\Cay}{\mathrm{Cay}}
\DeclareMathOperator{\val}{\mathrm{val}}
\DeclareMathOperator{\im}{\mathrm{Im}}
\DeclareMathOperator{\inc}{\mathrm{Inc}}
\DeclareMathOperator{\Isom}{\mathrm{Isom}}
\DeclareMathOperator{\composed}{\circ}
\newcommand{\gromprod}[3][1]{\ensuremath{\left(#2\cdot#3\right)_{#1}}}
\renewcommand{\mod}[1]{\vert#1\vert}
\newcommand{\restricted}[1]{\vert_{#1}}
\newcommand{\integers}{\mathbb{Z}}
\newcommand{\naturals}{\mathbb{N}}
\newcommand{\reals}{\mathbb{R}}
\newcounter{dummy}\numberwithin{dummy}{section}
\newtheorem{lem}[dummy]{Lemma}
\newtheorem{cor}[dummy]{Corollary}
\newtheorem{thm}[dummy]{Theorem}
\newtheorem{rem}[dummy]{Remark}
\newtheorem{prop}[dummy]{Proposition}
\newtheorem{defn}[dummy]{Definition}
\title{Computing JSJ decompositions of hyperbolic groups}
\author{Benjamin Barrett}
\begin{document}
\maketitle

\bibliographystyle{hplain}

\begin{abstract} We present an algorithm that computes Bowditch's canonical JSJ
decomposition of a given one-ended hyperbolic group over its virtually cyclic
subgroups. The algorithm works by identifying topological features in the boundary
of the group. As a corollary we also show how to compute the JSJ
decomposition of such a group over its virtually cyclic subgroups with
infinite centre. We also give a new algorithm that determines whether or not a
given one-ended hyperbolic group is virtually fuchsian. Our approach uses only
the geometry of large balls in the Cayley graph and avoids Makanin's
algorithm.\end{abstract}

\section{Introduction}

When studying a group it is natural and often useful to try to cut it into
simpler pieces by means of amalgamated free products and HNN extensions over
particularly simple subgroups. Sometimes this can be done in a canonical way
analogous to the characteristic submanifold decomposition of Jaco, Shalen and
Johannson~\cite{jacoshalen79, johannson79}, in which the family of embedded
tori along which the 3-manifold is cut is unique up to isotopy. Such JSJ
decompositions were introduced to group theory by Sela~\cite{sela97} to answer
questions about rigidity and the isomorphism problem for torsion-free
hyperbolic groups. In~\cite{bowditch98} Bowditch developed a related type of
decomposition for hyperbolic groups possibly with torsion. This decomposition
is built from the structure of local cut points in the boundary of the group
and is therefore an automorphism invariant of the group; this property of the
Bowditch JSJ was used in Levitt's work~\cite{levitt05} on outer automorphism
groups of one-ended hyperbolic groups. For more general constructions of JSJ
decompositions of groups see~\cite{ripssela97, dunwoodysageev99,
fujiwarapapsoglu06, guirardellevitt16}.

The above results describe and prove the existence of various types of JSJ
decompositions but do not give an algorithm to construct them.
Gerasimov~\cite{gerasimov} proved that there exists an algorithm that
determines whether or not the Gromov boundary of a given hyperbolic group is
connected. This algorithm is unpublished; see also~\cite{dahmanigroves08a}. The
connectedness of the boundary is determined by the so-called double-dagger
condition of Bestvina and Mess~\cite{bestvinamess91}; it is this condition that
Gerasimov showed to be computable. Equipped with this algorithm and Stallings's
theorem on ends of groups it is not difficult to compute a maximal decomposition of
a given hyperbolic group over its finite subgroups. With Gerasimov's result in
hand, we may restrict to the case of one-ended hyperbolic groups and consider
the computability of Bowditch's JSJ decomposition over virtually cyclic
subgroups. 

In this paper we present an algorithm that computes Bowditch's decomposition.
Like Gerasimov's algorithm, our approach uses the geometry of large balls in
the Cayley graph. This is in contrast to existing algorithms computing JSJ
decompositions over restricted families of virtually cyclic subgroups, most of
which rely on Makanin's algorithm for solving equations in free groups.

In~\cite{dahmaniguirardel11} Dahmani and Guirardel show that a canonical
decomposition of a one-ended hyperbolic group over a particular family of
virtually cyclic subgroups is computable; the family in question is the set of
virtually cyclic subgroups with infinite centre that are maximal for inclusion
among such subgroups. Crucial to this method is an algorithm that determines
whether or not the outer automorphism group of such a group is infinite. If a
group admits such a splitting then that splitting gives rise to an infinite set
of distinct elements of the outer automorphism group that are analogous to Dehn
twists in the mapping class group of a surface. The converse of this statement
is a theorem of Paulin~\cite{paulin91} that is refined by Dahmani and
Guirardel.

Dahmani and Guirardel comment that it is not known whether or not Bowditch's JSJ
decomposition is computable. Their approach is not suitable to this problem:
only central elements of the edge groups in a splitting contribute Dehn twists
to the automorphism group, so it is quite possible for a group to admit a
splitting over an infinite dihedral group, say, while having only a finite
outer automorphism group; in this case the decomposition computed by Dahmani
and Guirardel is trivial while Bowditch's JSJ decomposition is not. For
examples of hyperbolic groups exhibiting this property
see~\cite{millerneumannswarup96}. 

In the absence of torsion, the JSJ decomposition of a hyperbolic group over its
cyclic subgroups was shown to be computable by Dahmani and Touikan
in~\cite{dahmanitouikan13}. Their result is based on Touikan's
algorithm~\cite{touikan09}, which determines whether or not a given one-ended
hyperbolic group without 2-torsion splits acylindrically. Touikan's methods are
based on application of the Rips machine.

The existence of a splitting of a one-ended hyperbolic group over a virtually
cyclic subgroup is reflected in the existence of certain topological features
in its Gromov boundary by results of Bowditch~\cite{bowditch98, bowditch99a,
bowditch99b}; in this paper we show that these topological features can be
detected algorithmically.

\begin{thm}\label{thm:maintheorem} There is an algorithm that takes as input a
  presentation for a one-ended hyperbolic group and returns the graph of groups
  associated to the three following JSJ decompositions:
  \begin{enumerate}
    \item A JSJ decomposition over virtually cyclic subgroups of $\Gamma$,
      which we shall call a $\mathcal{VC}$-JSJ. This decomposition can be
      taken to be Bowditch's canonical decomposition.
    \item A JSJ decomposition over virtually cyclic subgroups of $\Gamma$
      with infinite centre, which we shall call a $\mathcal{Z}$-JSJ.
    \item A decomposition over maximal virtually cyclic subgroups of $\Gamma$
      with infinite centre that is universally elliptic over (not necessarily
      maximal) virtually cyclic subgroups of $\Gamma$ and is maximal for
      domination in the class of such decompositions. We shall call this a
      $\mathcal{Z}_\text{max}$-JSJ.
  \end{enumerate}
\end{thm}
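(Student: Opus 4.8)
The strategy is to read off all three decompositions from the topology of the Gromov boundary $\partial\Gamma$, following Bowditch's work, and then to show that the relevant topological features are visible at a scale in the Cayley graph that can be computed in advance. First I would dispose of the degenerate cases: a search for a Dehn presentation certifies that $\Gamma$ is hyperbolic and extracts an explicit hyperbolicity constant $\delta$, and the algorithm promised in the abstract decides whether $\Gamma$ is virtually Fuchsian; if it is, the $\mathcal{VC}$-JSJ is the trivial decomposition with a single Fuchsian vertex and the other two are immediate. So I may assume $\Gamma$ is one-ended, non-elementary and not virtually Fuchsian, whence $\partial\Gamma$ is a locally connected continuum with no global cut point and Bowditch's machinery applies: there is a finite graph of groups $T$ --- the $\mathcal{VC}$-JSJ, which can be taken to be Bowditch's canonical decomposition --- with quasi-convex virtually cyclic edge groups, whose combinatorics are entirely determined by the local cut point structure of $\partial\Gamma$ (local cut points and inseparable cut pairs with their valences, organised into necklaces and indecomposable pieces).

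The heart of the argument is a stability lemma: this local cut point structure is computed correctly from a finite approximation of $\partial\Gamma$ by shadows of balls of some radius $R_0$ depending computably on $\delta$ and the presentation. I would prove it in two halves, and the harder half is the main obstacle. That a candidate point or pair fails to locally separate is witnessed by a path in $\partial\Gamma$ avoiding it, and such a path is approximated by a chain of overlapping shadows of bounded-size balls, so non-separation is semi-decidable. Separation is the dangerous direction, because a priori it is a genuinely global statement about $\partial\Gamma$; here hyperbolicity is used to localise it. A would-be complementary component pulls back to a subset of the Cayley graph that is coarsely connected and coarsely separated from the others; the separating set lies in the limit set of a quasi-convex virtually cyclic subgroup whose quasi-convexity constant is bounded in terms of $\delta$; and the double-dagger condition of Bestvina--Mess then forces genuine separation to be witnessed, and genuine non-separation refuted, at a uniform scale one can compute. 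To make the ensuing search terminate I would invoke, in addition, either an a priori computable bound on the number of edges of $T$ (from accessibility of hyperbolic groups) or an effective certificate that a given rigid vertex admits no further $\mathcal{VC}$-splitting --- itself a separation statement of the same kind.

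Given the combinatorial tree, turning it into a labelled graph of groups is routine: each edge group is the stabiliser of an explicitly located point or pair in $\partial\Gamma$, and such a virtually cyclic subgroup can be exhibited as a finite subset of $\Gamma$ --- the stabiliser of a quasi-geodesic axis, again bounded in terms of $\delta$ --- while the vertex groups are fundamental groups of the corresponding subtrees with induced presentations obtained by standard Bass--Serre bookkeeping. Finally the $\mathcal{Z}$-JSJ and the $\mathcal{Z}_{\text{max}}$-JSJ are obtained from the $\mathcal{VC}$-JSJ by effective modifications --- collapsing the edges whose edge group has finite centre, and then passing to a maximal-for-domination universally elliptic $\mathcal{VC}$-splitting with maximal edge groups --- all purely algebraic operations on the already-computed graph of groups. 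So essentially everything reduces to the stability lemma, and within it to the claim that separation of $\partial\Gamma$, a global property, is certified at a precomputable scale.
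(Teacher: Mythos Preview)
Your high-level strategy --- read the JSJ off the boundary, and use the double-dagger condition to pass between global topological features of $\partial\Gamma$ and local combinatorics of the Cayley graph --- is the right intuition and matches the paper in spirit. But there are two genuine gaps.

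First, your ``stability lemma'' is asserted rather than proved, and the hard direction you correctly identify (certifying that a given pair \emph{is} a cut pair, not merely that it isn't) needs a concrete mechanism that you do not supply. The paper's mechanism is different from what you sketch: rather than approximating $\partial\Gamma$ by shadows and hoping the cut structure stabilises, it shows that the complement of a pair $\{p,q\}$ in $\partial X$ has the same connected components as a thickened cylinder $A_{r,R,K}(\gamma)$ around a quasi-geodesic $\gamma$ joining $p$ to $q$, and then runs a \emph{pumping lemma}: if any bi-infinite geodesic has disconnected cylinder, then by a pigeonhole argument on the finitely many local pictures one can replace it by a periodic quasi-geodesic of bounded period with the same property. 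This reduces the search to finitely many candidate axes, which is what makes the algorithm halt. Your appeal to quasi-convexity constants and double-dagger alone does not obviously produce such a finiteness statement --- you still face an infinite family of candidate cut pairs and no bound on how far you must look to see each one separate.

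Second, your plan works entirely in the absolute boundary $\partial\Gamma$, but the paper's algorithm is iterative and relative: it repeatedly asks whether a vertex group $\Gamma_v$ splits \emph{relative to its incident edge groups}, and answers this by detecting cut points and cut pairs in the \emph{Bowditch boundary} $\partial(\Gamma_v,\widehat{\mathrm{Inc}\,v})$ of the cusped space. This is why the paper needs the relative double-dagger machinery and the analysis of geodesics in horoballs. Your proposal to compute the whole Bowditch tree directly from the absolute cut-point structure would require not just detecting the existence of a cut pair but enumerating all necklaces and rigid pieces with a certificate of completeness; you gesture at accessibility for a halting criterion, but accessibility gives no \emph{computable} bound on the number of edges, and the ``effective certificate that a rigid vertex admits no further splitting'' is precisely the relative problem the paper solves.

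A smaller point: your recipe for the $\mathcal{Z}$-JSJ --- collapse the edges with dihedral-type edge group --- is incomplete. One must first \emph{refine} the $\mathcal{VC}$-JSJ at each hanging Fuchsian vertex by the mirrors splitting, and only then collapse the dihedral edges; otherwise the result need not be a $\mathcal{Z}$-JSJ.
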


The main content of Theorem~\ref{thm:maintheorem} is the computability of the
$\mathcal{VC}$-JSJ; it is shown in~\cite{dahmaniguirardel11} to be closely
related to the $\mathcal{Z}$-JSJ and $\mathcal{Z}_\text{max}$-JSJ and can
converted into either algorithmically. The $\mathcal{Z}_\text{max}$-JSJ is the
decomposition shown to be computable by Dahmani and
Guirardel~\cite{dahmaniguirardel11}.

The $\mathcal{Z}$-JSJ also plays a result in Dahmani and Guirardel's work,
although they comment that their methods cannot compute this decomposition,
since such a decomposition does not necessarily give rise to infinitely many
distinct outer automorphisms of the group.  For example, let $\Gamma$ be a
rigid hyperbolic group (such as the fundamental group of a closed hyperbolic
3-manifold) and let $g$ be an element of $\Gamma$ that is not a proper power.
Let $k > 1$ and consider the group $\Gamma' = \Gamma \freeprod_{g=t^k} \langle
t\rangle$ obtained by adjoining a $k$th root of $g$ to $\Gamma$. In this case
the $\mathcal{Z}_\text{max}$ decomposition computed by Dahmani and Guirardel is
trivial while the JSJ decomposition over virtually cyclic subgroups with
infinite centre is not. 

Central to the algorithm of Theorem~\ref{thm:maintheorem} is an algorithm that
determines whether or not a given hyperbolic group with a (possibly empty)
finite collection of virtually cyclic subgroups admits a proper splitting as an
amalgamated product or HNN extension over a virtually cyclic subgroup, relative
to that collection of subgroups. Recall that a \emph{cut pair} in a connected
topological space $S$ is a pair of points $p$ and $q$ such that $S - \{p, q\}$
is disconnected. It is shown in~\cite{bowditch98} that in the absolute case
(that is, if the collection of subgroups is empty), a one-ended hyperbolic
group admits such a splitting if and only if its Gromov boundary contains a cut
pair, at least as long as its boundary is not homeomorphic to a circle. In the
case of interest here we obtain a relative version of this statement by
replacing the Gromov boundary with the Bowditch boundary of the group relative
to the given family of subgroups. Unlike the Gromov boundary, the Bowditch
boundary might contain a cut point, in which case the group admits a relative
splitting. This is the peripheral splitting in the sense of
Bowditch~\cite{bowditch99b}. In the absence of a cut point the existence of a
relative splitting is determined by the existence of a cut pair, as in the
absolute case. It is the presence of these topological features of the boundary
that we show to be computable.

To detect the presence of a cut pair in the Bowditch boundary of a hyperbolic
group relative to a given family of subgroups  we first show that the
connectivity of the complement of a pair of points in the boundary is
equivalent to the connectivity of a thickened cylinder around a geodesic
connecting that pair of points in the cusped space defined
in~\cite{grovesmanning08}. Then, supposing that there is a cut pair in the
boundary, we use a pumping lemma argument to show that the geodesic $\gamma$
connecting the points in a cut pair may be assumed to be periodic and with
bounded period: we take a short subsegment $\gamma\restricted{[a, b]}$ of that
geodesic such that both the geodesic and the components of the thickened
cylinder are identical in small neighbourhoods of $a$ and $b$ and form a new
(local) geodesic that also connects the two points in a (possibly different)
cut pair by concatenating infinitely many copies of $\gamma\restricted{[a,
b]}$. A similar method is used in~\cite{cashenmacura11} to control cut pairs in
the decomposition space of a line pattern in a free group. This is sufficient
to detect a cut pair: the existence of such a periodic geodesic can be detected
in finite time by searching a large finite ball in the Cayley graph.

A maximal splitting is obtained from a JSJ decomposition by refining at the
flexible vertices. Conversely, to obtain a JSJ decomposition we must decide
which edges of the maximal splitting should be collapsed to reassemble the
flexible vertices in the JSJ decomposition. In Bowditch's JSJ decomposition the
stabilisers of flexible vertex groups are the maximal hanging fuchsian
subgroups. These are those subgroups that occur as a vertex stabiliser in some
splitting such that the Bowditch boundary of the subgroup relative the
stabilisers of the incident edges is homeomorphic to a circle. Therefore we
prove the following theorem, which is interesting in its own right, and does
not seem to appear in the literature.  Recall that the Convergence Group
Theorem of Tukia, Casson, Jungreis and Gabai~\cite{tukia88, cassonjungreis94,
gabai92} implies that the Gromov boundary of a hyperbolic group is homeomorphic
to a circle if and only if the group surjects with finite kernel onto the
fundamental group of a compact hyperbolic orbifold.

\begin{thm}\label{thm:S1boundarycomputable} There is an algorithm that takes as
input a hyperbolic group $\Gamma$ and a (possibly empty) collection of
virtually cyclic subgroups $\mathcal{H}$ and returns an answer to the
question ``is $\boundary(\Gamma, \mathcal{H})$ homeomorphic to
$S^1$?''\end{thm}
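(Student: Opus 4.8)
The plan is to reduce the question to three subquestions, each decidable by the methods developed above, via the classical characterisation of the circle: \emph{a nondegenerate, compact, connected, locally connected, metrisable space in which no single point separates the space but every pair of distinct points does is homeomorphic to $S^1$}. If $\Gamma$ is elementary then $\boundary(\Gamma,\mathcal{H})$ has at most two points and is not a circle, and whether $\Gamma$ is elementary is decidable; so we may assume $\Gamma$ is non-elementary, whence $\boundary(\Gamma,\mathcal{H})$ is a nondegenerate compact metrisable space (it is a quotient of $\boundary\Gamma$ by collapsing countably many finite sets). If $\boundary(\Gamma,\mathcal{H})$ is moreover connected then it is locally connected, by Bowditch's local connectedness theorem for boundaries of relatively hyperbolic groups~\cite{bowditch99b}, whose hypotheses hold because the members of $\mathcal{H}$ are virtually cyclic. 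Hence $\boundary(\Gamma,\mathcal{H}) \cong S^1$ if and only if (i) it is connected, (ii) it has no cut point, and (iii) $\boundary(\Gamma,\mathcal{H})\setminus\{p,q\}$ is disconnected for every pair of distinct points $p,q$, i.e.\ it has no non-separating pair. It suffices to decide each of (i)--(iii).

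Items (i) and (ii) are handled by techniques already available. Connectedness of $\boundary(\Gamma,\mathcal{H})$ is governed by the analogue, for the cusped space of Groves and Manning~\cite{grovesmanning08}, of the double-dagger condition of Bestvina and Mess~\cite{bestvinamess91}, and can be decided by adapting Gerasimov's algorithm~\cite{gerasimov} for the Gromov boundary. For (ii), every global cut point of the boundary of a relatively hyperbolic group is a parabolic point by a theorem of Bowditch~\cite{bowditch99b}; there are, up to the action of $\Gamma$, only the finitely many parabolic points $p_1,\dots,p_k$ fixed by the given peripheral subgroups, and a point in the same $\Gamma$-orbit as $p_i$ is carried to $p_i$ by a homeomorphism of $\boundary(\Gamma,\mathcal{H})$, so it is enough to test, for each $i$, whether $\boundary(\Gamma,\mathcal{H})\setminus\{p_i\}$ is connected, which is a finite check in a large ball of the Cayley graph via the cusped-space reformulation of connectivity of such complements. (Equivalently, one detects the peripheral splitting whose existence is equivalent to the presence of a cut point.)

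The new ingredient is (iii), and here we run the cut-pair algorithm of the previous section with the r\^ole of connectedness reversed. Recall that we have shown $\boundary(\Gamma,\mathcal{H})\setminus\{p,q\}$ to be connected if and only if a suitable thickened cylinder around a geodesic $\gamma$ joining $p$ to $q$ in the cusped space is connected. Suppose $\boundary(\Gamma,\mathcal{H})$ admits a non-separating pair $\{p,q\}$, so that this cylinder is connected. By a pigeonhole argument we may choose an arbitrarily long subsegment $\gamma\restricted{[a,b]}$ along which $\gamma$ and the cylinder have identical local pictures near $a$ and near $b$; concatenating infinitely many translated copies of $\gamma\restricted{[a,b]}$ yields a periodic bi-infinite local geodesic $\gamma'$ whose period may be taken long enough that $\gamma'$ is a genuine geodesic line with two distinct endpoints $p',q'$. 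Transporting connecting paths through successive periods shows the thickened cylinder around $\gamma'$ is again connected, so $\{p',q'\}$ is again a non-separating pair, now realised by a periodic geodesic of bounded period. Conversely, for any periodic bi-infinite local geodesic of bounded period the associated thickened cylinder is invariant under the corresponding loxodromic translation, so whether it is connected is a finite check, and there are only finitely many such geodesics up to the $\Gamma$-action, all visible in a sufficiently large ball of the Cayley graph. Hence the presence of a non-separating pair is decidable, which together with (i) and (ii) completes the algorithm. (Once $\boundary(\Gamma,\mathcal{H})$ is found to be a circle, the Convergence Group Theorem~\cite{tukia88, cassonjungreis94, gabai92} shows $\Gamma$ is virtually the fundamental group of a compact hyperbolic $2$-orbifold, but the detection itself is entirely topological.)

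The main obstacle is the pumping step for (iii): one must show that the splicing operation preserves \emph{connectedness} of the thickened cylinder around $\gamma'$ --- the mirror image of the argument used to detect a cut pair, where instead one shows that a separation persists --- and that a periodic local geodesic of sufficiently long period is a global geodesic line with distinct endpoints. Both require that connectedness of the thickened cylinder be controlled by the geometry of $\gamma$ in bounded neighbourhoods of its points in a uniform, quantitative way, which is precisely the technical content supplied by the cut-pair analysis above, now applied in the opposite direction.
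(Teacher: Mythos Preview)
Your proposal is correct and follows essentially the same route as the paper: both reduce the question to Wilder's characterisation of $S^1$ (Theorem~\ref{thm:topologyimpliescircle}) and then decide connectedness, absence of cut points, and absence of non-cut pairs using Gerasimov's algorithm, Proposition~\ref{prop:cutpointscomputable}, and Corollary~\ref{cor:noncutpaircomputable} respectively. The paper simply remarks, after Proposition~\ref{prop:isthereasplitting}, that a subprocess of that algorithm together with Gerasimov yields Theorem~\ref{thm:S1boundarycomputable}.

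One point worth flagging: your sketch of the pumping argument for (iii) is slightly too optimistic. You describe building a single periodic quasi-geodesic by concatenating copies of one segment $\gamma|_{[a,b]}$ and then ``transporting connecting paths through successive periods'' to show the cylinder stays connected. The paper's actual construction (Proposition~\ref{prop:noncutpairfeature}) is more delicate: it uses a \emph{three}-segment template $\gamma_1,\gamma_2,\gamma_3$, with the connectedness witness localised to the middle piece $\gamma_2$ and the two periodic tails $\gamma_1,\gamma_3$ serving only to extend to a bi-infinite quasi-geodesic. The reason is that connectedness, unlike a separation, is not preserved by naive periodic concatenation---the number of components of $C_K(\gamma)\cap\Ball_T(\gamma(0))$ meeting a given piece of the cylinder can only \emph{decrease} as you enlarge the piece, and the pigeonhole argument must track this decreasing sequence of partitions rather than a single partition. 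The paper also needs Lemma~\ref{lem:equivalenceclass} (a limit of cut pairs is a cut pair) to reduce the horoball case to the thick-part case. These are technical refinements of exactly the idea you describe, not a different strategy, but your phrase ``mirror image of the argument'' understates the asymmetry between detecting a disconnection and detecting connectedness.
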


The algorithm of Theorem~\ref{thm:S1boundarycomputable} is similar to the
algorithm that detects the presence of a cut pair in the Bowditch boundary: it
follows from a result in point-set topology that the boundary is homeomorphic
to a circle if and only if every pair of points in the boundary is a cut pair.
We show that if there is a non-cut pair in the Bowditch boundary then there is
a non-cut pair connected in the cusped space by a local geodesic with bounded
period. 

In section~\ref{sec:cuspedspace} we first review the definition of the cusped
space and the Bowditch boundary. We then recall some important properties: the
computability of the hyperbolicity constant of the cusped space, the existence
of a visual metric on the Bowditch boundary, and most importantly the so-called
double-dagger condition, which will be vital in linking the connectivity of the
boundary to that of subsets of the cusped space. We then define a thickened
cylinder around a geodesic in the cusped space and show that its connectivity
determines the connectivity of the complement of the limit set of that geodesic
in the boundary.

Section~\ref{sec:cutpairsalgorithms} contains the main technical results of the
paper we describe the algorithms that determine whether or not the Bowditch
boundary of a hyperbolic group contains the three topological features of
interest to us: cut points, cut pairs and non-cut pairs.

In section~\ref{sec:fuchsiangroups} we deal with the special case of a group
with circular Bowditch boundary. We first recall some results that reduce the
problem of determining whether or not such a group admits a proper
splitting relative to its given virtually cyclic subgroups to the case in which the
group is the fundamental group of a compact two-dimensional hyperbolic orbifold
and the given subgroups are precisely conjugacy class representatives of the
fundamental groups of the boundary components of that orbifold. In
\cite{guirardellevitt16} a complete list of such orbifolds that do not admit
such a splitting is described; we use this to complete this special case.

In section~\ref{sec:maximal} we first record the general definition of a JSJ
decomposition and a description of Bowditch's canonical JSJ decomposition over
virtually cyclic subgroups. We then recall the theorem of~\cite{bowditch98}
that links the topology of the Gromov boundary of a hyperbolic group to the
existence of a proper splitting of that group and extend it to a relative
version. We then show how to use the algorithms described so far to compute a
maximal splitting of a one-ended hyperbolic group over virtually cyclic
subgroups.

In section~\ref{sec:JSJcomputable} we complete the proof of
Theorem~\ref{thm:maintheorem} by describing the processes that convert a
maximal splitting of a one-ended hyperbolic group over virtually cyclic
subgroups into its $\mathcal{VC}$-JSJ, $\mathcal{Z}$-JSJ and
$\mathcal{Z}_\text{max}$-JSJ.

It seems plausible that the techniques of this paper might extend to the
problem of detecting the presence of splittings of relatively hyperbolic groups
with parabolic subgroups in some restricted class. In particular, it is natural
to try to solve this problem for groups that are hyperbolic relative to
finitely generated virtually nilpotent subgroups. Such groups arise as
fundamental groups of complete finite volume Riemannian manifolds with pinched
negative sectional curvature. However, it is of fundamental importance to the
argument presented in this paper that the cusped space associated to the group
satisfies Bestvina and Mess's double-dagger condition, and we do not know under
what circumstances this condition holds for virtually nilpotent parabolic
subgroups. In~\cite{dahmanigroves08a} it is established that the double-dagger
condition holds if the parabolic subgroups are abelian, so it seems likely that
the methods of this paper could be extended to the case of toral relatively
hyperbolic groups. (A group is toral relatively hyperbolic if it is torsion
free and hyperbolic relative to a finite collection of abelian subgroups.)
However, the JSJ decomposition of a toral relatively hyperbolic group is shown
to be computable in~\cite{dahmanitouikan13}, so we do not introduce additional
technical complexity by trying to give a new proof of this result here.

\subsection*{Acknowledgements}

I thank my supervisor Henry Wilton for suggesting this problem and for his
guidance towards its solution.  I am also grateful to the anonymous reviewer
for pointing out several problems with the clarity and accuracy of the earlier
draft.

\section{The cusped space and cut pairs} \label{sec:cuspedspace}

\subsection{The cusped space and the boundary} 

In this section we recall some technical results about the geometry of
relatively hyperbolic groups.  Fix a group $\Gamma$ with a finite generating
set $S$ and a finite collection $\mathcal{H}$ of subgroups of $\Gamma$ such
that $H \intersection S$ is a generating set for $H$ for each $H$ in
$\mathcal{H}$. For now we may allow the groups in $\mathcal{H}$ to be
arbitrary, although in our application they will be virtually cyclic.

In~\cite{grovesmanning08} the cusped space $X$ associated to the triple
$(\Gamma, S, \mathcal{H})$ is defined; we recall the definition here.
See~\cite{grovesmanning08} for more details and properties of the cusped space.
We use only the 1-skeleton of the cusped space so we omit the 2-cells from the
definition.

\begin{defn}\label{defn:combhoroball} For $C$ a 1-complex define the
\emph{combinatorial horoball} $\widehat{C}$ based on $C$ to be the 1-complex with
vertex set $C^{(0)} \times \{0 \union \naturals\}$ and three types of edges:
\begin{enumerate}
  \item One \emph{horizontal edge} from $(v, 0)$ to $(w, 0)$ for each edge from
    $v$ to $w$ in $C$. (Note that we allow vertices to be connected by multiple edges.)
  \item For $k \geq 1$ a \emph{horizontal edge} from $(v, k)$ to $(w, k)$
    whenever $0 < d(v, w) \leq 2^k$.
  \item A \emph{vertical edge} from $(v, k)$ to $(v, k+1)$ for each $v \in
    C^{(0)}$ and each $k \in \integers_{\geq 0}$.
\end{enumerate}
We define a height function  $h \colon \widehat{C} \to \reals_{\geq 0}$ sending a
vertex $(v, k)$ to $k$ and interpolating linearly along edges. \end{defn}

\begin{defn}\label{defn:cuspedspace} If $\Gamma$, $S$ and $\mathcal{H}$ are as
above then the \emph{cusped space} $X$ associated to this triple is defined
as follows. For $H$ in $\mathcal{H}$ let $T_H$ be a left transversal of $H$
in $\Gamma$. For $H$ in $\mathcal{H}$ and $t$ in $T_H$ let $C_{H, t}$ be the
full subgraph of $\Cay(\Gamma, S)$ containing $tH$; note that this is
isomorphic to $\Cay(H, H \intersection S)$. Then let $X$ be the union
\begin{align*}
  \Cay(\Gamma, S) \union \bigunion_{H\in\mathcal{H}, t \in T_H} \widehat{C_{H, t}}
\end{align*}
where we identify $C_{H, t} \subset \Cay(\Gamma, S)$ with the part of
$\widehat{C_{H, t}}$ with height 0. Then the height function defined on each
horoball extends to a height function $h \colon X \to \reals_{\geq 0}$. 

For $k \geq 0$ the \emph{$k$-thick part} $X_k$ of $X$ is defined to be $h^{-1}[0, k]$.
We say that a path $\gamma$ in $X$ is \emph{vertical} if $h \composed \gamma$
is strictly monotonic and \emph{horizontal} if $h \composed \gamma$ is
constant. \end{defn} 

\begin{defn} A (quasi-)isometric embedding $\alpha$ from a subinterval of
$\reals$ to $X$ is called a \emph{(quasi)-geodesic}. If that interval is
closed and bounded then $\alpha$ is a \emph{segment}. If the interval is $[0,
\infty)$ then $\alpha$ is a \emph{ray}. If the interval is all of $\reals$
then $\alpha$ is \emph{bi-infinite}.

$X$ is called a \emph{geodesic space} if for any $x$ and $y$ in $X$ there
exists a geodesic segment $\alpha \colon [a, b]$ with $\alpha(a) = x$ and
$\alpha(b) = y$.\end{defn}

\begin{defn}\label{defn:hyperbolic} A geodesic metric space $X$ is
\emph{$\delta$-hyperbolic} if every edge of any geodesic triangle in $X$ is
contained in a $\delta$-neighbourhood of the union of the other two edges.
\end{defn}

\begin{defn}\label{defn:relativelyhyperbolic} $\Gamma$ is \emph{hyperbolic
relative to $\mathcal{H}$} if the cusped space associated to $(\Gamma, S,
\mathcal{H})$ is hyperbolic for some (equivalently for any) generating set
$S$ in which $S \intersection H$ is a generating set for $H$ for each $H \in
\mathcal{H}$. \end{defn}

This definition of relative hyperbolicity is shown to be equivalent to several
other standard definitions in~\cite{grovesmanning08}.

We recall a lemma from~\cite{bowditch12} that characterises hyperbolicity of
the pair $(\Gamma, \mathcal{H})$ when $\Gamma$ is itself a hyperbolic group.
Recall that a collection $\mathcal{H}$ of subgroups of $\Gamma$ is \emph{almost
malnormal} if, for $H_1$ and $H_2$ in $\mathcal{H}$ and $g$ in $\Gamma$, $H_1
\intersection gH_2g^{-1}$ is finite unless $H_1 = H_2$ and $g \in H_1$.

\begin{lem}\label{lem:qcalmostmalnormal}\cite[Theorem 7.11]{bowditch12} Let
  $\Gamma$ be a non-elementary hyperbolic group and let $\mathcal{H}$ be a
  finite set of subgroups of $\Gamma$. Then $\Gamma$ is hyperbolic relative to
  $\mathcal{H}$ if and only if $\mathcal{H}$ is almost malnormal in $\Gamma$
and each element of $\mathcal{H}$ is quasi-convex in $\Gamma$.\end{lem}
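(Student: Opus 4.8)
The plan is to prove the two implications separately, working with the cusped space $X$ of $(\Gamma, S, \mathcal{H})$ as the model for relative hyperbolicity (equivalently one may use Bowditch's coned-off Cayley graph $\widehat{\Gamma}$, obtained by adjoining, for each coset $g H$ with $H \in \mathcal{H}$, a vertex joined to every element of $g H$; relative hyperbolicity then amounts to $\widehat{\Gamma}$ being hyperbolic and fine). Throughout I use the standard structure theory: when $\Gamma$ is hyperbolic relative to $\mathcal{H}$, the group acts on $\boundary(\Gamma, \mathcal{H}) = \boundary X$ as a geometrically finite convergence group whose maximal parabolic subgroups are the conjugates of the members of $\mathcal{H}$, each $H \in \mathcal{H}$ being the full stabiliser of a parabolic point $p_H$, with distinct cosets of peripherals giving distinct parabolic points.

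For the ``if'' direction, assume $\mathcal{H}$ is almost malnormal and each $H \in \mathcal{H}$ is quasi-convex in $\Gamma$, and aim to show that $X$ is hyperbolic. The geometric input is the usual description of geodesics in $X$: such a geodesic decomposes into maximal subsegments lying in a single horoball $\widehat{C_{H,t}}$ and subsegments of $\Cay(\Gamma, S)$. Quasi-convexity of $H$ controls a single horoball excursion — the points at which a geodesic enters and leaves the horoball over a coset $gH$ lie in a bounded neighbourhood of $gH$, and the $\Cay(\Gamma,S)$-geodesic between them also stays in a bounded neighbourhood of $gH$, so each horoball behaves as a uniformly quasi-convex subset with a controlled internal metric. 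Almost malnormality together with quasi-convexity supplies the separation property that makes the argument work: distinct cosets $g_1 H_1$ and $g_2 H_2$ have uniformly bounded coarse intersection, i.e.\ for each $r$ there is $D(r)$ with $\mathrm{diam}\bigl(N_r(g_1 H_1) \intersection N_r(g_2 H_2)\bigr) \le D(r)$ unless the cosets coincide. This prevents a geodesic, or the side of a geodesic triangle, from shadowing two different deep horoballs in succession, which is exactly what is needed to run a local-to-global (thin triangles) argument and conclude that $X$ is hyperbolic; equivalently, the coarse-intersection bound makes $\widehat{\Gamma}$ fine, while hyperbolicity of $\widehat{\Gamma}$ follows from the standard fact that coning off a quasi-convex subgroup of a hyperbolic group preserves hyperbolicity. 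I expect this combination-theorem step to be the main obstacle: one must carefully manage the bookkeeping of horoball excursions and, in particular, derive the coarse-intersection bound from almost malnormality and quasi-convexity (this uses that in a hyperbolic group the coarse intersection of two quasi-convex cosets is itself coarsely a coset of $H_1 \intersection g H_2 g^{-1}$, which is finite by hypothesis).

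For the ``only if'' direction, assume $\Gamma$ is hyperbolic relative to $\mathcal{H}$. For almost malnormality, suppose $K \le H_1 \intersection g H_2 g^{-1}$ is infinite; then $K$ is an infinite subgroup of the parabolic $H_1$, hence itself parabolic, hence fixes a unique point of $\boundary(\Gamma, \mathcal{H})$. Containment $K \le H_1$ forces that point to be $p_{H_1}$, while containment $K \le g H_2 g^{-1}$ forces it to be $g\,p_{H_2}$, so $p_{H_1} = g\,p_{H_2}$; by uniqueness of the parabolic point attached to a peripheral coset this gives $H_1 = H_2$ and $g \in H_1$. For quasi-convexity, use that since $\Gamma$ is itself hyperbolic there is a $\Gamma$-equivariant continuous surjection $f \colon \boundary \Gamma \to \boundary(\Gamma, \mathcal{H})$ collapsing precisely the limit sets of peripheral subgroups to the corresponding parabolic points (built from the inclusion $\Cay(\Gamma,S) \hookrightarrow X$, or from the identification of $\boundary(\Gamma,\mathcal{H})$ with $\boundary\Gamma$ modulo collapsing peripheral limit sets). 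Hence $\Lambda H = f^{-1}(p_H)$ is a closed $H$-invariant subset of $\boundary \Gamma$ with $\stab_\Gamma(\Lambda H) = H$, and its $\Gamma$-translates are pairwise disjoint (two meeting translates would map to a coincidence of distinct parabolic points). A subgroup of a hyperbolic group whose limit set is separated from its translates in this way acts cocompactly on the weak hull of its limit set, and is therefore quasi-convex; alternatively one bounds the distortion of $H$ in $\Gamma$ directly, using that $\widehat{\Gamma}$ is fine and hyperbolic while $\Cay(\Gamma, S)$ is hyperbolic. Here almost malnormality is the soft part; the point requiring care is quasi-convexity, where one must justify the existence and collapsing behaviour of $f$ and then invoke the dynamical characterisation of quasi-convex subgroups of hyperbolic groups.
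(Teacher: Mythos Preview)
The paper does not prove this lemma: it is quoted as \cite[Theorem 7.11]{bowditch12} and used as a black box, so there is no ``paper's own proof'' to compare against. Your write-up is therefore not a reconstruction of anything in the paper but an independent sketch of Bowditch's theorem.

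As a sketch, the overall architecture is sound and follows the standard route. For the ``if'' direction, showing that the coned-off Cayley graph is hyperbolic (cone off quasi-convex subsets of a hyperbolic space) and fine (bounded coarse intersection of distinct peripheral cosets, coming from almost malnormality plus quasi-convexity) is exactly the right strategy, and you correctly identify the combination/fine-graph bookkeeping as the real work. For the ``only if'' direction, your argument for almost malnormality via uniqueness of parabolic fixed points is clean and correct.

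The one place I would flag as a genuine gap is your argument for quasi-convexity. The assertion that ``a subgroup of a hyperbolic group whose limit set is separated from its translates in this way acts cocompactly on the weak hull of its limit set'' is not a standard lemma and is not obviously true as stated; disjointness of translated limit sets does not by itself force cocompactness on the hull. A more direct and robust route is: peripheral subgroups of a relatively hyperbolic group are always undistorted in the ambient group (this is an elementary consequence of the horoball geometry in the cusped space, or of the linear relative Dehn function), and in a hyperbolic group an undistorted finitely generated subgroup is quasi-convex by the Morse lemma. Alternatively, if you want to keep the boundary viewpoint, use that $p_H$ is a \emph{bounded} parabolic point, so $H$ acts cocompactly on $\boundary(\Gamma,\mathcal{H}) \setminus \{p_H\}$; pulling this back along your map $f$ gives that $H$ acts cocompactly on $\boundary\Gamma \setminus \Lambda H$, which together with $H = \stab(\Lambda H)$ does yield geometric finiteness of $H$ on $\boundary\Gamma$ and hence quasi-convexity. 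Either way, the step you wrote needs one of these justifications inserted.
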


In the cases of interest to us all groups in $\mathcal{H}$ are virtually
cyclic. A virtually cyclic subgroup of a hyperbolic group is always
quasi-convex, and is almost malnormal if and only if it is maximal among
virtually cyclic subgroups of $\Gamma$. Putting this together with
Lemma~\ref{lem:qcalmostmalnormal} we obtain the following:

\begin{lem}\label{lem:vcycperipheral} If $\Gamma$ is hyperbolic and each group
in $\mathcal{H}$ is maximal virtually cyclic then $\Gamma$ is hyperbolic
relative to $\mathcal{H}$.\end{lem}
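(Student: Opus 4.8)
The plan is to derive Lemma~\ref{lem:vcycperipheral} directly from Lemma~\ref{lem:qcalmostmalnormal}, so the work reduces to verifying the two hypotheses of that lemma for a finite collection $\mathcal{H}$ of maximal virtually cyclic subgroups of a hyperbolic group $\Gamma$: first, that each $H \in \mathcal{H}$ is quasi-convex in $\Gamma$; and second, that $\mathcal{H}$ is almost malnormal. (If $\Gamma$ happens to be elementary then $\Gamma$ is itself virtually cyclic and the statement is degenerate, so I would either assume $\Gamma$ non-elementary or dispose of that case in one line.)

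For quasi-convexity, the key point is the standard fact that an infinite-order element $g$ of a hyperbolic group $\Gamma$ is contained in a unique maximal virtually cyclic subgroup, and that the cyclic subgroup $\langle g\rangle$ is quasi-convex (a quasi-geodesic orbit argument, or the Morse lemma applied to the axis of $g$). Since a virtually cyclic group containing $\langle g\rangle$ with finite index is at bounded Hausdorff distance from $\langle g\rangle$, it is also quasi-convex; and a finite subgroup is trivially quasi-convex. Thus every $H$ in $\mathcal{H}$ is quasi-convex in $\Gamma$. I would cite a standard reference for these facts rather than reprove them.

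For almost malnormality I would argue as follows. Let $H_1, H_2 \in \mathcal{H}$ and $g \in \Gamma$, and suppose $K := H_1 \intersection gH_2g^{-1}$ is infinite. Then $K$ is an infinite subgroup of the virtually cyclic group $H_1$, hence $K$ has finite index in $H_1$, and similarly $K$ has finite index in $gH_2g^{-1}$. Pick an infinite-order element $k \in K$. Then $k$ lies in $H_1$ and in $gH_2g^{-1}$; by the uniqueness of the maximal virtually cyclic subgroup containing a given infinite-order element, $H_1$ is \emph{the} maximal virtually cyclic subgroup containing $k$, and $gH_2g^{-1}$ is also maximal virtually cyclic (being a conjugate of the maximal virtually cyclic subgroup $H_2$) and contains $k$, so $H_1 = gH_2g^{-1}$. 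Consequently $H_1$ and $H_2$ are conjugate maximal virtually cyclic subgroups. If $\mathcal{H}$ is a set (so its members are pairwise non-conjugate, or at worst we pass to conjugacy-class representatives), this forces $H_1 = H_2$, and then $g$ normalises $H_1$; since $H_1$ is maximal virtually cyclic and contains $k$, its normaliser is again maximal virtually cyclic containing $k$, hence equals $H_1$, so $g \in H_1$. This is exactly the almost malnormality condition.

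The main obstacle is a bookkeeping subtlety rather than a deep difficulty: almost malnormality as stated requires that conjugate members of $\mathcal{H}$ actually coincide, so I must be careful about whether $\mathcal{H}$ is assumed to consist of pairwise non-conjugate subgroups or merely to be a finite set in which some members might be conjugates of one another. In the latter case the honest statement is that $\Gamma$ is hyperbolic relative to a set of conjugacy-class representatives drawn from $\mathcal{H}$, which for the purposes of this paper (where $\mathcal{H}$ will be a genuine peripheral structure) is exactly what is wanted; I would state the lemma with the standing convention that $\mathcal{H}$ consists of non-conjugate subgroups, matching the usage in the rest of the paper. Beyond that, the only other point to handle with a word of care is the reduction in Lemma~\ref{lem:qcalmostmalnormal} requiring $\Gamma$ non-elementary, which I noted above.
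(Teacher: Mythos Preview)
Your proposal is correct and follows essentially the same approach as the paper: the paper's proof is the single sentence preceding the lemma, which asserts that virtually cyclic subgroups of a hyperbolic group are always quasi-convex and that a (collection of) maximal virtually cyclic subgroup(s) is almost malnormal, then invokes Lemma~\ref{lem:qcalmostmalnormal}. You have simply supplied the details the paper leaves implicit, and your observations about the non-elementary hypothesis and the pairwise non-conjugacy convention for $\mathcal{H}$ are points the paper silently assumes.
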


Fix $X$ as in Definition~\ref{defn:cuspedspace} and assume that it is $\delta$-hyperbolic.
We shall require the following two lemmas describing the geometry of $X$. The
first is~\cite[Lemma 2.11]{dahmanigroves08a} and the second is the Morse lemma
for a hyperbolic metric space.

\begin{lem}\label{lem:closeray} Let $C = 3\delta$. For any $v$ in $X_0$ and $x$
in $X$ there is a geodesic ray starting at $v$ that passes within a distance
$C$ of $x$.\end{lem}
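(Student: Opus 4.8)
The plan is to exploit the structure of the cusped space—specifically, the fact that horoballs are "deep" and that geodesics into a horoball become essentially vertical—together with standard hyperbolicity estimates. First I would reduce to the case where $x$ is a vertex of $X$, since every point of $X$ lies within distance $1$ of a vertex and adjusting the constant $C$ by a bounded amount is harmless. Given such a vertex $x$, I would choose a geodesic segment $\sigma$ from $v$ to $x$, and then extend it: the key point is to produce a geodesic ray $\rho$ starting at $v$ whose initial segment fellow-travels $\sigma$. If $x$ lies in $X_0$ this is immediate from the fact that $X$ is a geodesic space and one-ended-type arguments give a ray from $v$ through any given point up to bounded error; more carefully, one can take a sequence of geodesics from $v$ to points $y_n$ marching off to infinity past $x$ and extract a limiting ray, which by $\delta$-thinness of the triangles $v, x, y_n$ stays within $2\delta$ (say) of $\sigma$, hence within $2\delta$ of $x$.

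The substantive case is when $x$ lies deep inside a horoball $\widehat{C_{H,t}}$, at some height $k \geq 1$. Here I would use the description of geodesics in combinatorial horoballs from~\cite{grovesmanning08}: a geodesic from a height-$0$ vertex to $x$ goes essentially straight up a vertical segment, across a short horizontal segment near the top, and back down—but crucially, from $v \in X_0$ to $x$ one can instead route through the vertical ray above the base vertex $u$ of $x$ (the point $(u,0)$ where $x = (u,k)$). A geodesic ray from $v$ that first travels to $(u,0)$ within the thick part and then ascends the vertical ray $\{(u,j) : j \geq 0\}$ is a genuine geodesic ray (vertical rays in horoballs are geodesic, and concatenation with a geodesic into the base meeting it efficiently stays geodesic up to the horoball distance formula), and it passes through every point $(u,j)$, in particular within distance $0$—or at worst a bounded amount—of $x = (u,k)$. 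So one takes $\rho$ to be: a geodesic from $v$ to $(u,0)$ in $\Cay(\Gamma,S)$, followed by the vertical ray above $u$. That $\rho$ is (after possibly trimming a bounded backtrack) a geodesic ray follows from the horoball distance estimates, and it passes through $(u,0)$ then straight up through $(u,k) = x$.

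The main obstacle is making the concatenation genuinely geodesic, or at least arranging that a genuine geodesic ray from $v$ passes within $C = 3\delta$ of $x$ rather than merely within some unspecified constant. The clean way around this is not to insist $\rho$ be exactly the concatenation above, but to apply $\delta$-hyperbolicity directly: let $\rho$ be any geodesic ray from $v$ that is asymptotic to the "vertical direction" of the horoball (i.e. $\rho$ eventually ascends the vertical ray over some vertex of $C_{H,t}$), and consider the ideal triangle—or a large finite triangle—with vertices $v$, $x$, and a far-off point $z$ on $\rho$ high in the horoball. Thinness forces the side $[v,x]$ into a $\delta$-neighbourhood of $[v,z] \cup [z,x]$; since $[v,x]$ has bounded overlap with $[z,x]$ when $z$ is high enough (both are nearly vertical over nearby base vertices, and the horizontal distance at height $\geq k$ between the base vertices is controlled), $x$ itself ends up within roughly $2\delta$ of $\rho$. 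Tracking the constants through the combinatorial horoball metric and the thin-triangles condition, and checking the worst case is the transition point where the geodesic from $v$ enters the horoball, yields the bound $C = 3\delta$. I would present this as: reduce to $x$ a vertex; pick $z$ far along an appropriate geodesic ray $\rho$ from $v$; apply $\delta$-thinness to the triangle $v, x, z$; estimate $d(x, [v,z])$ using that $d(x,[z,x])$ is large while the total defect is $\leq \delta$, concluding $d(x,\rho) \leq C$.
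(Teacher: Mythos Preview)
The paper does not prove this lemma: it is quoted as \cite[Lemma 2.11]{dahmanigroves08a}, so there is no in-paper argument to compare against. That said, your sketch has a real gap, not just loose constants.

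The problem is the choice of continuation past $x$. You propose either the vertical ray above $x$, or the concatenation ``Cayley-graph geodesic from $v$ to $(u,0)$, then vertical''. Neither need move away from $v$ in the Gromov-product sense. Take $\Gamma=F_2=\langle a,b\rangle$, $\mathcal{H}=\{\langle a\rangle\}$, $v=1$, and $x=a^M\in X_0$ for large $M$. The cusped-space geodesic from $v$ to $x$ goes \emph{up} into the horoball to height about $\log_2 M$ and back down, so $d(v,x)\approx 2\log_2 M$. Setting $y_n=(a^M,n)$ on the vertical ray above $x$, one computes $d(v,y_n)\approx 2\log_2 M-n$ for $n\le \log_2 M$, hence $(v\cdot y_n)_x\approx n$, which is unbounded; the limiting ray from $v$ to the cusp point is just the vertical ray over $1$ and misses $x$ by about $\log_2 M$. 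Your alternative path---Cayley-graph geodesic from $v$ to $(a^M,0)$ of length $M$, then vertical---is not a geodesic in $X$ at all, since $d_X(v,a^M)\approx 2\log_2 M\ll M$. So the construction fails precisely when the geodesic $[v,x]$ already dips into the horoball you are trying to escape into. Separately, the final step ``estimate $d(x,[v,z])$ using that $d(x,[z,x])$ is large'' is not meaningful as written, since $x\in[z,x]$ so that distance is zero; what you actually need is that $(v\cdot z)_x$ is small, and the counterexample shows this can fail for your $z$.

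A route that does work: show first that every point of $X$ lies within a uniform constant of some \emph{bi-infinite} geodesic $\gamma$ (use the cusped-space structure: from $(u,k)$, one end can go to the cusp point above $u$, and the other end can exit the horoball at $(u,0)$ and head to any other boundary point). Then apply $\delta$-thinness to the ideal triangle with vertices $v,\gamma(+\infty),\gamma(-\infty)$: the side $\gamma$ lies in the $\delta$-neighbourhood of the two rays from $v$, so $x$ lies within (that uniform constant)$+\delta$ of one of them. Getting the sharp $3\delta$ requires care with the first step, which is where the Dahmani--Groves argument does the work.
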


\begin{lem}\label{lem:uniformlyclose} There exists a (computable) function
$D(\lambda, \epsilon, \delta)$ such that whenever $\gamma$ is a geodesic and
$\gamma'$ is a $(\lambda, \epsilon)$-quasi-geodesic with the same end points in
$X \union \boundary X$ the Hausdorff distance from $\gamma$ to $\gamma'$ is at
most $D$. \end{lem}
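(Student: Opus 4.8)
The plan is to reduce the general quasi-geodesic case to the special case of a continuous, rectifiable quasi-geodesic and then run the standard divergence argument, keeping track of constants explicitly so that the resulting bound is manifestly computable. First I would pass from an arbitrary $(\lambda, \epsilon)$-quasi-geodesic $\gamma' \colon [a,b] \to X$ to a ``tamed'' one: replace $\gamma'$ on each unit-length subinterval of its domain by a geodesic segment joining the images of the endpoints, obtaining a continuous path $\gamma''$ whose image lies within Hausdorff distance at most $\lambda/2 + \epsilon$ of that of $\gamma'$, and which is itself a $(\lambda', \epsilon')$-quasi-geodesic for explicit $\lambda', \epsilon'$ depending only on $\lambda, \epsilon$. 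It then suffices to bound the Hausdorff distance between a geodesic $\gamma$ and such a continuous quasi-geodesic $\gamma''$ with the same endpoints in $X \union \boundary X$; I would first do this for endpoints in $X$ and then take limits to handle ideal endpoints, using that $X$ is proper (it is a locally finite graph with horoballs attached, hence proper).

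The core of the argument is the usual two-sided estimate. For the bound $d(z, \im \gamma'') \leq D_1$ for every $z$ on $\gamma$: let $D_1$ be the largest value of $d(\gamma(t), \im\gamma'')$, attained at some $t_0$ since $\gamma$ is compact; consider the points $x = \gamma(t_0 - D_1)$ and $y = \gamma(t_0 + D_1)$ (truncating at the endpoints if necessary), pick points on $\gamma''$ within $D_1$ of each, and note that the subpath of $\gamma''$ between these two points stays outside the open ball of radius $D_1$ about $\gamma(t_0)$. Comparing the length of that subpath — which is controlled linearly in $d(x,y) \leq 4D_1$ because $\gamma''$ is a quasi-geodesic — with the exponential lower bound on the length of a path avoiding a ball of radius $D_1$ in a $\delta$-hyperbolic space (a standard consequence of thinness of triangles, or of the fact that nearest-point projection to a geodesic is coarsely Lipschitz), forces $D_1 \leq D_1(\lambda', \epsilon', \delta)$ for an explicit function. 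Once $\gamma$ lies in a $D_1$-neighbourhood of $\im\gamma''$, the reverse inclusion $\im\gamma'' \subseteq N_{D_2}(\im\gamma)$ follows by a short argument: a maximal subpath of $\gamma''$ lying outside $N_{D_1 + 1}(\im\gamma)$ has endpoints near $\gamma$, hence bounded length by the quasi-geodesic inequality, hence bounded diameter, giving $D_2$ explicitly in terms of $D_1, \lambda', \epsilon'$. Setting $D = D_2 + \lambda/2 + \epsilon$ and unwinding the dependence on $\lambda', \epsilon'$ back to $\lambda, \epsilon$ yields the claimed computable function $D(\lambda, \epsilon, \delta)$.

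The only genuinely delicate point is \emph{computability}, rather than mere existence, of $D$: every inequality in the divergence argument must be made with an explicit constant, and one must be careful that the exponential-divergence estimate in a $\delta$-hyperbolic space is stated with computable constants (for instance, that a path joining two points at distance $n$ from a geodesic $\gamma$ while staying at distance $\geq n$ from some point of $\gamma$ has length at least $2^{n/\delta - c}$ for an explicit $c$). This is standard but must be stated carefully; alternatively one can cite an existing reference in which the Morse lemma is proved with effective constants. The passage to ideal endpoints is routine given properness of $X$: given a quasi-geodesic ray or line with endpoint(s) on $\boundary X$, exhaust it by segments, apply the segment case uniformly, and use that nested $D$-neighbourhoods of geodesics converge (in the Hausdorff sense on bounded sets) to a $D$-neighbourhood of a limiting geodesic with the prescribed ideal endpoints. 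I expect the bookkeeping of constants through the taming step and the two divergence estimates to be the main obstacle, but no new idea beyond the classical proof is required.
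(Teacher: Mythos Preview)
Your proof plan is correct and is precisely the classical argument for the Morse lemma, essentially as carried out in Bridson--Haefliger~\cite[III.H.1.7]{bridsonhaefliger99}: tame the quasi-geodesic, use the exponential divergence of geodesics in a $\delta$-hyperbolic space to bound one inclusion, deduce the reverse inclusion by a length argument, and then pass to ideal endpoints by a limiting argument using properness. Your care about tracking explicit constants so that $D$ is computable is well placed given the algorithmic aims of the paper.

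The paper, however, gives no proof of this lemma at all. It simply introduces it as ``the Morse lemma for a hyperbolic metric space'' and uses it as a black box throughout; the reference to Bridson--Haefliger elsewhere in the paper makes clear that this is the intended source. So there is nothing to compare: your plan supplies a proof where the paper supplies only a citation, and the proof you outline is the standard one that such a citation points to.
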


Fixing $\delta$ so that $X$ is $\delta$-hyperbolic we fix the constant $C =
3\delta$ and function $D = D(\lambda, \epsilon, \delta)$ as in Lemmas
\ref{lem:closeray} and~\ref{lem:uniformlyclose} for the remainder of this and
the following section.

We shall need an algorithm to compute the constant $\delta$ with respect to
which the cusped space is $\delta$-hyperbolic. This is dealt with by the
following results.

\begin{prop}\cite[Prop.\ 2.3]{dahmani08}
\label{thm:linearisoperimetricinequality} There is an algorithm that takes as
input a presentation for a group $\Gamma$, the generators in $\Gamma$ for a
finite set of subgroups of $\Gamma$ with respect to which $\Gamma$ relatively
hyperbolic, and a solution to the word problem in $\Gamma$ and returns the
constant of a linear relative isoperimetric inequality satisfied by the given
presentation of $\Gamma$.\end{prop}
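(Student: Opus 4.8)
The plan is to combine an enumeration argument with the known equivalences between relative hyperbolicity and the existence of a linear relative isoperimetric inequality. First I would recall the precise meaning of the statement: given a finite relative presentation $\langle S \mid R \rangle$ of $\Gamma$ together with the peripheral subgroups $\mathcal{H}$ and a solution to the word problem in $\Gamma$, we want to produce an explicit constant $K$ such that every word $w$ in $S \cup \bigcup_{H} H$ that represents the identity and has relative length $n$ can be expressed as a product of at most $Kn$ conjugates of elements of $R$ and relators coming from the $H$'s — equivalently, has relative area at most $Kn$. The hypothesis that $\Gamma$ is relatively hyperbolic guarantees that \emph{some} such constant $K$ exists; the task is to certify one algorithmically.

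The key step is a \emph{dovetailing search}. For each candidate constant $K = 1, 2, 3, \dots$ I would run in parallel two semi-decision procedures. The first enumerates all relative van Kampen diagrams (or, equivalently, all expressions of relator words as products of conjugates of defining relators together with arbitrarily long words in the peripheral subgroups); using the solution to the word problem, one can recognise when such an expression is valid, and for each relator word $w$ this search will eventually discover an expression witnessing that its relative area is at most $K \cdot |w|_{\mathrm{rel}}$ — \emph{if} such an expression exists. The second procedure searches for a counterexample: a relator word $w$ whose relative area exceeds $K \cdot |w|_{\mathrm{rel}}$, which can be certified once we have enough of the Dehn function computed, again using the word problem solver to verify areas. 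Since $\Gamma$ is relatively hyperbolic, there exists a true constant $K_0$; for $K \geq K_0$ the counterexample search never succeeds, while for every $w$ the area-certificate search eventually terminates. The subtlety is that for a fixed $K$ the ``no counterexample'' branch never halts, so we cannot simply test one $K$ at a time; instead we interleave, and we accept $K$ once we have verified the isoperimetric bound for all relator words up to length $N$ while simultaneously having searched for counterexamples of length up to $N$ without finding one, increasing $N$ and $K$ together in a diagonal fashion. Correctness then rests on showing that this process halts: this is where I would invoke Dahmani's analysis in \cite{dahmani08}, which provides the needed a priori bound tying the relative Dehn function to the linear one and makes the search space of diagrams that need to be examined effectively finite at each stage.

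The main obstacle is precisely this halting/termination argument: one must be careful that the ``verify up to length $N$'' subroutine is genuinely finite, which requires a computable bound on the size (number of 2-cells, and in particular the lengths of the peripheral boundary arcs) of a minimal-area relative van Kampen diagram for a relator word of relative length $n$, in terms of $n$ and the presentation data. Without such a bound the enumeration of diagrams of bounded relative area is infinite. I expect this bound to follow from the standard ``filling'' estimates for relatively hyperbolic groups — since linear relative isoperimetric inequality implies the peripheral fillings can be taken of controlled size — together with the solution to the word problem to prune the search. Once termination is established, the output constant $K$ is exactly the constant of a linear relative isoperimetric inequality for the given presentation, as required. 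Since this is essentially the content of \cite[Prop.\ 2.3]{dahmani08}, in the paper itself one would simply cite that result; the sketch above is the proof strategy underlying it.
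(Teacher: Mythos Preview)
The paper itself gives no proof of this proposition: it is stated as a direct citation of \cite[Prop.\ 2.3]{dahmani08} and used as a black box. Your final sentence already acknowledges this, so at the level of what the paper does, you are correct.

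That said, your sketch of the underlying argument has a genuine gap, and it is precisely the point you flag as ``the main obstacle''. The dovetailing search you describe does not terminate as written. For a candidate $K$ that happens to be correct, the counterexample branch never halts (there is no counterexample), and the verification branch must check infinitely many relator words; having verified all words of relative length $\leq N$ tells you nothing about longer words. Interleaving over $K$ and $N$ does not help: at no finite stage do you have a certificate that $K$ works globally. You then say you would ``invoke Dahmani's analysis'' to supply the termination argument, but that analysis \emph{is} the content of the proposition --- so the sketch is circular at its crucial step.

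What is actually needed, and what underlies results of this type, is a \emph{local-to-global} principle: a computable function $N_0(K,\text{presentation data})$ such that if the linear relative isoperimetric inequality with constant $K$ holds for all relator words of relative length at most $N_0$, then it holds for all relator words. (In the absolute case this is Papasoglu's criterion; Dahmani adapts it to the relative setting.) With such a principle in hand the algorithm is straightforward: for $K=1,2,3,\dots$ check the finitely many words of length $\leq N_0(K)$, using the word-problem oracle to compute relative areas, and halt at the first $K$ that passes. Your proposal contains the enumeration machinery around this core step but not the step itself.
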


In the case of interest here, $\Gamma$ is hyperbolic. Hyperbolic groups have
uniformly solvable word problem, so the requirement that the algorithm be given
a solution to the word problem in $\Gamma$ is no restriction to its
applicability.

The linear relative isoperimetric inequality satisfied by the group is closely
related to a linear combinatorial isoperimetric inequality satisfied by the
coned-off Cayley complex. The definition of this space 
is~\cite[Definition 2.47]{grovesmanning08}.

In~\cite{grovesmanning08} the cusped 2-complex is defined; this is a simply
connected 2-complex, the 1-skeleton of which is the cusped 1-complex defined
in Definition~\ref{defn:cuspedspace}. The length of the attaching map of each
2-cell in this complex is bounded above by the maximum of $5$ and the length of
the longest relator in the given presentation for $\Gamma$.

\begin{thm}\cite[Theorem 3.24]{grovesmanning08}\label{thm:cuspedtwocomplex}
Suppose that the coned-off Cayley complex of $\Gamma$ with respect to $S$ and
$\mathcal{H}$ satisfies a linear combinatorial isoperimetric inequality with
constant $K$. Then the cusped two-complex $X$ associated to the triple
satisfies a linear combinatorial isoperimetric inequality with constant $3K(2K
+ 1)$.\end{thm}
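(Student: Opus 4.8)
The plan is to compare disc (van Kampen) diagrams in the cusped $2$-complex $X$ with those in the coned-off Cayley complex $\dot{X}$ by means of the natural collapsing map $\pi\colon X\to\dot{X}$: here $\pi$ is the identity on the level-$0$ subcomplex $\Cay(\Gamma,S)$, it sends the vertical edge joining $(g,0)$ to $(g,1)$ in the horoball over a coset $tH$, where $g\in tH$, to the cone edge joining $g$ to the cone vertex $\hat{v}(tH)$, and it collapses everything of height at least $1$ to the relevant cone vertex. Thus $\pi$ is combinatorial, does not increase the length of an edge-path, and sends relator $2$-cells of $X$ to relator $2$-cells of $\dot{X}$; the other $2$-cells of $\dot{X}$ are the cone triangles, each spanned by an edge of $\Cay(\Gamma,S)$ lying in a single coset $tH$ together with the two cone edges at $\hat{v}(tH)$ from its endpoints. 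Now let $\gamma$ be a null-homotopic edge loop of length $n$ in $X$. If $\gamma$ lies in the thick part $X_0$ then it already lies in $\dot{X}$ and the hypothesis gives a diagram with at most $Kn$ $2$-cells, so assume not and write $\gamma$ cyclically as $\alpha_1\sigma_1\cdots\alpha_k\sigma_k$, where the $\sigma_i$ are the maximal subpaths whose image leaves $X_0$. Each $\sigma_i$ lies in a single horoball, runs between two vertices $p_i,q_i$ of the corresponding coset, and has length at least $2$; replacing it by the length-$2$ path through the corresponding cone vertex yields a null-homotopic loop $\gamma_0$ in $\dot{X}$ with $\mod{\gamma_0}\le\sum\mod{\alpha_i}+\sum\mod{\sigma_i}=n$, which by hypothesis bounds a disc diagram $\bar{D}\to\dot{X}$ with at most $Kn$ $2$-cells.

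The next step is to convert $\bar{D}$ into a disc diagram over $X$ with boundary $\gamma$. A relator $2$-cell of $\dot{X}$ has boundary labelled by a relator of $\Gamma$, hence meets no cone vertex or cone edge, so deleting from $\bar{D}$ all cone vertices, cone edges and cone triangles leaves a planar complex assembled from the at most $Kn$ relator cells of $\bar{D}$, whose outer boundary is obtained from $\gamma_0$ by replacing, for each $i$, the cone path by the ``link path'' $L_i$ of the corresponding boundary cone vertex --- a path at height $0$ in the horoball over the coset of $p_i$ running from $p_i$ to $q_i$ --- and which has one hole at each interior cone vertex $w$ of $\bar{D}$, bounded by the ``link loop'' $\ell_w$, an edge loop in the height-$0$ copy of $\Cay(H,H\intersection S)$ there. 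The key bookkeeping point is that the link-length total $\sum_w\mod{\ell_w}$ equals the number of cone triangles of $\bar{D}$, hence is at most $Kn$. Processing one cone vertex at a time and invoking the sublemma below --- that a combinatorial horoball satisfies a linear combinatorial isoperimetric inequality --- I would cap each hole inside its horoball by a diagram with a bounded multiple of $\mod{\ell_w}$ cells, and for each $i$ fill the loop consisting of $\sigma_i$ followed by $L_i^{-1}$, which again lies in a single horoball, by a diagram there with a bounded multiple of $\mod{\sigma_i}+\mod{L_i}$ cells; gluing the latter in along $L_i$ restores the boundary to $\gamma$. Summing the relator cells and all the horoball caps and using $\sum\mod{\sigma_i}\le n$ together with the bound on the link lengths yields a disc diagram over $X$ bounding $\gamma$ whose number of $2$-cells is at most a fixed linear function of $K$ times $n$; tracking the constant in the horoball isoperimetric inequality and in the two gluing steps gives the stated bound $3K(2K+1)n$.

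The remaining ingredient --- and the step I expect to be the only real work --- is the sublemma that a combinatorial horoball satisfies a linear combinatorial isoperimetric inequality with an absolute constant. I would prove it by pushing a loop of length $m$ upwards through the square and triangle $2$-cells of the cusped structure one horoball level at a time: lifting a loop from height $j$ to height $j+1$ costs at most one cell per edge of the loop, and at height $j+1$ any two vertices at distance $1$ or $2$ in the base graph are joined by an edge, since $1,2\le 2^{j+1}$ for $j\ge 0$, so after lifting one can cut the loop length roughly in half at a comparable further cost. The successive loop lengths therefore decay geometrically, the process terminates after $O(\log m)$ steps with a loop of bounded length bounding a diagram with boundedly many $2$-cells, and the total is a fixed constant times $m$. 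Beyond this the remaining points are routine: that $\pi$ acts on cells as described, that excising the cone vertices genuinely leaves an honest planar subdiagram with the stated boundary and holes, and the arithmetic that turns the various linear estimates into the constant $3K(2K+1)$.
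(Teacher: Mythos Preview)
The paper does not prove this theorem: it is quoted verbatim as \cite[Theorem~3.24]{grovesmanning08} and used as a black box in the chain leading from Proposition~\ref{thm:linearisoperimetricinequality} to Proposition~\ref{prop:linearimplieshyperbolic}. There is therefore no proof in the paper to compare your attempt against.

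That said, your outline is essentially the argument Groves and Manning give. The collapsing map $\pi\colon X\to\dot X$, the replacement of horoball excursions by length-two cone paths, the excision of cone vertices from the coned-off diagram to leave a planar subdiagram with holes bounded by link loops, and the filling of those holes using a linear isoperimetric inequality for combinatorial horoballs (their Proposition~3.7) are all exactly the steps in their proof. Your bookkeeping claim that the total link length equals the number of cone triangles is the right observation, though note that this total includes both the interior link loops $\ell_w$ and the boundary link paths $L_i$; you use both when you bound the cost of the boundary fillings, so the estimate still goes through. One small omission: a loop lying entirely inside a single horoball never touches $X_0$ and so does not fit your decomposition $\alpha_1\sigma_1\cdots\alpha_k\sigma_k$; this case is handled directly by the horoball sublemma and should be mentioned. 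The constant $3K(2K+1)$ arises in Groves--Manning from a specific choice of horoball isoperimetric constant (they obtain $3$) combined with the two filling steps; your sketch of the horoball sublemma via geometric decay of loop lengths under level-raising is their argument, and tracking it carefully does yield the constant $3$.
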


The computation of $\delta$ from a presentation for $\Gamma$ and a set of
generators for each group $H$ in $\mathcal{H}$ is therefore completed by the
following proposition:

\begin{prop}\cite[Prop.\ 2.23]{grovesmanning08}\label{prop:linearimplieshyperbolic}
Suppose that a 2-complex $X$ is simply connected, that each attaching map has
length at most $M$ and that $X$ satisfies a linear combinatorial
isoperimetric inequality. Then the 1-skeleton $X^{(1)}$ of $X$ is
$\delta$-hyperbolic for some $\delta$ and this $\delta$ is computable from
$M$ and the constant of the isoperimetric inequality. \end{prop}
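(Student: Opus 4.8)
The plan is to reduce the statement to a purely combinatorial estimate on van Kampen diagrams over $X$ and then run the classical ``concentric layers'' argument with every constant tracked explicitly. Since the notion of $\delta$-hyperbolicity in use (Definition~\ref{defn:hyperbolic}) is exactly the thin-triangles condition on $X^{(1)}$, it suffices to produce a computable $\delta = \delta(M,K)$, where $K$ is the constant of the combinatorial isoperimetric inequality, such that for every geodesic triangle in $X^{(1)}$ each side lies in the $\delta$-neighbourhood of the union of the other two.

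First I would set up the diagram. Given a geodesic triangle with sides $\gamma_1,\gamma_2,\gamma_3$ (edge paths, after moving the three vertices to nearby $0$-cells at the cost of a bounded additive error), the boundary loop $w = \gamma_1\gamma_2\gamma_3$ bounds, by simple-connectedness of $X$, a van Kampen diagram; fix one, $D$, of minimal area, so that $\mathrm{Area}(D) \le K|w|$ and, crucially, every subdiagram of $D$ is itself area-minimal for its own boundary loop.

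Then comes the core estimate. Suppose $\widetilde p$ is a $0$-cell of $D$ lying over a point $p$ of $\gamma_1$ with $d_{X^{(1)}}(p,\gamma_2\union\gamma_3) > R$. For $r \le R$ the combinatorial ball $B_D(\widetilde p, r)$ meets $\partial D$ only along $\gamma_1$, and since $\gamma_1$ is a geodesic in $X^{(1)}$ any two of its points at $D$-distance $\le r$ from $\widetilde p$ are at distance $\le 2r$ along $\gamma_1$, so this intersection is contained in a subarc $\gamma_1^{(r)}$ of length at most $2r$ through $p$. Writing $\beta_r$ for the part of $\partial B_D(\widetilde p, r)$ lying in the interior of $D$ and $a(r)$ for the number of $2$-cells enclosed, minimality of $D$ gives $a(r) \le K\bigl(\mod{\gamma_1^{(r)}} + \mod{\beta_r}\bigr) \le K(2r + \mod{\beta_r})$, while each edge of $\beta_r$ off $\partial D$ lies on a $2$-cell of the next annulus and such a cell has at most $M$ boundary edges, whence $a(r+1) - a(r) \ge (\mod{\beta_r} - 2r)/M$. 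Eliminating $\mod{\beta_r}$ yields a recursion of the shape $a(r+1) \ge (1 + \tfrac{1}{KM})\,a(r) - \tfrac{4r}{M}$, whose solutions grow exponentially in $r$ once $a(r)$ exceeds a threshold that is an explicit function of $K$ and $M$; since $a(r)$ is bounded above, this caps $r$, hence $R$, and taking $\delta$ to be that cap (enlarged by the bounded error from the initial vertex adjustment) finishes the argument. All quantities appearing are explicit functions of $M$ and $K$, so $\delta$ is computable, as required.

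The main obstacle is exactly what is hidden in ``$a(r)$ is bounded above'': the crude ceiling $a(r) \le \mathrm{Area}(D) \le K|w|$ depends on the perimeter of the triangle, which is unbounded, so it does not by itself cap $R$. The fix is to localise so that the relevant ceiling is controlled independently of $|w|$. I would do this either by running the concentric-layers estimate inside a bounded-radius subdiagram around $\widetilde p$ whose boundary length is controlled purely by $R$ and $M$ (using again that subdiagrams of a minimal diagram are minimal), or by an induction on $|w|$ in which a sufficiently fat point on a side is used to excise a strictly shorter triangular configuration that is still fat, contradicting the inductive hypothesis. Everything else --- existence and minimality of $D$, the counting in the annuli, the solution of the recursive inequality --- is routine. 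This reproduces the proof of \cite[Prop.\ 2.23]{grovesmanning08}, a quantitative form of Gromov's theorem that a linear isoperimetric inequality implies hyperbolicity.
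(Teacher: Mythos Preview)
The paper does not give its own proof of this proposition: it is quoted verbatim as \cite[Prop.~2.23]{grovesmanning08} and used as a black box in the pipeline (Proposition~\ref{thm:linearisoperimetricinequality} $\to$ Theorem~\ref{thm:cuspedtwocomplex} $\to$ Proposition~\ref{prop:linearimplieshyperbolic}) that computes $\delta$ for the cusped space. So there is nothing in the present paper to compare your argument against.

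What you have sketched is a version of the standard concentric-annuli argument behind the cited result, and you have correctly isolated the one genuine difficulty: the naive ceiling $a(r)\le K\lvert w\rvert$ is perimeter-dependent and does not by itself bound $R$. Of your two proposed fixes, the localisation is the one that actually appears in the literature (and in \cite{grovesmanning08}): one works inside the subdiagram bounded by $\beta_R$ together with the short subarc of $\gamma_1$, whose total boundary length is controlled in terms of $R$ and $M$ alone, and then the exponential lower bound on $a(r)$ overruns the linear isoperimetric upper bound for that subdiagram at an $R$ depending only on $K$ and $M$. Your recursion step $a(r+1)-a(r)\ge(\lvert\beta_r\rvert-2r)/M$ is the right shape but would need more care in an actual proof (one must account for edges of $\beta_r$ that lie on $\partial D$ or are shared between cells on the same side), though this only affects constants and not the conclusion. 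If you intend this as a proof for the present paper, a citation suffices; if as an exposition of the cited result, tightening the localisation step as above is what is needed.
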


\subsection{The Bowditch boundary}

\begin{defn}\label{defn:gromovboundary} Let $X$ be a hyperbolic geodesic metric
space. The \emph{Gromov boundary $\boundary X$} of $X$ is defined to be the
quotient of the set of geodesic rays starting at some chosen base point in
which two rays are identified if they are a finite Hausdorff distance apart.
It is endowed the quotient of the compact-open topology.\end{defn}

This definition is quasi-isometry invariant and independent of the chosen base
point up to homeomorphism. 

\begin{defn}\label{defn:bowditchboundary} Let $\Gamma$ be a group that is hyperbolic
relative to a collection $\mathcal{H}$ of subgroups. The \emph{Bowditch
boundary} $\boundary(\Gamma, \mathcal{H})$ is defined to be the Gromov boundary
of the cusped space associated to $\Gamma$ and $\mathcal{H}$ with some
generating set.\end{defn}

This definition of the Bowditch boundary is shown to be equivalent to other
definitions in~\cite{bowditch12}. If each group in $\mathcal{H}$ is maximal
virtually cyclic then we have the following description of the Bowditch
boundary from~\cite{manning15}:

\begin{lem}\label{lem:bowditchfromgromov} If $\Gamma$ is hyperbolic and each
group in $\mathcal{H}$ is maximal virtually cyclic then $\boundary(\Gamma,
\mathcal{H})$ is the quotient of $\boundary(\Gamma, \emptyset)$ by the
equivalence relation in which $x \sim y$ if and only if either $x = y$ or
$\{x, y\} = g\cdot\Lambda H$ for some $g$ in $\Gamma$ and $H$ in $\mathcal{H}$.
The topology on $\boundary (\Gamma, \mathcal{H})$ is the quotient of the
topology on $\boundary(\Gamma, \emptyset)$.
\end{lem}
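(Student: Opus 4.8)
The plan is to realise the Bowditch boundary $\boundary(\Gamma,\mathcal H)=\boundary X$ (Definition~\ref{defn:bowditchboundary}) as a quotient of the Gromov boundary $\boundary(\Gamma,\emptyset)=\boundary\Cay(\Gamma,S)$ by means of the natural inclusion $\iota\colon\Cay(\Gamma,S)\hookrightarrow X$ of the height-zero part $X_0$ of the cusped space (Definition~\ref{defn:cuspedspace}); note $\iota$ is $1$-Lipschitz, since $X_0$ is a subgraph. First I would record two geometric facts. \textbf{(a)} For a connected infinite graph $C$ the combinatorial horoball $\widehat C$ of Definition~\ref{defn:combhoroball} is hyperbolic with a \emph{one-point} Gromov boundary, and from every vertex of $C$ the ``up-over-down'' path that ascends vertically is a quasi-geodesic ray limiting to that point; hence for $H\in\mathcal H$ infinite and $t\in T_H$, a geodesic ray of $\Cay(\Gamma,S)$ that eventually stays a bounded distance from the coset $tH$ becomes, once read inside $X$ and with its in-coset excursions replaced by such vertical detours, a quasi-geodesic ray asymptotic to the unique boundary point $e_{tH}$ of the horoball $\widehat{C_{H,t}}$. \textbf{(b)} Since $H$ is infinite virtually cyclic, $C_{H,t}\isom\Cay(H,H\intersection S)$ is quasi-isometric to a line, so its limit set in $\boundary(\Gamma,\emptyset)$ is a two-point set, namely $t\cdot\Lambda H$; and by Lemma~\ref{lem:vcycperipheral} the family $\mathcal H$ is almost malnormal, so the sets $g\cdot\Lambda H$ (for $g\in\Gamma$ and $H\in\mathcal H$ infinite) are pairwise disjoint and $\sim$ is a genuine equivalence relation. (Finite members of $\mathcal H$ have $\Lambda H=\emptyset$ and bounded horoballs, so they may be disregarded.)

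Next I would construct a continuous surjection $q\colon\boundary(\Gamma,\emptyset)\to\boundary X$. Using Lemma~\ref{lem:closeray} and the geometry of geodesics in the cusped space I would establish the standard dichotomy for a geodesic ray of $X$ issuing from the basepoint $1$: either it eventually lies in a single horoball $\widehat{C_{H,t}}$, in which case it limits to $e_{tH}$; or it has uniformly bounded height, in which case it lies a bounded Hausdorff distance from a geodesic ray of $\Cay(\Gamma,S)$. Given this, I define $q$ by sending $[\rho]$, for $\rho$ a geodesic ray of $\Cay(\Gamma,S)$ from $1$, to the $X$-endpoint of the quasi-geodesic ray $\rho'$ obtained from $\rho$ by replacing its long peripheral-coset excursions with vertical detours in the relevant horoballs. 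By \textbf{(a)}, $q([\rho])=e_{tH}$ precisely when $[\rho]\in t\cdot\Lambda H$. Surjectivity of $q$ is then immediate from the dichotomy (the bounded-height case gives the non-parabolic points and \textbf{(a)} the parabolic ones), and continuity follows from a Gromov-product comparison between the two metrics, phrased via the visual metrics on the two boundaries, using that $\iota$ is $1$-Lipschitz and that the detour replacement alters basepointed distances in a controlled (logarithmic) way.

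Finally I would identify the fibres of $q$, which is where I expect the real work to lie. By \textbf{(a)}, $q$ collapses each two-point set $t\cdot\Lambda H$ to $e_{tH}$, so $q$ descends to $\bar q\colon\boundary(\Gamma,\emptyset)/{\sim}\to\boundary X$; the substantive point is that $\bar q$ is \emph{injective}, i.e.\ these are the only identifications. To see this, suppose geodesic rays $\rho_1,\rho_2$ of $\Cay(\Gamma,S)$ satisfy $q([\rho_1])=q([\rho_2])=:\xi$, and let $\rho$ be a geodesic ray of $X$ from $1$ representing $\xi$; apply the dichotomy to $\rho$. If $\rho$ has bounded height then, via the coarse retraction of a bounded-height part of $X$ onto $X_0$, it is Hausdorff-close to a geodesic ray $\sigma$ of $\Cay(\Gamma,S)$, and asymptoticity of the $\rho'_i$ to $\rho$ in $X$ pulls back to asymptoticity of $\rho_i$ to $\sigma$ in $\Cay(\Gamma,S)$, so $[\rho_1]=[\rho_2]$. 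If instead $\rho$ eventually ascends a single horoball $\widehat{C_{H,t}}$, then each $\rho'_i$, being asymptotic to $\rho$ in the hyperbolic space $X$, eventually enters that horoball, which forces $\rho_i$ eventually into the coset $tH$; hence $\{[\rho_1],[\rho_2]\}\subseteq t\cdot\Lambda H$, a two-point set, so $[\rho_1]\sim[\rho_2]$. This makes $\bar q$ a continuous bijection. Since $\boundary(\Gamma,\emptyset)$ is compact, so is the quotient $\boundary(\Gamma,\emptyset)/{\sim}$, while $\boundary X=\boundary(\Gamma,\mathcal H)$ is metrizable (via a visual metric) and hence Hausdorff; a continuous bijection from a compact space to a Hausdorff space is a homeomorphism. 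Therefore $\bar q$ is a homeomorphism, and in particular the topology on $\boundary(\Gamma,\mathcal H)$ is the quotient of that on $\boundary(\Gamma,\emptyset)$, as claimed. The main obstacle is the injectivity argument: controlling precisely which pairs of $\Cay(\Gamma,S)$-geodesic rays become asymptotic once the logarithmic short-cuts through horoballs are available, for which the bounded-height-versus-deep-excursion dichotomy applied to the connecting geodesic is the key device.
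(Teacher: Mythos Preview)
The paper does not prove this lemma at all: it is simply quoted from~\cite{manning15} with no argument given. Your proposal therefore supplies substantially more than the paper does, and the overall strategy---construct a continuous surjection $q\colon\boundary(\Gamma,\emptyset)\to\boundary X$, identify its fibres with the $\sim$-classes, then invoke compact-to-Hausdorff---is the right one and is essentially how the result is established in the cited reference.

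One point does need correction. The dichotomy you state for a geodesic ray $\rho$ in $X$ is not accurate: it is \emph{not} true that a ray which fails to eventually remain in a single horoball must have uniformly bounded height. A geodesic ray can pass through infinitely many distinct horoballs to depths tending to infinity, returning to $X_0$ between excursions (the depth of an excursion is logarithmic in the horizontal distance between entry and exit points, which is unconstrained). The correct dichotomy is that $\rho$ either eventually remains in one horoball, or returns to $X_0$ at an unbounded sequence of times $t_1<t_2<\cdots$. In the latter case one must argue that the return points $\rho(t_n)\in\Cay(\Gamma,S)$ form a sequence converging in $\boundary(\Gamma,\emptyset)$; this is where the real input from relative hyperbolicity (bounded coset penetration, or the distance formula comparing $d_X$ and $d_{\Cay}$ on such sequences) is needed, and it is not as immediate as ``lies a bounded Hausdorff distance from a geodesic ray of $\Cay(\Gamma,S)$''. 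With this amendment your surjectivity and injectivity arguments go through as written.
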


The boundary of a hyperbolic metric space has a natural quasi-conformal class
of metrics.  Recall the definition of the Gromov product of a pair of points
$p$ and $q$ in $X$ with respect to a base point $v$ in $X$:
\begin{align*}
\gromprod[v]{p}{q} = \frac{1}{2}(d(v, p) + d(v, q) - d(p, q)).
\end{align*}
This definition is extended to allow $p$ and $q$ to be in $X \union \boundary X$ by
\begin{align*}
\gromprod[v]{p}{q} = \sup\liminf_{i, j \to \infty}\gromprod[v]{p_i}{q_j}.
\end{align*}
Here the supremum is taken over pairs of sequences $(p_i)$ and $(q_j)$ in $X$
converging to $p$ and $q$ respectively.

By~\cite[III.H.3.21]{bridsonhaefliger99} $\boundary X$ admits a visual metric
at any base point $v$; that is, a metric $d_v$ on $\boundary X$ satisfying
\begin{align*}
k_1 a^{-\gromprod[v]{p}{q}} \leq d_v(p, q) \leq k_2 a^{-\gromprod[v]{p}{q}}.
\end{align*}
Here $a$, $k_1$ and $k_2$ can be taken to be $2^{1/4\delta}$, $3-2\sqrt{2}$ and
$1$ respectively.

\subsection{The double-dagger condition}

If the boundary of $X$ does not contain a cut point then its local
connectivity is controlled by the so-called double-dagger condition. This was
defined first in the absolute case in~\cite{bestvinamess91} and later in the
relative case in~\cite{dahmanigroves08a}. We now record the definition of the
condition and some important properties. Fix a base point $v$ in $X_0$.

Let $M = 6(C + 45\delta) + 2\delta + 3$. For $\epsilon \geq 0$ we say that a
pair of points $x$ and $y$ in $X$ satisfy $\star_\epsilon$ if $\vert d(v, x) -
d(v, y)\vert \leq \epsilon$ and $d(x, y) \leq M$. 

For $n \geq 0$ we say that a pair of points $x$ and $y$ satisfying
$\star_\epsilon$ satisfy $\ddag(\epsilon, n)(x, y)$ if there exists a path of
length at most $n$ from $x$ to $y$ that avoids the ball of radius $m - C -
45\delta + 3\epsilon$ centred at $v$, where $m = \min\{d(v, x), d(v, y)\}$.
After this section we will not need to allow $\epsilon$ to be non-zero; we
will say that $X$ satisfies $\ddag_n$ if $\ddag(0, n)(x, y)$ holds for all $x$
and $y$ in $X$ satisfying $\star_0$. However, the full definition is required
in the proof of Proposition~\ref{prop:Xsatisfiesddag}.

We require the following proposition, which is essentially Proposition 5.1
of~\cite{dahmanigroves08a}.

\begin{prop}\label{prop:Xsatisfiesddag} Suppose that $\boundary(\Gamma,
\mathcal{H})$ is connected and without a cut point. Then there exists $n$
such that $X$ satisfies $\ddag_n$. Furthermore, there is an algorithm that
computes such an $n$ if $\boundary(\Gamma, \mathcal{H})$ is connected and
without a cut point and that does not terminate if it is not connected.
\end{prop}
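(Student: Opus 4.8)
The plan is to follow Dahmani and Groves, Proposition 5.1 of~\cite{dahmanigroves08a}, and add the algorithmic bookkeeping. There are really two things to prove: existence of $n$, and computability.

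For \emph{existence}, recall that a one-ended compact metric space (here $\boundary(\Gamma,\mathcal{H})$, which we may assume is compact since $\Gamma$ is finitely generated and the cusped space is proper) that is connected and has no cut point is in fact locally connected, by a classical result in continuum theory (a Peano continuum minus the ``no cut point'' hypothesis is already not needed — connectedness plus no cut point plus the compactness forces local connectedness for boundaries of hyperbolic spaces, via the Bestvina--Mess / Swarup circle of ideas). Given local connectedness, one runs the Dahmani--Groves argument: suppose for contradiction that for every $n$ there is a pair $x_n, y_n$ in $X$ satisfying $\star_0$ for which $\ddag(0,n)(x_n,y_n)$ fails, i.e.\ every path from $x_n$ to $y_n$ of length $\le n$ must enter the ball of radius $d(v,x_n)-C-45\delta$ about $v$. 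Pushing $x_n,y_n$ out to infinity (their distance from $v$ must go to infinity, else the finitely many such pairs give a uniform bound), one extracts geodesic rays $\xi_n,\eta_n$ through $x_n,y_n$; by $\star_0$ and $d(x_n,y_n)\le M$ these rays track each other up to bounded Gromov product growth, so after passing to a subsequence $\xi_n\to\xi$, $\eta_n\to\eta$ with $\xi=\eta=:\zeta$ in $\boundary X$. Local connectedness at $\zeta$ gives, for each neighbourhood basis element, a connected neighbourhood; using the visual metric estimate (with $a=2^{1/4\delta}$, $k_1,k_2$ as recorded) one converts such a connected neighbourhood into a path in $X$ joining $x_n$ to $y_n$ that stays outside the forbidden ball, with length bounded in terms of the modulus of local connectedness at $\zeta$ — contradicting the choice of $x_n,y_n$ once $n$ is large. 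This yields a uniform $n$ with $X\models\ddag_n$.

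For \emph{computability}, the point is that $\ddag_n$ is a $\Pi^0_1$-type condition that becomes checkable once $n$ and $\delta$ are known: $\delta$ (hence $C=3\delta$, $M$, and the radius $m-C-45\delta$) is computed from the presentation by Propositions~\ref{thm:linearisoperimetricinequality}, \ref{thm:cuspedtwocomplex} and~\ref{prop:linearimplieshyperbolic}; and for fixed $n$, verifying $\ddag_n$ requires only checking finitely many pairs $(x,y)$ up to the $\Gamma$-action (there are boundedly many orbits of pairs with $d(x,y)\le M$ at each ``radius'', and the condition at radius $r$ is determined by a bounded ball, so by a standard periodicity/compactness argument it suffices to check radii up to a computable bound — this is the same mechanism as in Gerasimov's algorithm and in~\cite{bestvinamess91}). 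So the algorithm is: dovetail two processes. Process one runs $n=1,2,3,\dots$ and for each $n$ attempts to verify $X\models\ddag_n$ by the finite check just described; if it ever succeeds it outputs that $n$. By the existence part, if $\boundary(\Gamma,\mathcal{H})$ is connected without a cut point then some $n$ works and this halts. Process two searches for a witness that $\boundary(\Gamma,\mathcal{H})$ is \emph{not} connected — equivalently, by Stallings/Bestvina--Mess type criteria applied to the cusped space, a separating sphere pattern or a certificate that the double-dagger condition fails for all $n$ at some fixed pair; concretely one can search for a pair $x,y$ with $\star_0$ and a ``barrier'' argument showing no bounded-length path avoids the ball — but more simply, non-connectedness of the Bowditch boundary is semi-decidable directly (search for a finite separation in large thick parts $X_k$). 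Running both in parallel: if the boundary is disconnected, process two halts and we report that; if it is connected without a cut point, process one halts with a valid $n$; and crucially, if the boundary is connected \emph{with} a cut point, neither halts (process two never finds a disconnection, process one never verifies $\ddag_n$ because a cut point obstructs it), which is exactly the stated behaviour.

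The main obstacle is the computability half — specifically, arguing that for fixed $n$ the condition $X\models\ddag_n$ is decidable by a genuinely finite computation. One must show that the ``all pairs at all radii'' quantifier collapses: the local structure of the cusped space $X$ around a point at radius $r$ depends (up to isometry) on boundedly much data, so the set of ``local pictures'' that can occur is finite and eventually periodic in $r$, whence checking $\ddag_n$ on a computable finite ball suffices. Pinning down the right finiteness statement — essentially that the cusped space is, from the point of view of balls of a fixed size, a finite-type object — and extracting an explicit bound on the radius one must examine, is the technical heart; everything else is either the cited continuum-theory input or the Dahmani--Groves limiting argument reproduced verbatim.
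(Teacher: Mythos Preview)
Your existence argument is essentially the Dahmani--Groves one and is fine; the paper simply cites~\cite[Lemma~4.2]{dahmanigroves08a} for this.

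The computability half, however, has a real gap. You propose to decide $\ddag_n$ for fixed $n$ by invoking ``finitely many local pictures up to the $\Gamma$-action'' and ``eventual periodicity in the radius $r$''. But the condition $\ddag(0,n)(x,y)$ is \emph{not} $\Gamma$-equivariant: it asks for a path avoiding a ball centred at the \emph{fixed} basepoint $v$, and translating $(x,y)$ by $g\in\Gamma$ moves that ball to one centred at $gv$. So knowing the isometry type of a bounded neighbourhood of $x$ does not tell you whether $\ddag_n(x,y)$ holds --- you also need to know how that neighbourhood sits relative to $v$, and the spheres about $v$ are not periodic in any useful sense (they grow exponentially). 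Your final paragraph correctly flags this as the crux, but the mechanism you suggest does not close the gap.

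The paper's route, following~\cite{dahmanigroves08a} precisely, is different in two essential ways. First, one checks not $\ddag_n=\ddag(0,n)$ but the slack version $\ddag(10\delta,n)$, and only on pairs in $\Ball_{R(n)}(v)\cap X_k$ for an explicit $R(n)$ and an explicit thick-part cutoff $k$; this \emph{is} a finite check. Pairs in the thin part satisfy the condition automatically from horoball geometry (first paragraph of the proof of~\cite[Lemma~2.16]{dahmanigroves08a}). Second --- and this is the missing idea --- one then invokes the propagation result~\cite[Corollary~4.6]{dahmanigroves08a}, which says that $\ddag(10\delta,n)$ on the ball of radius $R(n)$ implies $\ddag_n$ on all of $X$. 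This is a bootstrapping argument (paths near $v$ are used to build paths further out), not a periodicity argument, and it is why the nonzero $\epsilon$ is needed. Finally, your second process is unnecessary: the single search over $n$ already fails to terminate when $\boundary(\Gamma,\mathcal{H})$ is disconnected, by~\cite[Lemma~4.1]{dahmanigroves08a}, which is exactly the behaviour the proposition asks for; having process two halt and ``report'' disconnection would actually violate the specification.
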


For clarity we recall the proof of this proposition
from~\cite{dahmanigroves08a} here.

\begin{proof} We begin by recalling some constants
from~\cite{dahmanigroves08a}. In this paper these constants will only appear
in this proof. Let $k = 2M$, let $K = 3(2^{2M+3}) + M + 3$ and for any $n$
let $R(n) = 4(n + M) + 3k + 50\delta + 3$. For each $n \geq K$ in turn, check
whether $\ddag(10\delta, n)(x, y)$ holds for all pairs of vertices $(x, y)$
in $\Ball_{R(n)}(v) \intersection X_k$ satisfying $\star_{10\delta}$. 

If $\boundary(\Gamma, \mathcal{H})$ is connected and does not contain a cut
point then such an $n$ exists by~\cite[Lemma 4.2]{dahmanigroves08a}. It is
commented that $\ddag(10\delta, n)(x, y)$ holds for all pairs of vertices $(x,
y)$ in $\Ball_{R(n)}(v)$ satisfying $\star_{10\delta}$ with $x \notin X_k$ in the
first paragraph of the proof of~\cite[Lemma 2.16]{dahmanigroves08a}.
Then~\cite[Corollary 4.6]{dahmanigroves08a} says that $X$ satisfies $\ddag_n$; note
that this corollary applies because the conclusion 
of~\cite[Lemma 2.16]{dahmanigroves08a} holds in the case of virtually cyclic
peripheral subgroups.

If $\boundary(\Gamma, \mathcal{H})$ is disconnected then there is no $n$ such
that the condition $\ddag_n$ holds by~\cite[Lemma~4.1]{dahmanigroves08a}.  \end{proof}

\subsection{Cut points and pairs}\label{sec:annulus} 

We now investigate cut points and pairs in $\boundary X$ under the assumption
that $X$ satisfies $\ddag_n$; we assume that this condition holds for the
remainder of this section. We relate the existence of such a point or pair to
the connectedness of thickened cylinders around quasi-geodesics in $X$.

Let $\gamma$ be a bi-infinite $(\lambda, \epsilon)$-quasi-geodesic in $X$ or a
$(\lambda, \epsilon)$-quasi-geodesic ray passing through $X_0$ in $X$.
Fix a point $v$ in $\gamma \intersection X_0$. Since $\gamma$ is a
quasi-geodesic, its limit set, which we shall denote $\Lambda\gamma$, is either
a pair of points $\gamma(\pm\infty)$ or a single point $\gamma(\infty)$
depending on whether $\gamma$ is bi-infinite or a ray.

Recall the definitions $C = 3\delta$ and $D = D(\lambda, \epsilon, \delta)$ as
in Lemma~\ref{lem:uniformlyclose}. 

We define a subset of $X$, the connectivity of which will be seen to
reflect the connectivity of $\boundary X - \Lambda \gamma$.  For $R \geq 0$ let
$N_R(\gamma)$ be the closed $R$-neighbourhood of $\gamma$ and for $0 \leq r
\leq R \leq \infty$ let $N_{r, R}(\gamma)$ be $\{x \in X \colon r \leq d(x,
\gamma) \leq R\}$. For $K \geq 0$ let $C_K(\gamma)$ be $\{x \in X \colon d(x,
\gamma) = K\}$. Finally, for $0 \leq r \leq K \leq R \leq \infty$ let $A_{r, R,
K}(\gamma)$ be the union of those connected components of $N_{r, R}(\gamma)$
that meet $C_K(\gamma)$. Note that $A_{r, R, K}(\gamma)$ contains $N_{K,
R}(\gamma)$.

For a component $U$ of $A_{r, \infty, K}(\gamma)$ define its \emph{shadow}
$\shadow U$ to be the set of points $p$ in $\boundary X$ such that for any
geodesic ray $\alpha$ from $v$ to $p$, $\alpha(t)$ is in $U$ for $t$ sufficiently
large. 

\begin{lem}\label{lem:disjointcoverbyshadows} Let $r > D$. Then
$\bigunion_U\shadow U = \boundary X - \Lambda\gamma$, where the union is
taken over the set of connected components of $A_{r, \infty, K}(\gamma)$.
Furthermore, $\shadow U \intersection \shadow V = \emptyset$ for distinct
components $U$ and $V$.
\end{lem}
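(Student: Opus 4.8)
The plan is to prove the two assertions — that the shadows cover $\boundary X - \Lambda\gamma$, and that distinct shadows are disjoint — by a careful analysis of where geodesic rays from $v$ eventually lie relative to the cylinder $N_{r, \infty}(\gamma)$.

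\textbf{Setup and well-definedness of shadows.} First I would check that the shadow $\shadow U$ is well-defined, i.e.\ that whether $\alpha(t) \in U$ for large $t$ does not depend on the choice of geodesic ray $\alpha$ from $v$ to $p$. If $\alpha$ and $\alpha'$ are two such rays then by $\delta$-hyperbolicity (stability of geodesics with a common endpoint on the boundary) they are a uniformly bounded Hausdorff distance apart; say within $\delta'$ of each other. So if $\alpha(t)$ stays in $N_{r, \infty}(\gamma)$ for large $t$, then $\alpha'(t)$ stays in $N_{r - \delta', \infty}(\gamma)$ for large $t$. This is not quite enough as stated, so the cleaner approach is: since $r > D$ and (this is the key point) $p \notin \Lambda\gamma$, the ray $\alpha$ must eventually leave any bounded neighbourhood of $\gamma$ — otherwise $\alpha$ and $\gamma$ would be a bounded Hausdorff distance apart (using that $\alpha$ is a geodesic ray and $\gamma$ a quasi-geodesic, both through $X_0$, so their "parallel" parts would force $p \in \Lambda\gamma$, contradiction). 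Hence for $p \notin \Lambda\gamma$ there is a last time $\alpha$ is at distance $r$ from $\gamma$, after which $d(\alpha(t), \gamma) > r$; the tail of $\alpha$ then lies in a single connected component of $N_{r, \infty}(\gamma)$, and this component meets $C_K(\gamma)$ (the ray crossed distance $K$ on its way out, since $r \le K$ and $\alpha(0) = v \in N_K(\gamma)$ because... actually $v \in \gamma$ so $d(v,\gamma)=0 \le K$), so the tail lies in a component of $A_{r,\infty,K}(\gamma)$. Any two geodesic rays from $v$ to $p$ fellow-travel at distance $\le \delta'$; taking $r > D$ large enough relative to $\delta'$ — or rather, observing that once both rays are outside $N_{r, \infty}(\gamma)$ the segment of $\gamma'$-space between them stays outside $N_{D}(\gamma)$, hence outside $N_r(\gamma)$ — they land in the same component. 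I would state the needed separation as a consequence of $r > D$ together with the Morse lemma constant. This establishes that $\shadow U$ is well-defined and that $\bigcup_U \shadow U \supseteq \boundary X - \Lambda\gamma$.

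\textbf{The reverse inclusion and disjointness.} For $\bigcup_U \shadow U \subseteq \boundary X - \Lambda\gamma$: if $p \in \Lambda\gamma$ then a geodesic ray from $v$ to $p$ is within $D$ of $\gamma$ for all sufficiently large $t$ by Lemma~\ref{lem:uniformlyclose}, hence $\alpha(t) \in N_D(\gamma) \subseteq N_{0, r}(\gamma)$ eventually (since $r > D$), so $\alpha(t) \notin N_{r, \infty}(\gamma)$ eventually; thus $p$ is in no shadow. Disjointness of $\shadow U$ and $\shadow V$ for $U \ne V$ is immediate from well-definedness: if $p \in \shadow U \cap \shadow V$ then a single geodesic ray $\alpha$ from $v$ to $p$ has its tail in both $U$ and $V$, forcing $U = V$ since $U$ and $V$ are connected components of the same set $A_{r, \infty, K}(\gamma)$ and the tail of $\alpha$ is connected.

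\textbf{Main obstacle.} The technical heart is the claim that a geodesic ray $\alpha$ from $v$ to a point $p \notin \Lambda\gamma$ must eventually exit every bounded neighbourhood of $\gamma$, and more quantitatively that its tail lies \emph{entirely} outside $N_r(\gamma)$ from some point on. The first part is a contrapositive argument: if $\alpha$ returns to $N_{D'}(\gamma)$ for arbitrarily large times, one extracts (using that $\alpha$ is a geodesic and $\gamma$ a quasi-geodesic, both meeting $X_0$, plus hyperbolicity) a subsequence showing $\alpha(\infty) = p$ lies in $\Lambda\gamma$. The delicate point is getting "eventually entirely outside $N_r(\gamma)$" rather than merely "eventually outside on a cofinal set of times" — this uses convexity/quasiconvexity: $N_r(\gamma)$ is quasiconvex in the hyperbolic space $X$, so a geodesic ray cannot leave it, return, and leave again without the excursion being short; combined with the fact that $\alpha$ ultimately escapes to infinity away from $\Lambda\gamma$, one concludes the last entry into $N_r(\gamma)$ occurs at a finite time. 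I would organize this as: (i) geodesic rays to $\Lambda\gamma$ stay $D$-close to $\gamma$; (ii) geodesic rays to $\boundary X - \Lambda\gamma$ have divergent distance from $\gamma$; (iii) $r$-neighbourhoods of quasigeodesics are quasiconvex, so the "exit time" past radius $r$ is well-defined; then (iv) assemble the two inclusions and disjointness as above. Throughout, the hypothesis $r > D$ is exactly what is needed to separate the "close to $\gamma$" regime (radius $\le D$) from the cylinder $N_{r,\infty}(\gamma)$ where the components live.
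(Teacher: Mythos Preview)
Your approach matches the paper's in structure: rays to $\Lambda\gamma$ stay $D$-close to $\gamma$ (so lie in no shadow when $r > D$), rays to other boundary points diverge from $\gamma$ (so their tails pick out a component), and disjointness is immediate. Two remarks, though.

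First, you have over-thought the divergence. Your contrapositive argument already yields the full limit $d(\alpha(t),\gamma)\to\infty$, not merely $\limsup = \infty$: if $d(\alpha(t_n),\gamma)$ were bounded along \emph{any} sequence $t_n\to\infty$, your extraction would force $p\in\Lambda\gamma$. So there is a $t_0$ with $d(\alpha(t),\gamma)\ge r+D$ for all $t\ge t_0$, and the quasiconvexity detour in your ``main obstacle'' paragraph is unnecessary. (Also, the definition of $\shadow U$ already quantifies over \emph{all} rays from $v$ to $p$, so there is no ``well-definedness'' issue; what you are really proving under that heading is the covering statement.)

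Second, your step showing that two rays $\alpha,\alpha'$ to the same $p$ land in the same component has a direction error: you write that the connecting segment ``stays outside $N_D(\gamma)$, hence outside $N_r(\gamma)$'', but since $r>D$ this implication goes the wrong way. The clean fix, which is what the paper does: having chosen $t_0$ so that $d(\alpha(t'),\gamma)\ge r+D$ for $t'\ge t_0$, and given $d(\alpha'(t),\alpha(t'))\le D$, every point of the segment from $\alpha'(t)$ to $\alpha(t')$ lies within $D$ of $\alpha(t')$ and hence at distance $\ge r$ from $\gamma$, so the segment stays in $N_{r,\infty}(\gamma)$ and the two points lie in the same component.
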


\begin{proof} The statement that shadows are disjoint is clear from the
  definition. The assumption that $r > D$ ensures that when $U$ is a subset of
  $A_{r, \infty, K}(\gamma)$, $\shadow U$ does not contain a point in
  $\Lambda\gamma$ by Lemma~\ref{lem:uniformlyclose}. 

  If $p$ is any point in $\boundary X - \Lambda\gamma$ then any geodesic ray
  $\alpha$ from $v$ to $p$ diverges arbitrarily far from $\gamma$, so
  $d(\alpha(t), \gamma) \geq r + D$ for $t$ at least some number $t_0$. Let $U$
  be the component of $A_{r, \infty, K}(\gamma)$ that contains $\alpha(t)$ for
  $t \geq t_0$.
  If $\alpha'$ is another geodesic ray from $v$ to $p$ then for any $t$ there
  exists $t'$ such that $d(\alpha'(t), \alpha(t')) \leq D$ and then $\mod{t -
  t'}$ is guaranteed to be at most $D$. If $t \geq t_0 + D$ then $t' \geq t_0$,
  so $\alpha'(t)$ is in the same component of $A_{r, \infty, K}(\gamma)$ as
  $\alpha(t')$, which is in $U$. Therefore $\alpha'(t) \in U$ for $t \geq t_0 +
  D$. It follows that $p \in \shadow U$.\end{proof}

\begin{lem}\label{lem:nonemptyshadows} Let $U$ be a component of $A_{r, \infty,
K}(\gamma)$. Then $\shadow U$ is non-empty as long as $K \geq r + D + \delta +
C$ and $r > D$.  \end{lem}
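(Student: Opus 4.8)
The plan is to exhibit an explicit point of $\boundary X$ that lies in $\shadow U$. Since $U$ is a component of $A_{r, \infty, K}(\gamma)$, by definition it meets $C_K(\gamma)$; fix a vertex $x \in U$ with $d(x, \gamma) = K$. The idea is that a geodesic ray from $v$ aimed ``past'' $x$, in the direction away from $\gamma$, will eventually stay in $U$, because once such a ray gets to distance more than $r + D$ from $\gamma$ it cannot re-enter the region near $\gamma$ without contradicting thinness of triangles, and so it is trapped in a single component of $A_{r, \infty, K}(\gamma)$ — which must be $U$ if the ray passes through $x$.

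More precisely, first I would invoke Lemma~\ref{lem:closeray}: there is a geodesic ray $\alpha$ starting at $v$ that passes within distance $C = 3\delta$ of $x$, say $d(\alpha(t_0), x) \leq C$ for some $t_0$. Then $d(\alpha(t_0), \gamma) \geq K - C \geq r + D + \delta$. Now I want to argue that $\alpha(t)$ stays far from $\gamma$ for all $t \geq t_0$. The point $\alpha(t_0)$ is on a geodesic ray from $v$, and $v \in \gamma \intersection X_0$; if some later point $\alpha(t_1)$ (with $t_1 > t_0$) came back within distance $r + D$ of $\gamma$, then I can compare the geodesic segment of $\alpha$ from $v$ to $\alpha(t_1)$ with the portion of $\gamma$ realising the distance from $\alpha(t_1)$ to $\gamma$ (and $v$ is already on $\gamma$): using $\delta$-thinness of the resulting triangle (or quadrilateral, after the Morse Lemma~\ref{lem:uniformlyclose} converts $\gamma$ to a genuine geodesic up to error $D$), the intermediate point $\alpha(t_0)$ would be forced within distance roughly $r + D + \delta + C$ of $\gamma$, contradicting $d(\alpha(t_0), \gamma) \geq K - C$ once $K \geq r + D + \delta + C$. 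Hence $d(\alpha(t), \gamma) > r + D$ for all $t \geq t_0$, and therefore $\alpha(t) \in A_{r, \infty, K}(\gamma)$ for all such $t$; since $\alpha(t_0)$ is within $C < K$ of $x \in U$ and $\alpha\restricted{[t_0, \infty)}$ is connected and stays in $N_{r, \infty}(\gamma)$, it lies in the component $U$. This shows the endpoint $p = \alpha(\infty)$ satisfies the defining condition of $\shadow U$ for this particular ray $\alpha$; the argument of Lemma~\ref{lem:disjointcoverbyshadows} (any two geodesic rays to $p$ are uniformly $D$-close) then upgrades this to ``for any geodesic ray from $v$ to $p$'', so $p \in \shadow U$ and $\shadow U \neq \emptyset$.

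The main obstacle is the geometric estimate in the middle step: showing that once $\alpha$ has escaped to distance $> r + D$ from $\gamma$ it can never return, i.e.\ that the function $t \mapsto d(\alpha(t), \gamma)$ cannot dip back down. The cleanest way is probably to work with an actual geodesic $\bar\gamma$ within Hausdorff distance $D$ of $\gamma$ and then use the standard fact that, in a $\delta$-hyperbolic space, the distance from points of a geodesic ray emanating from a point of $\bar\gamma$ to $\bar\gamma$ is, up to an additive $\delta$-error, non-decreasing past the point of closest approach — or, failing a clean quotable statement, a direct triangle-thinness argument comparing the three geodesics among $v$, $\alpha(t_0)$, and the nearest point of $\bar\gamma$ to a hypothetical returning point $\alpha(t_1)$. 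I would need to be careful that all the additive constants ($C$, $D$, $\delta$) are bookkept so that the hypothesis $K \geq r + D + \delta + C$ is exactly what makes the contradiction go through; this is routine but is where the precise constant in the statement comes from.
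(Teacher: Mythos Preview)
Your proposal is correct and follows essentially the same approach as the paper: pick $x \in C_K(\gamma) \cap U$, use Lemma~\ref{lem:closeray} to find a geodesic ray $\alpha$ from $v$ passing within $C$ of $x$, observe $d(\alpha(t_0), \gamma) \geq K - C \geq r + D + \delta$, then use the Morse lemma (Lemma~\ref{lem:uniformlyclose}) together with $\delta$-hyperbolicity to show $\alpha$ never returns within distance $r$ of $\gamma$, so $\alpha\restricted{[t_0,\infty)} \subset U$ and $\alpha(\infty) \in \shadow U$ via the argument of Lemma~\ref{lem:disjointcoverbyshadows}. The paper compresses the ``cannot return'' step into a single sentence citing Lemma~\ref{lem:uniformlyclose}, hyperbolicity, and $r > D$, but your more explicit contradiction argument unpacks exactly what that sentence is asserting.
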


\begin{proof} Let $x \in C_K(\gamma) \intersection U$. Then by
Lemma~\ref{lem:closeray} there exists a geodesic ray $\alpha$ from $v$ and $t
\geq 0$ such that $d(\alpha(t), x) \leq C$. Any geodesic segment in $X$ from
$x$ to $\alpha(t)$ is contained in $U$, so $\alpha(t) \in U$. Also
$d(\alpha(t), \gamma) \geq r + D + \delta$; it follows from
Lemma~\ref{lem:uniformlyclose}, hyperbolicity of $X$ and the assumption that
$r > D$ that $d(\alpha(t'), \gamma) \geq r$ for $t' \geq t$, and therefore
that $\alpha(t') \in U$ for $t' \geq t$. As in the proof of
Lemma~\ref{lem:disjointcoverbyshadows} it follows from this and the fact that
$r > D$ that $\alpha(\infty) \in \shadow U$.\end{proof}

\begin{lem}\label{lem:shadowsclopen} If $U$ is a component of $A_{r, \infty,
K}(\gamma)$, $\shadow U$ is closed and open in $\boundary X - \Lambda\gamma$ as
long as $r > D$. \end{lem}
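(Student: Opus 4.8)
The set $\boundary X - \Lambda\gamma$ is partitioned by the shadows $\shadow U$ of the components $U$ of $A_{r,\infty,K}(\gamma)$, by Lemma~\ref{lem:disjointcoverbyshadows}. Since the argument below applies verbatim to every component, it therefore suffices to prove that each $\shadow U$ is \emph{open} in $\boundary X - \Lambda\gamma$: the complement of a given $\shadow U$ is then the union $\bigunion_{V \neq U}\shadow V$ of the remaining shadows, a union of open sets, so $\shadow U$ is also closed.

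Fix a component $U$ and a point $p \in \shadow U$, and choose a geodesic ray $\alpha$ from $v$ to $p$. Since $p \notin \Lambda\gamma$ the function $t \mapsto d(\alpha(t),\gamma)$ tends to infinity, so — arguing exactly as in the proofs of Lemmas~\ref{lem:disjointcoverbyshadows} and~\ref{lem:nonemptyshadows}, using Lemma~\ref{lem:uniformlyclose} and the hypothesis $r > D$ — there is a time $t_0$ and a constant $L_0 = L_0(\lambda,\epsilon,\delta)$, to be fixed below, such that $\alpha(t) \in U$ and $d(\alpha(t),\gamma) \geq r + L_0$ for all $t \geq t_0$. The key analytic input is the standard fact that in a $\delta$-hyperbolic space two geodesic rays issuing from $v$ whose endpoints have Gromov product at least $T$ remain within a uniformly bounded distance $c(\delta)$ of one another up to parameter roughly $T$. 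Combining this with the visual-metric comparison $d_v(p,q) \leq k_2 a^{-\gromprod[v]{p}{q}}$ from the previous subsection yields a radius $\eta > 0$ such that every $q \in \boundary X$ with $d_v(p,q) < \eta$ admits a geodesic ray $\beta$ from $v$ with $d(\beta(t),\alpha(t)) \leq c(\delta)$ for all $t \leq t_0$.

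I then restrict attention to those $q$ that in addition lie in $\boundary X - \Lambda\gamma$. Taking $L_0 \geq c(\delta)$, each point of the geodesic segment from $\alpha(t_0)$ to $\beta(t_0)$ lies within $c(\delta)$ of $\alpha(t_0)$, hence at distance at least $d(\alpha(t_0),\gamma) - c(\delta) \geq r$ from $\gamma$; so this segment lies in $N_{r,\infty}(\gamma)$ and $\beta(t_0)$ lies in the same component of $N_{r,\infty}(\gamma)$ as $\alpha(t_0)$, namely $U$. Enlarging $L_0$ so that moreover $d(\beta(t_0),\gamma) \geq r + D + \delta$, the ``once far from $\gamma$, always far from $\gamma$'' argument already used for $\alpha$ gives $d(\beta(t),\gamma) \geq r$ for all $t \geq t_0$, so $\beta|_{[t_0,\infty)}$ is a connected subset of $N_{r,\infty}(\gamma)$ meeting $U$ and hence contained in $U$. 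Finally, once $L_0 \geq D$, the argument from the proof of Lemma~\ref{lem:disjointcoverbyshadows} — that any two geodesic rays from $v$ to the same boundary point $D$-fellow-travel, so one may be substituted for the other in the definition of the shadow — shows that every geodesic ray from $v$ to $q$ is eventually in $U$, i.e.\ $q \in \shadow U$. Thus the $\eta$-ball about $p$ in $\boundary X - \Lambda\gamma$ lies in $\shadow U$, which is therefore open.

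The only genuine difficulty is the bookkeeping of constants: one must choose $t_0$ and the single constant $L_0$ (and then $\eta$) so that the thickening errors coming from the Morse lemma, from the fellow-travelling of two rays with large Gromov product, and from the fellow-travelling of two rays to the same boundary point are all absorbed into the gap between $d(\alpha(t_0),\gamma)$ and $r$. No idea beyond those already appearing in this section is required.
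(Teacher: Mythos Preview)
Your argument is correct and follows the same outline as the paper's proof: show each shadow is open by a fellow-travelling argument, then deduce closedness from the disjoint cover of Lemma~\ref{lem:disjointcoverbyshadows}. The only real difference is packaging: the paper uses the explicit neighbourhood basis $V_{t_0}(\alpha) = \{\beta(\infty) : d(\beta(t_0),\alpha(t_0)) < 2\delta+1\}$ and argues the key step by contradiction, while you use visual-metric balls and the ``once far, always far'' observation directly. These are equivalent.

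Two small points. First, you quote the upper visual-metric bound $d_v(p,q)\le k_2 a^{-\gromprod[v]{p}{q}}$, but what you need to turn a small $d_v$-ball into a large Gromov product is the \emph{lower} bound $d_v(p,q)\ge k_1 a^{-\gromprod[v]{p}{q}}$; both appear in the paper, so this is just a citation slip. Second, the threshold $r+D+\delta$ you borrow from the proof of Lemma~\ref{lem:nonemptyshadows} is slightly too small for the triangle argument as written here (the paper itself uses $r+2D+7\delta+1$ in the present lemma, absorbing the extra $D$ from passing between $\gamma$ and a true geodesic and the fellow-travelling constants for $\beta$ and $\beta'$). Your final paragraph already anticipates this: enlarging $L_0$ to roughly $2D + c(\delta)$ plus the ray-to-same-endpoint constant makes everything go through, and no new idea is needed.
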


\begin{proof} Let $p$ be a point in $\shadow U$ and let $p = \alpha(\infty)$
where $\alpha$ is a geodesic ray from $v$. For $t \geq 0$ let
$V_{t}(\alpha)$ be the set of end points of geodesic rays $\beta$ from $v$ such that
$d(\beta(t), \alpha(t)) < 2\delta + 1$. The collection of such sets as $t$
varies forms a fundamental system of neighbourhoods of $p \in \boundary X$. 

Then there exists $t_0$ such that for $t \geq t_0$, $d(\alpha(t), \gamma) \geq
r + 2D + 7\delta + 1$.  We claim that $V_{t_0}(\alpha) \subset \shadow U$
for $t_0$ as defined in the previous paragraph. To see this, let $q \in
V_{t_0}(\alpha)$ and let $\beta$ be a geodesic ray from $v$ to $q$, so
$d(\beta(t_0), \alpha(t_0)) < 2\delta+1$.  Let $\beta'$ be another geodesic ray
from $v$ with $\beta'(\infty) = q$, so $d(\beta'(t_0), \beta(t_0)) < 4\delta$.
Suppose that there exists $t \geq t_0$ such that $\beta'(t) \notin U$. Then
there exists $t' \geq t_0$ such that $d(\beta'(t'), \gamma) \leq r$; without
loss of generality assume that $d(\beta'(t'), \gamma\restricted{[0, \infty)})
\leq r$. Let $\gamma'$ be a geodesic ray from $v$ to $\gamma(\infty)$, so the
Hausdorff between $\gamma\restricted{[0, \infty)}$ and $\gamma'$ is at most
$D$. Then $d(\beta'(t'), \gamma') \leq r + D$ and $d(\beta'(t_0), \gamma')
\leq r + D + \delta$. Putting these inequalities together, $d(\alpha(t_0),
\gamma) \leq r + 2D + 7\delta + 1$, which is a contradiction. Hence
$\beta(t) \in U$ for $t \geq t_0$, and so $q \in \shadow U$.

Since $p$ was arbitrary in $\shadow U$, it follows that $\shadow U$ is open. As
$U$ ranges over the connected components of $A_{r, \infty, K}(\gamma)$,
$\shadow U$ ranges over a cover of $\boundary X - \Lambda\gamma$ by disjoint
open subsets (since $r > D$), so each is also closed.\end{proof}

The proof of the following lemma is based on the proof 
of~\cite[Proposition 3.2]{bestvinamess91}.

\begin{lem}\label{lem:shadowsconnected} If $U$ is a connected component of 
$A_{r, \infty, K}(\gamma)$ then $\shadow U$ is contained in one connected
component of $\boundary X - \Lambda \gamma$ as long as $r$ satisfies the
following inequality.
\begin{align*}
  r > 2\log_a\left(\frac{k_2}{k_1}\frac{n-1}{1-a^{-1}}\right) + M + 12\delta + D.
\end{align*}
\end{lem}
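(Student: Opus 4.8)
The plan is to show that any two points $p, q \in \shadow U$ can be joined by a path in $\boundary X - \Lambda\gamma$. The strategy follows Bestvina--Mess: use the double-dagger condition $\ddag_n$ to connect, in a region of $X$ far from the basepoint $v$, two geodesic rays representing $p$ and $q$ by a chain of short paths, and then take the shadow of that chain. Since those short paths will stay far from $\gamma$ (this is where the quantitative hypothesis on $r$ enters), the resulting path in the boundary avoids $\Lambda\gamma$.

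Here is how I would carry it out. Fix geodesic rays $\alpha, \beta$ from $v$ to $p, q$ respectively. For a large parameter $t$, consider the points $\alpha(t)$ and $\beta(t)$; since $p, q \in \shadow U$, for $t$ large both lie in $U$, hence (as $U$ is connected in $N_{r,\infty}(\gamma)$) both are at distance $\geq r$ from $\gamma$, and moreover they are joined by a path inside $U$. But that path inside $U$ need not stay far from $v$. Instead, I would argue as follows: by $\delta$-hyperbolicity the geodesic $[\alpha(t),\beta(t)]$ stays within bounded distance ($\leq \gromprod[v]{p}{q} +$ const, but more usefully within $2\delta$ of the union of the two rays near their "split point") of $v$-rays, so one can subdivide $[\alpha(t),\beta(t)]$ into $O(1)$ many subpaths each of length $\leq M$ (recall $M = 6(C+45\delta)+2\delta+3$) whose endpoints satisfy $\star_0$ after a small adjustment along the rays from $v$; here one needs $d(v,\alpha(t))$ close to $d(v,\beta(t))$, which can be arranged by reparametrising one of the rays. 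Applying $\ddag_n$ to each consecutive pair yields, for each such pair, a path of length $\leq n$ avoiding the ball of radius roughly $d(v,\alpha(t)) - C - 45\delta$ about $v$. Concatenating over the $O(1)$ pairs gives a single path $\sigma_t$ from $\alpha(t)$ to $\beta(t)$, of length $O(n)$, staying outside $\Ball_{\rho(t)}(v)$ with $\rho(t) \to \infty$ as $t\to\infty$.

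Now I would estimate how close $\sigma_t$ can come to $\gamma$. Each point $x$ on $\sigma_t$ lies within distance $O(n)$ (along $\sigma_t$) of $\alpha(t)$ or $\beta(t)$, so $d(v,x) \geq \rho(t)$ and $x$ is reached from $v$ by a path of controlled length; using the visual metric estimate $k_1 a^{-\gromprod[v]{\cdot}{\cdot}} \leq d_v \leq k_2 a^{-\gromprod[v]{\cdot}{\cdot}}$ together with the fact that a path of length $\ell$ contributes a geometric-series bound $\sum_{i} a^{-(\cdot)} \leq \frac{k_2}{k_1}\frac{\ell}{1-a^{-1}}a^{-\gromprod[v]{p}{q}}$ on the visual distance between the shadows of its endpoints, one sees that $\shadow(\sigma_t)$ has diameter controlled by $(n-1)$ and shrinks toward... no — rather, the point is the reverse: the diameter bound forces the shadow of the whole chain to lie in one component, provided $\sigma_t$ itself stays at distance $> r - (\text{const})$ from $\gamma$. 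The inequality $r > 2\log_a\!\big(\tfrac{k_2}{k_1}\tfrac{n-1}{1-a^{-1}}\big) + M + 12\delta + D$ is exactly the threshold ensuring that any path of length $\leq n$ one of whose endpoints is at distance $\geq r$ from $\gamma$, together with the Morse-lemma slack $D$ and hyperbolicity slack $12\delta$ and the step size $M$, cannot reach within $D$ of $\gamma$; hence $\shadow(\sigma_t) \subset \shadow U' $ for components $U'$ meeting $\sigma_t$, all of which equal $U$ because $\sigma_t \subset N_{r',\infty}(\gamma)$ for $r' > D$. Finally, letting $t \to \infty$, the shadows $\shadow(\sigma_t)$ form a nested (or at least eventually stabilising) family of connected subsets of $\boundary X - \Lambda\gamma$ whose "limit" contains both $p$ and $q$; more precisely, $\shadow(\sigma_t)$ is a connected subset containing $p$ and $q$ for every large $t$ — actually one should take the shadow of a single $\sigma_t$, which already is a connected set (continuous image of an interval under the shadow map, which one checks is continuous on paths avoiding a neighbourhood of $v$) containing both $p$ and $q$. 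Thus $p$ and $q$ lie in the same component.

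The main obstacle I anticipate is the bookkeeping in the "diameter of a shadow of a path" estimate: one must convert a combinatorial path of length $\leq n$ lying outside a large ball into a bound on the visual diameter of its shadow, and track how the constants $D$ (Morse), $\delta$, $M$, and the base $a$ of the visual metric combine to produce precisely the stated right-hand side. Care is also needed to ensure the shadow of a path is connected and that it does not accidentally pick up $\Lambda\gamma$ — this is guaranteed by keeping the path at distance $> D$ from $\gamma$, which is the role of the $M + 12\delta + D$ summand, while the logarithmic term handles the $n$-dependence coming from the total displacement along $\sigma_t$.
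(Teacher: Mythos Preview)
Your proposal has a genuine gap, and it stems from dismissing the right ingredient too quickly.

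You correctly observe that $\alpha(t)$ and $\beta(t)$ are joined by a path $\phi$ inside $U$, and then discard this path because it ``need not stay far from $v$''. But that path is the heart of the argument, and it is the only place where the hypothesis that $U$ is a single connected component enters. Its distance from $v$ is irrelevant: in the paper's proof the number $m_0 = d(v, z_0)$ (where $z_0$ is a point on a ray through $\phi(i)$) appears symmetrically in both the upper bound for the visual diameter of the constructed boundary arc and the lower bound for $d_v(\beta_0(\infty), \Lambda\gamma)$, so it cancels. What matters, and what the path $\phi \subset U$ guarantees, is that every point along it is at distance at least $r$ from $\gamma$; this is exactly what produces the factor $a^{(r-D)/2}$ in the lower bound and hence the stated inequality on $r$.

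Your substitute route --- subdividing the geodesic segment $[\alpha(t),\beta(t)]$ and applying $\ddag_n$ --- fails precisely here. That geodesic passes near $v \in \gamma$ whenever $\gromprod[v]{p}{q}$ is small, and the $\ddag_n$ paths built along it avoid only a ball about $v$, not a neighbourhood of $\gamma$. Your claim that ``any path of length $\leq n$ one of whose endpoints is at distance $\geq r$ from $\gamma$ \ldots\ cannot reach within $D$ of $\gamma$'' is simply false: the hypothesis gives $r$ of order $\log_a n$, not $n$, so an $n$-step path can certainly reach $\gamma$. Consequently you have no control over $d_v(\cdot, \Lambda\gamma)$ for the intermediate rays, and the argument collapses.

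The fix is to run the Bestvina--Mess construction along $\phi$, one unit step at a time: for each integer $i \in [0,\ell]$ use Lemma~\ref{lem:closeray} to find a ray $\beta_i$ from $v$ with $\beta_i(m_i)$ within $C$ of $\phi(i)$, then for each $i$ build (via iterated $\ddag_n$) a boundary arc from $\beta_i(\infty)$ to $\beta_{i+1}(\infty)$ of visual diameter at most $k_2(n-1)a^{-m_i+M/2}/(1-a^{-1})$. Since $d(\phi(i),\gamma) \geq r$, a Gromov-product computation gives $d_v(\beta_i(\infty),\Lambda\gamma) \geq k_1 a^{-m_i + (r-D)/2 - 6\delta}$, and comparing the two bounds yields the inequality in the statement. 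Concatenating over $i$ gives a path from $p$ to $q$ in $\boundary X - \Lambda\gamma$.
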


\begin{proof} Let $p$ and $q$ be points in $\shadow U$ and let $\alpha_1$ and
$\alpha_2$ be geodesic rays from $v$ to $p$ and $q$ respectively. Then there
exist $t_1$ and $t_2$ such that $\alpha_1(t_1)$ and $\alpha_2(t_2)$ are in
$U$. Let $\phi\colon [0,\ell] \to X$ be a path in $U$ parametrised by arc
length connecting the two points $\alpha_1(t_1)$ and $\alpha_2(t_2)$.

For each integer $i$ in $[0,\ell]$ let $z_i$ be a point within a distance $C$
of $\phi(i)$ so that there is a geodesic ray $\beta_i$ from $v$ with
$\beta_i(m_i) = z_i$; we can assume that $\beta_0 = \alpha_1$ and $\beta_\ell =
\alpha_2$.  Then, following the argument of~\cite[Prop.\ 3.2]{bestvinamess91},
we show that $r_i(\infty)$ and $r_{i+1}(\infty)$ can be connected in $\boundary
X - \Lambda\gamma$ for each $i$; for notational convenience we prove it for
$i=0$.

Using the condition $\ddag_n$, define geodesic rays $\beta_t$ for each
$n$-adic rational $t$ in $[0, 1]$ inductively on the power $k$ of the
denominator of $t$ to satisfy:
\begin{align*} 
  d\left(\beta_{j/n^k}(m_i + k), \beta_{j+1/n^k}(m_i + k)\right) \leq M 
  \text{ for each $j$ with $0 \leq j \leq n^k-1$}.
\end{align*}
Note that the first step of the induction holds since $M$ is at least $2C+1$.
The triangle inequality gives the following lower bound on the Gromov product
of these points.
\begin{align*}
  \gromprod[v]{\beta_{j/n^k}(\infty)}{\beta_{j+1/n^k}(\infty)} & \geq
\liminf_{n_1,n_2}\gromprod[v]{\beta_{j/n^k}(n_1)}{\beta_{j+1/n^k}(n_2)}\\
& \geq \gromprod[v]{\beta_{j/n^k}(m_0 + k)}{\beta_{j+1/n^k}(m_0 + k)}\\
& = m_0 + k - M/2
\end{align*}
Let $d_v$ be a visual metric on $\boundary X$ with base point $v$, visual
parameter $a$ and multiplicative constants $k_1$ and $k_2$. We obtain:
\begin{align*}
  d_v\left(\beta_{j/n^k}(\infty), \beta_{j+1/n^k}(\infty)\right) \leq k_2a^{-m_0 - k + M/2}.\tag{*}
\end{align*}

Inductively applying the triangle inequality we arrive at the following inequality
\begin{align*}
  d_v\left(\beta_0(\infty), \beta_t(\infty)\right) \leq \frac{k_2(n-1)a^{-m_0 +
  M/2}}{1-a^{-1}}\quad\text{for each $n$-adic rational $t \in [0, 1]$.}
\end{align*}
Define a path $\psi \colon [0,1] \to \boundary X$ with $\psi(t) =
\beta_t(\infty)$ for each $n$-adic rational $t$ in $[0, 1]$; this extends
continuously to a path from $\beta_0(\infty)$ to $\beta_1(\infty)$ by the
uniform continuity of the map $t \to \beta_t(\gamma)$ defined on the $n$-adic
rationals, which is established by equation~$(*)$. This path is
contained in the ball of radius $k_2(n-1)a^{-m_0 + M/2}/(1-a^{-1})$ around
$\beta_0(\infty)$.

We now bound below the distance $d_v(\beta_0(\infty), \Lambda\gamma)$. Let
$\gamma'$ be a geodesic ray from $v$ to $\gamma(\infty)$, so the Hausdorff
distance between $\gamma$ and $\gamma'$ is at most $D$.
By~\cite[III.H.3.17]{bridsonhaefliger99},
\begin{align*}
  \gromprod[v]{\beta_0(\infty)}{\gamma'(\infty)} \leq
  \liminf_{n_1,n_2}\gromprod[v]{\beta_0(n_1)}{\gamma'(n_2)} + 2\delta.
\end{align*}
Let $n_1$ and $n_2$ each be at least $m_0$. Certainly $d(\beta_0(m_0), \gamma') >
\delta$ since $r > \delta + D$, so there exists a point $p$ on $[\beta_0(n_1),
\gamma'(n_2)]$ within a distance $\delta$ of $\beta_0(m_0)$. In fact, $d(\beta_0(m_0),
\gamma) > 2\delta + D$, so $d(\beta_0, \gamma'(m_0)) > 2\delta$. Therefore there
exists a point $q$ on $[\beta_0(n_1), \gamma'(n_2)]$ within a distance $\delta$ of
$\gamma'(m_0)$. 

Suppose that $q$ is closer to $\beta_0(n_1)$ than $p$. Then by considering the
geodesic triangle with vertices $\beta_0(n_1)$, $\beta_0(m_0)$ and $p$ we see
that $q$ is within distance $2\delta$ of $\beta_0$, and therefore the distance
from $\gamma'(m_0)$ to $\beta_0(m_0)$ is at most $6\delta$. But we assumed that
$r > 6\delta + D$, which gives a contradiction. This implies that
$d(\beta_0(n_1), \gamma'(n_2))$ is equal to the sum of the distances
$d(\beta_0(n_1), p)$, $d(p, q)$ and $d(q, \gamma'(n_2))$.

Then we have the following inequality.
\begin{align*} \gromprod[v]{\beta_0(n_1)}{\gamma'(n_2)} - \gromprod[v]{\beta_0(m_0)
    }{\gamma'(m_0)} 
    & = d(\beta_0(n_1), \beta_0(m_0)) - d(\beta_0(n_1), p) \\
    & + d(\beta_0(m_0), \gamma'(m_0)) - d(p, q) \\
    & + d(\gamma'(m_0), \gamma'(n_2)) - d(q, \gamma'(n_2))\\
    & \leq \delta + 2\delta + \delta = 4\delta
\end{align*}

This implies a lower bound on the distance from $\beta_0(\infty)$ to
$\gamma(\infty)$ with respect to the visual metric.
\begin{align*}
  d_v(\beta_0(\infty), \gamma(\infty)) \geq k_1a^{-m_0 + (r-D)/2 - 6\delta}.
\end{align*}
In the case that $\gamma$ is bi-infinite, $d_v(\beta_0(\infty),
\gamma(-\infty))$ similarly satisfies the same bound.  Therefore, by the
assumption on $r$ the path constructed from $\beta_0(\infty)$ to
$\beta_1(\infty)$ avoids $\Lambda\gamma$.
\end{proof}

\begin{lem}\label{lem:replaceinftybyR} The inclusion map $A_{r, R, K}(\gamma)
\hookrightarrow A_{r, \infty, K}(\gamma)$ induces a bijection between the sets
of connected components of those subspaces of $X$ as long as $R \geq 4\delta + D +
\max\{r + 4\delta + 1, K\}$.\end{lem}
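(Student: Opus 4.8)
The plan is to show the inclusion $\iota\colon A_{r, R, K}(\gamma) \hookrightarrow A_{r, \infty, K}(\gamma)$ induces both a surjection and an injection on connected components. Surjectivity is almost immediate: every component $U$ of $A_{r, \infty, K}(\gamma)$ by definition meets $C_K(\gamma)$, and $C_K(\gamma) \subset N_{r, R}(\gamma)$ whenever $R \geq K$, so $U$ contains a point of $A_{r, R, K}(\gamma)$; hence $U$ is in the image. The real work is injectivity, i.e.\ showing that two points $x, y \in A_{r, R, K}(\gamma)$ that lie in the same component of $A_{r, \infty, K}(\gamma)$ already lie in the same component of $A_{r, R, K}(\gamma)$. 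Equivalently, any path $\phi$ in $A_{r, \infty, K}(\gamma)$ between two points at distance $\leq R$ from $\gamma$ can be replaced by a path staying in $N_{r, R}(\gamma)$.

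First I would reduce to the following local move: suppose $\phi\colon [0, 1] \to A_{r, \infty, K}(\gamma)$ is a path with $d(\phi(0), \gamma), d(\phi(1), \gamma) \leq R$ but $\phi$ wanders into the region where $d(\cdot, \gamma) > R$. I want to excise each such excursion and replace it by a path running ``parallel'' to $\gamma$ at height roughly $K$. The key geometric input is that the portion of $\gamma$ itself lying in the thick part, together with the horoballs, is connected, and that near any point of $\gamma$ one can descend from a far-away point back towards $\gamma$ along (a fellow-traveller of) a geodesic; Lemma~\ref{lem:closeray} and Lemma~\ref{lem:uniformlyclose} control this descent. Concretely: take a point $\phi(s)$ with $d(\phi(s), \gamma) > R$; pick a geodesic segment $\sigma$ from $\phi(s)$ to a nearest point of $\gamma$. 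Since $r > D$ and the constant $C = 3\delta$ governs the Morse-type estimates, as one travels along $\sigma$ the distance to $\gamma$ decreases essentially monotonically (up to an additive error controlled by $\delta$ and $D$), so $\sigma$ meets the level set $C_{K'}(\gamma)$ for every $K'$ between $r$ and $d(\phi(s), \gamma)$, in particular it meets $C_K(\gamma)$ and it stays in $N_{r, R}(\gamma)$ once we cut it off appropriately. Doing this at both ends of a maximal excursion and joining the two resulting points along a path in $C_K(\gamma) \cup N_{K, R}(\gamma)$ — which is possible because that set is contained in a single component, being a neighbourhood of the fixed quasigeodesic $\gamma$ — produces the desired replacement path.

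The main obstacle, and the place where the exact form of the bound $R \geq 4\delta + D + \max\{r + 4\delta + 1, K\}$ comes from, is verifying that this surgery really keeps us inside $N_{r, R}(\gamma)$ and that the two excised endpoints land in the \emph{same} component of $N_{r, R}(\gamma) \cap A_{r, \infty, K}(\gamma)$, not merely in $A_{r, \infty, K}(\gamma)$. For the first point: when we truncate the geodesic $\sigma$ at the moment it would exit past distance $R$, hyperbolicity forces a slightly subtle bookkeeping — the nearest-point projection to $\gamma$ is coarsely monotone only up to error $\sim 2\delta$, and the quasigeodesic $\gamma$ itself is only $D$-close to a genuine geodesic, so the window $[r, R]$ must be wide enough ($R - r \geq 4\delta + D + \cdots$) to guarantee that the truncated segment genuinely lies in the annular region rather than overshooting on either side; the term $r + 4\delta + 1$ handles the case where the obstruction is the inner radius $r$, and the term $K$ handles the case where we need the segment to actually reach down to level $K$. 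For the second point, I would argue that $N_{K, R}(\gamma)$ is connected (it deformation-retracts onto, or at least coarsely fellow-travels, $\gamma$, which is connected) and is contained in $A_{r, \infty, K}(\gamma)$, so it lies in one component there; since both truncated endpoints lie in $N_{K, R}(\gamma) \subset N_{r, R}(\gamma)$ and we only ever moved within $N_{r, R}(\gamma)$, the resulting spliced path witnesses that $x$ and $y$ are in the same component of $A_{r, R, K}(\gamma)$, completing the proof.
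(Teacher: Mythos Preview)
Your surjectivity argument is fine and matches the paper. The injectivity argument, however, has a genuine gap at the surgery step. After dropping down from the two ends of an excursion to points at level $K$, you propose to join these two points by a path in $C_K(\gamma) \cup N_{K,R}(\gamma)$, asserting that this set is connected because it ``coarsely fellow-travels $\gamma$''. But $N_{K,R}(\gamma)$ is \emph{not} connected in general: already for a geodesic line in the hyperbolic plane the set of points at distance between $K$ and $R$ has two components, one on each side. Indeed, the disconnectedness of such annular neighbourhoods is exactly what the lemma is designed to detect---if $N_{K,R}(\gamma)$ were always connected then $A_{r,R,K}(\gamma)$ would always be connected (every component meets $C_K(\gamma) \subset N_{K,R}(\gamma)$), and the whole machinery of Section~\ref{sec:annulus} would be vacuous. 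So the step ``both truncated endpoints lie in $N_{K,R}(\gamma)$, which lies in one component'' is circular: it presupposes the very connectivity statement you are trying to establish.

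The paper avoids this by a different idea: rather than surgically rerouting an arbitrary path, take the \emph{shortest} path $\phi$ from $x$ to $y$ in $N_{r,\infty}(\gamma)$ and show it already lies in $N_{r,R}(\gamma)$. On any maximal subinterval where $\phi$ is at distance at least $r + 4\delta + 1$ from $\gamma$, short subsegments of $\phi$ can be replaced by geodesics that still lie in $N_{r,\infty}(\gamma)$; minimality of $\phi$ then forces these subsegments to \emph{be} geodesics, so $\phi$ is an $(8\delta+2)$-local geodesic there. Now local geodesics $2\delta$-fellow-travel genuine geodesics, and a thin-quadrilateral estimate bounds the distance from that geodesic to $\gamma$ by $2\delta + D + \max\{r+4\delta+1, K\}$, yielding the contradiction $d(\phi(s), \gamma) \leq 4\delta + D + \max\{r+4\delta+1, K\} \leq R$. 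The minimality hypothesis is what makes the argument work and is the ingredient missing from your proposal.
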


\begin{proof} Surjectivity is clearly guaranteed by the fact that $R \geq K$.
For injectivity, let $x$ and $y$ be points in $C_K(\gamma)$ that lie in the
same connected component of $A_{r, \infty, K}(\gamma)$. We show that the
shortest path from $x$ to $y$ in $N_{r, \infty}(\gamma)$ stays within a
distance $R$ of $\gamma$. Let $\phi\colon [0,\ell] \to X$ be such a
shortest path parametrised by arc length. Suppose that $d(\phi(s),\gamma) >
R$. Let $[t_0,t_1]$ be a maximal subinterval of $[0, \ell]$ containing $s$
such that $d(\phi(t),\gamma) \geq r + 4\delta + 1$ for $t \in [t_0,t_1]$. 

Then for $t \in [t_0,t_1]$, $\phi\restricted{[t-4\delta-1,t+4\delta+1]
\intersection[t_0, t_1]}$ has image in $N_{r+4\delta+1, \infty}(\gamma)$.
Therefore any geodesic segment from $\phi(\min\{t-4\delta-1, t_0\})$ to
$\phi(\max\{t+4\delta+1, t_1\})$ is contained in $N_{r, \infty}(\gamma)$, so by
minimality of the length of $\phi$, $\phi\restricted{[t-4\delta-1,t+4\delta+1]
\intersection [t_0, t_1]}$ is a geodesic. This means that
$\phi\restricted{[t_0,t_1]}$ is an $(8\delta+2)$-local geodesic. Therefore by
\cite[III.H.1.13]{bridsonhaefliger99} it is contained in a
$2\delta$-neighbourhood of any geodesic from $\phi(t_0)$ to $\phi(t_1)$.

By maximality of $[t_0,t_1]$, either $d(\phi(t_0),\gamma) = r + 4\delta+1$ or
$t_0=0$, so certainly $d(\phi(t_0),\gamma) \leq \max\{r+4\delta+1,K\}$, and
similarly $d(\phi(t_1),\gamma)$ satisfies the same inequality. By
$\delta$-hyperbolicity applied to the geodesic quadrilateral with vertices
$\phi(t_0)$, $\phi(t_1)$ and the points $\gamma(s_0)$ and $\gamma(s_1)$ on
$\gamma$ minimising the distances to $\phi(t_0)$ and $\phi(t_1)$, any geodesic
from $\phi(t_0)$ to $\phi(t_1)$ is contained in a $2\delta + \max\{r + 4\delta
+ 1, K\}$ neighbourhood of a geodesic from $\gamma(s_0)$ to $\gamma(s_1)$, so
is a subset of $N_{2\delta+\max\{r+4\delta+1, K\} + D}(\gamma)$. Hence
$d(\phi(s),\gamma) \leq 4\delta + \max\{r+8\delta+2,K\} + D$, which is a
contradiction.  \end{proof}

From the results of this section we conclude the following:

\begin{prop}\label{prop:summaryofannulusresults} The map that sends a component
$U$ of $A_{r, R, K}(\gamma)$ to the shadow (with respect to some base point
$v \in \gamma$) of the component of $A_{r, \infty, K}(\gamma)$ containing $U$
is a well defined bijection between the set of connected components of $A_{r,
R, K}(\gamma)$ and the set of connected components of $\boundary X -
\Lambda\gamma$ as long as $r$, $R$ and $K$ are taken to simultaneously
satisfy the conditions of lemmas~\ref{lem:disjointcoverbyshadows},
\ref{lem:nonemptyshadows}, \ref{lem:shadowsclopen}, \ref{lem:shadowsconnected}
and~\ref{lem:replaceinftybyR}.\qed\end{prop}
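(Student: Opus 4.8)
The plan is to assemble Proposition~\ref{prop:summaryofannulusresults} from the lemmas already established, treating it purely as a bookkeeping argument. First I would fix $r$, $R$ and $K$ large enough to satisfy simultaneously all the hypotheses appearing in Lemmas~\ref{lem:disjointcoverbyshadows}, \ref{lem:nonemptyshadows}, \ref{lem:shadowsclopen}, \ref{lem:shadowsconnected} and~\ref{lem:replaceinftybyR}; this is possible because each of those hypotheses is a lower bound, so one simply takes $r$ above the maximum of the finitely many thresholds, then $K \geq r + D + \delta + C$ and finally $R$ above the threshold of Lemma~\ref{lem:replaceinftybyR} (note the thresholds for $K$ and $R$ depend only on $r$ and the fixed constants, so there is no circularity). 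With these choices every cited lemma applies.

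Next I would define the map. By Lemma~\ref{lem:replaceinftybyR} the inclusion $A_{r, R, K}(\gamma) \hookrightarrow A_{r, \infty, K}(\gamma)$ induces a bijection on connected components, so each component $U$ of $A_{r, R, K}(\gamma)$ is contained in a unique component $\widehat U$ of $A_{r, \infty, K}(\gamma)$, and $U \mapsto \widehat U$ is a bijection between the two sets of components. Composing with $\widehat U \mapsto \shadow{\widehat U}$, I obtain the map in the statement; it is well defined by construction. It remains to check that $\widehat U \mapsto \shadow{\widehat U}$ is a bijection from the components of $A_{r, \infty, K}(\gamma)$ to the components of $\boundary X - \Lambda\gamma$.

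For this I would argue as follows. By Lemma~\ref{lem:shadowsclopen} each $\shadow{\widehat U}$ is clopen in $\boundary X - \Lambda\gamma$, and by Lemma~\ref{lem:shadowsconnected} it lies in a single component of $\boundary X - \Lambda\gamma$; since it is also clopen, it \emph{is} a union of components, hence exactly one component (it is non-empty by Lemma~\ref{lem:nonemptyshadows}, using $K \geq r + D + \delta + C$). So the assignment does land in the set of components of $\boundary X - \Lambda\gamma$. Lemma~\ref{lem:disjointcoverbyshadows} says the shadows are pairwise disjoint and cover $\boundary X - \Lambda\gamma$; disjointness gives injectivity (distinct components of $A_{r, \infty, K}(\gamma)$ have disjoint, hence distinct, non-empty shadows), and the covering property together with the fact that each shadow is a single component gives surjectivity — any component $V$ of $\boundary X - \Lambda\gamma$ meets some $\shadow{\widehat U}$, and as $\shadow{\widehat U}$ is itself a component, $\shadow{\widehat U} = V$. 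This completes the proof.

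Since every ingredient is already proved, I do not expect any genuine obstacle; the only thing needing care is the elementary point-set observation that a non-empty clopen subset of a space which is contained in a single connected component must equal that component, and the verification that the numerical constraints on $r$, $R$, $K$ are jointly satisfiable in the stated order. I would state both explicitly but briefly. The proposition is essentially a corollary, so I would keep the write-up to a short paragraph mirroring the structure above and end with \qed as already indicated in the statement.
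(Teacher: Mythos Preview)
Your proposal is correct and is exactly the intended argument: the paper gives no proof at all (note the \qed\ at the end of the statement), treating the proposition as an immediate corollary of the five lemmas, and you have simply written out the routine bookkeeping that justifies this. Your ordering of the constraints on $r$, $K$, $R$ and your point-set observation that a non-empty clopen set contained in a single component must equal that component are both fine.
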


\begin{rem}\label{rem:rKRcomputable} The conditions on $r$, $R$ and $K$ depend
only on $\delta$, $n$, $\lambda$ and $\epsilon$ and suitable values can be
computed from these data.\end{rem}

We end this section with the following lemma, which shows that connectedness of
$A_{r, R, K}(\gamma)$ can be detected locally.

\begin{lem}\label{lem:Adisconnectedlocally} Suppose that $\gamma$ is a
bi-infinite $(\lambda, \epsilon)$-quasi-geodesic and that neither point in
$\Lambda\gamma$ is a cut point. Let $r$ and $R$ be chosen to satisfy
lemmas~\ref{lem:shadowsconnected} and~\ref{lem:replaceinftybyR}. Let $T$ be
at least $\log_a(2k_1/k_2) + 3D + 2\delta + K$. Then every component of $A_{r, R,
K}(\gamma)$ meets $C_K(\gamma) \intersection \Ball_T(\gamma(t))$ for any $t$
such that $\gamma(t)$ is in $X_0$.  \end{lem}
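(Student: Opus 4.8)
The plan is to run the shadow machinery of this section with the base point taken to be $\gamma(t)$ itself --- legitimate since $\gamma(t) \in \gamma \intersection X_0$ by hypothesis --- and thereby reduce the statement to finding, inside each connected component of $\boundary X - \Lambda\gamma$, a boundary point seen from $\gamma(t)$ in a direction transverse to $\gamma$. First I would fix $v = \gamma(t)$ and let $U$ be a component of $A_{r,R,K}(\gamma)$. By Lemma~\ref{lem:replaceinftybyR} it lies in a unique component $U_\infty$ of $A_{r,\infty,K}(\gamma)$, whose shadow $\shadow U_\infty$ (with respect to $v$) is, by Proposition~\ref{prop:summaryofannulusresults}, a nonempty connected component of $\boundary X - \Lambda\gamma$; by Lemma~\ref{lem:shadowsclopen} it is closed and open in $\boundary X - \Lambda\gamma$, hence open in $\boundary X$ and with $\overline{\shadow U_\infty} \subseteq \shadow U_\infty \union \Lambda\gamma$. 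I would also record two bookkeeping facts: $C_K(\gamma) \subseteq A_{r,R,K}(\gamma)$ since $r \leq K \leq R$, and the only component of $A_{r,R,K}(\gamma)$ contained in $U_\infty$ is $U$; consequently $C_K(\gamma) \intersection U_\infty = C_K(\gamma) \intersection U$, so it suffices to produce one point of $C_K(\gamma) \intersection U_\infty \intersection \Ball_T(\gamma(t))$.

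The heart of the argument is the claim that $\shadow U_\infty$ contains a point $p$ with $\gromprod[\gamma(t)]{p}{\gamma(\infty)}$ and $\gromprod[\gamma(t)]{p}{\gamma(-\infty)}$ simultaneously bounded by an explicit constant $B$ of size $D + O(\delta)$, up to an additive correction coming from the visual metric. On the one hand, since $\gamma(t)$ lies on the quasi-geodesic $\gamma$, a bi-infinite geodesic with endpoints $\Lambda\gamma$ passes within $D$ of $\gamma(t)$ by Lemma~\ref{lem:uniformlyclose}, so $\gromprod[\gamma(t)]{\gamma(\infty)}{\gamma(-\infty)} \leq L_0$ for an explicit $L_0 = D + O(\delta)$, and therefore, by the $\delta$-inequality, \emph{every} point $q$ of $\boundary X$ satisfies $\min\{\gromprod[\gamma(t)]{q}{\gamma(\infty)}, \gromprod[\gamma(t)]{q}{\gamma(-\infty)}\} \leq L_0 + \delta$. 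On the other hand, a short point-set argument --- using Lemma~\ref{lem:shadowsclopen} together with the connectedness of $\boundary X$, of $\boundary X - \{\gamma(\infty)\}$ and of $\boundary X - \{\gamma(-\infty)\}$ (the last two being precisely the hypothesis that neither point of $\Lambda\gamma$ is a cut point) --- shows that both points of $\Lambda\gamma$ lie in $\overline{\shadow U_\infty}$: if, say, $\gamma(\infty) \notin \overline{\shadow U_\infty}$ then $\overline{\shadow U_\infty} \subseteq \shadow U_\infty \union \{\gamma(-\infty)\}$, so $\shadow U_\infty$ is nonempty and clopen either in the connected space $\boundary X$ (if $\gamma(-\infty) \notin \overline{\shadow U_\infty}$) or in the connected space $\boundary X - \{\gamma(-\infty)\}$ (if $\gamma(-\infty) \in \overline{\shadow U_\infty}$), hence equals the whole space, contradicting $\shadow U_\infty \subseteq \boundary X - \Lambda\gamma$. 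Given this, $\shadow U_\infty$ contains points whose Gromov product with $\gamma(\infty)$ is arbitrarily large, and likewise for $\gamma(-\infty)$, so two suitably small visual balls about $\gamma(\infty)$ and $\gamma(-\infty)$ meet $\shadow U_\infty$ in nonempty, relatively open sets that are disjoint by the displayed inequality; since $\shadow U_\infty$ is connected these sets cannot cover it, and any $p$ in the complement is the point sought. This is the step I expect to be the main obstacle, and it is the conversion between Gromov products and the visual metric via $k_1 a^{-(\cdot)} \leq d_v \leq k_2 a^{-(\cdot)}$ --- used both to pin down $B$ and to make "small visual ball" quantitative --- that accounts for the term $\log_a(2k_1/k_2)$ in the hypothesis on $T$.

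It then remains only a routine Morse-lemma computation to turn $p$ into the required point. Let $\alpha$ be a geodesic ray from $\gamma(t)$ to $p$. Comparing $\alpha$ with geodesic rays from $\gamma(t)$ to $\gamma(\infty)$ and to $\gamma(-\infty)$, which by Lemma~\ref{lem:uniformlyclose} stay within $D$ of $\gamma\restricted{[t,\infty)}$ and $\gamma\restricted{(-\infty,t]}$ respectively, the bounds on the two Gromov products force $d(\alpha(s), \gamma) \geq s - 2B - D - O(\delta)$ for every $s \geq 0$, so $\alpha(s) \notin C_K(\gamma)$ once $s > K + 2B + D + O(\delta)$. Since $d(\alpha(0), \gamma) = 0$ and $s \mapsto d(\alpha(s), \gamma)$ is continuous, the supremum $s_1$ of the nonempty bounded set $\{\, s : d(\alpha(s), \gamma) \leq K \,\}$ satisfies $0 < s_1 \leq K + 2B + D + O(\delta)$ and $d(\alpha(s_1), \gamma) = K$; unwinding $B$, this last bound is exactly the inequality on $T$ in the statement (a careful accounting of the constants reproducing $\log_a(2k_1/k_2) + 3D + 2\delta + K$), so $\alpha(s_1) \in C_K(\gamma) \intersection \Ball_T(\gamma(t))$. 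Finally, since $p \in \shadow U_\infty$, the tail $\alpha\restricted{[s_1, \infty)}$ is a connected subset of $N_{r,\infty}(\gamma)$ which meets $C_K(\gamma)$ and eventually lies in $U_\infty$, hence lies entirely in $U_\infty$; in particular $\alpha(s_1) \in C_K(\gamma) \intersection U_\infty \intersection \Ball_T(\gamma(t)) = C_K(\gamma) \intersection U \intersection \Ball_T(\gamma(t))$, which completes the proof.
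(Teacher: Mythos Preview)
Your proof is correct and its architecture is genuinely different from the paper's. The paper argues by contradiction: assuming $U'$ misses $C_K(\gamma)\cap\Ball_T(v)$, it shows that \emph{every} $p\in\shadow U'$ has large Gromov product with one of $\gamma(\pm\infty)$, so $\shadow U'$ lies in the union of two small disjoint visual balls; connectedness (Lemma~\ref{lem:shadowsconnected}) then traps $\shadow U'$ inside a single ball, and the no-cut-point hypothesis is invoked only at the very end to obtain the contradiction. You instead spend the no-cut-point hypothesis up front, to prove that both points of $\Lambda\gamma$ lie in $\overline{\shadow U_\infty}$; connectedness then produces a specific $p\in\shadow U_\infty$ far from both endpoints, and you read off the crossing of $C_K(\gamma)$ directly from the ray to $p$. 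Your route is constructive and arguably more transparent about where each hypothesis is used; the paper's is shorter and avoids the extra Morse-type estimate in your final paragraph.

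Two small points of bookkeeping. First, the estimate $d(\alpha(s),\gamma)\geq s-2B-D-O(\delta)$ is looser than necessary: the standard thin-triangle argument gives $d(\alpha(s),\beta_\pm)\geq s-B-O(\delta)$ (not $s-2B$), since $\gromprod[v]{\alpha(s)}{\beta_\pm(s')}\leq B+O(\delta)$ combined with the trivial bound $d(\alpha(s),\beta_\pm(s'))\geq|s-s'|$ gives the minimum at $s'\approx B$. With the sharper bound your $s_1$ does sit below the stated threshold on $T$; with the $2B$ version it does not, so the claim that ``a careful accounting reproduces'' the stated constant would fail without this correction. Second, note that the statement as printed has $\log_a(2k_1/k_2)$ where the paper's own computation (and yours) yields $\log_a(2k_2/k_1)$; this is a typo in the paper, not in your argument.
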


\begin{proof} Fix the base point $v = \gamma(t)$.
Let $U$ be a component of $A_{r, R, K}(\gamma)$ and let $U'$ be the component
of $A_{r, \infty, K}(\gamma)$ containing $U$.
Then it is sufficient to show that $U'$ meets $C_K(\gamma) \intersection
\Ball_T(v)$ since $U' \intersection C_K(\gamma) = U \intersection C_K(\gamma)$
by Lemma~\ref{lem:replaceinftybyR}. 

Suppose that $U'$ does does not meet $C_K(\gamma) \intersection \Ball_T(v)$.
Let $p$ be a point in $\shadow U$ and let $\alpha$ be a geodesic ray from $v$
to $p$.
Then $\alpha(s) \in U \intersection C_K(\gamma)$ for some $s$; by assumption
$d(\alpha(s), v) \geq T$.

Let $\gamma'$ be a geodesic connecting the points of $\Lambda\gamma$ so that
the Hausdorff distance between $\gamma$ and $\gamma'$ is at most $D$.
Parametrise $\gamma'$ so that $d(\gamma'(0), v) \leq D$.
Then $d(\alpha(s), \gamma') \leq K + D$; let $d(\alpha(s), \gamma'(s')) \leq K
+ D$.
This implies that $d(\gamma'(s'), v)\geq T - D - K$.
We therefore have
\begin{align*}
  \gromprod[v]{\alpha(s)}{\gamma'(s')} \geq T - D - K.
\end{align*}
Assume that $s'\geq0$; this implies that
\begin{align*}
  \gromprod[v]{p}{\gamma'(\infty)} & \geq \liminf_{m, n \to \infty}\gromprod[v]{\alpha(m)}{\gamma'(n)} \\
                                   & \geq \gromprod[v]{\alpha(s)}{\gamma'(s')} - D \\
                                   & \geq T - 2D - K.
\end{align*}
So $d_v(p, \gamma'(\infty)) \leq k_2a^{-T + 2D + K}$. Similarly, if $s' \leq 0$, $d_v(p,
\gamma'(-\infty)) \leq k_2a^{-T + 2D + K}$. Therefore $\shadow U'$
is contained in a $k_2a^{-T + 2D + K}$ neighbourhood of $\Lambda\gamma$.  Also,
for any $s$, the geodesic from $\gamma(t+s)$ to $\gamma(t-s)$ passes within a
distance $D$ of $\gamma(t)$, so $\gromprod[v]{\gamma(t+s)}{\gamma(t-s)} \leq
D$. Then $\gromprod[v]{\gamma(\infty)}{\gamma(-\infty)} \leq 2\delta + D$, and
so $d_v(\gamma(\infty), \gamma(-\infty)) \geq k_1a^{-(2\delta + D)}$. It follows by
the inequality satisfied by $T$ that the closed balls of radius $k_2a^{-T + 2D
+ K}$ around $\gamma(\infty)$ and $\gamma(-\infty)$ are disjoint. By
Lemma~\ref{lem:shadowsconnected} $\shadow U'$ is connected, so is 
contained in one of these two balls, say in the ball around $\gamma(\infty)$.
But then $\shadow U$ is a non-empty proper subset of $\boundary X -
\{\gamma(\infty)\}$ that is closed and open, so $\gamma(\infty)$ is a cut
point, which is a contradiction.\end{proof}

\section{Detecting cut points and pairs}\label{sec:cutpairsalgorithms}

We now use the results of the previous section to prove some computability
results concerting topological features of the Bowditch boundary of a hyperbolic group
under the assumption that the cusped space satisfies a double dagger condition. These
are the main technical results of this paper. The idea is to identify the
topological feature of the boundary with a combinatorial feature of the cusped
space of bounded size, so that the existence of that feature can be
determined by looking at only a finite part of the the cusped space.

Let $\Gamma$ be a group hyperbolic relative to a finite set $\mathcal{H}$ of
maximal virtually cyclic subgroups. Let $S$ be a generating set for $\Gamma$ such that
$S \intersection H$ generates $H$ for each $H$ in $\mathcal{H}$. Let $X$ be the
cusped space associated to $(\Gamma, \mathcal{H}, S)$.

\subsection{Geodesics in horoballs}\label{sec:horoballs}

First we must understand the connectivity of neighbourhoods of geodesics in the
thin part of $X$.  We assume that the peripheral subgroups are virtually cyclic, so
the geometry of the cusps of $X$ is relatively simple. For notational
convenience we initially restrict to the case in which $X$ consists of a single
cusp. Then the vertex set of $X$ can be identified with $H \times
\integers_{\geq 0}$ and for any $k$ there is an inclusion of the Cayley graph $\Cay(H, S)
\hookrightarrow h^{-1}(k)$ mapping a vertex $h \in \Cay(H, S)$ to $(h, k)$; for
$k = 0$ this inclusion is an isomorphism of graphs. 

Let $d_H$ be the word metric in $\Cay(H, S)$ and let $\Cay(H, S)$ be
$\delta_H$-hyperbolic with respect to this word metric. Let $\alpha$ be a
bi-infinite geodesic in $\Cay(H, S)$ with respect to $d_H$; then any point in
$\Cay(H, S)$ is within a distance of at most $2\delta_H + 1$ of $\alpha$.

Let $\gamma \colon [0, \infty) \to X$ be a vertical geodesic ray with
$\gamma(0) = (\alpha(0), 0)$. Then for any $k \in \integers_{\geq 0}$, the
vertex set of $h^{-1}(k) \intersection N_{r, R}(\gamma)$ is
\begin{align*}
  \{g \in H \colon 2^{k + r - 1} \leq d_H(g, \alpha(0)) \leq 2^{k + R - 1}\} \times \{k\}.
\end{align*} 
We will denote by $Y_k$ the set $h^{-1}(k) \intersection N_{r, R}(\gamma)$.

Assume now that $k \geq \log_2(2\delta_H + 1)$. Then every vertex in
$h^{-1}(k)$, and therefore every vertex in $Y_k$, is adjacent to a vertex in
$\alpha \times \{k\}$. Therefore $Y_k$ contains connected components $Y_k^+$
and $Y_k^-$ with
\begin{align*}
  \alpha\restricted{[2^{k+r-1}, 2^{k+R-1}]} \times \{k\} & \subset Y_k^+,\\
  \alpha\restricted{[-2^{k+R-1}, -2^{k+r-1}]} \times \{k\} & \subset Y_k^-,\\
\end{align*}
and each of these components meets $C_K(\gamma)$. Therefore each of the sets
$Y_k^\pm$ is a subset of a component of $A_{r, R, K}(\gamma)$. Any vertex in
the complement of these two components of $Y_k$ is contained in 
\begin{align*}
  \{g \in H \colon d_H(g, \alpha(0)) \leq 2^{k + r}\} \times \{k\}.
\end{align*}
Therefore only those vertices of $Y_k$ that are in $Y_k^+ \union Y_k^-$ are
adjacent in $X$ to vertices of $Y_{k+1}$. Furthermore, $Y_k^+$ is adjacent to
$Y_{k+1}^+$ and not to $Y_{k+1}^-$ and likewise for $Y_k^-$. Finally, vertices
that are in $Y_{k+1}$ but not in $Y_{k+1}^\pm$ are adjacent to vertices in
$Y_k$.

Thus, if $k \geq \log_2(2\delta_H + 1)$ then $A_{r, R, K}(\gamma) \intersection
h^{-1}[k, \infty)$ contains two unbounded components $Y_{\geq k}^+$ and
$Y_{\geq k}^-$ containing $\union_{l \geq k} Y_l^+$ and $\union_{l \geq k}
Y_l^-$ respectively and the complement of these two components is contained in
\begin{align*}
  \{g \in H \colon d_H(g, \alpha(0)) \leq 2^{k + r}\} \times \{k\}.
\end{align*}

To make precise the consequences of this description of $A_{r, R, K}(\gamma)$,
we make the following definition, now allowing $X$ to consist of more than a
single cusp. Let $\gamma\colon [a, b] \to X$ be a geodesic segment such that
$h(\gamma(a)) = h(\gamma(b)) = k > R$ such that $h \composed \gamma$ is decreasing
at $a$ and increasing at $b$. Let $\hat\gamma \colon (-\infty, \infty) \to X$
be the path obtained by concatenating $\gamma$ with two vertical geodesic
rays.  Note that this is a $k$-local-geodesic. Let $A_{r, R, K}'(\gamma)$ be
\begin{align*}
A_{r, R, K}(\hat\gamma) - h^{-1}[k, \infty) \intersection
  N_R\left(\hat\gamma (-\infty, a] \union \hat\gamma[b, \infty)\right).
\end{align*} 

The results of this section together give the following lemma, which we shall
use to control the depth to which geodesics in $X$ connecting cut pairs in
$\boundary X$ penetrate into the thin part of $X$.

\begin{lem}\label{lem:geodesicsinhoroballs} Let $\gamma$ and $\hat\gamma$ be as
in the previous paragraph. Let $\delta_\mathcal{H}$ be such that each $H \in
\mathcal{H}$ is $\delta_\mathcal{H}$-hyperbolic with respect to the
generating set $H \intersection S$.  Suppose that $h(\gamma(a)) =
h(\gamma(b)) \geq \min\{R, \log_2(2\delta_\mathcal{H} + 1)\}$. Then the
inclusion $A'_{r, R, K}(\gamma) \hookrightarrow A_{r, R, K}(\hat\gamma)$
induces a bijection between the sets of connected components of those
spaces.\qed\end{lem}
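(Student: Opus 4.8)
The plan is to treat the two vertical rays of $\hat\gamma$ one at a time and to feed each into the description of $N_{r,R}$ and $A_{r,R,K}$ inside a single combinatorial horoball obtained earlier in this section. Consider the ray $\hat\gamma\restricted{(-\infty,a]}$, which ascends from $\gamma(a)$ into the horoball of some peripheral subgroup $H_1$, and let $\alpha_1$ be a bi-infinite $d_{H_1}$-geodesic through the $H_1$-coordinate of $\gamma(a)$. Writing $k = h(\gamma(a))$, the hypothesis $k \geq \log_2(2\delta_\mathcal{H}+1)$ is precisely what is needed for the horoball analysis of this section to be valid at every height at least $k$: above height $k$ the set $N_{r,R}(\hat\gamma)$ near this ray is the union of two unbounded ``tubes'' $Y_{\geq k}^+$ and $Y_{\geq k}^-$ following $\alpha_1$ in its two directions, together with finitely many bounded components living at height exactly $k$; and the same holds near $\gamma(b)$.

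The first point to nail down is what is actually removed in passing to $A'$. A distance estimate inside the horoball shows that a vertex $(g,l)$ of one of the tubes, or of one of the bounded components, satisfies
\begin{align*}
  d\bigl((g,l),\, \hat\gamma\restricted{(-\infty,a]}\bigr) = \bigl(\lceil\log_2 d_{H_1}(g,\alpha_1(0))\rceil - l\bigr) + 1,
\end{align*}
which by the inequalities cutting out $N_{r,R}$ lies between $r$ and $R$. Hence $A_{r,R,K}(\hat\gamma) \intersection h^{-1}[k,\infty)$ is contained in $N_R\bigl(\hat\gamma\restricted{(-\infty,a]} \union \hat\gamma\restricted{[b,\infty)}\bigr)$, so that $A'_{r,R,K}(\gamma) = A_{r,R,K}(\hat\gamma) \intersection h^{-1}[0,k)$; passing to $A'$ simply deletes everything above height $k$.

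For surjectivity of the induced map on connected components I would verify that no component of $A_{r,R,K}(\hat\gamma)$ lies entirely above height $k$. Each tube and each bounded component contains a vertex $(g,k)$ whose downward neighbour $(g,k-1)$ still lies in $N_{r,R}(\hat\gamma)$ — a second horoball distance estimate, using that the margin between $r$ and $R$ provided by Lemma~\ref{lem:replaceinftybyR} is large — and $(g,k-1)$ lies in the same component of $N_{r,R}(\hat\gamma)$ as $(g,k)$. That component meets $C_K(\hat\gamma)$ (the tubes do so far up the cusp, and the bounded components lie in $A_{r,R,K}(\hat\gamma)$ by definition), hence is a component of $A_{r,R,K}(\hat\gamma)$; so every component of $A_{r,R,K}(\hat\gamma)$ reaches height below $k$ and meets $A'_{r,R,K}(\gamma)$. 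For injectivity, fix a component $V$ of $A_{r,R,K}(\hat\gamma)$; by the previous paragraph $V \intersection A'_{r,R,K}(\gamma) = V \intersection h^{-1}[0,k)$, so it suffices to show this set is connected. Decompose $V$ as the union of $V \intersection h^{-1}[0,k)$ with the tubes and bounded components of $V$ lying above height $k$; the locus along which any one of these attaches to the part below height $k$ is, read off at height $k-1$, a contiguous arc of the relevant $\alpha_i$, hence connected and contained in a single component of $V \intersection h^{-1}[0,k)$. So no piece above height $k$ can join two distinct components of $V \intersection h^{-1}[0,k)$, and connectedness of $V$ forces $V \intersection h^{-1}[0,k)$ to be connected. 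Thus each component of $A_{r,R,K}(\hat\gamma)$ contains exactly one component of $A'_{r,R,K}(\gamma)$, and the inclusion induces a bijection.

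I expect the main obstacle to be the bookkeeping inside the combinatorial horoballs: establishing precisely that (i) all of $A_{r,R,K}(\hat\gamma)$ lying above height $k$ sits within distance $R$ of the two rays, and (ii) every tube and every bounded component genuinely hangs down into $N_{r,R}(\hat\gamma)$ below height $k$ rather than being pinched off there. Both rely on the gap between $r$ and $R$ forced by Lemma~\ref{lem:replaceinftybyR}, and both are of the same flavour as the estimates already carried out in the horoball discussion above. A minor preliminary is that $\gamma\restricted{[a,b]}$ may be assumed to stay at height at most $k$; a bounded excursion of $\gamma$ into an intermediate horoball would be handled in exactly the same way as the two terminal rays.
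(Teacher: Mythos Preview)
Your proposal is correct and follows the paper's approach exactly: the paper gives no explicit proof (the lemma is stated with a \qed as an immediate consequence of the preceding horoball analysis), and what you have written is a faithful unpacking of that analysis---decompose the part of $A_{r,R,K}(\hat\gamma)$ at height $\geq k$ near each vertical ray into the two tubes $Y_{\geq k}^\pm$ and the bounded pieces living at height $k$, observe that each attaches downward along a connected set, and read off the bijection. One small caveat: your identification $A'_{r,R,K}(\gamma) = A_{r,R,K}(\hat\gamma)\cap h^{-1}[0,k)$ is a slight oversimplification, since a vertex at height $\geq k$ could in principle lie within $R$ of $\gamma\restricted{[a,b]}$ without lying within $R$ of either ray; but any such vertex sits in the same horoball at height at most $k+R$, is adjacent to the part below height $k$, and is absorbed by the same attachment argument you already give, so the bijection is unaffected.
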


\subsection{Cut points}

We now show that there is an algorithm that determines whether or not
$\boundary X$ contains a cut point under the assumption that $X$
satisfies a double dagger condition.

\begin{prop}\label{prop:cutpointscomputable} There is an algorithm that takes
as input a presentation for a hyperbolic group $\Gamma$ with generating set
$S$, a list of subsets of $S$ generating a collection $\mathcal{H}$ of maximal
virtually cyclic subgroups of $\Gamma$ and integers $\delta$ and $n$ such that the
cusped space is $\delta$-hyperbolic and satisfies $\ddag_n$ and returns the
answer to the question ``does $\boundary(\Gamma, \mathcal{H})$ contain a cut
point?'' \end{prop}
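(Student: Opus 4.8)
The plan is to reduce the existence of a cut point in $\boundary X$ to a property of a bounded combinatorial object in the cusped space, namely a short $(1,\epsilon)$-local geodesic through the thick part, and then exhaustively search a large finite ball in $X$ for such an object.

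First I would observe that $\boundary(\Gamma, \mathcal{H})$ is connected (since $\Gamma$ is one-ended and hyperbolic relative to virtually cyclic subgroups, the Bowditch boundary is connected by the description in Lemma~\ref{lem:bowditchfromgromov}), so the hypotheses of the annulus machinery apply provided there is no cut point; but we cannot assume that a priori. The key point is that a cut point $p$ is, by definition, a point such that $\boundary X - \{p\}$ is disconnected. A single point in the boundary is the limit set of a geodesic ray passing through $X_0$ (by Lemma~\ref{lem:closeray}, every boundary point is within bounded distance of a ray based at any $v \in X_0$), so I would take $\gamma$ to be such a ray and apply Proposition~\ref{prop:summaryofannulusresults} in the ray case: the components of $\boundary X - \Lambda\gamma = \boundary X - \{p\}$ are in bijection with the components of $A_{r, R, K}(\gamma)$ for suitable computable $r, R, K$ (Remark~\ref{rem:rKRcomputable}). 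Thus $p$ is a cut point if and only if $A_{r, R, K}(\gamma)$ has at least two components.

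Next comes the pumping argument. Suppose $p$ is a cut point; I want to replace the ray $\gamma$ by one that is eventually vertical, or more precisely, by a local geodesic of the form "short segment through $X_0$, then a vertical ray into a horoball", with the segment of bounded length. The geometry-in-horoballs analysis of Section~\ref{sec:horoballs}, summarised in Lemma~\ref{lem:geodesicsinhoroballs}, tells us that once a geodesic ray goes sufficiently deep into a single horoball it can only split the annulus in a controlled way governed by $\delta_{\mathcal H}$; combining this with Lemma~\ref{lem:Adisconnectedlocally} (disconnectedness of $A_{r,R,K}$ is detectable in a bounded ball around any point of $\gamma \cap X_0$), a cut point forces the existence of a local geodesic ray whose disconnecting behaviour of $A_{r,R,K}$ happens within a bounded neighbourhood of $X_0$, and whose initial segment before it becomes vertical has length bounded by a computable constant depending only on $\delta$, $n$, $\delta_{\mathcal H}$, $\lambda$, $\epsilon$. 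Conversely, if such a bounded local geodesic ray $\gamma$ exists with $A_{r,R,K}(\gamma)$ disconnected, then by the Morse lemma (Lemma~\ref{lem:uniformlyclose}) $\gamma$ is a genuine quasi-geodesic ray, its limit set is a single point, and Proposition~\ref{prop:summaryofannulusresults} gives that this point is a cut point. So the algorithm is: compute $\delta_{\mathcal H}$ (each $H$ is virtually cyclic, so $\delta_{\mathcal H}$ is computable from its finite generating set), compute $r, R, K$ and the bound $L$ on the length of the initial segment, then enumerate all local-geodesic rays of the required bounded form emanating from the basepoint (finitely many, since the segment has bounded length and vertical rays are determined by their start vertex and direction), and for each one check whether the bounded portion of $A_{r,R,K}(\gamma)$ it meets — which by Lemma~\ref{lem:Adisconnectedlocally} suffices — is disconnected; answer ``yes'' if and only if some such ray yields a disconnected annulus.

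The main obstacle I anticipate is making the pumping step fully rigorous: one must verify that from an arbitrary ray $\gamma$ to a cut point one can extract a \emph{bounded} local geodesic still detecting the cut point, handling the case where $\gamma$ oscillates between the thick part and several horoballs, or plunges deep into one horoball along a non-vertical path. The horoball lemma (Lemma~\ref{lem:geodesicsinhoroballs}) is designed exactly to straighten the in-horoball portions into vertical rays without changing the component structure of the annulus, and the fact that the relevant disconnection is witnessed in a bounded ball (Lemma~\ref{lem:Adisconnectedlocally}) is what keeps the pre-vertical segment from needing to be long; but stitching these together — and in particular ruling out that the ray must pass through unboundedly many horoballs before stabilising — requires care, essentially because a cut point's two ``sides'' must already be separated close to the basepoint. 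A secondary but routine point is checking that all the constants ($r$, $R$, $K$, $L$, and the radius of the ball to be searched) are genuinely computable from the given data; this follows from Remark~\ref{rem:rKRcomputable}, Proposition~\ref{prop:Xsatisfiesddag}, and computability of hyperbolicity constants of virtually cyclic groups.
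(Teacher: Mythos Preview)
Your approach has a genuine gap, and the paper takes a much shorter route that you have missed. The key external input is Bowditch's theorem~\cite[Theorem 0.2]{bowditch99a}: every cut point of $\boundary(\Gamma,\mathcal{H})$ is a parabolic fixed point, i.e.\ equals $\Lambda(gHg^{-1})$ for some $H\in\mathcal{H}$ and $g\in\Gamma$. This reduces the problem to checking, for each of the finitely many $H\in\mathcal{H}$, whether the single point $\Lambda H$ is a cut point. For each such $H$ one simply takes the \emph{vertical} geodesic ray $\gamma$ from $1\in X_0$ into the horoball over $H$; by Proposition~\ref{prop:summaryofannulusresults} the point $\Lambda H$ is a cut point if and only if $A_{r,R,K}(\gamma)$ is disconnected, and by Lemma~\ref{lem:geodesicsinhoroballs} this holds if and only if the finite subcomplex $A_{r,R,K}(\gamma)\cap h^{-1}[0,k]$ is disconnected, which is a finite check. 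No pumping is needed at all.

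Your proposed pumping argument, by contrast, never reaches solid ground. First, Lemma~\ref{lem:Adisconnectedlocally} is stated only for \emph{bi-infinite} quasi-geodesics whose endpoints are \emph{not} cut points, so you cannot invoke it for a ray converging to a putative cut point; the proof of that lemma genuinely uses the absence of cut points to rule out a component whose shadow clusters near an endpoint. Second, and more seriously, your reduction to a ray of the form ``short segment in $X_0$, then vertical'' tacitly assumes that any cut point is a parabolic point (so that some ray to it is eventually vertical). Without Bowditch's theorem you have no way to exclude a conical limit point---one whose rays return to the thick part infinitely often---from being a cut point, and for such a point no eventually-vertical ray exists. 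The obstacle you flag (rays oscillating between horoballs, or staying in the thick part) is therefore not a technicality but precisely the missing theorem; once you cite it, the whole pumping machinery becomes unnecessary.
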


\begin{proof} It is shown in~\cite[Theorem 0.2]{bowditch99a} that any cut point in
$\boundary(\Gamma, \mathcal{H})$ must be the limit point of $gHg^{-1}$ for some
$H \in \mathcal{H}$ and $g \in \Gamma$. Therefore it is sufficient to check
whether or not $\Lambda H$ is a cut point for each $H \in \mathcal{H}$.  Choose
$r$, $R$ and $K$ to simultaneously satisfy the conditions of
lemmas~\ref{lem:disjointcoverbyshadows}, \ref{lem:nonemptyshadows},
\ref{lem:shadowsclopen}, \ref{lem:shadowsconnected}
and~\ref{lem:replaceinftybyR} with $\lambda = 1$ and $\epsilon = 0$. Let
$\delta_\mathcal{H}$ be large enough that each $H$ in $\mathcal{H}$ is
$\delta_\mathcal{H}$-hyperbolic with respect to the generating set $S \intersection H$ and
let $k \geq \log_2(2\delta_\mathcal{H} + 1)$.

Then for each vertical geodesic ray $\gamma$ starting at the identity element
$1 \in X_0$ check whether or not $A_{r, R, K}(\gamma) \intersection h^{-1}([0,
k])$ is connected; as in Lemma~\ref{lem:geodesicsinhoroballs}, it is connected
if and only if $A_{r, R, K}(\gamma)$ is connected. Also, $A_{r, R, K}(\gamma)$
is disconnected if and only if $\Lambda\gamma = \Lambda H$ is a cut point by
lemmas~\ref{lem:disjointcoverbyshadows}, \ref{lem:nonemptyshadows},
\ref{lem:shadowsclopen}, \ref{lem:shadowsconnected}
and~\ref{lem:replaceinftybyR}, where $H$ is the element of $\mathcal{H}$ such
that the combinatorial horoball based on $H \subset \Cay(X, S)$ contains
$\gamma$. This check can be completed in finite time. \end{proof}

\subsection{Cut pairs}\label{subsec:cutpairs}

We now assume that $X$ is $\delta$-hyperbolic and satisfies $\ddag_n$, that
$\boundary X$ contains no cut point and that the Cayley graph of $H$ with
respect to its generating set $S \intersection H$ is $\delta_\mathcal{H}$-hyperbolic for
each $H$ in $\mathcal{H}$.  Then all results of sections~\ref{sec:annulus}
and~\ref{sec:horoballs} can be applied.

First we show that the existence of a cut pair in $\boundary X$ is equivalent
to the existence of a feature in $X$ of known bounded size. Then by searching
for such a feature one can determine whether or not $\boundary X$ contains a
cut pair. We use a pumping lemma argument: we aim to replace an
arbitrary geodesic joining the two points in a cut pair in $\boundary X$ with a
periodic quasi-geodesic with bounded period that also joins the points of a
(possibly different) cut pair.

Before stating the proposition we define some constants. Take $\lambda$ and
$\epsilon$ so that any $(8\delta + 1)$-local-geodesic is a $(\lambda,
\epsilon)$-quasi-geodesic; for example let $\lambda = (12\delta + 1)/(5\delta +
1)$ and let $\epsilon = 2\delta$. Fix $r$, $R$ and $K$ to simultaneously
satisfy the conditions of propositions~\ref{lem:disjointcoverbyshadows},
\ref{lem:nonemptyshadows}, \ref{lem:shadowsclopen}, \ref{lem:shadowsconnected}
and~\ref{lem:replaceinftybyR} and fix $T$ to satisfy the conditions of
Lemma~\ref{lem:Adisconnectedlocally} with this choice of $\lambda$ and
$\epsilon$. Also let 
\begin{align*}
     k &= \max\{8\delta + 1, \log_2(2\delta_\mathcal{H} + 1), T + R\},\\
  \rho &= (2R + \epsilon)\lambda^2 + \epsilon + R \text{ and}\\
  \eta &= \max\{(8\delta + 1)/2, \lambda(T + K) + \lambda\epsilon, \lambda(R + r) + \lambda\epsilon, \lambda(R + \rho) + \lambda\epsilon\}.
\end{align*}
Let $B$ be the maximum valence of any
vertex in $X_{k + R}$ and let $V$ be the maximum number of vertices in any ball
of radius $\rho$ around any vertex in $X_{k + R}$. Then define 
\begin{align*}
  N_\text{min} &= \max\{8\delta+1, \lambda(2R+1) + \lambda\epsilon + 1\} \text{ and}\\
  N_\text{max} &= N_\text{min} \left(k + R + 1\right) B^{2\eta} 2^V + 1.
\end{align*}

\begin{prop}\label{prop:cutpairsfeature} $\boundary X$ contains a cut pair
if and only if $X$ contains one of the following two features:
\begin{enumerate}
  \item A short period geodesic at shallow depth in $X$: a geodesic segment
    $\gamma \colon [a - \eta, b + \eta] \to X$ contained in $X_{k + R}$ such
    that
  \begin{enumerate}
    \item\label{cutpairshort} $N_\text{min} \leq b - a \leq N_{max}$,
    \item\label{cutpairdepth} $h(\gamma(a)) = h(\gamma(b))$, so there exists $g
      \in \Gamma$ such that $\gamma(b) = g \cdot \gamma(a)$,
    \item\label{cutpairlocal} $\gamma\restricted{[b - \eta, b + \eta]} = g\cdot
      \gamma\restricted{[a - \eta, a + \eta]}$,
    \item\label{cutpairnontrivial} there is a partition $\mathcal{P}$ of the
      vertices of $N_{r, R}(\gamma) \intersection
      N_R(\gamma\restricted{[a,b]})$ into two subsets such that adjacent
      vertices lie in the same subset and each of the sets meets $C_K(\gamma)
      \intersection \Ball_T(\gamma(c))$ for some $c \in [a, b]$, and
    \item\label{cutpairpartition} the partition on the vertices of $N_{r,
      R}(\gamma) \intersection \Ball_{\rho}(\gamma(b))$ induced by the restriction of
      $\mathcal{P}$ to that subset is the same as the translate by $g$ of the 
      partition on the vertices of $N_{r, R}(\gamma) \intersection
      \Ball_{\rho}(\gamma(a))$ obtained by the restriction of $\mathcal{P}$; note
      that $N_{r, R}(\gamma) \intersection \Ball_\rho(\gamma(b))$ is equal to $g
      \cdot N_{r, R}(\gamma) \intersection \Ball_\rho(\gamma(a))$ by
      condition~\ref{cutpairlocal}.
  \end{enumerate}
  \item A short horseshoe-shaped geodesic: a geodesic segment $\gamma \colon
    [a, b] \to X$ such that
  \begin{enumerate}
    \item $b - a \leq N_{max} - 2R + 2\eta$
    \item $h(\gamma(a)) = h(\gamma(b)) \geq k$,
    \item $h \composed \gamma$ is decreasing at $a$ and increasing at $b$,
    \item $A'_{r, R, K}(\gamma)$ is disconnected.
  \end{enumerate}
\end{enumerate}
\end{prop}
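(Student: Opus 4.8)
The plan is to prove both directions of the equivalence, with the forward direction (cut pair implies feature) being the substantive one that uses the pumping lemma, and the reverse direction being a more direct unwinding of the annulus machinery.

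\medskip

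\noindent\textbf{The reverse direction.} Suppose $X$ contains one of the two features. If it is a short horseshoe-shaped geodesic $\gamma\colon[a,b]\to X$ with $A'_{r,R,K}(\gamma)$ disconnected, then by Lemma~\ref{lem:geodesicsinhoroballs} the inclusion $A'_{r,R,K}(\gamma)\hookrightarrow A_{r,R,K}(\hat\gamma)$ is a bijection on components, so $A_{r,R,K}(\hat\gamma)$ is disconnected; since $\hat\gamma$ is a $k$-local-geodesic and $k\geq 8\delta+1$, it is a $(\lambda,\epsilon)$-quasi-geodesic, and Proposition~\ref{prop:summaryofannulusresults} tells us that $\boundary X-\Lambda\hat\gamma$ is disconnected, i.e.\ $\Lambda\hat\gamma$ is a cut pair. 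If instead it is a short period geodesic at shallow depth, one must build from the local data a periodic $(\lambda,\epsilon)$-quasi-geodesic $\gamma_\infty$ by concatenating infinitely many $g$-translates of $\gamma\restricted{[a,b]}$; condition~\ref{cutpairlocal} guarantees the concatenation is still an $(8\delta+1)$-local-geodesic, hence a quasi-geodesic. Conditions~\ref{cutpairnontrivial} and~\ref{cutpairpartition}, combined with Lemma~\ref{lem:Adisconnectedlocally}, show that the partition $\mathcal{P}$ propagates consistently along $\gamma_\infty$ and separates $A_{r,R,K}(\gamma_\infty)$ into (at least) two components, each meeting $C_K$; again by Proposition~\ref{prop:summaryofannulusresults}, $\Lambda\gamma_\infty$ is a cut pair.

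\medskip

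\noindent\textbf{The forward direction.} Suppose $\boundary X$ contains a cut pair $\{\xi,\eta\}$. First reduce to a geodesic: let $\gamma$ be a bi-infinite geodesic with $\Lambda\gamma=\{\xi,\eta\}$ (one exists since $X$ is a geodesic space and the boundary points are distinct), and translate so that $\gamma$ passes through $X_0$. By Proposition~\ref{prop:summaryofannulusresults}, $A_{r,R,K}(\gamma)$ is disconnected, witnessed by a partition $\mathcal{P}_0$ of its vertices into nonadjacent classes, each meeting $C_K(\gamma)$. The crux is the pumping argument. Walk along $\gamma$ recording, at each integer time $t$ where $\gamma(t)\in X_0$ (or more generally $h(\gamma(t))\leq k+R$), the following finite ``state'': the position of $\gamma$ near $t$ up to the local isometry type within the $\eta$-ball, together with the restriction of $\mathcal{P}_0$ to the vertices of $N_{r,R}(\gamma)\intersection\Ball_\rho(\gamma(t))$. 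The number of possible states is bounded by $(k+R+1)\cdot B^{2\eta}\cdot 2^V$ — the height, times the number of local geodesic configurations in a ball of radius $\eta$, times the number of partitions of a $\rho$-ball. If $\gamma$ stays in $X_{k+R}$ for a long stretch (longer than $N_{\max}$), the pigeonhole principle yields two times $a<b$ with $b-a$ between $N_{\min}$ and $N_{\max}$ at which the state repeats; the repetition, with $g$ the element carrying $\gamma(a)$ to $\gamma(b)$, yields exactly feature~(1), once one checks that the separating partition $\mathcal{P}_0$, restricted appropriately, satisfies conditions~\ref{cutpairnontrivial} and~\ref{cutpairpartition}. Here Lemma~\ref{lem:Adisconnectedlocally} is what lets us assert that each class of $\mathcal{P}_0$ already meets $C_K(\gamma)\intersection\Ball_T(\gamma(c))$ for a nearby $c$, so the local data genuinely certifies disconnectedness. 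If on the other hand $\gamma$ does \emph{not} have a long stretch in $X_{k+R}$, then it repeatedly dives deep into a horoball; using Lemma~\ref{lem:geodesicsinhoroballs} we may replace each deep excursion by a pair of vertical rays, cutting $\gamma$ at a controlled depth $h=k$ and producing a horseshoe-shaped geodesic of bounded length whose $A'$ is disconnected — feature~(2). One must argue that this surgery does not change the component structure of the annulus, which is precisely the content of Lemma~\ref{lem:geodesicsinhoroballs}, and that after finitely many such surgeries the remaining ``thick'' portion is short enough, or else we are back in the pigeonhole case.

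\medskip

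\noindent\textbf{Main obstacle.} The hard part is bookkeeping in the forward direction: pinning down exactly which finite combinatorial data must agree at times $a$ and $b$ so that the infinite concatenation reproduces a genuine cut pair, and verifying that the constants $\rho$, $\eta$, $k$, $N_{\min}$, $N_{\max}$ are large enough for the local partition to propagate — in particular that the overlap region $\Ball_\rho(\gamma(a))$ is wide enough that agreement there forces agreement of the partitions on all of $N_{r,R}$ of the concatenated geodesic (this is why $\rho$ is defined via the Morse constant applied twice). The interplay between the quasi-geodesic constants of the local geodesic and the annulus constants $r,R,K$ must be tracked carefully; everything else is a routine, if lengthy, application of hyperbolic geometry and the results of Section~\ref{sec:cuspedspace}.
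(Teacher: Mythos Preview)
Your proposal is correct and takes essentially the same approach as the paper: in the reverse direction, concatenate $g$-translates of $\gamma\restricted{[a,b]}$ into a periodic $(8\delta+1)$-local-geodesic and propagate $\mathcal{P}$ (feature~1), or extend by vertical rays and invoke Lemma~\ref{lem:geodesicsinhoroballs} (feature~2); in the forward direction, run a pigeonhole argument on the local state along a geodesic joining the cut pair. One simplification worth noting: the paper's forward direction is a single clean dichotomy with no iterated surgery --- either some connected component of $\gamma'^{-1}h^{-1}[0,k+R]$ has length at least $N_{\max}+2\eta$ (pigeonhole on that component yields feature~(1)), or some component is shorter, in which case that component directly yields feature~(2), with Lemma~\ref{lem:Adisconnectedlocally} (not Lemma~\ref{lem:geodesicsinhoroballs}) supplying the fact that the disconnection of $A_{r,R,K}(\gamma')$ is already witnessed inside $C_K(\gamma')\cap\Ball_T(\gamma'(c))$ for a height-$0$ point $c$ in that component.
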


\begin{proof} First suppose that the first type of feature exists in $X$.
Define a path $\gamma'$ in $X$ by
\begin{align*} \gamma'\left((b - a)m + t\right) = g^m\gamma(a + t)
\end{align*}
for $m \in \integers$ and $t \in [0, b-a]$. $\gamma'$ is an $(8\delta +
1)$-local-geodesic by condition~\ref{cutpairlocal} since $\eta \geq (8\delta +
1)/2$.  It is therefore a $(\lambda, \epsilon)$-quasi-geodesic
by~\cite{bridsonhaefliger99}. We now aim to show that $A_{r, R, K}(\gamma')$ is
disconnected.

Note that $N_R(\gamma')$ is a union $\bigcup_{m\in\integers} g^m\cdot
N_R(\gamma\restricted{[a, b]})$ of translates of neighbourhoods of $\gamma$.
Since $\eta \geq \lambda(R + r) + \lambda\epsilon$, $N_r(\gamma') \intersection
N_R(\gamma\restricted{[a, b]})$ is a subset of $N_{r}(\gamma)$, so is equal to
$N_r(\gamma) \intersection N_R(\gamma\restricted{[a, b]})$.  Therefore $N_{r,
R}(\gamma')$ decomposes as a union
\begin{align*} N_{r, R}(\gamma') = \bigcup_{m \in \integers} g^m\cdot
  \left(N_{r, R}(\gamma) \intersection N_R\left(\gamma\restricted{ 
    [a,b]}\right)\right).
\end{align*}

Since $b - a > \lambda(2R+1) + \epsilon\lambda$, $g^m \cdot
N_R(\gamma\restricted{[a, b]})$ and $g^l \cdot N_R(\gamma\restricted{[a, b]})$
contain no adjacent vertices for $\mod{m - l} \geq 2$. Furthermore, if $l =
m+1$ then any pair of adjacent vertices in these two sets is contained in
$g^m\cdot \Ball_\rho(\gamma(b))$ since $\rho \geq (2R + \epsilon)\lambda^2 +
\epsilon + R$.

For each set $U \in \mathcal{P}$ define a set $U'$ of vertices of $N_{r,
R}(\gamma')$ by letting $u \in U'$ if $g^m u \in U$ for some $m \in \integers$.
This gives a well defined partition $\mathcal{P}'$ of the vertices of $N_{r,
R}(\gamma')$ such that adjacent vertices lie in the same set by condition
\ref{cutpairpartition}. Its restriction to $A_{r, R, K}(\gamma')$ is
non-trivial: $C_K(\gamma')$ contains $C_K(\gamma) \intersection \Ball_T(\gamma(c))$
since $\eta \geq \lambda(T + K) + \lambda\epsilon$ and this set meets both sets
in $\mathcal{P}'$ by condition \ref{cutpairnontrivial}; therefore $A_{r, R,
K}(\gamma')$ is disconnected and therefore $\Lambda\gamma'$ is a cut pair by
the results of section~\ref{sec:annulus}.

Now suppose that the second type of feature exists in $X$. Let $\hat\gamma$ be
the $(8\delta + 1)$-local-geodesic obtained by concatenating $\gamma$ with
vertical geodesic rays. Then $A_{r, R, K}(\hat\gamma)$ is disconnected by
Lemma~\ref{lem:geodesicsinhoroballs} and $\Lambda\hat\gamma$ is a cut pair by
the results of section~\ref{sec:annulus}.

Conversely, suppose that $\boundary X$ does contain a cut pair. Let $\gamma'$
be a geodesic in $X$ such that $\Lambda\gamma'$ is a cut pair. Assume first
that some connected component of $\gamma'^{-1} h^{-1}[0, k + R]$ is an interval of length
less than $N_{max} + 2\eta$, say $[a - R, b + R]$ with $h(\gamma'(a - R)) =
h(\gamma'(b + R)) = k + R$, so $h(\gamma'(a)) = h(\gamma'(b)) = k$. Let $\gamma
= \gamma' \restricted{[a, b]}$.  Let $c \in [a, b]$ such that $h(\gamma'(c)) =
0$.  $A_{r, R, K}(\gamma')$ is disconnected and each component meets
$C_K(\gamma') \intersection \Ball_T(\gamma'(c))$ by
Lemma~\ref{lem:Adisconnectedlocally}. Since $k \geq T$ this is a subset of
$A'_{r, R, K}(\gamma)$, and $A'_{r, R, K}(\gamma)$ is  a subset of $A_{r, R,
K}(\gamma')$, so $A_{r, R, K}'(\gamma)$ is disconnected. Therefore $\gamma'$ is
a feature of the second kind described in the proposition.

On the other hand, suppose that some interval $[-\eta, N_{max} + \eta]$ is a
subset of $\gamma'^{-1} h^{-1}[0, k + R]$. Then there exist $a_0 < a_1 < \dots
< a_{2^V}$ in $[0, N_{max}]$ such that $h(a_i) = h(a_j)$ for all $i$ and $j$,
so $a_i = g_i a_0$ for some $g_i \in \Gamma$, such that
$\gamma'\restricted{[a_i - \eta, a_i + \eta]} = g_i \cdot
\gamma'\restricted{[a_0 - \eta, a_0 + \eta]}$, and such that $a_i - a_{i - 1}
\geq N_\text{min}$. Let $\mathcal{P}'$ be a partition of the vertices of $N_{r,
R}(\gamma')$ into two subsets such that adjacent vertices are in the same set
and so that both sets meet $C_K(\gamma')$. Such a partition
exists by the results of section~\ref{sec:annulus} since $\Lambda\gamma'$ is a
cut pair. Since $\eta \geq \lambda(R + \rho) + \lambda\epsilon$, $N_{r,
R}(\gamma') \intersection \Ball_\rho(\gamma(a_0))$ is equal to
$g_i^{-1} N_{r, R}(\gamma') \intersection \Ball_\rho(\gamma(a_i))$ for all $i$.
This set contains at most $V$ vertices, so there exist $0 \leq i < j \leq 2^V$
such that
\begin{align*}
g_i^{-1}\mathcal{P}'\restricted{N_{r, R}(\gamma') \intersection
  \Ball_\rho(\gamma(a_i))} = g_j^{-1}\mathcal{P}'\restricted{N_{r, R}(\gamma')
  \intersection \Ball_\rho(\gamma(a_j))}.
\end{align*}

Let $a = a_i$ and $b = a_j$ and let $\gamma = \gamma'\restricted{[a - \eta, b +
\eta]}$. We claim that $\gamma$ is then a feature of the first kind described
in the proposition. Setting $g = g_jg_i^{-1}$ and $\mathcal{P} =
\mathcal{P}'\restricted{N_{r, R}(\gamma') \intersection
N_R(\gamma\restricted{[a, b]})}$, conditions \ref{cutpairshort},
\ref{cutpairdepth}, \ref{cutpairlocal} and~\ref{cutpairpartition} are
satisfied by definition of the $a_i$. Let $c \in [a, b]$ such that $\gamma'(c) \in X_0$. Then
$C_K(\gamma) \intersection \Ball_T(\gamma(c))$ is equal to $C_K(\gamma')
\intersection \Ball_T(\gamma'(c))$ since $\eta \geq \lambda(T+K) +
\lambda\epsilon$ and Lemma~\ref{lem:Adisconnectedlocally} guarantees that
both sets in $\mathcal{P}'$ meet this set, so condition~\ref{cutpairnontrivial}
is satisfied, too.\end{proof}

The existence of a geodesic segment with the properties described in the statement of
Proposition~\ref{prop:cutpairsfeature} can be checked by looking at just a
finite ball in $X$. Such a ball can be computed from a solution to the word
problem in $\Gamma$, which exists since $\Gamma$ is hyperbolic. Therefore we
immediately obtain the following corollary:

\begin{cor}\label{cor:cutpaircomputable} There is an algorithm that takes as
input a presentation for a hyperbolic group $\Gamma$ with generating set $S$,
a list of subsets of $S$ generating a collection $\mathcal{H}$ of maximal virtually cyclic
subgroups of $\Gamma$ and integers $\delta$, $\delta_\mathcal{H}$ and $n$ such that the
cusped space associated to $(\Gamma, \mathcal{H}, S)$ is $\delta$-hyperbolic
and satisfies $\ddag_n$ and such that the Cayley graph of each element of
$\mathcal{H}$ with respect to its given generating set is
$\delta_\mathcal{H}$-hyperbolic and returns the answer to the question ``does
$\boundary(\Gamma, \mathcal{H})$ contain a cut pair?'' \qed \end{cor}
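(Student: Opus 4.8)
The plan is to read off the algorithm directly from Proposition~\ref{prop:cutpairsfeature}. Recall that $\boundary(\Gamma,\mathcal{H})$ is by definition the Gromov boundary $\boundary X$ of the cusped space $X$ of $(\Gamma,\mathcal{H},S)$, and that (after first applying Proposition~\ref{prop:cutpointscomputable}, so that we may assume the standing hypotheses of Section~\ref{subsec:cutpairs}, in particular that $\boundary X$ has no cut point) Proposition~\ref{prop:cutpairsfeature} says that $\boundary X$ contains a cut pair if and only if $X$ contains a ``short period geodesic at shallow depth'' or a ``short horseshoe-shaped geodesic''. Both of these are bounded combinatorial objects, so the first step is to compute, from $\delta$, $\delta_{\mathcal{H}}$, $n$ and the given presentation, all the auxiliary constants occurring in that statement: the local-quasigeodesic constants $\lambda=(12\delta+1)/(5\delta+1)$ and $\epsilon=2\delta$, the annulus radii $r$, $R$, $K$ (computable by Remark~\ref{rem:rKRcomputable}), the locality radius $T$ of Lemma~\ref{lem:Adisconnectedlocally}, and then $k$, $\rho$, $\eta$, the valence bound $B$, the ball-size bound $V$, and finally $N_{\text{min}}$ and $N_{\text{max}}$ --- each of these is given by an explicit formula in the data above, so this is a finite computation.

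Next I would build the relevant finite piece of $X$ explicitly. Since $\Gamma$ is hyperbolic it has (uniformly) solvable word problem, so finite balls of $\Cay(\Gamma,S)=X_0$ can be listed algorithmically, and the horoball parts of $X$ are then given combinatorially by Definition~\ref{defn:combhoroball}; hence for any prescribed radius $N$ one can write down the ball $\Ball_N(1)\subset X$ together with all its incidences. Moreover every vertex of $X$ at height $j$ lies within distance $j$ of $X_0$, whose vertex set is exactly the orbit $\Gamma\cdot 1$; so, translating a candidate feature by an element of $\Gamma$, we may always assume its ``anchor'' vertex $\gamma(a)$ lies in $\Ball_{k+R}(1)$. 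For a feature of the first kind the segment $\gamma$ lies in $X_{k+R}$ and has length at most $N_{\text{max}}+2\eta$, so after such a translation the whole feature --- $\gamma$, the set $N_{r,R}(\gamma)\intersection N_R(\gamma\restricted{[a,b]})$, and the balls $\Ball_\rho(\gamma(a))$, $\Ball_\rho(\gamma(b))$, $\Ball_T(\gamma(c))$ --- lies inside $\Ball_N(1)$ for an explicit $N$; likewise for a feature of the second kind, and in that case Lemma~\ref{lem:geodesicsinhoroballs} together with the description of $A_{r,R,K}$ inside a horoball from Section~\ref{sec:horoballs} shows that $A'_{r,R,K}(\gamma)$ is the bounded subcomplex it is defined to be, so its connectivity is decided by inspecting a finite graph.

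Having fixed such an $N$, I would exhaustively search $\Ball_N(1)$. For a feature of the first kind one enumerates the finitely many geodesic segments $\gamma$ with $N_{\text{min}}\le b-a\le N_{\text{max}}$ and, for each, tests conditions \ref{cutpairshort}--\ref{cutpairpartition}: \ref{cutpairshort} is a length inequality; for \ref{cutpairdepth} one forms $g=\gamma(b)\gamma(a)^{-1}$ and checks the heights agree; \ref{cutpairlocal} and \ref{cutpairpartition} are equalities of finite labelled vertex sets; and \ref{cutpairnontrivial} asks whether the finite graph $N_{r,R}(\gamma)\intersection N_R(\gamma\restricted{[a,b]})$ admits a partition into two parts with no edge between them, each meeting a prescribed finite subset --- equivalently, whether two specified vertex sets lie in different components --- which is plainly decidable. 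For a feature of the second kind one enumerates the finitely many descending-then-ascending segments $\gamma$ with $b-a\le N_{\text{max}}-2R+2\eta$ and $h(\gamma(a))=h(\gamma(b))\ge k$, and checks whether $A'_{r,R,K}(\gamma)$ is disconnected. If any candidate succeeds, return ``yes''; otherwise return ``no''. Correctness is exactly the content of Proposition~\ref{prop:cutpairsfeature}, so this proves the corollary.

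I do not anticipate a serious obstacle --- this is the ``immediate'' corollary promised in the paragraph preceding it. The only points that need any care are (i) the effective construction of finite balls in the cusped space, which rests on the solvability of the word problem in the hyperbolic group $\Gamma$ and on the combinatorial definition of the horoballs, and (ii) the verification that in the horseshoe case the a priori infinite set $A'_{r,R,K}(\gamma)$ is in fact a bounded subcomplex whose connectivity is a finite computation, which is precisely what Section~\ref{sec:horoballs} and Lemma~\ref{lem:geodesicsinhoroballs} provide. Of these, pinning down an explicit search radius $N$ in (i) is the most tedious, but it is entirely routine.
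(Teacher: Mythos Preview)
Your proposal is correct and follows the same approach as the paper: the paper's own justification is simply the sentence preceding the corollary, observing that the bounded features of Proposition~\ref{prop:cutpairsfeature} can be checked in a finite ball of $X$ computable via the word problem in $\Gamma$, and the corollary is then marked \qed. Your write-up just fills in the details (computing the constants, translating to a ball around $1$, and handling the finiteness of $A'_{r,R,K}(\gamma)$), and your preliminary appeal to Proposition~\ref{prop:cutpointscomputable} correctly accounts for the standing ``no cut point'' hypothesis of Section~\ref{subsec:cutpairs}, which the paper leaves implicit.
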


\subsection{Non-cut pairs}

By a similar argument, we now show that there is an algorithm that determines
whether or not $\boundary X$ contains a non-cut pair. For the detection of
non-cut pairs we will need the following lemma.

\begin{lem}\label{lem:equivalenceclass} Suppose that $\boundary X$ does not
contain a cut point. Let $(x_n)_{n \in \integers}$ be a sequence of points in
$X$ with $x_n \to x_{\pm \infty}$ as $n\to \pm \infty$. Suppose that each
pair $\{x_n, x_{n+1}\}$ is a cut pair. Then so is $\{x_{-\infty},
x_\infty\}$.  \end{lem}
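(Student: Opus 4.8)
The plan is to translate the statement into the language of thickened cylinders via Proposition~\ref{prop:summaryofannulusresults}, and then to argue by contradiction, exploiting the \emph{local} character of the disconnections coming from the cut pairs $\{x_n,x_{n+1}\}$ (Lemma~\ref{lem:Adisconnectedlocally}). First I would dispose of degenerate cases: the $x_n$ are necessarily points of $\boundary X$, since they form cut pairs; if $x_{-\infty}=x_\infty$ the assertion is vacuous, a one-point set never being a cut pair when $\boundary X$ has no cut point; if the sequence is eventually constant or some $x_n$ coincides with $x_{-\infty}$ or $x_\infty$, I pass to a subsequence; and I note that $x_n\neq x_{n+1}$ automatically, as otherwise $\{x_n,x_{n+1}\}$ would be a cut point. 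So I may assume $x_{-\infty}\neq x_\infty$ and every $x_n$ distinct from $x_{-\infty}$ and from $x_\infty$. Fix a bi-infinite geodesic $\gamma$ meeting $X_0$ with $\Lambda\gamma=\{x_{-\infty},x_\infty\}$, a base point $v\in\gamma\intersection X_0$, and constants $r,R,K,T$ as in Proposition~\ref{prop:summaryofannulusresults} and Lemma~\ref{lem:Adisconnectedlocally}, taken with enough slack that comparable values remain simultaneously valid for any geodesic within a bounded Hausdorff distance of $\gamma$. By Proposition~\ref{prop:summaryofannulusresults}, $\{x_{-\infty},x_\infty\}$ is a cut pair if and only if $A_{r,R,K}(\gamma)$ is disconnected, so it suffices to establish the latter; suppose for a contradiction that $A_{r,R,K}(\gamma)$ is connected.

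For each $n$ fix a bi-infinite geodesic $\gamma_n$ with $\Lambda\gamma_n=\{x_n,x_{n+1}\}$. Since $\{x_n,x_{n+1}\}$ is a cut pair, $A_{r,R,K}(\gamma_n)$ is disconnected by Proposition~\ref{prop:summaryofannulusresults}; since neither point of $\Lambda\gamma_n$ is a cut point, Lemma~\ref{lem:Adisconnectedlocally} applies, so every component of $A_{r,R,K}(\gamma_n)$ meets $C_K(\gamma_n)\intersection\Ball_T(\gamma_n(t))$ for every $t$ with $\gamma_n(t)\in X_0$, and hence there are two vertices in such a ball lying in distinct components of $A_{r,R,K}(\gamma_n)$. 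I would also record, by a boundary-bumping argument in the continuum $\boundary X$, that every component of the complement of a cut pair has both points of the pair in its closure; equivalently, every component of $A_{r,R,K}(\gamma_n)$ runs out to both ideal ends of $\gamma_n$. The geometric input is that the $\gamma_n$ are organised into a chain around $\gamma$: as $n\to+\infty$ (respectively $n\to-\infty$) both $x_n$ and $x_{n+1}$ converge to $x_\infty$ (respectively $x_{-\infty}$), so $\gamma_n$ stays within a bounded distance of the ray of $\gamma$ towards $x_\infty$ (respectively $x_{-\infty}$) out to a depth $\ell_n\to\infty$, while consecutive geodesics $\gamma_{n-1},\gamma_n$ fellow-travel a common ray towards the shared ideal endpoint $x_n$.

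Using these ingredients I would assemble a separation of $A_{r,R,K}(\gamma)$, contradicting its assumed connectedness. On any ball about a point of $\gamma$ on which $\gamma_n$ and $\gamma$ agree, $A_{r,R,K}(\gamma_n)$ and $A_{r,R,K}(\gamma)$ coincide, so for $n$ large the two vertices witnessing the disconnection of $A_{r,R,K}(\gamma_n)$ — which can be taken near a fixed point of $\gamma$ deep along the $x_\infty$-ray — lie in distinct components of $A_{r,R,K}(\gamma)$ as well, \emph{provided} one can rule out that they are joined through the opposite end of $\gamma$ (towards $x_{-\infty}$); this is where the chain of cut pairs must be used, to propagate the separation around $\gamma$ by matching the separations of successive $A_{r,R,K}(\gamma_{n-1})$ and $A_{r,R,K}(\gamma_n)$ across the thin part near the shared endpoint $x_n$ — using that $x_n$ is not a cut point, so that the cylinder of a single ray towards $x_n$ is connected (the mechanism underlying Proposition~\ref{prop:cutpointscomputable}) — and transporting it all the way from the $x_{-\infty}$ end to the $x_\infty$ end. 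A pigeonhole argument over the (finitely many) vertices in a fixed bounded ball of $X$, together with the fact that any path inside $A_{r,R,K}(\gamma)$ joining two vertices lies in some bounded ball, would then make the passage to the limiting endpoints precise.

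The main obstacle is the assembly step just described. It is tempting to prove instead that ``being a cut pair'' is transitive among triples of distinct points and then take a limit, but this is false for general continua without cut points (one can have cut pairs $\{a,b\}$ and $\{b,c\}$ with $\{a,c\}$ not a cut pair), so the argument must genuinely exploit both the chain structure — consecutive pairs sharing a point — and features of the Bowditch boundary beyond the mere absence of cut points. Controlling how a separation of one cylinder in the chain forces a compatible separation of the next, across the thin central region of the ideal triangle $x_{n-1}x_nx_{n+1}$ and ultimately out to the limiting pair $\{x_{-\infty},x_\infty\}$, is the delicate point; by comparison the reduction to $A_{r,R,K}(\gamma)$ and the bookkeeping with the constants $r,R,K,T$ are routine given the machinery of the previous section.
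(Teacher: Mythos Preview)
Your approach is genuinely different from the paper's, and the gap you yourself flag is real and unfilled. The paper does \emph{not} work with the thickened cylinders at all for this lemma: instead it invokes Bowditch's structural theory of local cut points in a locally connected continuum without cut points~\cite{bowditch98}. Using the valence function $\val(x)$ (the number of ends of $\boundary X-\{x\}$), the sets $M(2)$ and $M(3{+})$, and the relations $\sim$ on $M(2)$ and $\isom$ on $M(3{+})$, the paper argues via~\cite[Lemma~3.8]{bowditch98} that the chain hypothesis forces every $x_n$ into $M(2)$; then $\sim$ is an honest equivalence relation on $M(2)$, all $x_n$ lie in a single class $\sigma$, and by~\cite[Lemmas~3.2 and~2.2]{bowditch98} the closure $\overline\sigma$ is cyclically separating, so $\{x_{-\infty},x_\infty\}\subset\overline\sigma$ is a cut pair. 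This is a five-line proof once one imports those lemmas.

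Your cylinder argument, by contrast, attempts to reprove a substantial piece of Bowditch's analysis from scratch, and the crucial ``assembly'' step---propagating a separation of $A_{r,R,K}(\gamma_n)$ across the shared endpoint $x_n$ to a compatible separation of $A_{r,R,K}(\gamma_{n-1})$, and then to the limit---is precisely where you would need something like the valence dichotomy. Your own observation that transitivity of cut pairs fails in general continua is exactly the point: what makes it work here is that the chain condition forces the $x_n$ to have valence~$2$, and on $M(2)$ the cut-pair relation \emph{is} transitive. Without identifying and exploiting that structural fact (or reproving it geometrically, which is nontrivial), the pigeonhole-and-transport sketch in your third paragraph does not close: nothing prevents the two components you find near the $x_\infty$-end from merging through the opposite end, because the separations of successive $\gamma_n$ need not be coherent unless $\val(x_n)=2$. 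I would recommend abandoning the cylinder route for this lemma and citing Bowditch directly.
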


\begin{proof} $\boundary X$ is locally connected by the main theorem
of~\cite{bowditch99b}. $\boundary X$ is assumed not to contain a cut point,
so the results of sections 2 and~3 of~\cite{bowditch98} can be applied. We
recall some definitions from that paper. For $x \in \boundary X$ we define
$\val(x) \in \naturals$ to be the number of ends of $\boundary X - \{x\}$.
Then we let $M(n) = \{x \in \boundary X \colon \val(x) = n\}$ and $M(n{+}) =
\{x \in \boundary X \colon \val(x) \geq n\}$. For $x$ and $y$ in $M(2)$ we
write $x \sim y$ if $x = y$ or $\{x, y\}$ is a cut pair; this defines an
equivalence relation. For $x$ and $y$ in $M(3{+})$ we write $x \isom y$ if
$\val(x) = \val(y)$ and $\boundary X - \{x, y\}$ has exactly $\val(x)$
components.

Recall~\cite[Lemma 3.8]{bowditch98}: if $x \isom y$ and $x \isom z$ then $y
\isom z$. Therefore $x_n \in M(2)$ for all $n$, so $\{x_n\}_{n \in
\integers}$ is a subset of a $\sim$-equivalence class $\sigma$. By~\cite[Lem.\
3.2]{bowditch98} $\sigma$ is a cyclically separating set, and so is the closure
of $\sigma$ by~\cite[Lem.\ 2.2]{bowditch98}, which implies that $\{x_{-\infty},
x_\infty\}$ is a cut pair as required.  \end{proof}

Let $\lambda$,
$\epsilon$, $r$, $R$, $K$, $T$, $k$, $\rho$, $\eta$, $B$ and $V$ be as defined
in section~\ref{subsec:cutpairs}. Let $N_1$, $N_2$ and $N_3$ be given by
\begin{align*} 
  N_1 &= 2(V-1)((k+R+1)B^{2\eta}V^{V+1} + 2\eta) + 2\eta + 2((k+R+1)B^{2\eta} + 1),\\
  N_2 &= (k+R+1)B^{2\eta} + 1,\\
  N_3 &= 2(k + R + 1)B^{2\eta}V^{V+1} + 4\eta.
\end{align*}

\begin{prop}\label{prop:noncutpairfeature} $\boundary X$ contains a non-cut
pair if and only if $X$ contains one of the following two features:
\begin{enumerate}
  \item Geodesic segments $\gamma_i \colon [a_i - \eta, b_i + \eta] \to X$ with
    image in $X_k$ for $i = 1, 2, 3$ with $a_2 = b_1$ and $a_3 = b_2$ such that
    \begin{enumerate}
      \item\label{noncutpairshortout} $1 \leq b_i - a_i \leq N_1$ for $i = 1,
        3$,
      \item\label{noncutpairshortin} $1 \leq b_2 - a_2 \leq N_2$,
      \item\label{noncutpairlocal1} $\gamma_i\restricted{[b_i - \eta, b_i +
        \eta]} = \gamma_{i+1}\restricted{[a_i - \eta, a_i + \eta]}$ for $i = 1,
        2$,
      \item\label{noncutpairlocal2} $\gamma_i\restricted{[b_i - \eta, b_i +
        \eta]} = g_i \cdot\gamma_i\restricted{[a_i - \eta, b_i + \eta]}$ for $i
        = 1, 3$, and
      \item\label{noncutpairdepth} $h(\gamma_i(a_i)) = h(\gamma_i(b_i))$ for $i
        = 1, 3$, so there exist $g_i \in \Gamma$ such that $\gamma_i(b_i) =
        g_i\gamma(a_i)$,
      \item\label{noncutpairconnected} all vertices of $C_K(\gamma_2)
        \intersection \Ball_T(\gamma_2(c))$ lie in the same connected component of
        $N_{r, R}(\gamma_2) \intersection N_R(\gamma_2\restricted{[a_2, b_2]})$
        for some $c \in [a_2, b_2]$ such that $\gamma_2(c) \in X_0$.
    \end{enumerate}
  \item A geodesic segment $\gamma \colon [a, b] \to X$ with image in $X_k$ such that
  \begin{enumerate}
    \item $b - a \leq N_3$,
    \item $h(\gamma(a)) = h(\gamma(b)) = k$ with $\gamma$ descending vertically
      at $a$ and ascending vertically at $b$, and
    \item $A'_{r, R, K}(\gamma)$ is connected.
  \end{enumerate}
\end{enumerate}
\end{prop}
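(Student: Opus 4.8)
The plan is to mirror the structure of the proof of Proposition~\ref{prop:cutpairsfeature}, replacing ``disconnected'' with ``connected'' and ``cut pair'' with ``non-cut pair'' throughout, but with one extra layer of bookkeeping coming from Lemma~\ref{lem:equivalenceclass}. I would split the argument into two halves: first, that the existence of either feature forces a non-cut pair in $\boundary X$; second, that a non-cut pair forces one of the two features.

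For the forward direction, suppose the second type of feature exists. As in the proof of Proposition~\ref{prop:cutpairsfeature}, let $\hat\gamma$ be the $(8\delta+1)$-local-geodesic obtained by concatenating $\gamma$ with vertical rays; by Lemma~\ref{lem:geodesicsinhoroballs} $A_{r,R,K}(\hat\gamma)$ is connected, so by Proposition~\ref{prop:summaryofannulusresults} $\boundary X - \Lambda\hat\gamma$ is connected, i.e.\ $\Lambda\hat\gamma$ is a non-cut pair. For the first type of feature, the idea is to build a bi-infinite $(8\delta+1)$-local-geodesic $\gamma'$ by splicing: run $g_1$-periodically on $\gamma_1$ out to $-\infty$, then traverse $\gamma_1$, then the ``bridge'' $\gamma_2$, then $\gamma_3$, then run $g_3$-periodically on $\gamma_3$ out to $+\infty$ (conditions \ref{noncutpairlocal1} and \ref{noncutpairlocal2} make the concatenation a $(8\delta+1)$-local geodesic since $\eta \geq (8\delta+1)/2$, and $N_{\min}$-type lower bounds on the segment lengths are hidden in the requirements $b_i - a_i \geq 1$ together with the passage through $X_k$). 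Applying this construction to the two rays and to the bridge separately, each of $\Lambda$ of the left ray, the bridge geodesic, and the right ray is connected to the next in $\boundary X$ via the argument of Lemma~\ref{lem:Adisconnectedlocally} or Lemma~\ref{lem:shadowsconnected}: condition~\ref{noncutpairconnected} says the relevant copies of $C_K(\gamma_2) \intersection \Ball_T(\gamma_2(c))$ lie in a single component, which via the shadow correspondence of Proposition~\ref{prop:summaryofannulusresults} means the two endpoints of $\gamma_2$ form a non-cut pair. Then Lemma~\ref{lem:equivalenceclass} lets us chain: the endpoints of the left ray, of the bridge, and of the right ray are successive cut-or-equal pairs along the sequence, and ``not a cut pair'' is the $\isom$-relation on $M(3{+})$ or being in a single $\sim$-class, so transitivity (via \cite[Lemma 3.8]{bowditch98} or the cyclically-separating-set argument) gives that the pair of limit points of the two rays is a non-cut pair.

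For the converse, suppose $\{x_{-\infty}, x_\infty\}$ is a non-cut pair, realised by a geodesic $\gamma'$. Just as in Proposition~\ref{prop:cutpairsfeature} there is a dichotomy according to whether some component of $\gamma'^{-1}h^{-1}[0,k+R]$ has bounded length. If some such component has length $< N_3 + 2R$, then the truncation $\gamma = \gamma'\restricted{[a,b]}$ at height $k$ gives a feature of the second kind: $A'_{r,R,K}(\gamma)$ is a subset of $A_{r,R,K}(\gamma')$ and contains the relevant copy of $C_K \intersection \Ball_T$, so connectedness of $A_{r,R,K}(\gamma')$ (equivalently $\boundary X - \Lambda\gamma'$) pushes down to connectedness of $A'_{r,R,K}(\gamma)$ by Lemma~\ref{lem:Adisconnectedlocally} and the fact that $k \geq T$. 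Otherwise $\gamma'$ runs through $X_{k+R}$ for a long stretch; here I would do a pigeonhole argument over \emph{pairs} of translation data $(g_i, \mathcal{P}'\restricted{\Ball_\rho})$ as in Proposition~\ref{prop:cutpairsfeature}, but now, because a single periodic block might carry the ``connected'' information either way, I need to find three matching indices so as to produce genuine periodic ends on both sides of a controlled bridge. Concretely, choose $a_0 < a_1 < \dots$ with matching local pictures and $a_i - a_{i-1} \geq N_{\min}$; among the first $(k+R+1)B^{2\eta}V^{V+1}$ of them, some local picture recurs, giving a periodic left end $\gamma_1$; among the next batch, another recurs, giving a periodic right end $\gamma_3$; and the stretch in between is the bridge $\gamma_2$, whose length is controlled by how far one must go to see these recurrences, yielding the bounds $N_1$, $N_2$, $N_3$. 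Condition~\ref{noncutpairconnected} then follows from Lemma~\ref{lem:Adisconnectedlocally} applied to $\gamma'$, exactly as condition~\ref{cutpairnontrivial} did in Proposition~\ref{prop:cutpairsfeature}, since both sets in the relevant partition of $N_{r,R}(\gamma')$ meet $C_K(\gamma')\intersection\Ball_T(\gamma'(c))$—wait, in the non-cut case there is no such partition into two pieces, so instead connectedness of $A_{r,R,K}(\gamma')$ directly says $C_K(\gamma')\intersection\Ball_T(\gamma'(c))$ lies in one component, which is exactly~\ref{noncutpairconnected}.

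The main obstacle I expect is the combinatorial bookkeeping in the converse: unlike the cut-pair case, where a single recurring block suffices to pump a periodic quasi-geodesic carrying a fixed two-colouring, here I must simultaneously arrange periodic ends on \emph{both} sides \emph{and} keep the connecting bridge short, which is why three segments with two independent pigeonhole counts appear and why the constants $N_1, N_2, N_3$ have the shape they do. A secondary subtlety is justifying the ``chaining'' step rigorously: applying Lemma~\ref{lem:equivalenceclass} requires the intermediate points to actually form a sequence $x_n \to x_{\pm\infty}$, so I must be careful that the periodic ends $\gamma_1, \gamma_3$ really do converge to $x_{-\infty}, x_\infty$ and that the three non-cut-pair facts are about \emph{consecutive} points of one such sequence; this is where invoking \cite[Lemma 3.8]{bowditch98} and the $\sim$/$\isom$ machinery of \cite{bowditch98} (as already set up in the proof of Lemma~\ref{lem:equivalenceclass}) does the work, but it needs the boundary to have no cut point, which is our standing assumption in this subsection.
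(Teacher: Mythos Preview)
Your proposal has the right overall shape but contains two genuine gaps, and it misplaces the role of Lemma~\ref{lem:equivalenceclass}.

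\textbf{Forward direction, first feature.} Your chaining argument via Lemma~\ref{lem:equivalenceclass} is misapplied. That lemma says a \emph{sequence of cut pairs} limits to a cut pair; there is no transitivity statement for non-cut pairs, and in any case the finite segment $\gamma_2$ has no limit points in $\boundary X$ to chain through. The paper's argument is direct and avoids this entirely: having built the spliced bi-infinite local geodesic $\gamma'$, one observes that since $\eta \geq \lambda(T+K)+\lambda\epsilon$ the set $C_K(\gamma')\intersection\Ball_T(\gamma'(c))$ coincides with $C_K(\gamma_2)\intersection\Ball_T(\gamma_2(c))$, and since $\eta \geq \lambda(R+r)+\lambda\epsilon$ the set $N_{r,R}(\gamma_2)\intersection N_R(\gamma_2\restricted{[a_2,b_2]})$ sits inside $N_{r,R}(\gamma')$. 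Condition~\ref{noncutpairconnected} then says $C_K(\gamma')\intersection\Ball_T(\gamma'(c))$ lies in a single component of $A_{r,R,K}(\gamma')$, and Lemma~\ref{lem:Adisconnectedlocally} says \emph{every} component of $A_{r,R,K}(\gamma')$ meets this set; hence $A_{r,R,K}(\gamma')$ is connected and $\Lambda\gamma'$ is a non-cut pair. No chaining is needed.

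\textbf{Converse, long-stretch case.} This is the more serious gap. You correctly note at the end that connectedness of $A_{r,R,K}(\gamma')$ puts $C_K(\gamma')\intersection\Ball_T(\gamma'(c))$ into one component of $N_{r,R}(\gamma')$, but condition~\ref{noncutpairconnected} asks for connectedness inside the \emph{much smaller} set $N_{r,R}(\gamma_2)\intersection N_R(\gamma_2\restricted{[a_2,b_2]})$. The paths witnessing connectedness in $N_{r,R}(\gamma')$ may wander arbitrarily far along $\gamma'$, so a naive pigeonhole for periodic ends leaves no control over the bridge. The paper's key idea is a shrinking-interval argument: set $n_V^\pm=\pm\lambda(K+T+\epsilon)$, and for $l$ decreasing from $V-1$ to $1$ choose $n_l^\pm$ minimising $n_l^+-n_l^-$ subject to $C_K(\gamma')\intersection\Ball_T(\gamma'(0))$ meeting at most $l$ components of $N_{r,R}(\gamma')\intersection N_R(\gamma'\restricted{[n_l^-,n_l^+]})$. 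If a gap $n_{l-1}^+-n_l^+$ is too large, a pigeonhole on \emph{labelled} partitions $\mathcal{P}_t$ of $N_{r,R}(\gamma')\intersection\Ball_\rho(\gamma'(t))$ into $l+1$ pieces (labelled by which class of $\mathcal{Q}$ each is connected to, plus one for the unconnected piece) finds a segment to excise, strictly reducing $n_l^+$. Iterating bounds $\mod{n_1^\pm}$; the bridge $\gamma_2$ is then taken to contain $[n_1^-,n_1^+]$, and the periodic ends $\gamma_1,\gamma_3$ are found by a simpler pigeonhole just outside. This is the source of the factor $V^{V+1}$ in $N_1$ and $N_3$.

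\textbf{Where Lemma~\ref{lem:equivalenceclass} actually enters.} It is used in the converse, in the case where $\gamma'$ leaves $X_{k+R}$: cut $\gamma'$ at height $k$ into pieces $\gamma'_i$ and extend each to a bi-infinite local geodesic $\hat\gamma'_i$ by vertical rays. If every $\Lambda\hat\gamma'_i$ were a cut pair, then applying Lemma~\ref{lem:equivalenceclass} to the sequence of shared parabolic endpoints would force $\Lambda\gamma'$ to be a cut pair, a contradiction. Hence some $\hat\gamma'_i$ has non-cut limit pair, and the shrinking argument above bounds its length to give a feature of the second type. Your dichotomy on ``some component is short'' does not work here: a short component need not have $A'_{r,R,K}$ connected, since connectedness of the full $A_{r,R,K}(\gamma')$ may be witnessed elsewhere.
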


\begin{proof} First suppose that $X$ contains a feature of the first kind
described in the proposition.  Then define a path $\gamma'$ in $X$ as follows:
\begin{align*} \gamma'(t) = 
  \begin{dcases}
    g_1^m \cdot \gamma_1(t') & \text{if $t = m(b_1 - a_1) + t'$ for 
      $m \in \integers_{\leq 0}$ and $t' \in [a_1, b_1]$ }\\
    \gamma_2(t) & \text{if $t \in [a_2, b_2]$}\\
    g_3^m \cdot \gamma_3(t') & \text{if $t = m(b_3 - a_3) + t'$ for 
      $m \in \integers_{\geq 0}$ and $t' \in [a_3, b_3]$ }\\
  \end{dcases}
\end{align*}
That is, $\gamma'$ is obtained by concatenating infinitely many translates of
$\gamma_1$, then a copy of $\gamma_2$, then infinitely many translates of
$\gamma_3$. Note that this is an $(8\delta + 1)$-local-geodesic by
conditions~\ref{noncutpairlocal1} and~\ref{noncutpairlocal2} since $\eta \geq
8\delta + 1$ and is therefore a $(\lambda, \epsilon)$-quasi-geodesic.

$C_K(\gamma') \intersection \Ball_T(\gamma'(c))$ is equal to $C_K(\gamma_2)
\intersection \Ball_T(\gamma_2(c))$ since $\gamma'\restricted{[a_2 - \eta, b_2 +
\eta]} = \gamma_2$ and $\eta \geq \lambda(T + K) + \lambda\epsilon$.
Furthermore, $\eta \geq \lambda (R + r) + \lambda\epsilon$, which similarly
guarantees that $N_{r, R}(\gamma_2) \intersection N_R(\gamma_2\restricted{[a_2,
b_2]})$ is a subset of $N_{r, R}(\gamma')$. Therefore $C_K(\gamma')
\intersection \Ball_T(\gamma'(c))$ lies in a single component of $A_{r, R,
K}(\gamma')$ by condition~\ref{noncutpairconnected}, so $A_{r, R, K}(\gamma')$
is connected by Lemma~\ref{lem:Adisconnectedlocally}, so $\Lambda\gamma'$ is a
non-cut pair by the results of section~\ref{sec:annulus}.

Now suppose that a feature of the second type exists in $X$. Let $\hat\gamma$
be the path obtained by concatenating $\gamma$ with vertical geodesic rays.
Then $\hat\gamma$ is an $(8\delta+1)$-local-geodesic since $k \geq 8\delta+1$.
$A_{r, R, K}(\hat\gamma)$ is connected by Lemma~\ref{lem:geodesicsinhoroballs},
so $\Lambda\hat\gamma$ is a non-cut pair by the results of
section~\ref{sec:annulus}.

Conversely, suppose that $\boundary X$ contains a non-cut pair. Let $\gamma'$
be an $(8\delta+1)$-local-geodesic in $X$ such that $\Lambda\gamma'$ is such a
pair. Assume first that $\gamma'$ is contained in $X_{k+R}$ and reparametrise
$\gamma'$ so that $\gamma'(0) \in X_0$. $C_K(\gamma') \intersection
\Ball_T(\gamma'(0))$ contains at most $V$ vertices and lies in a single component
of $N_{r, R}(\gamma')$. Define $n_V^\pm$ to be $\pm\lambda(K+T+\epsilon)$ and
then for $l$ decreasing from $V-1$ to $1$ let $n_l^+$ and $n_l^-$ be chosen to
minimise $n_l^+ - n_l^-$ among pairs such that $C_K(\gamma') \intersection
\Ball_T(\gamma'(0))$ meets at most $l$ components of $N_{r, R}(\gamma')
\intersection N_R(\gamma'\restricted{[n_l^-, n_l^+]})$ and $[n_{l+1}^-,
n_{l+1}^+] \subset [n_l^-, n_l^+]$.  Note that the condition that
$\mod{n^\pm_l} \geq \lambda(K + T) + \lambda\epsilon$ ensures that $C_K(\gamma')
\intersection \Ball_T(\gamma'(0))$ is a subset of $N_{r, R}(\gamma') \intersection
N_R(\gamma'\restricted{[n_l^-, n_l^+]})$.

Suppose that $n_{l - 1}^+ - n_l^+ > (k + R + 1)B^{2\eta} V^{V+1} + 2\eta$. Let
$\mathcal{Q}$ be the partition of $C_K(\gamma') \intersection \Ball_T(\gamma'(0))$
into $l$ non-empty subsets induced by connectivity in $N_{r,
R}(\gamma') \intersection N_R(\gamma'\restricted{[n_{l-1}^-, n_{l-1}^+-1]})$.
For each $t \in [n_l + \eta, n_{l-1} - \eta]$ define the following sets:
\begin{align*}
Y_t &= N_{r, R}(\gamma') \intersection \Ball_\rho(\gamma'(t)) \\
Z_t &= N_{r, R}(\gamma') \intersection \left(\Ball_\rho(\gamma'(t)) \union
  N_R(\gamma'\restricted{[n_{l-1}^-, t]})\right)
\end{align*}
and let $\mathcal{P}_t$ be the partition of the vertices of $Y_t$ into $l + 1$
subsets: $l$ corresponding to the $l$ sets in $\mathcal{Q}$ by connectivity in
$Z_t$ and one containing the part of $Y_t$ not connected to $C_K(\gamma')
\intersection \Ball_T(\gamma'(0))$ in $Z_t$.  As in the proof of
Proposition~\ref{prop:cutpairsfeature} there exist $s_1 < s_2$ in $[n^+_l + \eta,
n^+_{l-1} - \eta]$ such that such that
\begin{enumerate}
\item $h(\gamma'(s_1)) = h(\gamma'(s_2))$, so $\gamma'(s_2) = g\gamma'(s_1)$
  for some $g$ in $\Gamma$,
\item $\gamma'\restricted{[s_2-\eta,s_2+\eta]} = g\cdot\gamma'
  \restricted{[s_1-\eta,s_1+\eta]}$, which implies that $Y_{s_2} = g\cdot
  Y_{s_1}$, and
\item 
  $\mathcal{P}_{s_2} = g\cdot\mathcal{P}_{s_1}$. 
\end{enumerate}
Then replace $\gamma'$ by another path defined by
\begin{align*} \gamma''(t) = 
  \begin{dcases}
    \gamma'(t) & \text{if $t \leq s_1$}\\
    g^{-1}\cdot \gamma'(t + (s_2 - s_1)) & \text{if $t \geq s_2$} \\
  \end{dcases}
\end{align*}

This is an $(8\delta+1)$-local-geodesic since $\eta \geq 8\delta+1$. By
definition of $\rho$, the intersection of $Z_{s_2}$ and $N_{r, R}(\gamma')
\intersection N_R(\gamma'\restricted{[s_2, n_{l-1}^+]})$ is contained in
$Y_{s_2}$. Then by definition of $n_{l-1}^+$, two distinct sets in
$\mathcal{P}_{s_2}$ meet $N_{r, R}(\gamma') \intersection
N_R(\gamma'\restricted{[s_2, n_{l-1}^+]})$. Since $\eta \geq \lambda(R + r +
\epsilon)$,
\begin{align*}
  g^{-1}N_{r, R}(\gamma') \intersection N_R(\gamma'\restricted{[s_2, n_{l-1}^+]})
  = N_{r, R}(\gamma'') \intersection N_R(\gamma'\restricted{[s_1, n_{l-1}^+ -
  (s_2 - s_1)]}),
\end{align*}
and it follows that $C_K(\gamma'') \intersection \Ball_T(\gamma'(0))$ meets at
most $l-1$ components of $N_{r, R}(\gamma'') \intersection
N_R(\gamma''\restricted{[n_{l-1}^-, n_{l-1}^+ - (s_2 - s_1)]})$. This implies
that the process of replacing $\gamma'$ by $\gamma''$ leaves unchanged
$n_{l'}^+$ for all $l' < l$ and $n_{l'}^-$ for all $l'$ and strictly reduces
$n_l^+$.  Therefore by repeating this process we can assume that $\gamma'$ was
chosen to ensure that $\mod{n_{l-1}^\pm - n_l^\pm}$ is at most
$(k+R+1)B^{2\eta}V^{V+1} + 2\eta$ for all $l$, and therefore that 
\begin{align*}
  \mod{n_1^\pm} \leq \lambda(K + T + \epsilon) + (V-1)((k+R+1)B^{2\eta}V^{V+1} + 2\eta).
\end{align*}

There exist $a_1 \leq b_1 \leq n_1^- - \eta$ with $n_1^- - b_1$ and $b_1 - a_1$
both at most $(k+R+1)B^{2\eta} + 1$ such that 
\begin{enumerate}
\item $h(\gamma'(b_1)) = h(\gamma'(a_1))$, so $\gamma'(b_1) = g_1\cdot
  \gamma'(a_1)$ for some $g_1 \in \Gamma$.
\item $\gamma'\restricted{[b_1 - \eta, b_1 + \eta]} =
  g_1\cdot\gamma'\restricted{[a_1 - \eta, a_1 + \eta]}$.
\end{enumerate}
Then let $\gamma_1 = \gamma'\restricted{[a_1 - \eta, b_1+\eta]}$. Similarly
define $b_3 \geq a_3 \geq n_1^+$ and let $\gamma_3 = \gamma'\restricted{[a_3 -
\eta, b_3 + \eta]}$. Let $a_2 = b_1$ and $b_2 = a_3$ and let $\gamma_2 =
\gamma'\restricted{[a_2 - \eta, b_2 + \eta]}$; note that $b_2 - a_2 \leq N_1$.
Then the triple $(\gamma_1, \gamma_2, \gamma_3)$ is a feature in $X$ of the
first kind listed in the proposition: conditions~\ref{noncutpairshortout},
\ref{noncutpairshortin}, \ref{noncutpairdepth}, \ref{noncutpairlocal1}
and~\ref{noncutpairlocal2} clearly hold by construction and
condition~\ref{noncutpairconnected} holds because the condition that $\eta \geq
\lambda(R + r) + \lambda\epsilon$ ensures that $N_{r, R}(\gamma') \intersection
N_R(\gamma'\restricted{[n_1^-, n_1^+]})$ is a subset of $N_{r, R}(\gamma_2)
\intersection N_R(\gamma_2\restricted{[a_2, b_2]})$.

If $\gamma'$ is not contained in $X_{k+R}$ let $\gamma'^{-1}h^{-1}[0, k+R]$ be
a (possibly infinite) union of (possibly infinite) intervals $\union_{i \in
I}[a_i-R, b_i+R]$ where we order the intervals so that $a_{i+1} > b_i$ for all
$i$. Then $h(\gamma'(a_i)) = h(\gamma'(b_i)) = k$ for all $i$ and we can apply
the results of section~\ref{sec:horoballs}.  For $i \in I$ let $\gamma'_i =
\gamma'\restricted{[a_i, b_i]}$. Let $\hat{\gamma}'_i$ be the bi-infinite
$(8\delta + 1)$-local-geodesic obtained by concatenating $\gamma_i$ with
either one or two vertical geodesic rays.

Suppose that $\Lambda\hat\gamma'_i$ is a cut pair for all $i$. Then
Lemma~\ref{lem:equivalenceclass} tells us that $\Lambda\gamma'$ is a cut pair,
which is a contradiction.  Therefore there exists $i$ such that
$\Lambda\hat\gamma_i$ is a non-cut pair. 

Arguing as before, the geodesic $\hat\gamma_i$ can be altered to ensure that
$b_i - a_i \leq 4\eta + 2(k+R+1)B^{2\eta}V^{V+1}$; this yields a feature of the
second type described in the proposition.  \end{proof}

From this we deduce the following corollary:

\begin{cor}\label{cor:noncutpaircomputable} There is an algorithm that takes as
input a presentation for a hyperbolic group $\Gamma$ with generating set $S$,
a list of subsets of $S$ generating a collection $\mathcal{H}$ of maximal virtually cyclic
subgroups of $\Gamma$ and integers $\delta$, $\delta_\mathcal{H}$ and $n$ such that the
cusped space associated to $(\Gamma, \mathcal{H}, S)$ is $\delta$-hyperbolic
and satisfies $\ddag_n$ and such that the Cayley graph of each element of
$\mathcal{H}$ with respect to its given generating set is
$\delta_\mathcal{H}$-hyperbolic and returns the answer to the question ``does
$\boundary(\Gamma, \mathcal{H})$ contain a non-cut pair?'' \qed \end{cor}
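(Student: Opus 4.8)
The plan is to deduce Corollary~\ref{cor:noncutpaircomputable} from Proposition~\ref{prop:noncutpairfeature} in exactly the way Corollary~\ref{cor:cutpaircomputable} was deduced from Proposition~\ref{prop:cutpairsfeature}. The point is that Proposition~\ref{prop:noncutpairfeature} reduces the existence of a non-cut pair in $\boundary X$ to the existence of a bounded-size combinatorial feature in $X$: either a triple of short geodesic segments $\gamma_1, \gamma_2, \gamma_3$ with the listed periodicity, matching and connectivity properties (with lengths bounded by $N_1$, $N_2$ and with the $\gamma_i$ living in $X_k$), or a single short horseshoe-shaped geodesic $\gamma$ of length at most $N_3$ with $A'_{r,R,K}(\gamma)$ connected. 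All of the constants $\lambda, \epsilon, r, R, K, T, k, \rho, \eta, B, V$ and hence $N_1, N_2, N_3$ are computable from $\delta$, $\delta_{\mathcal H}$ and $n$, as recorded in Remark~\ref{rem:rKRcomputable} together with the explicit formulas in Section~\ref{subsec:cutpairs}; so the bound on the size of the feature is explicitly known once these three integers are supplied as input.

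First I would observe that, since $\Gamma$ is hyperbolic, it has solvable word problem, so from the presentation and generating set one can construct any given finite ball in the cusped space $X$: one enumerates vertices as elements of $\Gamma \times \{0\} \sqcup \bigsqcup_{H,t} (tH \times \naturals)$, decides equality of vertices and adjacency using the solution to the word problem together with the explicit edge rules of Definitions~\ref{defn:combhoroball} and~\ref{defn:cuspedspace}, and truncates at the desired radius. Next, using the computed values of the constants, I would bound the radius of the ball in $X$ (centred, say, at the identity vertex) that must contain a feature of either kind if one exists anywhere in $X$: a feature of the first kind is a bounded-length configuration of geodesic segments and an associated partition of the vertices of a bounded neighbourhood, and a feature of the second kind is a bounded-length geodesic together with the finite graph $A'_{r,R,K}(\gamma)$, whose connectivity can be checked directly since it is a finite graph (note that $A'$ is defined by deleting from $A_{r,R,K}(\hat\gamma)$ the part of the two vertical rays above height $k$, leaving a finite object). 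By homogeneity of the construction under the $\Gamma$-action, if such a feature exists at all then a translate of it lies within bounded distance of the basepoint. Then the algorithm simply enumerates all candidate features inside that finite ball, checks each of the finitely many defining conditions (which are all local and decidable), and returns ``yes'' if and only if one is found; correctness is immediate from Proposition~\ref{prop:noncutpairfeature}.

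The only subtlety is bookkeeping: one must make sure that every quantity appearing in the two feature types is bounded by a computable function of the inputs, including the search space for the auxiliary partitions $\mathcal{P}$ (there are at most $2^V$ partitions of the relevant vertex set into two pieces, which is finite) and the auxiliary connectivity data, and that checking condition~\ref{noncutpairconnected} and the connectivity of $A'_{r,R,K}(\gamma)$ are finite computations on explicitly constructible finite graphs. None of this is hard; it is the same finite-search packaging used for Corollary~\ref{cor:cutpaircomputable}, and there is no genuine obstacle beyond being careful that all constants are indeed computable, which was already arranged in Section~\ref{subsec:cutpairs} and Remark~\ref{rem:rKRcomputable}. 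Hence the corollary follows, and I would state the proof in one short paragraph: compute the constants, compute the bound on the search ball, enumerate and test all candidate features in that ball, and invoke Proposition~\ref{prop:noncutpairfeature}.
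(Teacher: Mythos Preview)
Your proposal is correct and matches the paper's approach exactly: the paper gives no proof beyond the preceding sentence ``From this we deduce the following corollary'' and a \qed, relying (as for Corollary~\ref{cor:cutpaircomputable}) on the fact that the bounded-size features of Proposition~\ref{prop:noncutpairfeature} can be searched for in a computable finite ball of $X$ using the word problem. Your write-up simply spells out the bookkeeping the paper leaves implicit.
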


\section{Splittings of groups with circular boundary}\label{sec:fuchsiangroups}

The question of the existence of a relative splitting of a hyperbolic group
cannot be answered by consideration of the topology of its boundary alone if
the boundary is homeomorphic to a circle: some groups with circular boundary
split and some do not. In this section we deal with this special case. 

\subsection{Circular boundary}

Let $\Gamma$ by a hyperbolic group with a finite collection $\mathcal{H}$ of
subgroups such that $\Gamma$ is hyperbolic relative to $\mathcal{H}$ and
$\boundary(\Gamma, \mathcal{H})$ is homeomorphic to $S^1$. Then $\Gamma$ acts
as a discrete convergence group on $\boundary(\Gamma, \mathcal{H})$
by~\cite{bowditch99c}, so by the Convergence Group Theorem of Tukia, Casson,
Jungreis and Gabai~\cite{tukia88,cassonjungreis94,gabai92} there is a properly
discontinuous action of $\Gamma$ by isometries on $\mathbb{H}^2$ and a
$\Gamma$-equivariant homeomorphism $\boundary(\Gamma, \mathcal{H}) \to
\boundary\mathbb{H}^2$.

Let $K$ be the (finite) kernel of the action of $\Gamma$ on $\boundary(\Gamma,
\mathcal{H})$; note that this is the same as the kernel of the extension of the
action to $\mathbb{H}^2$. Then $\Gamma$ is an extension

  \begin{center}\begin{tikzpicture}[>=angle 90]
  \matrix (a) [matrix of nodes, row sep=3em, column sep = 2em, text height = 
    1.5ex, text depth=0.25ex]
  {
    $1$ & $K$ & $\Gamma$ & $\Gamma'$ & $1$\\
  };
  \path[->](a-1-1) edge (a-1-2)
    (a-1-2) edge (a-1-3)
    (a-1-3) edge (a-1-4)
    (a-1-4) edge (a-1-5);
  \end{tikzpicture}\end{center}
and the quotient $\Gamma'$ acts faithfully on $\mathbb{H}^2$. Let
$\mathcal{H}'$ be the set of images of elements of $\mathcal{H}$ in $\Gamma'$.

\begin{lem}\label{lem:quotientbyeffectivekernel} With $\Gamma$ and $\Gamma'$ as
above, $\Gamma$ splits non-trivially over a virtually cyclic subgroup relative to
$\mathcal{H}$ if and only if $\Gamma'$ admits such a splitting relative to
$\mathcal{H}'$.\end{lem}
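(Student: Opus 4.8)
The plan is to move splittings back and forth across the finite-kernel extension $1 \to K \to \Gamma \to \Gamma' \to 1$ using elementary Bass--Serre theory, the key point being that the peripheral (ellipticity) condition is preserved in both directions.

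For the direction ``$\Gamma'$ splits $\Rightarrow$ $\Gamma$ splits'', I would start with an action of $\Gamma'$ on a simplicial tree $T$ without inversions, without a global fixed point, with virtually cyclic edge stabilisers, and with each element of $\mathcal{H}'$ elliptic, and pull it back along $\pi\colon\Gamma\to\Gamma'$. Since $\pi$ is surjective the induced $\Gamma$-action still has no global fixed point; since each $H\in\mathcal{H}$ acts through its elliptic image $H'\in\mathcal{H}'$, each $H$ is elliptic; and the stabiliser in $\Gamma$ of an edge $e$ is $\pi^{-1}(\stab_{\Gamma'}(e))$, an extension of the finite group $K$ by a virtually cyclic group, hence itself virtually cyclic. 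Passing to the minimal $\Gamma$-invariant subtree (nonempty since $\Gamma$ is finitely generated and fixes no point) gives a splitting in the usual sense, and projecting each $H$-fixed point onto this invariant subtree keeps it $H$-fixed, by uniqueness of nearest-point projection onto a subtree of a tree; this yields the required relative splitting of $\Gamma$.

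For the converse I would work with the fixed subtree $T^K\subseteq T$. It is nonempty because a finite group acting without inversions on a tree fixes a point (Serre), and it is $\Gamma$-invariant because $K$ is normal: if $x\in T^K$ and $g\in\Gamma$ then $k(gx)=g(g^{-1}kg)x=gx$ for all $k\in K$. Hence $\Gamma$ acts on $T^K$ with $K$ in the kernel, so the action descends to $\Gamma'$. Each edge of $T^K$ has the same $\Gamma$-stabiliser as it does in $T$, so its $\Gamma'$-stabiliser is a quotient of a virtually cyclic group and therefore virtually cyclic; and no point of $T^K$ is fixed by all of $\Gamma'$, since such a point would be a $\Gamma$-fixed point of $T$. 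For the relative condition, given $H\in\mathcal{H}$ I would take a point $p\in T$ fixed by $H$ and let $q$ be its nearest-point projection onto the $H$-invariant subtree $T^K$: for every $h\in H$, $hq$ is the projection of $hp=p$ onto $hT^K=T^K$, so $hq=q$ and $H$ (hence $H'$) fixes $q$. After replacing $T^K$ by the minimal $\Gamma'$-invariant subtree (again cutting down keeps the $H'$ elliptic, by the same projection argument) one obtains the desired relative splitting of $\Gamma'$. The one genuinely delicate point is precisely this preservation of ellipticity of the peripheral subgroups when one restricts from $T$ to $T^K$ and then to the minimal subtree; everything else is routine bookkeeping, using only that extensions and quotients of virtually cyclic groups by finite groups are virtually cyclic.
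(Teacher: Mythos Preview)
Your argument is correct and close in spirit to the paper's, but the converse direction is organised differently. The paper assumes from the outset that the $\Gamma$-tree $T$ is minimal and then argues that $K$ in fact acts trivially on all of $T$: since $K$ is finite it fixes a vertex $v$, and since $K$ is normal it fixes the whole orbit $\Gamma\cdot v$ pointwise; minimality then forces the convex hull of $\Gamma\cdot v$ to be all of $T$, so $K$ fixes $T$ pointwise and the $\Gamma$-action descends directly to a $\Gamma'$-action on $T$ itself. In that set-up ellipticity of $\mathcal{H}'$ is immediate, since each $H'$ fixes exactly what $H$ fixed. Your route via the fixed subtree $T^K$ avoids invoking minimality up front but then needs the nearest-point-projection argument to transport $H$-fixed points into $T^K$; this is a genuine (and correct) extra step, though under the minimality hypothesis it collapses, since then $T^K=T$. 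Both arguments use the same ingredients; the paper's is marginally shorter, while yours makes the preservation of the peripheral condition more explicit.
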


\begin{proof} One direction is clear: if $\Gamma'$ acts minimally on a
non-trivial tree $T$ with virtually cyclic edge stabilisers then the quotient
map $\Gamma \to \Gamma'$ induces a minimal $\Gamma$-action on $T$ with
edge stabilisers finite extensions of the edge stabilisers of the
$\Gamma'$-action. 

Conversely, suppose that $\Gamma$ acts minimally on a non-trivial tree $T$ with
virtually cyclic edge stabilisers. $K$ is finite, so the restriction of the
action to $K$ fixes a vertex $v$. $K$ is normal, so its action fixes
$\Gamma\cdot v$ pointwise. Any point in $T$ lies on a geodesic path connecting
points in $\Gamma\cdot v$, since the union of all such paths is a
$\Gamma$-invariant subtree of $T$ and the action was assumed to be minimal.
Therefore $K$ acts trivially on $T$.

It follows that the $\Gamma$-action descends to a $\Gamma'$ action, and the
edge stabilisers are quotients of the original edge stabilisers by finite
subgroups.  Furthermore, elements of $\mathcal{H}'$ are elliptic if elements of
$\mathcal{H}$ are. \end{proof}

Any finite normal subgroup of $\Gamma$ is contained in the ball of radius
$4\delta + 2$ centred at the identity by~\cite{bridsonhaefliger99}, so by
checking all finite subsets of this ball using a solution to the word problem,
the set of finite normal subgroups of $\Gamma$ can be computed. $K$ is the
unique maximal finite normal subgroup of $\Gamma$ and is therefore computable. 

As described above, $\Gamma'$ can be realised as a discrete subgroup of
$\Isom\mathbb{H}^2$. The conjugacy classes of elements of $\mathcal{H}'$ are
then precisely the conjugacy classes of maximal parabolic subgroups of
$\Gamma'$. The action of $\Gamma'$ on $\boundary(\Gamma', \mathcal{H}')$ is
minimal, so the limit set of the action is $S^1$. It follows that $\Gamma'$ is
a Fuchsian group of the first kind: $\mathbb{H}^2/\Gamma'$ is a finite volume
orbifold and $\mathcal{H}'$ is a choice of conjugacy class representatives for
the cusp subgroups. Truncating the cusps of the orbifold, we realise $\Gamma'$
as the fundamental group of a compact hyperbolic orbifold such that
$\mathcal{H}'$ is a choice of conjugacy class representatives for the boundary
subgroups.  

\begin{defn}\label{defn:boundedFuchsian} A group is~\emph{bounded Fuchsian} if
it acts properly discontinuously and convex cocompactly (i.e.\ cocompactly on
a convex subset) on $\mathbb{H}^2$. Then a group is bounded Fuchsian if and
only if it surjects with finite kernel onto the fundamental group of a compact
two-dimensional hyperbolic orbifold. The \emph{peripheral
subgroups} of a bounded Fuchsian group are the fundamental groups of the
boundary components of the associated orbifold.\end{defn}

Therefore we have shown that $\Gamma'$ is a bounded Fuchsian group and
$\mathcal{H}'$ is a collection of representatives of its conjugacy classes of
peripheral subgroups. This proves the following lemma:

\begin{lem}\label{lem:grouptoorbifold} There is an algorithm that, when given a
presentation for a hyperbolic group with a set $\mathcal{H}$ of virtually
cyclic peripheral subgroups, returns a presentation for another hyperbolic
group $\Gamma'$ with a set $\mathcal{H}'$ of virtually cyclic subgroups such
that $\boundary(\Gamma', \mathcal{H}')$ is homeomorphic to $\boundary(\Gamma,
\mathcal{H})$ and $\Gamma$ splits over a virtually cyclic subgroup relative
to $\mathcal{H}$ if and only if $\Gamma'$ splits over a virtually cyclic
subgroup relative to $\mathcal{H}'$. Furthermore, if $\boundary(\Gamma',
\mathcal{H}')$ is homeomorphic to a circle then $\Gamma'$ is the fundamental
group of a compact two-dimensional hyperbolic orbifold and $\mathcal{H}'$ is
a set of representatives of fundamental groups of components of the boundary of
the orbifold.  \end{lem}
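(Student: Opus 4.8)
The plan is to assemble Lemma~\ref{lem:grouptoorbifold} from the pieces already set up in this subsection: the construction of $\Gamma'$ as the quotient of $\Gamma$ by its maximal finite normal subgroup, the identification of the boundary-relative splittings of $\Gamma$ with those of $\Gamma'$, and the structure theory of groups with circular Bowditch boundary. First I would make the construction algorithmic. Given the presentation for $\Gamma$ and the generators of the subgroups in $\mathcal{H}$, compute the hyperbolicity constant $\delta$ for $\Cay(\Gamma, S)$ (this is classical for hyperbolic groups) and enumerate all subsets of $\Ball_{4\delta+2}(1)$; using a solution to the word problem, check which of these are subgroups and which are normal. Every finite normal subgroup lies in this ball by \cite{bridsonhaefliger99}, so this finds all of them; the unique maximal one is $K$. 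Then $\Gamma' = \Gamma/K$ has an explicit presentation (add the finitely many generators of $K$ as relators), and $\mathcal{H}'$ is the list of images of the generating sets of the $H \in \mathcal{H}$. Since $K$ is finite, $\Gamma'$ is still hyperbolic and each subgroup in $\mathcal{H}'$ is still virtually cyclic.

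Next I would record why the boundary and splitting properties transfer. The kernel $K$ acts trivially on $\boundary(\Gamma, \mathcal{H})$: this is the defining property of the maximal finite normal subgroup in the circular-boundary case, but more robustly, $K$ is finite and hence acts with bounded orbits on the cusped space, so it acts trivially on the Gromov boundary of that space. Thus $\boundary(\Gamma, \mathcal{H})$ is naturally a $\Gamma'$-space, and one checks it is (equivariantly homeomorphic to) $\boundary(\Gamma', \mathcal{H}')$ by comparing cusped spaces, or by invoking that a finite-kernel quotient does not change the relatively hyperbolic boundary. The equivalence of the existence of a virtually cyclic splitting relative to $\mathcal{H}$ versus $\mathcal{H}'$ is exactly Lemma~\ref{lem:quotientbyeffectivekernel}, whose proof is given above: finite kernels are elliptic in any action on a tree, and a minimal $\Gamma$-action on a tree with $K$ acting trivially descends to a minimal $\Gamma'$-action with the same ellipticity data.

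Finally I would deal with the ``furthermore'' clause, which is where most of the actual content sits. Suppose $\boundary(\Gamma', \mathcal{H}')$ is homeomorphic to $S^1$. By \cite{bowditch99c} the action of $\Gamma'$ on its boundary is a discrete convergence action, so by the Convergence Group Theorem \cite{tukia88,cassonjungreis94,gabai92} we get a properly discontinuous isometric action of $\Gamma'$ on $\mathbb{H}^2$ with boundary $S^1$; since $\Gamma'$ was already arranged to act faithfully on its boundary, this action is faithful, realising $\Gamma'$ as a discrete subgroup of $\Isom\mathbb{H}^2$. The action on $\boundary(\Gamma', \mathcal{H}')$ is minimal, so the limit set is all of $S^1$, i.e.\ $\Gamma'$ is Fuchsian of the first kind; hence $\mathbb{H}^2/\Gamma'$ is a finite-volume hyperbolic $2$-orbifold and the conjugacy classes in $\mathcal{H}'$, being the maximal parabolic subgroups, are exactly the cusp subgroups. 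Truncating the cusps realises $\Gamma'$ as the fundamental group of a compact hyperbolic $2$-orbifold with $\mathcal{H}'$ the boundary subgroups, which is the bounded Fuchsian conclusion of Definition~\ref{defn:boundedFuchsian}. The main obstacle is not any single deep step but rather checking that the hypothesis ``$\Gamma'$ acts faithfully on its boundary'' — guaranteed by the construction of $\Gamma'$ as the quotient by the kernel of the boundary action — is genuinely what the Convergence Group Theorem needs; once faithfulness and discreteness of the convergence action are in hand the identification with an orbifold fundamental group is standard Fuchsian-group theory. I would also be slightly careful that the algorithm of the first part does not require knowing in advance whether the boundary is a circle: it does not, since computing $K$ and forming $\Gamma'$ is unconditional, and the ``furthermore'' clause is merely a property enjoyed by the output when the boundary happens to be circular.
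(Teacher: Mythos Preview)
Your approach is exactly the paper's: the ``proof'' in the paper is precisely the discussion preceding the lemma, and you have reconstructed it faithfully --- compute the maximal finite normal subgroup $K$ inside $\Ball_{4\delta+2}(1)$, pass to $\Gamma' = \Gamma/K$, invoke Lemma~\ref{lem:quotientbyeffectivekernel} for the splitting equivalence, and then in the circular case apply the Convergence Group Theorem and the finite-covolume Fuchsian machinery. Your observation that the construction of $\Gamma'$ is unconditional (independent of whether the boundary is circular) is correct and worth making explicit.

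There is one genuine slip. You argue that $K$ acts trivially on $\boundary(\Gamma,\mathcal{H})$ because ``$K$ is finite and hence acts with bounded orbits on the cusped space, so it acts trivially on the Gromov boundary of that space.'' This inference is false: a finite group acting isometrically on a hyperbolic space always has bounded orbits, yet can act nontrivially on the boundary --- think of a finite-order rotation of $\mathbb{H}^2$ about an interior point, which acts freely on $S^1$. What makes $K$ act trivially is \emph{normality}, not finiteness: the fixed-point set $\mathrm{Fix}(K)\subset\boundary(\Gamma,\mathcal{H})$ is closed and $\Gamma$-invariant (since $K$ is normal), and the $\Gamma$-action on its Bowditch boundary is minimal, so $\mathrm{Fix}(K)$ is either empty or everything. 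One then checks it is nonempty (for instance, any loxodromic $g$ has a power commuting with all of the finite group $K$, so $K$ preserves the pair $\{g^+,g^-\}$; combining several such pairs forces a genuine fixed point). Equivalently, and closer to the paper's logic, one may simply \emph{define} $K$ as the kernel of the boundary action, observe it is finite (an infinite normal subgroup of a non-elementary convergence group would have full limit set), and then identify it with the maximal finite normal subgroup for the purposes of the algorithm. Either way, you need normality plus minimality here; bounded orbits alone do not suffice, and this matters because faithfulness of the $\Gamma'$-action on $S^1$ is exactly what lets you embed $\Gamma'$ discretely in $\Isom\mathbb{H}^2$.
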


\subsection{Orbifolds}

Given the results of the previous section, the problem of determining whether
or not a given group $\Gamma$ with circular boundary splits non-trivially over
a virtually cyclic subgroup relative to a collection $\mathcal{H}$ of
peripheral subgroups reduces to the case in which $\Gamma$ is a bounded
Fuchsian group and $\mathcal{H}$ is a set of conjugacy class representatives of
its peripheral subgroups. Let $\Gamma = \pi_1Q$ where $Q$ is a compact
hyperbolic orbifold, so $\mathcal{H}$ is a set of conjugacy class
representatives of the boundary subgroups of $Q$.

We recall some terminology to describe orbifolds. The universal cover of a
hyperbolic orbifold (possibly with boundary) is a convex subspace $\tilde{Q}$
of $\mathbb{H}^2$. The orbifold fundamental group $\pi_1Q$ acts properly
discontinuously and by isometries on $\tilde{Q}$. The \emph{underlying
topological surface} $Q_\text{top}$ of $Q$ is defined to be the topological
quotient of $\tilde{Q}$ by this action. The \emph{topological boundary}
$\boundary_\text{top}$ of $Q$ is the boundary of $Q_\text{top}$, while the
\emph{orbifold boundary} $\boundary Q$ of $Q$ is the image of the boundary of
$\tilde{Q}$ in $Q_\text{top}$ under the quotient projection. A \emph{mirror} in
$Q$ is the image in $Q_\text{top}$ of the set of fixed points of an order 2
orientation reversing isometry of $\tilde{Q}$ in $\pi_1Q$. All mirrors in $Q$
are contained in $\boundary_\text{top}Q$ and any mirror is homeomorphic either
to a circle or to an interval. In the latter case each end point of the
interval is contained in either $\boundary Q$ or in another mirror. The
intersection of two mirrors is called a \emph{corner reflector}; the stabiliser
in $\pi_1Q$ of the preimage in $\tilde{Q}$ of a corner reflector is a finite
dihedral group. If $x$ is a point in $\tilde{Q}$ whose stabiliser in $\pi_1Q$
is non-trivial, cyclic and orientation preserving then the image of $x$ in
$Q_\text{top}$ is called a \emph{cone point}. The \emph{singular locus} of $Q$
is the union of all mirrors, corner reflectors and cone points. If $y$ is not
contained in the singular locus of $Q$ then any preimage of $y$ in $\tilde{Q}$
has trivial stabiliser in $\pi_1Q$.

A \emph{geodesic} in $Q$ is a curve $\gamma$ that is the image of a geodesic
$\tilde\gamma$ in $\tilde{Q}$ under the quotient projection. A geodesic
$\gamma$ is \emph{closed} if it is compact and \emph{simple} if it is closed
and furthermore $g\cdot\tilde\gamma$ is either equal to or disjoint from
$\tilde\gamma$ for each $g$ in $\pi_1Q$. A simple closed geodesic is
homeomorphic to either a circle or an interval with each of its end points
contained in a mirror in $Q$. A simple closed geodesic is \emph{essential} if
it is not contained in $\boundary_\text{top}Q$. For more information about
orbifolds see~\cite{scott83}.

The theory of splittings of fundamental groups of orbifolds relative to their
boundary subgroups is developed in~\cite{guirardellevitt16}; we recall the
following results:

\begin{lem}\cite[Corollary 5.6]{guirardellevitt16}\label{lem:splitiffnonsmall}
$\Gamma$ splits non-trivially relative to $\mathcal{H}$ if and only if $Q$
contains an essential simple closed geodesic.\end{lem}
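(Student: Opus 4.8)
The plan is to prove this classical fact directly: for a $2$--orbifold group, a splitting relative to the peripheral subgroups is precisely one that is dual to an essential simple closed curve on $Q$. I would argue the forward implication by Bass--Serre theory and the converse using the convergence action of $\Gamma$ on $\boundary(\Gamma,\mathcal{H})=S^1$. For the forward implication, suppose $Q$ contains an essential simple closed geodesic $c$. Its full preimage $\tilde c$ in $\tilde Q\subseteq\mathbb{H}^2$ is a $\pi_1 Q$--invariant family of pairwise disjoint bi-infinite geodesics --- disjointness of the lines being exactly the simplicity of $c$ --- and we let $T$ be the tree dual to this pattern, with one vertex for each closure of a component of $\tilde Q\setminus\tilde c$ and one edge for each line of $\tilde c$. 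Then $\pi_1 Q$ acts on $T$: an edge stabiliser is the stabiliser of a geodesic line, hence two--ended (infinite cyclic or infinite dihedral), in particular virtually cyclic, while a vertex stabiliser is the fundamental group of a component of $Q$ cut along $c$. A simple closed geodesic bounds neither a disc nor a disc with a single cone point, and is not freely homotopic to any boundary component (distinct closed geodesics are never freely homotopic); hence no complementary piece is degenerate, which is exactly the statement that the associated graph--of--groups decomposition of $\pi_1 Q$ is non--trivial. Finally $c$ may be taken in the interior of $Q$, so each lift of a boundary component is disjoint from $\tilde c$ and hence lies in one complementary component; every peripheral subgroup therefore fixes a vertex of $T$, so the splitting is relative to $\mathcal{H}$.

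For the converse, suppose $\Gamma=\pi_1 Q$ acts minimally and without a global fixed point on a tree $T$ with virtually cyclic edge stabilisers, relative to $\mathcal{H}$. If some edge stabiliser were finite we would have a free product decomposition of $\Gamma$ relative to $\mathcal{H}$; the $2$--orbifold groups admitting such a decomposition are limited and are dealt with by direct inspection (cf.\ the list of small orbifolds in \cite{guirardellevitt16}), so we may assume every edge stabiliser is two--ended. Realise $\Gamma$ as a Fuchsian group on $\mathbb{H}^2$ with limit set $\boundary(\Gamma,\mathcal{H})=S^1$. For an edge $e$ with stabiliser $C$, the limit set $\Lambda C$ is a pair $\{c_+,c_-\}$ of points of $S^1=\boundary\mathbb{H}^2$; let $A_e$ be the geodesic of $\mathbb{H}^2$ joining them, and set $\mathcal{A}=\{gA_e : g\in\Gamma,\ e\ \text{an edge of}\ T\}$, a $\Gamma$--invariant family. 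The crux is that the lines of $\mathcal{A}$ are pairwise non--crossing: removing $e$ from $T$ separates it into two half--trees whose limit sets are the closures of the two arcs of $S^1\setminus\{c_+,c_-\}$ --- this uses the convergence action on $S^1$ and quasiconvexity of $C$, in the spirit of \cite{bowditch99c} --- and since any other edge $e'$ of $T$ lies on one definite side of $e$, both endpoints of $A_{e'}$ lie in the closure of a single one of these arcs, so $A_e$ and $A_{e'}$ do not link. A $\Gamma$--invariant non--crossing family of geodesic lines projects to a finite family of disjoint simple closed geodesics in $Q$, understood in the orbifold sense of this section (one must track how a line of $\mathcal{A}$ runs through mirrors and cone points of $Q$). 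The tree dual to this curve system recovers the given splitting up to edge collapses and subdivisions, so by non--triviality at least one of the curves fails to bound a disc, a disc with one cone point, or an annulus; that curve is an essential simple closed geodesic.

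The forward direction is routine once one records that a simple closed geodesic is never degenerate in the above sense. The substance is in the converse, and above all in the non--crossing step --- converting ``$\Gamma$ acts on a tree'' into ``the associated axes in $\mathbb{H}^2$ do not link'' --- together with the bookkeeping needed to match the abstract splitting with the geometric decomposition dual to the curve system so that non--triviality and the relative condition transfer cleanly. The other delicate point is the interaction with torsion: showing that an edge group is, up to conjugacy, the stabiliser of a genuinely embedded simple closed geodesic, even when that geodesic is an arc ending on mirrors of $Q$. For both of these I would ultimately defer to the detailed analysis of splittings of $2$--orbifold groups in \cite{guirardellevitt16}.
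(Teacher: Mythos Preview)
The paper does not prove this lemma; it is simply quoted as \cite[Corollary~5.6]{guirardellevitt16}. There is therefore no argument in the paper to compare against, and your sketch must be assessed on its own as an independent proof of the cited result.

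Your outline is the standard one and is essentially sound. The forward direction (curve gives splitting via the dual tree) is routine. For the converse, the key non-crossing step is correct in spirit: an edge $e$ of $T$ induces a decomposition of $\boundary(\Gamma,\mathcal{H})-\Lambda\Gamma_e$ into two open sets---this is exactly the mechanism used in the proof of Proposition~\ref{prop:relativesplittingimpliescutpair} in this paper---and here those sets are the two open arcs of $S^1$ determined by $\Lambda\Gamma_e$; any other edge stabiliser, being elliptic on one side of $e$, has its limit pair contained in the closure of a single arc, so the corresponding axes in $\mathbb{H}^2$ do not link.

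Two remarks. First, your detour through finite edge stabilisers is unnecessary in this setting: since $\boundary(\Gamma,\mathcal{H})\cong S^1$ is connected, $\Gamma$ admits no splitting over a finite subgroup relative to $\mathcal{H}$, so every edge group is automatically two-ended. Second, you concede at the end that for the orbifold bookkeeping---matching the abstract tree with the geometric curve system, handling mirrors and cone points, verifying local finiteness of the axis family, and checking that non-triviality of the splitting forces essentiality of at least one curve---you would ``ultimately defer to the detailed analysis \ldots\ in \cite{guirardellevitt16}''. Since the lemma \emph{is} that result, this is circular. If your aim is a self-contained proof, those details must actually be supplied; if not, you may as well cite the result outright, as the paper does.
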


\begin{defn} We call a compact orbifold without an essential simple closed geodesic 
\emph{small}.\end{defn}

\begin{prop}\cite[Proposition 5.12]{guirardellevitt16}\label{prop:characterisationofsmall}
A hyperbolic 2-orbifold $Q$ is small if and only if it is one of the following. 
\begin{enumerate}
  \item A sphere with three cone points, so $\pi_1Q \isom \langle a, b \vert
    a^p, b^q, ab^r\rangle$ where $p^{-1}+q^{-1}+r^{-1} < 1$ and the peripheral
    structure is empty.
  \item A triangle, all three edges of which are mirrors, so $\pi_1Q$ has
    presentation $\langle a, b, c \vert a^2, b^2, c^2, (ab)^p, (bc)^q,
    (ca)^r\rangle$ where $p^{-1}+q^{-1}+r^{-1} < 1$ and the peripheral
    structure is empty.
  \item A disc with two cone points, so $\pi_1Q \isom \langle a, b \vert a^p,
    b^q\rangle$ where $p, q > 1$ and the peripheral structure is $\{\langle
    ab\rangle\}$.
  \item A cylinder with one cone point, so $\pi_1Q \isom \langle a, b \vert
    (ab)^p\rangle$ where $p >1$ and the peripheral structure is $\{\langle
    a\rangle, \langle b\rangle\}$.
  \item A pair of pants, so $\pi_1Q$ is the free group $\langle a, b
    \vert\rangle$ and the peripheral structure is $\{\langle a\rangle, \langle
    b\rangle, \langle c\rangle\}$.
  \item A disc with one cone point with edge consisting of an interval boundary
    component and a mirror, so $\pi_1Q \isom \langle a, t \vert a^2,
    t^p\rangle$ where $p > 1$ and the peripheral structure is $\{\langle a,
    tat^{-1}\rangle\}$.
  \item A square with an interval boundary component and three mirrors edges,
    so $\pi_1Q \isom \langle a, b, c\vert a^2, b^2, c^2, (ab)^p, (bc)^q\rangle$
    where $p + q \geq 1$ and the peripheral structure is $\{\langle a,
    c\rangle\}$.
  \item An annulus in which one edge comprises an interval boundary component and a
    mirror and the other is a circular boundary component, so $\pi_1Q \isom
    \langle a, t \vert a^2\rangle$ with peripheral structure $\{\langle a,
    tat^{-1}\rangle\}$.
  \item A pentagon with two non-adjacent interval boundary components and three
    mirrors as edges, so $\pi_1Q \isom \langle a, b, c \vert a^2, b^2, c^2,
    (ab)^p\rangle$ with peripheral structure $\{\langle b, c\rangle, \langle c,
    a\rangle\}$.
  \item A hexagon, the six edges of which are alternately interval boundary
    components and mirrors, so $\pi_1Q \isom \langle a, b, c \vert a^2, b^2,
    c^2\rangle$ and the peripheral structure is $\{\langle a, b\rangle, \langle
    b, c\rangle, \langle c, a\rangle\}$.
\end{enumerate}
\end{prop}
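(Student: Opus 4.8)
The plan is to invoke Lemma~\ref{lem:splitiffnonsmall}, which reduces smallness of $Q$ to the geometric statement that $Q$ carries no essential simple closed geodesic, and then to classify such $Q$ by first passing to the orientation double cover. Let $\pi\colon\widehat Q\to Q$ be that cover, with deck involution $\tau$; then $\widehat Q$ is an orientable compact hyperbolic $2$-orbifold --- a compact surface with finitely many cone points --- obtained from $Q$ by lifting corner reflectors to cone points and lifting mirrors and interval boundary components to two-sided simple closed geodesics or to boundary circles. I would first show that $Q$ is small if and only if $\widehat Q$ is. In one direction, an essential simple closed geodesic of $Q$, which may be a closed curve or an arc with endpoints on mirrors, pulls back to a $\tau$-invariant essential simple closed curve of $\widehat Q$: if its preimage bounded a disc with at most one cone point or a boundary-parallel annulus, then $\tau$-invariance of the bounding curve would force that region either to be $\tau$-invariant, contradicting essentiality downstairs, or to be exchanged with its complement, making $\widehat Q$ a sphere with at most two cone points, hence non-hyperbolic. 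In the other direction, an essential simple closed curve in $\widehat Q$ can be isotoped by innermost-disc surgery to meet its $\tau$-translate minimally, and minimality (again using hyperbolicity to exclude the degenerate cases) forces it to be $\tau$-invariant or disjoint from its translate, so that it descends to an essential simple closed geodesic of $Q$.

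The second step is the orientable case. For a compact surface of genus $g$ with $b$ boundary circles and $c$ cone points and negative orbifold Euler characteristic, I would use the classical fact that there is an essential, non-boundary-parallel simple closed geodesic unless $(g,b,c)$ is one of $(0,0,3)$, $(0,1,2)$, $(0,2,1)$ or $(0,3,0)$, that is, unless $\widehat Q$ is a sphere with three cone points, a disc with two cone points, a cylinder with one cone point, or a pair of pants. The existence statement when $3g+b+c\geq 4$ is the usual complexity argument: one exhibits an essential simple closed curve --- for instance a non-separating curve when $g\geq 1$, or a curve cutting off two cone points, or a curve cutting off a boundary circle together with a cone point --- and checks in each case that it is neither boundary-parallel nor the boundary of a (necessarily non-hyperbolic) disc with at most one cone point. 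Each of the four exceptions is handled directly, using the fact that a disc with at most one cone point cannot occur as a hyperbolic sub-orbifold: every essential simple closed curve on such an orbifold is isotopic into the boundary. These four orbifolds are exactly cases (1), (3), (4) and (5) of the proposition.

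The third step reverses the correspondence of the first: $Q$ is small exactly when $\widehat Q$ is one of the four orbifolds just listed, so $Q$ is either one of those four or the quotient of one of them by an orientation-reversing involution. I would then enumerate, up to conjugacy in the isometry group, the orientation-reversing involutions of the sphere with three cone points, the disc with two cone points, the cylinder with one cone point and the pair of pants, form the quotient orbifolds together with their induced peripheral structures, and discard any quotient that fails to be a hyperbolic orbifold. This should produce exactly cases (2), (6), (7), (8), (9) and (10): for instance case (2) is the quotient of the sphere with three cone points by reflection in the circle through its three cone points, and the remaining five arise similarly from the orientation-reversing involutions of the disc with two cone points, the cylinder with one cone point and the pair of pants. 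The ``if'' direction of the proposition is then immediate: each of the ten orbifolds, being one of the four of the second step or such a quotient, is small by the second and first steps.

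I expect the real work to be the bookkeeping in the first and third steps. One must treat an arc joining two mirrors, or a mirror to itself, on exactly the same footing as an ordinary simple closed curve, correctly recognise which curves and arcs are inessential --- a loop around a single corner reflector, or an arc cutting a half-disc off a mirror, is inessential precisely because the region it cuts off is non-hyperbolic --- and, most delicately, verify that the enumeration of conjugacy classes of orientation-reversing involutions of the four basic orbifolds is both exhaustive and non-redundant, so that the list of ten is exactly right. The surgery argument matching smallness of $Q$ with smallness of $\widehat Q$ also depends essentially on the hyperbolicity hypothesis to eliminate the degenerate spherical and Euclidean sub-orbifolds that would otherwise intervene.
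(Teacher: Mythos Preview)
The paper does not prove this proposition at all: it is quoted verbatim as \cite[Proposition~5.12]{guirardellevitt16} and used as a black box, so there is no proof in the paper to compare your proposal against.

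That said, your outline via the orientation double cover is a standard and workable route to the classification, and you have correctly identified where the labour lies: the equivalence ``$Q$ small $\Leftrightarrow$ $\widehat Q$ small'' and the enumeration of orientation-reversing involutions of the four orientable models. One point deserves more care than you give it. In the direction ``$\widehat Q$ not small $\Rightarrow$ $Q$ not small'', once you have made an essential simple closed curve $\gamma\subset\widehat Q$ either $\tau$-invariant or disjoint from $\tau\gamma$, you still have to argue that its image in $Q$ is essential in the sense used here (a simple closed geodesic not contained in $\partial_{\mathrm{top}}Q$). When $\gamma$ is $\tau$-invariant but not pointwise fixed, its image may be a one-sided curve or an arc between mirrors rather than a two-sided closed curve, and when $\gamma$ and $\tau\gamma$ are disjoint you must rule out that $\pi(\gamma)$ is boundary-parallel or bounds a disc with at most one cone point; both exclusions use hyperbolicity of the complementary pieces, as you note, but the case analysis is a little more involved than your sketch suggests. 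The enumeration in step three is genuinely finite and routine, but you should be explicit that an involution of, say, the pair of pants may permute the boundary circles, which is what distinguishes cases (8), (9), (10) from one another.
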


\begin{lem}\label{lem:identifyingsmallorbifolds} There is an algorithm that
  takes as input a presentation for a hyperbolic group $\Gamma$ and a
  collection $\mathcal{H}$ of maximal virtually cyclic peripheral subgroups and
  terminates if and only if $\boundary(\Gamma, \mathcal{H})$ is homeomorphic to
  a circle and $\Gamma$ does not split relative to $\mathcal{H}$ over a
  virtually cyclic subgroup.
\end{lem}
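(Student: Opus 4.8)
The strategy is to combine the semialgebraic reduction of Lemma~\ref{lem:grouptoorbifold} with the explicit classification of small orbifolds in Proposition~\ref{prop:characterisationofsmall}. First I would run the algorithm of Theorem~\ref{thm:S1boundarycomputable} to decide whether $\boundary(\Gamma, \mathcal{H})$ is homeomorphic to $S^1$; if it is not, the desired algorithm never terminates, so we simply enter an infinite loop (or, equivalently, run a divergent computation). If the answer is yes, then by Lemma~\ref{lem:grouptoorbifold} we may algorithmically produce a presentation for a hyperbolic group $\Gamma'$ together with a collection $\mathcal{H}'$ of virtually cyclic peripheral subgroups such that $\Gamma'$ is the fundamental group of a compact hyperbolic $2$-orbifold $Q$, the collection $\mathcal{H}'$ represents the boundary subgroups of $Q$, and $\Gamma$ splits over a virtually cyclic subgroup relative to $\mathcal{H}$ if and only if $\Gamma'$ splits over a virtually cyclic subgroup relative to $\mathcal{H}'$. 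By Lemma~\ref{lem:splitiffnonsmall}, this happens precisely when $Q$ is not small.

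It therefore remains to decide, given the pair $(\Gamma', \mathcal{H}')$, whether the associated orbifold is one of the ten types listed in Proposition~\ref{prop:characterisationofsmall}. Since each of these types is specified up to isomorphism by a presentation depending on finitely many integer parameters $p, q, r$ (subject to the stated inequalities) together with an explicitly described peripheral structure, the set of such pairs $(\pi_1 Q, \text{peripheral structure})$ is recursively enumerable: one enumerates all admissible parameter values and writes down the corresponding presentation and peripheral subgroups. In parallel, one can run a search for an isomorphism between $(\Gamma', \mathcal{H}')$ and each candidate from this list, together with the corresponding data on the other side. The key point is that the isomorphism problem for hyperbolic groups is solvable (by Dahmani--Guirardel, building on Sela), and more precisely that one can decide whether two hyperbolic groups equipped with finite lists of (quasi-convex) subgroups are isomorphic by an isomorphism respecting those lists up to conjugacy; alternatively, since the candidate groups here are all virtually surface groups of a very restricted form, one may verify the match by comparing the associated orbifolds directly (Euler characteristic, number and orders of cone points, number and combinatorics of boundary and mirror components, all of which are computable invariants of $(\Gamma', \mathcal{H}')$ once it is realised as an orbifold group). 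If $(\Gamma', \mathcal{H}')$ matches one of the small types, the algorithm halts; otherwise it keeps searching and, because $Q$ is then not small, never halts — which is exactly the required behaviour.

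Assembling these pieces: the overall algorithm runs the $S^1$-recognition algorithm (if the boundary is not a circle, diverge), then applies Lemma~\ref{lem:grouptoorbifold} to pass to $(\Gamma', \mathcal{H}')$, then dovetails a search over the recursively enumerable list of small-orbifold data from Proposition~\ref{prop:characterisationofsmall}, halting exactly when a match is found. By Lemmas~\ref{lem:splitiffnonsmall} and~\ref{lem:grouptoorbifold} a match is found if and only if $\Gamma$ does not split over a virtually cyclic subgroup relative to $\mathcal{H}$, so the algorithm terminates if and only if $\boundary(\Gamma, \mathcal{H})$ is a circle and $\Gamma$ does not so split, as claimed.

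The main obstacle is the matching step: one needs a genuinely effective way to certify that $(\Gamma', \mathcal{H}')$ is isomorphic, as a pair, to one of the listed small-orbifold pairs. The cleanest route is to note that once we know $\Gamma'$ is a bounded Fuchsian group with peripheral structure $\mathcal{H}'$, the data of the orbifold $Q$ (its underlying surface, boundary pattern, mirrors, corner reflectors, cone points with their orders) is determined by $(\Gamma', \mathcal{H}')$ and can be extracted algorithmically — for instance via Euler-characteristic and torsion computations, or by invoking the solution to the isomorphism problem for such groups — after which comparison against the finite list in Proposition~\ref{prop:characterisationofsmall} is immediate. I expect the details of this extraction to be the only place requiring care; everything else is bookkeeping combined with the cited results.
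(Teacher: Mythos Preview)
Your overall strategy is sound and reaches the right conclusion, but it is more circuitous than the paper's proof and leans on results whose place in the logical order you should be careful about.

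The paper's argument is shorter: it applies Lemma~\ref{lem:grouptoorbifold} straight away to obtain $(\Gamma', \mathcal{H}')$ (that algorithm runs on any input, not just those with circular boundary), and then simply enumerates in parallel all small-orbifold pairs from Proposition~\ref{prop:characterisationofsmall} together with all homomorphisms in both directions between $\Gamma'$ and those groups, halting when an inverse pair respecting peripheral structure up to conjugacy is found. This needs only a solution to the word problem in hyperbolic groups. The point is that such a match exists if and only if $\boundary(\Gamma,\mathcal{H})\cong S^1$ \emph{and} $\Gamma$ does not split relative to $\mathcal{H}$, since any isomorphism of $(\Gamma',\mathcal{H}')$ with a listed pair forces the boundary to be a circle. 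Thus the $S^1$-recognition step is absorbed into the search and need not be carried out separately.

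Two cautions about your version. First, invoking Theorem~\ref{thm:S1boundarycomputable} here is a forward reference: in the paper that theorem is established only after Proposition~\ref{prop:isthereasplitting}, whose algorithm calls the present lemma. One can in fact prove Theorem~\ref{thm:S1boundarycomputable} independently from the cut-point and non-cut-pair detectors of Section~\ref{sec:cutpairsalgorithms}, but you would have to make that explicit to avoid circularity. Second, appealing to the Dahmani--Guirardel solution of the isomorphism problem for the matching step runs counter to the aim of the paper and is itself potentially circular, since their algorithm relies on computing a JSJ decomposition. The paper's enumeration-of-homomorphisms trick, or your alternative of reading off the orbifold invariants directly, both avoid this; the former is the simpler route and dissolves the ``main obstacle'' you identify.
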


\begin{proof} First use the algorithm of Lemma~\ref{lem:grouptoorbifold} and
let the output of that algorithm be $(\Gamma', \mathcal{H}')$; then
$\boundary(\Gamma, \mathcal{H})$ is homeomorphic to a circle and $\Gamma$
does not split over a virtually cyclic subgroup relative to $\mathcal{H}$ if
and only if there is an isomorphism from $\Gamma'$ to one of the groups with
peripheral structure listed in proposition~\ref{prop:characterisationofsmall}
that maps elements of $\mathcal{H}'$ to conjugates of elements of that
peripheral structure.
  
Enumerate the groups and peripheral structures described in
Proposition~\ref{prop:characterisationofsmall} and, in parallel, enumerate
all homomorphisms from these groups to $\Gamma'$ and homomorphisms from
$\Gamma'$ to these groups. Note that this is possible since one can test
whether or not a map defined on the generators of a group extends to a
homomorphism using a solution to the word problem in the codomain of the map,
and $\Gamma'$ and all groups listed in
Proposition~\ref{prop:characterisationofsmall} are hyperbolic. The algorithm
then terminates when an inverse pair of such maps that preserve the peripheral
structures (up to conjugacy) is found.  \end{proof}

\section{Maximal splittings}\label{sec:maximal}

We now apply the technical results of sections~\ref{sec:cutpairsalgorithms}
and~\ref{sec:fuchsiangroups} to the problem of finding a maximal splitting of a
one-ended hyperbolic group.

\subsection{JSJ decompositions}

We begin by recalling the definition a JSJ decomposition. For a more detailed
account see~\cite{guirardellevitt16}. Let $\Gamma$ be a group and let
$\mathcal{A}$ be a collection of subgroups of $\Gamma$ that is closed under
conjugation and taking subgroups. Let $\mathcal{H}$ be another collection of
subgroups of $\Gamma$. An \emph{$(\mathcal{A}, \mathcal{H})$-tree} is a tree
with a $\Gamma$-action with no edge inversions in which the stabiliser of each
edge is in $\mathcal{A}$ and each element of $\mathcal{H}$ is \emph{elliptic},
that is, it fixes a point in the tree. An $(\mathcal{A}, \mathcal{H})$-tree $T$
\emph{dominates} another such tree $T'$ if there is a $\Gamma$-equivariant
morphism from $T$ to $T'$.  $T$ is \emph{elliptic with respect to $T'$} if the
stabiliser of each edge in $T$ is elliptic in $T'$. For $v$ a vertex of $T$ we
denote by $\Gamma_v$ the stabiliser of $v$, and similarly if $e$ is an edge of
$T$ we let $\Gamma_e$ denote the stabiliser of $e$. We will sometimes refer to
a $(\mathcal{A}, \mathcal{H})$-tree as a \emph{splitting of $\Gamma$ over
$\mathcal{A}$ relative to $\mathcal{H}$}. A splitting is \emph{trivial} if
$\Gamma$ is elliptic. We will frequently implicitly move between the languages
of group actions on trees and graphs of groups.

Fixing $\mathcal{A}$ and $\mathcal{H}$, a subgroup of $\Gamma$ is
\emph{universally elliptic} if it is elliptic with respect to any
$(\mathcal{A}, \mathcal{H})$-tree. An $(\mathcal{A}, \mathcal{H})$-tree is
universally elliptic if the stabiliser of each if its edges is universally
elliptic. An $(\mathcal{A}, \mathcal{H})$ tree is a \emph{JSJ tree} if it is
universally elliptic and is maximal for domination among universally elliptic
$(\mathcal{A}, \mathcal{H})$ trees. 

\subsection{The Bowditch JSJ}

In~\cite{bowditch98} Bowditch defines a canonical JSJ tree $\Sigma$ for a
one-ended hyperbolic group $\Gamma$ where $\mathcal{A}$ is the set of virtually
cyclic subgroups of $\Gamma$ and $\mathcal{H}$ is empty. (To ensure that
$\mathcal{A}$ is closed under taking subgroups we should strictly speaking also
include the finite subgroups of $\Gamma$ in $\mathcal{A}$. However, since
$\Gamma$ is assumed to be one-ended it does not split over any finite subgroup,
so this change is not important.) We recall a description of the tree $\Sigma$ here.

$\Sigma$ is a tree with a three-colouring: its vertex set $V(\Sigma)$ admits a
partition $V_1(\Sigma) \disjointunion V_2(\Sigma) \disjointunion V_3(\Sigma)$
preserved by the action of $\Gamma$ such that no two vertices in either
$V_1(\Sigma)$ or $V_2(\Sigma) \union V_3(\Sigma)$ are adjacent.

The stabiliser of a vertex in $V_1(\Sigma)$ is a maximal virtually cyclic subgroup of
$\Gamma$ and therefore contains the stabiliser of each incident edge at finite
index.

To describe the vertices of the second type we require the following
definition. Recall definition~\ref{defn:boundedFuchsian} of a bounded Fuchsian
group.

\begin{defn} A \emph{hanging Fuchsian} subgroup $Q$ of $\Gamma$ is a subgroup
of $\Gamma$ that is the stabiliser of a vertex in some finite splitting of
$\Gamma$ over virtually cyclic subgroups such that $Q$ admits an isomorphism
with a bounded Fuchsian group that maps the stabilisers of incident
edges precisely to the peripheral subgroups.\end{defn}

Stabilisers of vertices in $V_2(\Sigma)$ are precisely the maximal hanging
Fuchsian subgroups of $\Gamma$ and the stabilisers of incident edges at such a
vertex in the JSJ decomposition are precisely the peripheral subgroups referred
to in the definition.

The stabiliser of a vertex in $V_3(\Sigma)$ is not virtually cyclic and is not
a hanging Fuchsian subgroup.

\subsection{Splittings and the topology of the boundary}\label{sec:splittingstopology}

In this section we recall and extend some results linking the existence of a
non-trivial splitting of a hyperbolic group relative to a collection of
virtually cyclic subgroups to the topology of a particular Bowditch boundary. 

For the remainder of Section~\ref{sec:splittingstopology}, we fix a hyperbolic
group $\Gamma$ and a collection $\mathcal{H}$ of virtually cyclic subgroups. If
$H$ is any virtually cyclic subgroup of $\Gamma$, let $\widehat{H}$ be the unique
maximal virtually cyclic subgroup of $\Gamma$ containing $H$. Then let
$\widehat{\mathcal{H}}$ be $\{\widehat{H} \vert H \in \mathcal{H}\}$. Recall
Lemma~\ref{lem:vcycperipheral}: since each group in $\widehat{\mathcal{H}}$ is
maximal virtually cyclic, $\Gamma$ is hyperbolic relative to
$\widehat{\mathcal{H}}$. We may therefore study the boundary $\boundary(\Gamma,
\widehat{\mathcal{H}})$.

\subsubsection{Boundaries with cut points}

We first recall results that deal with the case in which $\boundary(\Gamma,
\widehat{\mathcal{H}})$ contains a cut point. First recall
Bowditch's definition~\cite{bowditch01} of a \emph{peripheral splitting}.

\begin{defn} If $\Gamma$ is a group and $\mathcal{H}$ is a finite collection of
subgroups of $\Gamma$, a \emph{peripheral splitting} of $\Gamma$ with respect
to $\mathcal{H}$ is a representation of $\Gamma$ is a finite bipartite graph of
groups such that each vertex group of one colour is conjugate to an element of
$\mathcal{H}$, and each element of $\mathcal{H}$ is conjugate to a vertex group
of that colour.

As for any splitting, we say that a peripheral splitting is \emph{trivial} if
some vertex group is equal to $\Gamma$.\end{defn}

Note that, in our setting, a peripheral splitting of $\Gamma$ with respect to
$\widehat{\mathcal{H}}$ is a splitting of $\Gamma$ over virtually cyclic subgroups
relative to $\mathcal{H}$.

Then the following proposition, which we obtain by putting together two
theorems of Bowditch, completes our treatment of the case in which the boundary
contains a cut point.

\begin{prop}\label{prop:cutpointimpliessplitting} Suppose that
  $\boundary(\Gamma, \widehat{\mathcal{H}})$ is connected and contains a cut point.
Then the pair $\Gamma$ admits a non-trivial splitting over virtually cyclic
subgroups relative to $\mathcal{H}$.
\end{prop}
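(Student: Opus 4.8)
The plan is to invoke two theorems of Bowditch to reduce the existence of a splitting to the existence of the cut point we are given. First I would recall Bowditch's theorem~\cite{bowditch01} characterising when a relatively hyperbolic group admits a non-trivial peripheral splitting: if $\boundary(\Gamma, \widehat{\mathcal{H}})$ is connected, then $\Gamma$ admits a non-trivial peripheral splitting with respect to $\widehat{\mathcal{H}}$ if and only if $\boundary(\Gamma, \widehat{\mathcal{H}})$ has a cut point that does not lie in the limit set of any peripheral subgroup, \emph{or} $\boundary(\Gamma, \widehat{\mathcal{H}})$ itself has a global cut point in a weaker sense. Actually the cleaner route is to use the result of Bowditch that says: if the boundary of a relatively hyperbolic group is connected and has a cut point, then the group splits over a parabolic or over a two-ended subgroup. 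Since the peripheral subgroups in $\widehat{\mathcal{H}}$ are virtually cyclic, parabolic subgroups here are virtually cyclic, so either way we obtain a splitting of $\Gamma$ over a virtually cyclic subgroup.

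Concretely, the key steps are: (1) By Lemma~\ref{lem:vcycperipheral}, $\Gamma$ is hyperbolic relative to $\widehat{\mathcal{H}}$, so the Bowditch boundary $\boundary(\Gamma, \widehat{\mathcal{H}})$ is defined and, by hypothesis, is connected with a cut point. (2) Invoke Bowditch's theorem on cut points: a one-ended relatively hyperbolic group whose boundary is connected and has a cut point splits non-trivially over a subgroup that is either parabolic (hence in $\widehat{\mathcal{H}}$, hence virtually cyclic) or two-ended. In either case the edge group is virtually cyclic. (3) Verify that this splitting is relative to $\mathcal{H}$, i.e.\ that each $H \in \mathcal{H}$ is elliptic. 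This follows because each such $H$ is contained in some $\widehat H \in \widehat{\mathcal{H}}$, which is a peripheral subgroup and hence elliptic in any splitting coming from Bowditch's construction (peripheral subgroups are always elliptic in a splitting over which the group is hyperbolic relative to them — this is part of the relative hyperbolicity package, e.g.\ via acylindrical accessibility or directly from Bowditch's construction of the splitting from the cut point). (4) Note that a non-trivial splitting over a virtually cyclic subgroup of $\Gamma$ relative to $\mathcal{H}$ is exactly what is claimed.

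\emph{The main obstacle} is citing the exact form of Bowditch's theorem that handles cut points and torsion simultaneously. Bowditch's original work~\cite{bowditch99b, bowditch01} on cut points in boundaries of relatively hyperbolic groups gives precisely the statement that connectedness plus a cut point yields a non-trivial peripheral splitting (and more: the JSJ-like structure of the cut point set). Since a peripheral splitting with respect to $\widehat{\mathcal{H}}$ is by definition a splitting over virtually cyclic subgroups relative to $\mathcal{H}$ (as remarked in the excerpt just before the proposition), this is immediate. I would therefore structure the proof as: quote Bowditch's peripheral-splitting theorem to get a non-trivial peripheral splitting of $\Gamma$ with respect to $\widehat{\mathcal{H}}$, then translate via the remark that such a splitting is a splitting over virtually cyclic subgroups relative to $\mathcal{H}$. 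The one subtlety to check is non-triviality: Bowditch's theorem produces a \emph{non-trivial} peripheral splitting precisely because there is a cut point (a trivial peripheral splitting corresponds to the absence of a cut point), so non-triviality of the resulting splitting over virtually cyclic subgroups is automatic unless the splitting is trivial for the degenerate reason that $\Gamma$ itself is one of the peripheral vertex groups — but that cannot happen since $\Gamma$ is not virtually cyclic (it is one-ended and non-elementary).
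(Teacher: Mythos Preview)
Your proposal is correct and follows essentially the same route as the paper: invoke Bowditch's results on cut points in relatively hyperbolic boundaries to obtain a non-trivial peripheral splitting with respect to $\widehat{\mathcal{H}}$, then observe (as remarked just before the proposition) that such a peripheral splitting is automatically a splitting over virtually cyclic subgroups relative to $\mathcal{H}$. The only refinement the paper makes explicit is to split the citation into two steps --- first \cite[Theorem~0.2]{bowditch99a} to conclude that the cut point is a parabolic fixed point, then \cite[Theorem~1.2]{bowditch99b} to produce the non-trivial peripheral splitting --- whereas you bundle these into a single invocation of ``Bowditch's peripheral-splitting theorem.''
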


\begin{proof} By~\cite[Theorem 0.2]{bowditch99a} the global cut
  point of $\boundary(\Gamma, \widehat{\mathcal{H}})$ is a parabolic fixed point.
By~\cite[Theorem 1.2]{bowditch99b} $\Gamma$ admits a non-trivial peripheral
splitting with respect to $\widehat{\mathcal{H}})$. The edge groups in this splitting
are automatically virtually cyclic, since each edge meets a vertex with vertex
group conjugate into $\widehat{\mathcal{H}}$, and the splitting is automatically
relative to $\mathcal{H}$: in fact it is relative to $\widehat{\mathcal{H}}$.
\end{proof}

\subsubsection{Boundaries with cut pairs}

In the absence of cut points, the existence of a relative splitting is
reflected in the existence of cut pairs in the boundary.  In the absolute case,
recall the following theorem of~\cite{bowditch98}.

\begin{thm}\cite[Theorem 6.2]{bowditch98}\label{thm:cutpairimpliessplitting}
Let $\Gamma$ be a one-ended hyperbolic group such that $\boundary(\Gamma,
\emptyset)$ contains a cut pair and is not homeomorphic to $S^1$. Then $\Gamma$
admits a non-trivial splitting over a virtually cyclic subgroup.  \end{thm}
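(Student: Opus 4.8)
The plan is to recover the splitting from the cut pair by reconstructing a piece of Bowditch's canonical tree, so the argument mixes the point-set topology of the boundary with the dynamics of the convergence action. Since $\Gamma$ is one-ended and hyperbolic, $M := \boundary(\Gamma, \emptyset)$ is a Peano continuum — connected, locally connected, compact and metrizable — and it contains no global cut point; moreover $\Gamma$ acts on $M$ as a uniform convergence group, and this action preserves any relation on $M$ that is defined purely topologically.

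First I would set up the combinatorics of cut pairs as in~\cite{bowditch98}. For $x \in M$ let $\val(x)$ be the number of ends of $M \setminus \{x\}$, put $M(2) = \{x : \val(x) = 2\}$ and $M(3{+}) = \{x : \val(x) \geq 3\}$, define $x \sim y$ on $M(2)$ by $x = y$ or $\{x, y\}$ a cut pair, and $x \isom y$ on $M(3{+})$ by $\val(x) = \val(y)$ and $M \setminus \{x, y\}$ having exactly $\val(x)$ components. These are equivalence relations — this is precisely where local connectedness and the absence of a global cut point enter, and it is the same input that drives the proof of Lemma~\ref{lem:equivalenceclass}. The $\sim$-classes and $\isom$-classes, together with the components obtained by deleting such a class from $M$, carry a natural betweenness relation and so form a pretree in Bowditch's sense. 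A pretree always embeds in an $\mathbb{R}$-tree; the decisive point is that here this $\mathbb{R}$-tree is simplicial, i.e.\ the nesting structure of cut pairs is discrete. This is where the group is needed: cocompactness of the $\Gamma$-action on triples of distinct points of $M$ forces finitely many $\Gamma$-orbits of cut pairs and an ascending-chain condition on crossing-free nested families, ruling out infinitely divisible configurations and producing a genuine simplicial tree $\Sigma$ carrying a $\Gamma$-action.

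It then remains to check three properties of the resulting splitting. First, non-triviality: if $\Gamma$ fixed a vertex of $\Sigma$ the structure would collapse to a single invariant class, and one verifies that this forces $M$ either to contain no cut pair or to be homeomorphic to $S^1$ — both excluded by hypothesis. Second, the edge groups are virtually cyclic: an edge of $\Sigma$ is determined by a cut pair $\{x, y\}$, its setwise stabiliser contains the pointwise stabiliser with index at most $2$, and the pointwise stabiliser of two distinct points of $\boundary\Gamma$ preserves a bi-infinite quasigeodesic joining them, hence is quasiconvex and two-ended (or finite), so virtually cyclic. Third, the edge groups are proper subgroups: a virtually cyclic group cannot equal the one-ended group $\Gamma$. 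Taken together, these say the Bass--Serre tree $\Sigma$ gives a non-trivial splitting of $\Gamma$ over a virtually cyclic subgroup.

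The hard part is the discreteness claim — promoting the pretree of cut-pair classes to an honest simplicial $\Gamma$-tree. The point-set topology of Peano continua recalled in sections 2--3 of~\cite{bowditch98} is not by itself enough; one has to exploit the finiteness coming from the convergence action, and organising this finiteness correctly — controlling how $\isom$-classes and $\sim$-classes interleave and bounding chains of nested cut pairs — is the technical heart of the proof. The downstream facts (two-point stabilisers in hyperbolic groups are virtually cyclic; one-ended groups are not virtually cyclic) are routine.
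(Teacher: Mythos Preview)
The paper does not prove this theorem at all: it is quoted verbatim as \cite[Theorem~6.2]{bowditch98} and used as a black box, with no argument supplied. So there is no ``paper's own proof'' to compare against.

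That said, your sketch is a faithful high-level outline of Bowditch's original argument in~\cite{bowditch98}: the combinatorics of $\val$, $M(2)$, $M(3{+})$, $\sim$ and $\isom$; the pretree built from cut-pair classes and their separation properties; the promotion to a simplicial tree using cocompactness of the convergence action on triples; and the routine verification that edge stabilisers are virtually cyclic and the action has no global fixed point. You have also correctly located the genuine difficulty in the discreteness step. If you wanted to turn this into a self-contained proof you would need to fill in precisely that step --- the passage from pretree to simplicial tree --- which in Bowditch's paper occupies Sections~3--5 and is substantial; the sketch as written flags it but does not supply it.
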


We require a relative version of this theorem. We only require such a result in the case
when $\Gamma$ arises as a vertex group in a splitting of a larger group over
virtually cyclic subgroups, and $\mathcal{H}$ is the collection of edge groups
incident at that vertex. In this section we show that in this simple case the
relative result follows from Theorem~\ref{thm:cutpairimpliessplitting}, and one
can avoid $\reals$-trees machinery. For a discussion in greater generality,
see~\cite{groff13}.

\begin{prop}\label{prop:cutpairimpliesrelativesplitting} Let $v$ be a vertex in
a minimal $\Gamma$-tree $T$ with virtually cyclic edge groups where $\Gamma$
is hyperbolic and one-ended. Let $\inc v$ be a set of representatives of
$\Gamma_v$-conjugacy classes of stabilisers of edges in $T$ incident at $v$.
Suppose that $\boundary(\Gamma_v, \inc v)$ is not a single point, does not
contain a cut point and is not homeomorphic to a circle but does contain a
cut pair. Then $\Gamma_v$ admits a non-trivial splitting over virtually cyclic
subgroups of $\Gamma_v$ relative to $\inc v$.  \end{prop}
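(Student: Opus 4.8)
The plan is to reduce to the absolute statement, Theorem~\ref{thm:cutpairimpliessplitting}, applied not to $\Gamma_v$ but to the one-ended hyperbolic group $\Gamma$ itself --- whose boundary, unlike that of $\Gamma_v$, is automatically a connected, locally connected space without a cut point --- and then to descend the resulting virtually cyclic splitting of $\Gamma$ back to $\Gamma_v$ along the tree $T$. First I would normalise. Since $\Gamma$ is one-ended and the edge groups of $T$ are virtually cyclic hence quasi-convex, those edge groups are in fact infinite two-ended, and $\Gamma_v$ is a quasi-convex --- in particular hyperbolic --- subgroup of $\Gamma$. Replacing $\inc v$ by the set $\widehat{\inc v}$ of maximal virtually cyclic subgroups of $\Gamma_v$ containing its members, Lemma~\ref{lem:vcycperipheral} shows $\Gamma_v$ is hyperbolic relative to $\widehat{\inc v}$, and since a virtually cyclic subgroup is elliptic in a tree exactly when the maximal virtually cyclic subgroup containing it is, it suffices to split $\Gamma_v$ over virtually cyclic subgroups relative to $\widehat{\inc v}$. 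By Lemma~\ref{lem:bowditchfromgromov}, $B:=\boundary(\Gamma_v,\widehat{\inc v})$ is the quotient $q\colon\boundary\Gamma_v\to B$ collapsing each $g\cdot\Lambda H$ ($g\in\Gamma_v$, $H\in\widehat{\inc v}$) to a point. Now dispose of the degenerate cases: if $B$ is disconnected then $\Gamma_v$ splits over a finite --- hence virtually cyclic --- subgroup relative to $\widehat{\inc v}$ and we are done; if $\Gamma_v$ is virtually cyclic the hypotheses on $B$ fail; and if $\Gamma_v$ is bounded Fuchsian then the hypotheses on $B$ force the associated compact orbifold to be non-small, so the result follows from Lemma~\ref{lem:splitiffnonsmall} and Proposition~\ref{prop:characterisationofsmall}. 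So from now on $B$ is a connected space without a cut point, not a point and not a circle, and $\Gamma_v$ is neither virtually cyclic nor bounded Fuchsian.

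Next I would exhibit $B$ as a quotient of $\boundary\Gamma$. Fix a lift $\tilde v$ of $v$ in the Bass--Serre tree of $T$. Quasi-convexity of $\Gamma_v$ gives a closed copy $\Lambda\Gamma_{\tilde v}\cong\boundary\Gamma_v$ inside $\boundary\Gamma$, and $\boundary\Gamma$ is the union of $\Lambda\Gamma_{\tilde v}$ with the ``branches'' $\Lambda\Gamma_{>\tilde e}$ lying beyond the edges $\tilde e$ incident to $\tilde v$, where each branch meets $\Lambda\Gamma_{\tilde v}$ precisely in $\Lambda\Gamma_{\tilde e}$, which is one of the collapsed pairs $g\cdot\Lambda H$. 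Define $\phi\colon\boundary\Gamma\to B$ by $\phi|_{\Lambda\Gamma_{\tilde v}}=q$ and by sending each branch $\Lambda\Gamma_{>\tilde e}$ to the parabolic point $q(\Lambda\Gamma_{\tilde e})$; one checks this is well defined and continuous, hence a quotient map of compact Hausdorff spaces. Let $\{\bar p,\bar q\}$ be a cut pair of $B$, and treat first the case that neither point is parabolic, so that $\phi^{-1}\{\bar p,\bar q\}=\{p,q\}$ is a genuine pair of points, lying in $\boundary\Gamma_v$. Because $\phi$ is a quotient map and $B\setminus\{\bar p,\bar q\}$ is disconnected, $\boundary\Gamma\setminus\{p,q\}$ is disconnected, so $\{p,q\}$ is a cut pair of $\boundary\Gamma$; moreover, since $q$ maps each of the two ``sides'' of $B\setminus\{\bar p,\bar q\}$ back onto a non-empty clopen subset of $\boundary\Gamma_v\setminus\{p,q\}$, the set $\boundary\Gamma_v$ meets at least two distinct complementary components of $\boundary\Gamma\setminus\{p,q\}$. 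Finally, $\boundary\Gamma\not\cong S^1$: otherwise $\Gamma$ is virtually Fuchsian and $\Gamma_v$ is bounded Fuchsian, a case excluded above.

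Now apply Theorem~\ref{thm:cutpairimpliessplitting} to $\Gamma$: it splits non-trivially over a virtually cyclic subgroup, and by Bowditch's analysis of cut pairs in \cite{bowditch98} --- the stratification of the boundary by the valence function, the equivalence relation $\sim$ on $M(2)$ and the associated cyclically separating sets, exactly the tools used in Lemma~\ref{lem:equivalenceclass} --- I may take a non-trivial $\Gamma$-tree $\Sigma$ with virtually cyclic edge groups in which the cyclically separating set $\sigma\supseteq\{p,q\}$ is realised, so that the limit set of each vertex group of $\Sigma$ lies in the closure of a single complementary component of $\boundary\Gamma\setminus\{p,q\}$ (this is how the construction realises a cut pair, possibly after refining a hanging-Fuchsian vertex along the corresponding curve). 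Restricting $\Sigma$ to $\Gamma_v$ yields a $\Gamma_v$-tree with virtually cyclic edge groups. It is non-trivial: if $\Gamma_v$ fixed a vertex $w$ of $\Sigma$ then $\boundary\Gamma_v\subseteq\Lambda\Gamma_w$, which lies in the closure of one complementary component of $\boundary\Gamma\setminus\{p,q\}$, contradicting the fact that $\boundary\Gamma_v$ meets at least two of them (the escape routes $\Gamma_v$ virtually cyclic or bounded Fuchsian having been excluded). And each $H\in\widehat{\inc v}$ is elliptic in $\Sigma$: this is the point at which the full force of ``$\{\bar p,\bar q\}$ is a cut pair of $B$'' enters. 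Indeed $\sigma$ must be chosen within its $\sim$-class so that it separates no pair $g\cdot\Lambda H$; were some such pair separated, collapsing it would re-join the two sides of $B\setminus\{\bar p,\bar q\}$, so that set would be connected. Hence each $\Lambda H$, and therefore $H$, is carried by a single vertex of $\Sigma$. The restriction of $\Sigma$ is then the required non-trivial splitting of $\Gamma_v$ over virtually cyclic subgroups relative to $\widehat{\inc v}$, hence relative to $\inc v$.

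The step I expect to cause the most trouble is exactly the last one: extracting from the absolute Theorem~\ref{thm:cutpairimpliessplitting} a virtually cyclic splitting in which the peripheral subgroups are elliptic --- equivalently, choosing the cyclically separating set within its $\sim$-class so as to be compatible with the edges of $T$ at $v$ --- which demands a careful reading of Bowditch's construction of the tree from the cut-pair structure. Subsidiary care is needed for the degenerate cases: $\Gamma$ or $\Gamma_v$ bounded Fuchsian, handled via Lemma~\ref{lem:splitiffnonsmall} and Proposition~\ref{prop:characterisationofsmall} (together with Lemma~\ref{lem:quotientbyeffectivekernel} to remove a finite kernel); and, most delicately, cut pairs of $B$ containing a parabolic point $\bar q=q(g\cdot\Lambda H)$, where the splitting is intermediate between a peripheral splitting as in Proposition~\ref{prop:cutpointimpliessplitting} and a cut-pair splitting. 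For that case one runs the same lift to $\boundary\Gamma$, now with $\phi^{-1}(\bar q)$ equal to the limit set of the subgroup generated by the vertex groups of $T$ lying beyond the edges at $\tilde v$ whose stabiliser is commensurable with $H$, and argues as before.
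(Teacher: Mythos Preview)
Your overall strategy --- lift the cut pair of $B=\boundary(\Gamma_v,\widehat{\inc v})$ to a cut pair in $\boundary\Gamma$, invoke the absolute Theorem~\ref{thm:cutpairimpliessplitting} for $\Gamma$, then descend --- is exactly the paper's. The implementations diverge at the descent step, and yours has a gap precisely where you anticipate it.

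The paper does not build an ad hoc $\Gamma$-tree realising the chosen cut pair and then argue ellipticity of the peripherals after the fact. Instead it takes $\Sigma$ to be Bowditch's canonical JSJ tree and uses universal ellipticity of $\Sigma$ with respect to $T$ to produce a common refinement $\widehat\Sigma$ equipped with a collapse $p\colon\widehat\Sigma\to\Sigma$ and a map $f\colon\widehat\Sigma\to T$ injective on $p$-fibres. Lemma~\ref{lem:fixedcomponent} then supplies a component $S\subset f^{-1}(v)$ invariant under $\Gamma_v$, and the $\Gamma_v$-action on $S$ is \emph{automatically} relative to $\inc v$: each incident edge group of $T$ is commensurable with the stabiliser of some edge of $\widehat\Sigma$ adjacent to $S$, hence fixes a vertex of $S$. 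One then shows by contradiction that this action has no global fixed point: if $\Gamma_v$ fixed $w\in S$ then $pw\in V_3(\Sigma)$ (since $\boundary(\Gamma_v,\widehat{\inc w})\cong B$ is neither a point nor a circle), the lifted cut pair $\{\tilde x,\tilde y\}$ would lie in $\Lambda\Gamma_u$ for some $V_1$- or $V_2$-vertex $u$, and would therefore be the limit set of an edge incident at $v$ --- collapsing to a single point in $B$, contradicting the no-cut-point hypothesis.

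Your ellipticity argument is the gap. You assert that each vertex group of your tree $\Sigma$ has limit set contained in the closure of a single component of $\boundary\Gamma\setminus\{p,q\}$; this already needs justification (for a valence-two cut pair the natural tree is a refinement of the JSJ at a hanging Fuchsian vertex, and one must check that the other JSJ vertices respect the partition by $\{p,q\}$). More seriously, even granting that, the implication ``$\Lambda H$ lies on one side of $\{p,q\}$, therefore $H$ fixes a vertex of $\Sigma$'' is not what you have established: ellipticity of $H$ in $\Sigma$ means $\Lambda H$ is not separated by \emph{any} edge of $\Sigma$, i.e.\ by any $\Gamma$-translate $g'\{p,q\}$, whereas your quotient-to-$B$ argument only controls translates with $g'\in\Gamma_v$. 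A translate by $g'\notin\Gamma_v$ might land inside $\Lambda\Gamma_v$ as one of the collapsed peripheral pairs, in which case its image in $B$ is a single (non-cut) point and your ``collapsing it would re-join the two sides'' reasoning no longer applies. The paper's refinement trick sidesteps all of this, and as a bonus treats the parabolic cut-pair case and the bounded-Fuchsian case uniformly, without the separate analyses you flag as needing care.
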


First we need the following lemma. Recall that for a vertex $v$ in a
$\Gamma$-tree, we defined $\inc v$ to be a set of conjugacy class
representatives of the stabilisers of the edges of the $\Gamma$-tree incident
at $v$, and let $\widehat{\inc v}$ be the set of maximal virtually cyclic
subgroups of $\Gamma_v$ that contain the elements of $\inc v$.

\begin{lem}\label{lem:fixedcomponent} Let $f \colon T_1 \to T_2$ be an
  equivariant map of $\Gamma$-trees with virtually infinite cyclic edge
  stabilisers such that the action of $\Gamma$ on $T_1$ is cocompact and the
  action of $\Gamma$ on $T_2$ is minimal.  (This means that there is no proper
  $\Gamma$-invariant subtree of $T_2$.) Let $v$ be a vertex of $T_2$ such that
  $\boundary(\Gamma_v, \widehat{\inc v})$ is not a single point, is connected
  and does not contain a cut point. Then the action of $\Gamma_v$ on $T_1$
  fixes a component of $f^{-1}(v)$.\end{lem}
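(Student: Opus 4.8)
The plan is to argue by contradiction: assume $\Gamma_v$ fixes no component of $f^{-1}(v)$, and deduce that $\boundary(\Gamma_v,\widehat{\inc v})$ is either a single point or contains a global cut point, contrary to hypothesis. Write $G=\Gamma_v$, take $f$ to be a morphism of $\Gamma$-trees with no inversions (subdividing if necessary), put $Y=f^{-1}(v)$, a $G$-invariant subforest of $T_1$, and let $\mathcal{T}=T_1/Y$ be the tree obtained by collapsing each component of $Y$; call the images of these components the \emph{special} vertices of $\mathcal{T}$, a $G$-invariant set. The engine of the argument is the following observation: if $e$ is an edge of $T_1$ meeting $Y$ but not contained in $Y$, then $f(e)$ is an edge of $T_2$ incident at $v$ (it cannot be collapsed, since then both endpoints would map to $v$ and $e$ would lie in $Y$); since $\stab_\Gamma(e)$ fixes the endpoint of $e$ lying in $Y$ it fixes $v$, hence lies in $G$ and is $G$-conjugate into an element of $\widehat{\inc v}$, and being an edge stabiliser of $T_1$ it is a finite-index subgroup of such a conjugate, so is an \emph{infinite parabolic} subgroup of $(G,\widehat{\inc v})$. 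In particular each special vertex of $\mathcal{T}$ is adjacent only to non-special vertices, along edges with infinite parabolic stabiliser.

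First I would show $G$ fixes no vertex of $\mathcal{T}$. A fixed special vertex is a fixed component, excluded. If $G$ fixed a non-special vertex $\bar x$, it would fix the unique preimage $x\in T_1\setminus Y$, hence fix $f(x)\neq v$ in $T_2$, hence fix the initial edge $e_0$ of the arc $[v,f(x)]$; as $\stab_\Gamma(e_0)\subseteq G$ (because $e_0$ is incident at $v$) this forces $G=\stab_\Gamma(e_0)$, so $G$ is virtually infinite cyclic and, being an edge stabiliser of $T_2$ incident at $v$, lies in $\widehat{\inc v}$, whence $\boundary(\Gamma_v,\widehat{\inc v})$ is a single point, a contradiction. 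Since $\Gamma_v$ is finitely generated it therefore admits a non-empty minimal $G$-subtree $\mathcal{T}_0\subseteq\mathcal{T}$, which is non-trivial. I then claim $\mathcal{T}_0$ contains a special vertex: otherwise $\mathcal{T}_0$ lies in a single component $W$ of $\mathcal{T}$ with its special vertices deleted (deleting them disconnects $\mathcal{T}$, by the first paragraph), $G$ preserves $W$, and lifting $W$ to a subtree $\widetilde W\subseteq T_1\setminus Y$ one checks that $f(\widetilde W)$ is connected and avoids $v$, so lies in a single component $B$ of $T_2\setminus\{v\}$; then $G$ preserves $B$, fixes the edge of $T_2$ pointing into $B$, and we reach the same contradiction as before.

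Finally I would extract the forbidden splitting. As $\mathcal{T}_0$ is minimal it has no valence-one vertices, so the special vertex it contains has valence at least two; collapsing every edge of $\mathcal{T}_0$ not incident to a special vertex produces a tree $\mathcal{S}$ on which $G$ acts, in which that special vertex survives with valence at least two and on which $G$ has no global fixed point (a fixed vertex of $\mathcal{S}$ would pull back to a proper $G$-invariant subtree of $\mathcal{T}_0$). Thus $\mathcal{S}$ is a non-trivial splitting of $\Gamma_v$, and since every edge of $\mathcal{S}$ is incident to a special vertex, all of its edge stabilisers are infinite parabolic subgroups of $(\Gamma_v,\widehat{\inc v})$. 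But a group hyperbolic relative to $\widehat{\inc v}$ with $\boundary(\Gamma_v,\widehat{\inc v})$ connected, not a single point, and without a global cut point admits no non-trivial splitting over an infinite parabolic subgroup $C$: the limit set of $C$ is a single parabolic point of $\boundary(\Gamma_v,\widehat{\inc v})$, and the Bass--Serre tree of the splitting exhibits that point as a global cut point (\cite{bowditch99a, bowditch99b, bowditch01}; one may first reduce $\mathcal{S}$ so that no vertex group coincides with an incident edge group). This contradiction proves the lemma. The delicate points are this last implication, where the ``no cut point'' hypothesis is spent, and the fact — secured in the first and third paragraphs via the special vertices — that the splitting we produce is over \emph{parabolic} subgroups, which is precisely why a cut \emph{point}, and not merely a cut pair, is forced.
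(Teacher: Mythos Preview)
Your argument is correct and takes a route genuinely different from the paper's. The paper works extrinsically in the ambient boundary $\partial(\Gamma,\emptyset)$: it first locates a vertex $w\in T_1$ with $\Gamma_w\cap\Gamma_v$ non-elementary (otherwise the induced $\Gamma_v$-action on $T_1$ gives a splitting over finite groups relative to $\widehat{\inc v}$, contradicting connectedness of the Bowditch boundary), observes that such a $w$ must lie in $f^{-1}(v)$, and then shows that any two such vertices lie in the same component $S$ by examining, for each edge $e\notin f^{-1}(v)$, the partition of $\partial(\Gamma,\emptyset)-\Lambda\Gamma_e$ and projecting to $\partial(\Gamma_v,\widehat{\inc v})$; the no-cut-point hypothesis forces both non-elementary limit sets onto the same side of $e$. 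Your approach stays intrinsic to $(\Gamma_v,\widehat{\inc v})$: collapse components of $f^{-1}(v)$, show the resulting $\Gamma_v$-tree has no global fixed point, pass to a minimal subtree, and manufacture a non-trivial splitting of $\Gamma_v$ over parabolic subgroups, which forces a parabolic cut point. The paper's method identifies the fixed component explicitly and uses the ambient group's boundary; yours is a cleaner contradiction argument that never leaves the relative structure of $\Gamma_v$, at the cost of a little more Bass--Serre bookkeeping.

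One point to tighten: for the final implication (non-trivial splitting over an infinite parabolic subgroup forces a global cut point) you need $\mathcal{S}$ to be \emph{relative to} $\widehat{\inc v}$, i.e.\ every $H\in\widehat{\inc v}$ elliptic in $\mathcal{S}$; otherwise the projection $\partial(\Gamma_v,\emptyset)\to\partial(\Gamma_v,\widehat{\inc v})$ could glue the two half-tree shadows together across some identified peripheral limit pair. This does hold, by the same observation you used for edges meeting $Y$, run in reverse: minimality of $T_2$ makes $f$ surjective, so each edge of $T_2$ at $v$ is $f(e'')$ for some edge $e''$ of $T_1$, and hence each $H\in\widehat{\inc v}$ contains the virtually infinite cyclic group $\stab_\Gamma(e'')$ with finite index and is therefore elliptic in $T_1$, hence in $\mathcal{T}$ and in any collapse of a subtree thereof. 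You should also flag that the existence of a minimal subtree uses finite generation of $\Gamma_v$, which is true in the paper's setting (vertex groups of $\mathcal{VC}$-splittings of one-ended hyperbolic groups are quasi-convex) but is not stated among the lemma's hypotheses.
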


\begin{proof} First we show that there is a vertex $w \in T_1$ such that
$\Gamma_w \intersection \Gamma_v$ is non-elementary. If this is not the case
then the stabiliser of each edge of $T_1$ with respect to the action of
$\Gamma_v$ on $T_1$ is either finite or commensurable with the stabilisers of
its end points. Therefore the action induces a splitting of $\Gamma_v$ with
finite edge groups and virtually cyclic vertex groups. By minimality of the
action of $\Gamma$ on $T_2$ each edge $e$ incident at $v$ is $f(e')$ for some
edge $e'$ of $T_1$, and then $\Gamma_{e'}$ is a finite index subgroup of
$\Gamma_e$, since each is virtually infinite cyclic. In particular,
$\widehat{\Gamma_e}$ is elliptic, and the splitting is relative to $\widehat{\inc v}$.
But $\boundary(\Gamma_v, \widehat{\inc v})$ was assumed to be connected and not a
single point, which is a contradiction.

Then $f(w) = v$, otherwise any edge separating $f(w)$ from $v$ in $T_2$ has
non-elementary stabiliser. Let $S$ be the component of $f^{-1}(v)$ containing
$w$. We now show that any other vertex $w'$ of $T_1$ such that $\Gamma_{w'}
\intersection \Gamma_v$ is non-elementary is also in $S$. Suppose that $e$ is
an edge of $T_1$ that is not in $f^{-1}(v)$. As in Section~1
of~\cite{bowditch98} there exists a partition of $\boundary(\Gamma, \emptyset) -
\Lambda\Gamma_e$ as $U_1 \disjointunion U_2$.  The intersection of $\Gamma_e$
with $\Gamma_v$ is either finite or commensurable with a conjugate of an
element of $\widehat{\inc v}$, so the images of $U_1 \intersection
\Lambda\Gamma_v$ and $U_2 \intersection \Lambda\Gamma_v$ under the projection
map $\Lambda\Gamma_v \to \boundary(\Gamma, \widehat{\inc v})$ cover all but at
most a point of $\boundary(\Gamma, \widehat{\inc v})$. These sets are disjoint,
so one must be empty, say $U_2$. But $\Lambda\Gamma_w$ and $\Lambda\Gamma_{w'}$
each contain more than two points, so must both be contained in $U_1 \union
\Lambda\Gamma_e$. This implies that $w$ and $w'$ are on the same side of $e$.

Therefore the action of $\Gamma_v$ on $T_1$ fixes $S$, for any element of
$\Gamma_v$ must send $w$ to a vertex of $S$.\end{proof}

\begin{proof}[Proof of Proposition~\ref{prop:cutpairimpliesrelativesplitting}]
Let $\Sigma$ be Bowditch's JSJ tree for $\Gamma$. $\Sigma$ is
then elliptic with respect to $T$, so by~\cite[Proposition
2.2]{guirardellevitt16} there exists a $\Gamma$-tree $\widehat\Sigma$ and maps $p
\colon \widehat\Sigma \to \Sigma$ and $f \colon \widehat\Sigma \to T$ such that:
\begin{enumerate}
  \item $p$ is a collapse map. (i.e.\ a map given by collapsing some edges of
    $\widehat\Sigma$ to points.)
  \item For $w \in \Sigma$, the restriction of $f$ to $p^{-1}(w)$ is injective.
\end{enumerate}

Let $S \subset \widehat\Sigma$ be the component of $f^{-1}(v)$ fixed by the action
of $\Gamma_v$ constructed in Lemma~\ref{lem:fixedcomponent}. Suppose that a
vertex $w$ of $S$ is fixed by the $\Gamma_v$-action.

If $e$ is any edge of $\widehat\Sigma$ that is adjacent to but not contained in $S$
then $\Gamma_e \leq \Gamma_{f(e)}$ and the subgroup is necessarily of finite
index. Conversely the stabiliser of any edge incident at $v$ contains the
stabiliser of an edge adjacent to $S$ at finite index. The stabiliser of any
edge adjacent to $S$ is then commensurable with the stabiliser of an edge
incident at $w$, and vice versa. Therefore $\boundary(\Gamma_v, \widehat{\inc w})$ is
homeomorphic to $\boundary(\Gamma_v, \widehat{\inc v})$. We assumed that
$\boundary(\Gamma_v, \widehat{\inc v})$ was neither a point nor homeomorphic to a circle,
so $pw$ is not in $V_1(\Sigma)$ or $V_2(\Sigma)$, and is therefore in
$V_3(\Sigma)$.

Let $x$ and $y$ be points in $\boundary(\Gamma_v, \widehat{\inc v})$ and choose preimages
$\widetilde{x}$ and $\widetilde{y}$ in $\boundary(\Gamma_v, \emptyset)$, which
we identify with $\Lambda\Gamma_v \subset \boundary(\Gamma, \emptyset)$.  
The set of components of $\boundary(\Gamma_v, \widehat{\inc v}) - \{x, y\}$ is in
bijection with the set of those components of $\boundary(\Gamma, \emptyset) -
\{\widetilde{x}, \widetilde{y}\}$ that meet $\Lambda\Gamma_v$. 

Suppose then
that $\{x, y\}$ is a cut pair in $\boundary(\Gamma_v, \widehat{\inc v})$, so at least two
components of $\boundary(\Gamma, \emptyset) - \{\widetilde{x}, \widetilde{y}\}$ meet
$\Lambda\Gamma_v$.

Then by Theorem~\ref{thm:cutpairimpliessplitting} there is a type 1 or type 2
vertex $u$ of $\Sigma$ such that $\Lambda\Gamma_u$ contains $\{\widetilde{x},
\widetilde{y}\}$. Then $\{\widetilde{x}, \widetilde{y}\} = \Lambda\Gamma_u
\intersection \Lambda\Gamma_v$, so $\{\widetilde{x}, \widetilde{y}\}$ is the
limit set of an edge incident at $v$. Therefore $\{x, y\} \subset
\boundary(\Gamma_v, \widehat{\inc v})$ is a single point, which is a contradiction
because $\boundary(\Gamma_v, \widehat{\inc v})$ was assumed not to contain a cut
point. Hence the action of $\Gamma_v$ on $S$ does not fix any vertex and
therefore gives rise to a non-trivial splitting of $\Gamma_v$ relative to $\inc v$.

\end{proof}

\subsubsection{Boundaries without cut points or pairs}

Our description of the relationship between the existence of splittings and the
topology of the boundary is completed by the following proposition, which serves as a
converse to Propositions~\ref{prop:cutpointimpliessplitting}
and~\ref{prop:cutpairimpliesrelativesplitting}.

\begin{prop}\label{prop:relativesplittingimpliescutpair} Let $\Gamma$ be a
  hyperbolic group and let $\mathcal{H}$ be a finite set of virtually cyclic
  subgroups of $\Gamma$ such that $\boundary(\Gamma, \widehat{\mathcal{H}})$ is
    connected.  Suppose that $\Gamma$ admits a non-trivial splitting over a
  virtually cyclic subgroup relative to $\mathcal{H}$.  Then $\boundary(\Gamma,
\widehat{\mathcal{H}})$ contains a cut point or pair.\end{prop}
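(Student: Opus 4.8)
The plan is to reduce the statement to the absolute case, i.e.\ to the two theorems of Bowditch already invoked in the previous propositions (\cite[Theorem 0.2]{bowditch99a} together with the structure of the Bowditch JSJ, and \cite[Theorem 6.2]{bowditch98}), by passing through the maximal virtually cyclic hulls. The hypothesis gives a nontrivial $\Gamma$-tree $T$ with virtually cyclic edge stabilisers in which each $H \in \mathcal{H}$ is elliptic. First I would replace $T$ by a minimal subtree, so the $\Gamma$-action on $T$ is minimal; the splitting remains nontrivial and still relative to $\mathcal{H}$. Since $\Gamma$ is one-ended it does not split over a finite subgroup, so every edge stabiliser of $T$ is virtually \emph{infinite} cyclic, and its maximal virtually cyclic hull is again virtually infinite cyclic and is almost malnormal by Lemma~\ref{lem:vcycperipheral}.

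The core step is to compare $T$ with Bowditch's canonical JSJ tree $\Sigma$ of $\Gamma$ (over virtually cyclic subgroups, absolute case). The vertex stabilisers of $T$ are elliptic in $\Sigma$? Not obviously — what one knows is that the edge groups of $\Sigma$ are universally elliptic, hence elliptic in $T$, so $\Sigma$ dominates suitably and there is a common refinement: by \cite[Proposition 2.2]{guirardellevitt16} there are maps $p\colon\widehat\Sigma\to\Sigma$ (a collapse) and $f\colon\widehat\Sigma\to T$ with $f$ injective on fibres $p^{-1}(w)$. Now take an edge $e$ of $T$ with (infinite virtually cyclic) stabiliser $\Gamma_e$. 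Its limit set $\Lambda\Gamma_e=\Lambda\widehat{\Gamma_e}$ is a pair of points in $\partial(\Gamma,\emptyset)$. I would argue that, because $T$ is a nontrivial minimal tree, $\Gamma_e$ separates $\Lambda\Gamma$: pushing the bipartition of the two half-trees of $T\smallsetminus e$ to the boundary gives a partition of $\partial(\Gamma,\emptyset)\smallsetminus\Lambda\Gamma_e$ into two nonempty closed-open pieces (this is the content of Section~1 of~\cite{bowditch98}, already used in the proof of Lemma~\ref{lem:fixedcomponent}). Hence $\{\Lambda\Gamma_e\}$ is a cut pair — or possibly a single point, if the two limit points coincide, which cannot happen for an infinite virtually cyclic group — in $\partial(\Gamma,\emptyset)$.

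Next I would transfer this to $\partial(\Gamma,\widehat{\mathcal{H}})$ via Lemma~\ref{lem:bowditchfromgromov}, which presents the latter as a quotient of $\partial(\Gamma,\emptyset)$ collapsing exactly the pairs $g\cdot\Lambda H$ for $H\in\widehat{\mathcal{H}}$. If $\Gamma_e$ is (commensurable with a conjugate of) an element of $\widehat{\mathcal{H}}$ then its limit pair is collapsed to a point $p$, and the two complementary pieces descend to exhibit $p$ as a cut point of $\partial(\Gamma,\widehat{\mathcal{H}})$ — unless $\partial(\Gamma,\widehat{\mathcal{H}})$ is a single point or $p$ alone, which is excluded since the boundary is connected and the splitting nontrivial forces it to be infinite. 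If $\Gamma_e$ is \emph{not} conjugate into $\widehat{\mathcal{H}}$, then by almost malnormality each $g\cdot\Lambda H$ lies entirely on one side of $\Lambda\Gamma_e$, so the two complementary pieces of $\partial(\Gamma,\emptyset)\smallsetminus\Lambda\Gamma_e$ descend to two disjoint nonempty open subsets of $\partial(\Gamma,\widehat{\mathcal{H}})\smallsetminus\{\bar x,\bar y\}$, where $\{\bar x,\bar y\}$ is the (possibly one-point) image of $\Lambda\Gamma_e$; either way $\partial(\Gamma,\widehat{\mathcal{H}})$ has a cut point or a cut pair.

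The main obstacle I anticipate is the careful bookkeeping in the second and third steps: verifying that the complementary pieces of the boundary really are \emph{nonempty} after the quotient (one must use minimality of $T$ and that vertex groups are infinite, not just that edges separate in the tree), and checking that the quotient map does not accidentally merge the two pieces — this is exactly where almost malnormality of $\widehat{\mathcal{H}}$ (Lemma~\ref{lem:vcycperipheral}) and the explicit description of the identifications in Lemma~\ref{lem:bowditchfromgromov} are needed. The degenerate case where $\Gamma_e$ \emph{is} peripheral, so its limit pair collapses to a point, needs to be handled separately and gives the ``cut point'' alternative rather than the ``cut pair'' one; separating these cases cleanly is the part that requires the most care.
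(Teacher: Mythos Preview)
Your proposal is essentially correct and follows the same line as the paper's proof: take an edge $e$ of the minimal tree $T$, use Section~1 of~\cite{bowditch98} to partition $\boundary(\Gamma,\emptyset)\smallsetminus\Lambda\Gamma_e$ into two nonempty open pieces $U_1, U_2$ indexed by the half-trees, observe that each $H\in\mathcal{H}$ is elliptic so $\Lambda H$ is either equal to $\Lambda\Gamma_e$ or contained in one $U_i$, and then push the partition through the quotient map of Lemma~\ref{lem:bowditchfromgromov} to get a cut point or cut pair in $\boundary(\Gamma,\widehat{\mathcal{H}})$.

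The only comment is that the detour through Bowditch's JSJ tree $\Sigma$ and the common refinement $\widehat\Sigma$ from~\cite[Proposition~2.2]{guirardellevitt16} is never actually used in your argument and can simply be deleted: once you pick the edge $e$ of $T$ you work directly with $T$, and the paper does exactly this without ever mentioning $\Sigma$. Also, where you invoke almost malnormality to see that each $g\cdot\Lambda H$ lies on one side, the paper instead uses directly that $H$ is elliptic in $T$, hence $H\leq\Gamma_w$ for some vertex $w$, so $\Lambda H\subset\Lambda\Gamma_w\subset U_i\cup\Lambda\Gamma_e$; both routes yield the same dichotomy (either $\Lambda H=\Lambda\Gamma_e$ or $\Lambda H\cap\Lambda\Gamma_e=\emptyset$), since two virtually infinite cyclic subgroups of a hyperbolic group sharing a limit point are commensurable.
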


\begin{proof} Let $T$ be the $\Gamma$-tree associated to such a non-trivial splitting.
Without loss of generality assume that the action of $\Gamma$ on $T$ is
minimal. Let $e$ be any edge in $T$. Let $T_1$ and $T_2$ be the two
components of the complement of the interior of $e$ in $T$. Then as in the proof of Lemma~\ref{lem:fixedcomponent}
 we obtain a partition of $\boundary(\Gamma,
\emptyset) - \Lambda\Gamma_e$ as $U_1\disjointunion U_2$ where $U_i$ are open
sets given by
\begin{align*}
  U_i = \boundary T_i \union \bigunion_{w \in T_i} (\Lambda\Gamma_w - \Lambda\Gamma_e)
\end{align*}

If a subgroup $H$ of $\Gamma$ is in $\mathcal{H}$, $H \leq \Gamma_w$ for some vertex $w \in \Sigma$ and either $\Lambda H =
\Lambda\Gamma_e$ or $\Lambda H \intersection \Lambda\Gamma_e = \emptyset$.
In the latter case either
$\Lambda H \subset U_1$ or $\Lambda H \subset U_2$. Let $\pi \colon
\boundary(\Gamma, \emptyset) \to \boundary(\Gamma, \widehat{\mathcal{H}})$ be the
quotient projection of Lemma~\ref{lem:bowditchfromgromov}. It follows that the images
of $U_1$ and $U_2$ under $\pi$ in the complement of $\pi(\Lambda\Gamma_e)$ in
$\boundary(\Gamma, \widehat{\mathcal{H}})$ are disjoint open sets; they are non-empty by
the minimality of the action of $\Gamma$ on $T$. The image of $\Lambda\Gamma_e$
is either one or two points, and therefore $\boundary(\Gamma, \widehat{\mathcal{H}})$
contains a cut point or pair. \end{proof}

\subsection{Virtually cyclic subgroups and finding splittings}

We will need the following lemma, which allows us to do various computations
related to virtually cyclic subgroups of hyperbolic groups.

\begin{lem}\cite[Lemma 2.8]{dahmaniguirardel11}\label{lem:twoendedsubgroups}
There is an algorithm that, when given a presentation for a hyperbolic group
$\Gamma$ and a finite subset $S \subset \Gamma$, returns an answer to the
question ``is $\langle S \rangle \leq \Gamma$ virtually cyclic?'' If the
answer is ``yes'' then the algorithm also determines
\begin{enumerate}
\item the (unique) maximal finite normal subgroup of $\langle S \rangle$, 
\item a presentation for $\langle S\rangle$,
\item whether $\langle S \rangle$ is of type $\mathcal{Z}$ or $D_\infty$.
  (Recall that we say that a virtually cyclic group of type $\mathcal{Z}$
  (respectively $D_\infty$) if it surjects onto $\integers$ (respectively
  $D_\infty$).)
\item a generating set for the maximal virtually cyclic subgroup of $\Gamma$
  containing $\langle S\rangle$.\end{enumerate}
\end{lem}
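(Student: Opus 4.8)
The plan is to reduce every assertion to two standard facts about hyperbolic groups: that the word problem is uniformly solvable, and that quasi-convex subgroups (in particular virtually cyclic ones) behave well algorithmically. First I would recall how to decide virtual cyclicity: a finitely generated subgroup $\langle S\rangle$ of a hyperbolic group $\Gamma$ is virtually cyclic if and only if it is elementary, i.e.\ it fixes a pair of points (or a point) in $\boundary(\Gamma,\emptyset)$, equivalently it contains a finite-index cyclic subgroup. Using a solution to the word problem one can compute the stable letter(s): run a parallel search over elements $g\in\Gamma$ of increasing length, testing whether all generators in $S$ lie in the virtually cyclic subgroup generated by $g$ together with finitely many short elements; simultaneously one searches for a witness that $\langle S\rangle$ is non-elementary (a pair of elements with disjoint fixed-point sets on the boundary, detectable by examining ping-pong dynamics on a large ball, since hyperbolicity gives an explicit constant controlling this). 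Exactly one of these searches terminates, so the question ``is $\langle S\rangle$ virtually cyclic?'' is decidable.

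Once $\langle S\rangle$ is known to be virtually cyclic, items (1)--(4) follow. For (1), the maximal finite normal subgroup of any hyperbolic group lies in the ball of radius $4\delta+2$ about the identity by~\cite{bridsonhaefliger99}, and the same bound applies inside $\langle S\rangle$ once we have its hyperbolicity constant (which is itself computable, e.g.\ via Proposition~\ref{prop:linearimplieshyperbolic} applied to a presentation); one then enumerates the finitely many subsets of that ball and tests normality and finiteness using the word problem. For (2), having found the finite normal subgroup $F$ and an infinite-order element $g$ with $\langle F, g\rangle$ of finite index, one writes down coset representatives, expresses the generators in $S$ in terms of them, and reads off a presentation; alternatively one uses the classification of virtually cyclic groups as finite-by-(cyclic or infinite dihedral) to identify the isomorphism type and hence a presentation directly. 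For (3), $\langle S\rangle$ is of type $\mathcal Z$ precisely when it has an index-two-free surjection to $\integers$, equivalently when it has a finite-index normal cyclic subgroup $\langle g\rangle$ with $\langle S\rangle/\langle g\rangle$ and the conjugation action trivial enough to avoid a $D_\infty$ quotient; concretely, once the structure from (2) is in hand, whether the virtual centre is infinite (type $\mathcal Z$) or the group surjects onto $D_\infty$ is a finite check on the conjugation action of the coset representatives on $g$.

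For (4), the maximal virtually cyclic subgroup $\widehat{\langle S\rangle}$ containing $\langle S\rangle$ is the stabiliser in $\Gamma$ of the limit set $\Lambda\langle S\rangle\subset\boundary(\Gamma,\emptyset)$, which is a pair of points (or one point, in the finite case, which cannot occur here as $\langle S\rangle$ is infinite under the standing hypothesis that it is virtually cyclic and we may assume infinite; the finite case is degenerate and handled separately). Since $\Gamma$ is hyperbolic, the setwise stabiliser of such a pair is quasi-convex with a computable quasi-convexity constant, so a bounded search over a large ball in the Cayley graph recovers a generating set: explicitly, $\widehat{\langle S\rangle}$ contains $\langle S\rangle$ with index bounded in terms of $\delta$, so it suffices to test which elements $g$ in a ball of that controlled radius satisfy $g\Lambda\langle S\rangle=\Lambda\langle S\rangle$, a condition checkable by computing Gromov products (equivalently, checking that $g$ conjugates the infinite-order element $g_0$ of $\langle S\rangle$ to an element with the same axis, up to the finite normal subgroup). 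The main obstacle is making the index bound for $\langle S\rangle\le\widehat{\langle S\rangle}$ explicit and uniform: one needs that two infinite-order elements of a $\delta$-hyperbolic group with the same pair of endpoints differ by an element of length bounded in $\delta$ together with the translation lengths, and that all such elements together with the (bounded) finite normal subgroup generate $\widehat{\langle S\rangle}$. This is standard but requires some care with the Morse lemma (Lemma~\ref{lem:uniformlyclose}) and the fact that translation lengths in hyperbolic groups are bounded below away from zero by a computable constant; granting these, the search terminates and returns the required generating set.
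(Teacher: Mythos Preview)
Your parallel-search skeleton matches the paper's, but there is a genuine gap in how you obtain the intrinsic hyperbolicity constant $\delta'$ for $\langle S\rangle$ needed in item~(1). You invoke Proposition~\ref{prop:linearimplieshyperbolic}, but that result takes a presentation of $\langle S\rangle$ as input; your route to a presentation (item~(2)) already uses the maximal finite normal subgroup found in item~(1), which is precisely what required $\delta'$ --- so the argument is circular as written. The paper breaks this loop by invoking Kapovich's algorithm~\cite{kapovich96}, which searches for a quasi-convexity constant $K$ for $\langle S\rangle$ inside $\Gamma$; this terminates whenever $\langle S\rangle$ is virtually cyclic (since such subgroups are always quasi-convex), and from $K$ together with the ambient $\delta$ one computes $\delta'$ directly, with no presentation needed. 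With $\delta'$ in hand the finite normal subgroups are enumerated inside the $(4\delta'+2)$-ball in the word metric of $\langle S\rangle$, as you say, and items~(2)--(4) are then simply deferred to the original argument of Dahmani--Guirardel (the point of the paper's rewrite being only to excise Makanin's algorithm from the decision step and item~(1)).

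On the ``no'' side the paper's witness is also cleaner than detecting boundary ping-pong: it searches for $g,h\in\langle S\rangle$ with $[g^2,h^2]$ of infinite order, a condition that is checkable because torsion orders in $\Gamma$ are bounded by the cardinality of the $(4\delta+2)$-ball. If $\langle S\rangle$ fails to be quasi-convex it is non-elementary and hence contains a nonabelian free subgroup, so such a pair exists; conversely no such pair can exist in a virtually cyclic group, since passing to an index-two subgroup one has a surjection to $\integers$ with finite kernel.
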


The proof of this lemma in~\cite{dahmaniguirardel11} uses Makanin's algorithm
for solving equations in hyperbolic groups. We modify that part of the argument
to use only elementary methods in keeping with the themes of this paper.

\begin{proof} We give an alternative method to determine whether or not
$\langle S\rangle$ is virtually cyclic and to produce a maximal finite normal
subgroup of $\langle S\rangle$ in the case that it is; the rest of the
argument can be copied verbatim from~\cite{dahmaniguirardel11}. First compute
$\delta$ with respect to which $\Gamma$ is $\delta$-hyperbolic.

Use the algorithm of~\cite[Proposition 4]{kapovich96} to search for a constant
$K$ with respect to which $\langle S\rangle$ is $K$-quasi-convex in $\Gamma$.
This algorithm finds such a constant if it exists and does not terminate of
$\langle S\rangle$ is not quasi-convex; note that if $\langle S\rangle$ is
virtually cyclic then it is guaranteed to be quasi-convex. If the algorithm
terminates use $K$ and $\delta$ to compute $\delta'$ such that $\langle
S\rangle$ is $\delta'$-hyperbolic.  Then all finite subgroups of $\langle
S\rangle$ can be conjugated into a ball of radius at most $4\delta' + 2$ with
respect to the word metric in $\langle S\rangle$, so all finite normal
subgroups of $S$ can be computed using a solution to the word and conjugacy
problems in $\Gamma$. Once this is computed the algorithm
of~\cite{dahmaniguirardel11} can be used to determine whether or not $\langle
S\rangle$ is virtually cyclic.

In parallel, search for a pair of elements $g$ and $h$ in $\langle S\rangle$
such that $[g^2, h^2]$ has infinite order. This can be checked since the order
of an element of $\Gamma$ of finite order is bounded above by the number of
elements of $\Gamma$ in the ball of radius $4\delta + 2$.

If $\langle S\rangle$ is not quasi-convex then it contains a free group on two
generators, so a pair $(g, h)$ as in the previous paragraph certainly exists.
Conversely, if such a pair exists then $\langle S\rangle$ cannot be virtually cyclic,
since any virtually cyclic group contains a subgroup of index two that surjects onto
$\integers$ with finite kernel.  \end{proof}

\begin{lem}\label{lem:findingasplitting} There is an algorithm that takes as
  input a presentation for a hyperbolic group $\Gamma$ and a collection
  $\mathcal{H}$ of peripheral subgroups and either returns the graph of groups
  associated to a non-trivial splitting of $\Gamma$ relative to $\mathcal{H}$
over virtually cyclic subgroups or does not terminate if no such splitting
exists.  \end{lem}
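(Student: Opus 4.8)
The plan is to obtain the splitting by an exhaustive search: I enumerate candidate graph-of-groups decompositions of $\Gamma$, verify each one, and output the first that is valid; this procedure halts exactly when a non-trivial splitting exists, and otherwise runs forever, as the statement requires. One could also extract a genuine \emph{decision} procedure for the existence of such a splitting from the machinery developed so far --- by detecting cut points, cut pairs and non-cut pairs in $\boundary(\Gamma,\widehat{\mathcal{H}})$ via Section~\ref{sec:cutpairsalgorithms}, treating the circular case with Theorem~\ref{thm:S1boundarycomputable} and Lemma~\ref{lem:identifyingsmallorbifolds}, and invoking the results of Section~\ref{sec:splittingstopology} --- but this is not needed for the statement as phrased, so I will only describe the search.

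Here is the search. First compute $\delta$ with $\Gamma$ being $\delta$-hyperbolic. Then enumerate, by dovetailing, all finite graphs of groups $\mathbb{G}$ of the following shape: each vertex group is a finitely generated subgroup of $\Gamma$, specified by a finite generating subset of $\Gamma$ together with a quasiconvexity constant produced by Kapovich's algorithm~\cite[Proposition 4]{kapovich96} (from which a hyperbolicity constant for that subgroup is computable); each edge group is a subgroup of its two incident vertex groups, certified virtually cyclic by Lemma~\ref{lem:twoendedsubgroups}; each edge monomorphism is realised by an explicit element of $\Gamma$; and for each $H\in\mathcal{H}$ one vertex is marked, together with an element of $\Gamma$ conjugating $H$ into the marked vertex group. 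For each such candidate I check, using only the solution to the word problem in $\Gamma$: that the edge group at each edge is exactly the intersection of the relevant conjugates of its two incident vertex groups (this intersection being quasiconvex and computable); that the natural homomorphism $\nu\colon\pi_1\mathbb{G}\to\Gamma$ is surjective, i.e.\ the generators of $\Gamma$ lie in the quasiconvex subgroup generated by the candidate data, which is a decidable membership question; that $\nu$ is injective, which for a one-edge decomposition amounts to the assertion that no reduced word in the Bass--Serre sense represents the identity in $\Gamma$, and is a finite check because, by quasiconvexity of all the subgroups involved together with hyperbolicity of $\Gamma$, such a word would have length bounded by a computable function of $\delta$ and the quasiconvexity constants (a general finite graph of groups reduces to this case along a maximal subtree); that each marked $H$ really is conjugate into the marked vertex group; and that the decomposition is non-trivial. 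I output the first candidate passing all of these checks.

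Correctness of the search rests on the observation that any genuine non-trivial splitting of $\Gamma$ over virtually cyclic subgroups relative to $\mathcal{H}$ can be encoded in the form above. Indeed, its edge groups are virtually cyclic, hence quasiconvex in $\Gamma$, so its vertex groups are quasiconvex (in particular hyperbolic) subgroups of $\Gamma$; and each $H\in\mathcal{H}$, being elliptic in the associated Bass--Serre tree, is contained in a vertex stabiliser and so is conjugate into a vertex group. Such a candidate therefore occurs in the enumeration and passes verification, so the search halts. Conversely, any candidate that passes verification is by construction a non-trivial splitting of $\Gamma$ over virtually cyclic subgroups relative to $\mathcal{H}$, so if no such splitting exists the search never halts.

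The step I expect to be the main obstacle is the verification that $\nu$ is injective. What is needed is the fact that, for quasiconvex subgroups $A$ and $B$ of a hyperbolic group, it is decidable whether $\langle A, B\rangle$ equals $A\freeprod_{A\intersection B} B$ (and the analogous statement for HNN extensions), via an explicit bound, in terms of $\delta$ and the quasiconvexity constants, on the length of a reduced word that could witness the failure of injectivity; establishing this bound is the one genuinely non-routine quasiconvexity estimate in the argument. Granting it, together with the standard fact that the vertex groups of a splitting of a hyperbolic group over quasiconvex edge groups are themselves quasiconvex, the remainder --- organising the dovetailed enumeration, passing between Bass--Serre trees and graphs of groups, and checking the relative and non-triviality conditions --- is routine bookkeeping.
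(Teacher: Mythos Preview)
Your approach and the paper's share the same high-level strategy---enumerate candidate splittings and verify---but the paper organises the search differently in a way that sidesteps the injectivity obstacle you identify. Rather than enumerating tuples of quasiconvex subgroups of $\Gamma$ and then having to certify that the resulting graph of groups has fundamental group isomorphic to $\Gamma$, the paper enumerates \emph{presentations} of $\Gamma$ via Tietze transformations and looks for one of a particular syntactic shape (generators and relators partitioned as $S_1, S_2, S_3$ and $R_1, R_2, R_3$, with maps $\iota_1, \iota_2$ witnessing the inclusions of the edge group into the vertex groups). Since every such presentation comes with an explicit isomorphism to $\Gamma$, the global isomorphism $\nu$ is automatic; what remains is to check that the edge-group maps $\widehat\iota_1, \widehat\iota_2$ are injective and not surjective, and that each $H \in \mathcal{H}$ is conjugate into a vertex group. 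The key observation is that $\widehat\iota_1$ and $\widehat\iota_2$ are both injective if and only if the composed map $\widehat\iota \colon \langle S_3 \mid R_3 \rangle \to \Gamma$ is injective (one direction is the normal form theorem for amalgamated products), and since the domain is virtually cyclic this is an elementary check: verify that the maximal finite normal subgroup injects and that some infinite-order element maps to an element of infinite order.

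The step you flag as the main obstacle---a computable bound on the length of a shortest reduced word witnessing failure of injectivity for an amalgam of quasiconvex subgroups---is a genuine gap in your argument as written: you assert such a bound exists but do not prove it, and it is not a routine consequence of quasiconvexity. Something along these lines may be extractable from the combination-theorem literature, but you have not supplied it. The paper's Tietze-transformation device is exactly what lets one avoid this: by enumerating presentations rather than subgroup data, the only injectivity that ever needs verifying is that of a virtually cyclic group into $\Gamma$. (A secondary point: your requirement that each edge group equal the full intersection of the incident vertex groups is not satisfied by an arbitrary splitting, so you would also need to argue that if any splitting exists then one of this restricted form exists.)
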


\begin{proof} Suppose that $\Gamma$ admits a proper splitting relative to
  $\mathcal{H}$ as an amalgamated product over a virtually cyclic subgroup; the
  case of an HNN extension is similar. Then $\Gamma$ admits a presentation that
  makes this splitting explicit in the following sense. There are finite
  disjoint symmetric sets of symbols $S_1$, $S_2$ and $S_3$, finite subsets
  $R_1$, $R_2$ and $R_3$ of the free monoids $S_1^\star$, $S_2^\star$ and
  $S_3^\star$ respectively, and maps $\iota_1$ and $\iota_2$ from $S_3^\star$
  to $S_1^\star$ and $S_2^\star$ respectively, such that $\Gamma$ admits an
  isomorphism to the group $\mod{\mathcal{P}}$ with presentation $\mathcal{P}$ of the form
\begin{align*}
  \langle S_1 \union S_2 \union S_3 \vert R_1 \union R_2 \union R_3 \union
    \{s^{-1}\iota_1(s), s^{-1}\iota_2(s) \colon s \in S_3\}\rangle
\end{align*}
where $\{\iota_1(r) \vert r \in R_3\} \subset R_1$ and $\{\iota_2(r) \vert r
\in R_3\} \subset R_2$. This ensures that $\iota_1$ and $\iota_2$ induce group
homomorphisms from $\langle S_3 \vert R_3\rangle$ to $\langle S_1 \vert
R_1\rangle$ and  $\langle S_2 \vert R_2\rangle$, which we shall denote
$\widehat\iota_1$ and $\widehat\iota_2$. Let $\widehat\iota$ be the induced map from
$\langle S_3 \vert R_3\rangle$ to $\Gamma$. Then the assumptions on the nature
of the splitting give the following conditions:
\begin{enumerate}
\item that $\langle S_3 \vert R_3 \rangle$ be virtually cyclic,
\item that $\widehat\iota_1$ and $\widehat\iota_2$ be injective,
\item that $\widehat\iota_1$ and $\widehat\iota_2$ not be surjective and
\item that for each $H \in \mathcal{H}$ with generating set $S_H$ there exists
  $g_H$ in $\Gamma$ such that either $S_1$ or $S_2$ contains the image of
  $g_HS_Hg_H^{-1}$ under the isomorphism from $\Gamma$ to $\mod{\mathcal{P}}$.
\end{enumerate}

We show that there is an algorithm that finds such a presentation for
$\Gamma$ if it exists. Using Tietze transformations, there is an algorithm that
enumerates all presentations of $\Gamma$ and for each presentation gives an
explicit isomorphism from $\Gamma$ to the realisation of that presentation;
therefore it is sufficient to show that each of the four conditions above can
be checked algorithmically.

The first condition can be checked using the first part of the algorithm of
Lemma~\ref{lem:twoendedsubgroups}.

$\widehat\iota_1$ and $\widehat\iota_2$ are both injective if and only if $\widehat\iota$
is injective: one direction is trivial, the other is a consequence of the
normal form theorem for the amalgamated product. To check this condition,
compute the maximal finite normal subgroup of $\langle S_3\vert R_3\rangle$ and
check the triviality of the image under $\widehat\iota$ of each of these elements.
Then find an element of $\langle S_3\vert R_3\rangle$ of infinite order and
check whether or not the image under $\widehat\iota$ of this element has infinite
order.

To check the surjectivity of $\widehat\iota_1$ first check whether or not $\langle
S_1 \rangle$ is virtually cyclic. If it is, check whether or not the maximal finite
normal subgroup of $\im\widehat\iota_1$ is equal to the maximal finite normal
subgroup of $\langle S_1 \rangle$. If it is, next check whether or not $\langle
S_1 \rangle$ and $\langle S_3 \rangle$ are either both of $\mathcal{Z}$-type or
both of $D_\infty$ type. If they are both of the same type, pass to an index 2
subgroup of each if necessary to ensure that they are both of $\mathcal{Z}$
type, then check whether or not the composition of $\widehat\iota_1$ with the
natural surjection to $\integers$ is surjective. If it is then $\widehat\iota_1$ is
surjective; if any of these tests produced the opposite answer then
$\widehat\iota_1$ is not surjective. Repeat this process for $\widehat\iota_2$.

The final condition can be checked using a solution to the simultaneous
conjugacy problem in $\Gamma$.

The existence of such a presentation for $\Gamma$ guarantees that $\Gamma$
splits non-trivially as an internal amalgamated product 
\begin{align*}
  \Gamma \isom \langle S_1 \vert R_1 \rangle \freeprod_{\langle S_3 \vert R_3
    \rangle} \langle S_2 \vert R_2 \rangle.
\end{align*}
This splitting is over a virtually cyclic
subgroup of $\Gamma$ and is relative to $\mathcal{H}$. 
\end{proof}

\subsection{Computing a maximal splitting}

We will need to be able to determine algorithmically whether or not the
boundary of the given hyperbolic group is homeomorphic to a circle. This is
achieved using the algorithm of Corollary~\ref{cor:noncutpaircomputable} and a
theorem from point-set topology, which we note here.

\begin{thm}\cite[II.2.13]{wilder49}\label{thm:topologyimpliescircle} Any
separable, connected, locally connected space containing more than one point
that is without a cut point and in which every pair is a cut pair is
homeomorphic to $S^1$.\end{thm}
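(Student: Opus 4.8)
This is a classical result of point-set topology quoted from \cite{wilder49}, so in the paper it is used as a black box; nonetheless, here is the shape of a proof. We may assume that $S$ is a Peano continuum — compact, connected, locally connected and metrizable — which is the ambient setting of \cite{wilder49} and is in any case automatic in our application, where $S$ is a Bowditch boundary. The plan is to show that every point of $S$ has Menger--Urysohn order exactly $2$; the classical characterisation of the circle (a non-degenerate Peano continuum every point of which has order $2$ is homeomorphic to $S^1$) then finishes the proof.

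First, no point has order $\le 1$: if $x$ had arbitrarily small open neighbourhoods with at most one boundary point, then a small enough such neighbourhood $U$, with $\boundary U = \{y\}$, would exhibit $y$ as a cut point, since $U\setminus\{y\}$ and $S\setminus\overline U$ are disjoint non-empty open sets covering $S\setminus\{y\}$; this contradicts the hypothesis and shows in particular that $S$ has no endpoint. Second, since $S$ is a non-degenerate Peano continuum with no cut point it is cyclicly connected (a classical theorem of Whyburn): any two of its points lie on a simple closed curve. The same cut-point argument shows that if a component $V$ of the complement of a simple closed curve $J\subseteq S$ had only one boundary point, that point would be a cut point of $S$; hence every such $V$ meets $J$ in at least two points, and, using arcwise connectedness of $\overline V$, we may join two of them by an arc $A$ with interior in $V$, so that $J\cup A$ is a theta-curve inside $S$.

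The crux — and the step I expect to be hardest — is to rule out points of order $\ge 3$, equivalently to show that no simple closed curve $J\subseteq S$ is proper. Suppose $J\subsetneq S$ and form the theta-curve $\theta=J\cup A$ as above, with $A$ joining feet $d_1,d_2\in J$ and $J=\alpha\cup\beta$ the two arcs of $J$ between them. Choosing an interior point $p$ of $A$ and an interior point $q$ of $\alpha$, the third arc $\beta$ reconnects all the remaining pieces, so $\theta\setminus\{p,q\}$ is connected and $\{p,q\}$ is not a cut pair of $\theta$. It remains to deduce that $\{p,q\}$, for a suitable choice, is not a cut pair of $S$ either, which is the delicate point: a priori some component $W$ of $S\setminus\theta$ could be attached to the rest of $S$ only through $p$ and $q$. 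Such a $W$ would have to satisfy $\boundary W=\{p,q\}$ (again since $\lvert\boundary W\rvert\ge 2$ by the cut-point argument), it lies inside $V$, and distinct bad $W$'s arising from different choices of $q$ are disjoint yet all have $p$ in their frontier; local connectedness of $S$ together with the no-cut-point hypothesis bound this phenomenon, and varying $p$ and $q$ then produces a choice of $\{p,q\}$ that does not separate $S$, contradicting the hypothesis that every pair is a cut pair. (Equivalently, one proves directly that a Peano continuum with no cut point which properly contains a simple closed curve has a non-separating pair of points; this is also where spaces such as the Sierpi\'nski carpet, in which no pair separates at all, get excluded.) Once every point of $S$ has order $2$, $S$ is the required circle.
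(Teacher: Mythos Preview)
The paper does not prove this theorem at all: it is quoted verbatim from Wilder and used as a black box, exactly as you note in your first sentence. Your additional sketch is therefore more than the paper provides, and its overall shape is sound: reduce to a Peano continuum, use Whyburn's cyclic connectedness to find a simple closed curve, show it cannot be proper by producing a non-separating pair inside a theta-curve, and conclude via the order-$2$ characterisation of the circle. The one place where your writing is vaguer than it needs to be is the ``delicate point'': the clean way to finish is to observe that $S\setminus\theta$ has only countably many components (local connectedness plus separability), each with boundary of size at least two contained in $\theta$, so for a fixed $p$ only countably many choices of $q\in\alpha$ can satisfy $\partial W=\{p,q\}$ for some component $W$; since $\alpha$ is uncountable, some $q$ avoids all of them, and for that $q$ every component of $S\setminus\theta$ meets $\theta\setminus\{p,q\}$, whence $S\setminus\{p,q\}$ is connected. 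Making that cardinality argument explicit removes the hand-waving in your penultimate sentence.
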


\begin{prop}\label{prop:isthereasplitting} There is an algorithm that takes as
  input a presentation for a hyperbolic group $\Gamma$ with a collection
  $\mathcal{H}$ of virtually cyclic subgroups such that $\boundary(\Gamma,
  \widehat{\mathcal{H}})$ is connected and $\Gamma$ appears as the stabiliser
  of some vertex in the action of a hyperbolic group on a tree and
  $\mathcal{H}$ is a set of conjugacy class representatives of incident edge
  groups and returns the answer to the question ``does $\Gamma$ split
non-trivially over a virtually cyclic subgroup relative to
$\mathcal{H}$?''\end{prop}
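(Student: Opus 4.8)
The plan is to assemble the algorithm from the pieces already established, organised around a topological dichotomy for the Bowditch boundary. Write $\boundary=\boundary(\Gamma,\widehat{\mathcal{H}})$. Since $\boundary$ is assumed connected, I would use the following trichotomy: $\Gamma$ splits non-trivially over a virtually cyclic subgroup relative to $\mathcal{H}$ (equivalently admits a non-trivial peripheral splitting with respect to $\widehat{\mathcal{H}}$) if and only if one of the following holds: (i) $\boundary$ contains a cut point (Proposition~\ref{prop:cutpointimpliessplitting}); (ii) $\boundary$ has no cut point, is not a single point, is not homeomorphic to $S^1$, and contains a cut pair (Proposition~\ref{prop:cutpairimpliesrelativesplitting}, whose geometric hypotheses hold by assumption since $\Gamma$ is a vertex group with $\mathcal{H}$ the incident edge groups); or (iii) $\boundary\isom S^1$ and the associated orbifold is not small (Lemma~\ref{lem:grouptoorbifold} together with Lemma~\ref{lem:splitiffnonsmall}). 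The reverse implications, guaranteeing the absence of a splitting in the complementary cases, all follow from Proposition~\ref{prop:relativesplittingimpliescutpair}. So it suffices to detect each of these boundary features effectively.

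Concretely, I would first replace $\mathcal{H}$ by $\widehat{\mathcal{H}}$ using Lemma~\ref{lem:twoendedsubgroups}; then $\Gamma$ is hyperbolic relative to $\widehat{\mathcal{H}}$ by Lemma~\ref{lem:vcycperipheral}, and a bounded-orbit argument in trees shows that splitting relative to $\mathcal{H}$ is equivalent to splitting relative to $\widehat{\mathcal{H}}$. Next I would compute the hyperbolicity constant $\delta$ of the cusped space using Propositions~\ref{thm:linearisoperimetricinequality}, \ref{thm:cuspedtwocomplex} and~\ref{prop:linearimplieshyperbolic}, and a hyperbolicity constant $\delta_\mathcal{H}$ for the Cayley graphs of the (virtually cyclic) members of $\widehat{\mathcal{H}}$. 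Then I would run three procedures in parallel: (A) the algorithm of Lemma~\ref{lem:findingasplitting}, which halts with output ``yes'' exactly when $\Gamma$ splits; (B) the algorithm of Lemma~\ref{lem:identifyingsmallorbifolds}, which halts with output ``no'' exactly when $\boundary\isom S^1$ and $\Gamma$ does not split; and (C) the algorithm of Proposition~\ref{prop:Xsatisfiesddag}, searching for an $n$ with $\ddag_n$, followed — once such an $n$ has been found — by the algorithms of Proposition~\ref{prop:cutpointscomputable}, Corollary~\ref{cor:cutpaircomputable} and Corollary~\ref{cor:noncutpaircomputable}.

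I would interpret the output of procedure (C) as follows. If Proposition~\ref{prop:cutpointscomputable} reports a cut point, output ``yes''. Otherwise $\boundary$ has no cut point, so Corollaries~\ref{cor:cutpaircomputable} and~\ref{cor:noncutpaircomputable} are applicable: if $\boundary$ has no cut pair, output ``no'' (by the contrapositive of Proposition~\ref{prop:relativesplittingimpliescutpair}, which covers the degenerate case $\vert\boundary\vert\le 1$ as well); if $\boundary$ has both a cut pair and a non-cut pair, then it is connected, has no cut point, has more than one point and is not homeomorphic to $S^1$, so output ``yes'' by Proposition~\ref{prop:cutpairimpliesrelativesplitting}; and if $\boundary$ has a cut pair but no non-cut pair, then, being also separable, connected, locally connected (by~\cite{bowditch99b}), of more than one point and without a cut point, Theorem~\ref{thm:topologyimpliescircle} forces $\boundary\isom S^1$, and here (C) produces nothing, deferring to (A) and (B). I would then check that at least one of (A), (B), (C) halts and that their answers never conflict: if $\Gamma$ splits then (A) halts, and if additionally $\boundary$ has no cut point then (C) also halts (with ``yes''); if $\Gamma$ does not split then either $\boundary\isom S^1$, in which case (B) halts, or $\boundary$ has no cut point and no cut pair, in which case (C)'s $\ddag$-search terminates and (C) halts with ``no''. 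The conditions under which the different branches fire are mutually exclusive, so the first output obtained is correct.

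The one delicate point — essentially the only one, since the substantive work lies in Sections~\ref{sec:cutpairsalgorithms} and~\ref{sec:fuchsiangroups} — is that Proposition~\ref{prop:cutpointscomputable} and Corollaries~\ref{cor:cutpaircomputable},~\ref{cor:noncutpaircomputable} presuppose that the cusped space satisfies some $\ddag_n$, and the algorithm of Proposition~\ref{prop:Xsatisfiesddag} computing such an $n$ is guaranteed to terminate only when $\boundary$ is connected and has no cut point. I would sidestep this by running Lemma~\ref{lem:findingasplitting} concurrently: if $\boundary$ has a cut point then $\Gamma$ splits by Proposition~\ref{prop:cutpointimpliessplitting}, so (A) returns an answer even if (C) stalls; and if $\boundary$ has no cut point then (C)'s search does terminate and the remaining analysis goes through. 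The only other care needed is to verify, by inspecting the firing conditions above, that each ``no'' output genuinely certifies the absence of a splitting and can never coincide with a ``yes''.
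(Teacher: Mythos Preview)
Your proposal is correct and follows essentially the same strategy as the paper: run the splitting-finder of Lemma~\ref{lem:findingasplitting} in parallel with the search for a $\ddag_n$ constant, then use the cut-point and cut-pair detectors to resolve the remaining cases, deferring the circular-boundary case to the race between Lemmas~\ref{lem:findingasplitting} and~\ref{lem:identifyingsmallorbifolds}. The only notable difference is that you additionally invoke Corollary~\ref{cor:noncutpaircomputable} together with Theorem~\ref{thm:topologyimpliescircle} to decide \emph{within} branch~(C) whether $\boundary\cong S^1$, which lets you output ``yes'' directly (via Proposition~\ref{prop:cutpairimpliesrelativesplitting}) when a cut pair and a non-cut pair coexist, rather than waiting for the splitting-finder; the paper instead simply races (A) and (B) as soon as a cut pair is found. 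Both organisations are valid. Your side remark that splitting relative to $\mathcal{H}$ is equivalent to splitting relative to $\widehat{\mathcal{H}}$ (via the finite-index bounded-orbit argument) is correct but not actually needed, since Propositions~\ref{prop:cutpointimpliessplitting}, \ref{prop:cutpairimpliesrelativesplitting} and~\ref{prop:relativesplittingimpliescutpair} are already stated for splittings relative to $\mathcal{H}$; the paper also sidesteps this by checking first whether $\Gamma$ is virtually cyclic, which you omit but which your ``no cut pair'' branch handles correctly.
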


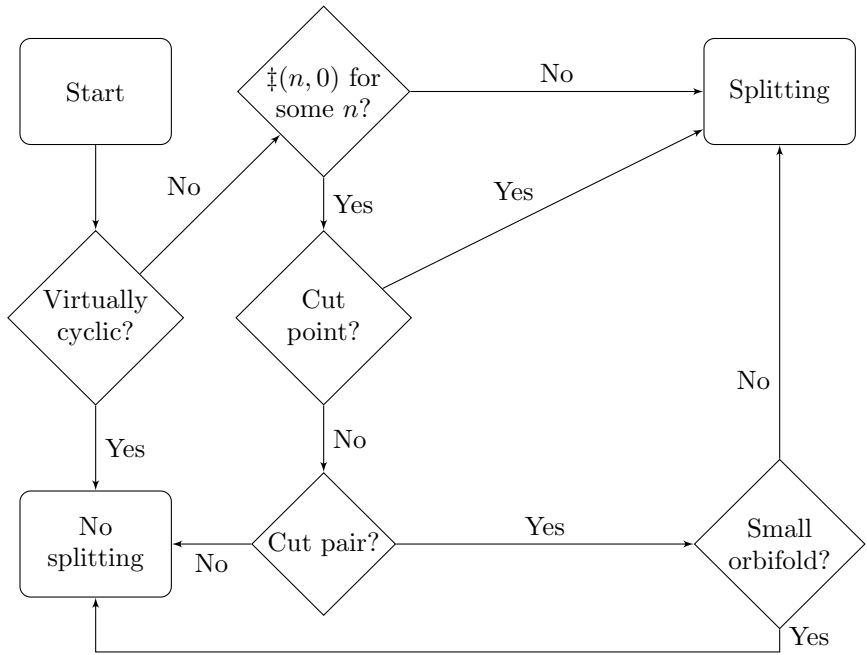
\begin{figure}
\centering
\begin{tikzpicture}[node distance = 2cm, auto]

\tikzstyle{decision} = [diamond, draw,
  text width=4.5em, text badly centered, node distance=3cm, inner sep=0pt]
\tikzstyle{startblock} = [rectangle, draw, 
  text width=5em, text centered, rounded corners, minimum height=4em]
\tikzstyle{acceptblock} = [rectangle, draw,
  text width=5em, text centered, rounded corners, minimum height=4em]
\tikzstyle{rejectblock} = [rectangle, draw,
  text width=5em, text centered, rounded corners, minimum height=4em]
\tikzstyle{line} = [draw, -latex']

\node [startblock] (start) {Start};
\node [decision, below of=start] (vcyclic) {Virtually cyclic?};
\node [decision, right of=start] (ddag) {$\ddag(n, 0)$ for some $n$?};
\node [decision, below of=ddag] (cutpoint) {Cut point?};
\node [acceptblock, right of=ddag, node distance=6cm] (splitting) {Splitting};
\node [decision, below of=cutpoint] (cutpair) {Cut pair?};
\node [decision, right of=cutpair, node distance=6cm] (small) {Small orbifold?};
\node [rejectblock, left of=cutpair, node distance=3cm] (nosplitting) {No splitting};

\path [line] (start) -- (vcyclic);
\path [line] (vcyclic) --  node {No} (ddag);
\path [line] (vcyclic) --  node {Yes} (nosplitting);
\path [line] (ddag) --  node {No} (splitting);
\path [line] (ddag) --  node {Yes} (cutpoint);
\path [line] (cutpoint) -- node {No} (cutpair);
\path [line] (cutpoint) -- node {Yes} (splitting);
\path [line] (cutpair) --  node {No} (nosplitting);
\path [line] (cutpair) --  node {Yes} (small);
\path [line] (small) --  node [near start] {No} (splitting);
\path [line] (small.south) -- ++(0,-.3cm) node[pos=.3] {Yes} -- ++(-9cm,0) -- (nosplitting);

\end{tikzpicture}

\caption{The decision process in the algorithm of Proposition~\ref{prop:isthereasplitting}.}
\end{figure}

\begin{proof} Let the given group be $\Gamma$ and the peripheral structure be
$\mathcal{H}$. First check whether or not $\Gamma$ is virtually cyclic. If it
is then $\Gamma$ does not split properly over a virtually cyclic subgroup.
If it is not then compute $\widehat{\mathcal{H}}$; then $\boundary(\Gamma,
\widehat{\mathcal{H}})$ contains more than a single point and the results
of this section can be applied.

Next compute $\delta$ such that the cusped space $X$ associated to the pair
$(\Gamma, \widehat{\mathcal{H}})$ is $\delta$-hyperbolic.  Search for a
non-trivial splitting of $\Gamma$ relative to $\mathcal{H}$ using
Lemma~\ref{lem:findingasplitting} and, in parallel, search for $n$ such that
$\ddag_n$ holds in $X$; one of these processes must terminate by
Proposition~\ref{prop:Xsatisfiesddag} and
Proposition~\ref{prop:cutpointimpliessplitting}.

If a splitting is found then $\Gamma$ does split non-trivially over a virtually
cyclic subgroup relative to $\mathcal{H}$, and the algorithm can return
``yes''. If $X$ satisfies $\ddag_n$ then use the algorithm of
Corollary~\ref{cor:cutpaircomputable} to check whether or not
$\boundary(\Gamma, \widehat{\mathcal{H}})$ contains a cut point. If it does then $\Gamma$
does split properly over a virtually cyclic subgroup by
Proposition~\ref{prop:cutpointimpliessplitting}

If there is no cut point, use the algorithm of
Corollary~\ref{cor:cutpaircomputable} on $X$ to determine whether or
$\boundary(\Gamma, \widehat{\mathcal{H}})$ contains a cut pair; if it does not then
$\Gamma$ does not split relative to $\mathcal{H}$ by
Proposition~\ref{prop:relativesplittingimpliescutpair}.

If there is a cut pair then simultaneously run the algorithms of
Lemma~\ref{lem:findingasplitting} and
Lemma~\ref{lem:identifyingsmallorbifolds}. If the former terminates then a
splitting has been found; if the latter does then no splitting exists.
\end{proof}

Note that a subprocess of this algorithm, together with the algorithm
of~\cite{gerasimov} that determines whether or not a hyperbolic group is
one-ended, provides the algorithm promised in
Theorem~\ref{thm:S1boundarycomputable}.

\begin{prop}\label{prop:maximalsplittingcomputable} There is an algorithm that,
when given a presentation for a hyperbolic group, computes the graph of groups
associated to a splitting of that group that is maximal for domination.
\end{prop}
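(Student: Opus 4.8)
The plan is to build the maximal splitting iteratively by repeatedly applying the algorithm of Proposition~\ref{prop:isthereasplitting} to the vertex groups of an ever-refining graph-of-groups decomposition, stopping when no vertex group admits a further proper splitting relative to its incident edge groups. The output will be a graph of groups all of whose vertex groups are \emph{rigid} in the appropriate relative sense, and this will be maximal for domination essentially by construction.

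First I would set up the iteration. Start with the trivial splitting of $\Gamma$ (one vertex, no edges). At each stage we have a finite graph of groups; for each vertex group $\Gamma_v$, let $\inc v$ be the collection of incident edge groups (which are virtually cyclic), and feed $(\Gamma_v, \inc v)$ to the algorithm of Proposition~\ref{prop:isthereasplitting}. One needs to check that the hypotheses of that proposition are met: the vertex group $\Gamma_v$ is itself hyperbolic (vertex groups of splittings of hyperbolic groups over virtually cyclic, hence quasiconvex, subgroups are quasiconvex and so hyperbolic), $\Gamma_v$ appears as a vertex stabiliser in an action of a hyperbolic group on a tree, $\inc v$ is a set of conjugacy representatives of incident edge groups, and $\boundary(\Gamma_v, \widehat{\inc v})$ is connected — the last point requires knowing that the vertex groups arising in this process are one-ended relative to their peripheral structure, which follows because if $\boundary(\Gamma_v, \widehat{\inc v})$ were disconnected the group would split over a finite subgroup relative to $\inc v$, and such a splitting can be absorbed; alternatively, since $\Gamma$ is one-ended, Stallings's theorem applied relative to the peripheral structure rules this out, and one can arrange the bookkeeping so that the peripheral structure makes each vertex group one-ended. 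When the algorithm of Proposition~\ref{prop:isthereasplitting} answers ``yes'' for some vertex, use the algorithm of Lemma~\ref{lem:findingasplitting} to produce an explicit proper splitting of $\Gamma_v$ relative to $\inc v$ over a virtually cyclic subgroup, and refine the current graph of groups at $v$ by this splitting (folding so that the incident edge groups remain elliptic). Since each new edge group is virtually cyclic and the refinement is relative to $\inc v$, the result is again a splitting of $\Gamma$ over virtually cyclic subgroups that dominates the previous one.

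The main obstacle is termination. One must show this refinement process halts after finitely many steps. This follows from accessibility: since $\Gamma$ is one-ended hyperbolic, by Bowditch's work (or Delzant--Potyagailo, or the existence of the JSJ tree $\Sigma$ recalled above) there is an \emph{a priori} bound on the complexity of any reduced splitting of $\Gamma$ over virtually cyclic subgroups — more precisely, Bowditch's JSJ tree $\Sigma$ is universally elliptic and maximal for domination among universally elliptic trees, and every reduced virtually-cyclic splitting of $\Gamma$ is dominated by a common refinement whose combinatorics are controlled by $\Sigma$ together with the bounded complexity of the (orbifold) flexible vertices. Concretely, I would invoke the finiteness of the number of vertices and edges (up to the $\Gamma$-action) of a maximal splitting: the flexible vertices of $\Sigma$ are hanging Fuchsian, a maximal splitting refines each such vertex into pieces corresponding to a pants-type decomposition of the associated compact orbifold, and there are only finitely many isotopy classes of such decompositions; the rigid and virtually-cyclic vertices contribute nothing further. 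Hence the number of edges of any reduced virtually-cyclic splitting of $\Gamma$ is bounded, so the strictly-increasing (in domination, hence in edge count after reduction) sequence of splittings we produce must stabilise. When it stabilises, every vertex group $\Gamma_v$ of the final graph of groups satisfies: the algorithm of Proposition~\ref{prop:isthereasplitting} answers ``no'' for $(\Gamma_v, \inc v)$, i.e.\ $\Gamma_v$ admits no proper splitting over a virtually cyclic subgroup relative to $\inc v$. By the definition of domination, such a graph of groups is maximal for domination among all virtually-cyclic splittings of $\Gamma$: any splitting $T'$ dominating our final tree $T$ would, when restricted to a vertex group of $T$, give a proper relative splitting, contradicting rigidity.

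Finally I would record the one subtlety in the reduction step: after refining at $v$ and folding, one must re-collapse any edge on which the adjacent graph-of-groups map is an isomorphism, and must verify that the incident edge groups $\inc v$ are carried to (conjugates of) vertex groups of the refinement so that the new peripheral structures are well-defined; this is exactly the content of Lemma~\ref{lem:findingasplitting} producing a splitting \emph{relative to} $\inc v$. With termination established and rigidity of all final vertex groups guaranteed, the algorithm returns the graph of groups of a splitting maximal for domination, which is what was required.
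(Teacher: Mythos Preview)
Your approach is essentially the same as the paper's: iteratively refine starting from the trivial splitting, using Proposition~\ref{prop:isthereasplitting} to detect further splittings and Lemma~\ref{lem:findingasplitting} to produce them, and argue that the terminal decomposition is maximal for domination because its vertex groups are relatively rigid. The one notable difference is in the termination argument: the paper simply invokes Bestvina--Feighn accessibility~\cite{bestvinafeighn91} to bound the complexity of reduced splittings, whereas you deduce the bound from the structure of Bowditch's JSJ tree $\Sigma$ together with the fact that a pants decomposition of a compact orbifold has boundedly many curves. Both arguments are valid; the paper's is more direct, while yours makes the reason for the bound more explicit at the cost of presupposing the structure theory of the JSJ (which is not circular, since existence of $\Sigma$ is known abstractly and only its \emph{computability} is at issue). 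Your attention to verifying the hypotheses of Proposition~\ref{prop:isthereasplitting}---in particular the connectedness of $\boundary(\Gamma_v,\widehat{\inc v})$---is a detail the paper leaves implicit.
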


\begin{proof} The algorithm iteratively constructs a sequence of
$\Gamma$-marked graphs of groups $G_i$. Let $G_1$ consist of a single vertex
with vertex group $\Gamma$. Then to obtain $G_{i+1}$ from $G_i$, use
Proposition~\ref{prop:isthereasplitting} to check whether each vertex group
splits non-trivially relative to its incident edge groups. If no vertex group
does split then halt the algorithm here. If the group at a vertex $v$ does
split then find the non-trivial graph of groups $G'$ with fundamental group
$\Gamma_v$ using Lemma~\ref{lem:findingasplitting}. Then define $G_{i+1}$ by
replacing the vertex $v$ of $G_i$ by the graph $G'$ and connecting edges
corresponding to the edges of $G_i$ incident at $v$ to $G'$ in the obvious
way.

This process must eventually stabilise: this follows from an accessibility
theorem~\cite{bestvinafeighn91} since the associated group actions on trees
constructed are minimal and reduced by construction. Let the corresponding
$\Gamma$-trees stabilise at a $\Gamma$-tree $T$. 

Suppose that another $\Gamma$-tree $T'$ with virtually cyclic edge stabilisers
dominates $T$. Let $v$ be a vertex of $T$. If $e$ is an incident edge then
$\Gamma_e$ is elliptic with respect to the action on $T'$, since it contains
the stabiliser of an edge of $T'$ as a subgroup of finite index.  Therefore
each incident edge subgroup of $\Gamma_v$ is elliptic with respect to $T'$. The
vertex stabiliser $\Gamma_v$ does not split over a virtually cyclic subgroup
relative to its incident edge groups, so $\Gamma_v$ is elliptic with respect to
$T'$. The vertex $v$ was arbitrary, so $T$ dominates $T'$, and so $T$ is maximal
for domination.  \end{proof}

\section{JSJ Decompositions}\label{sec:JSJcomputable}

In this section we show that three closely related types of JSJ splittings are
computable for hyperbolic groups. Fix a one-ended hyperbolic group $\Gamma$ and recall that 
$\mathcal{VC}$ is defined to be the set of all virtually cyclic subgroups of $\Gamma$,
$\mathcal{Z}$ is defined to be the set of all virtually cyclic subgroups of $\Gamma$
with infinite centre and $\mathcal{Z}_\text{max}$ is defined to be the set of
subgroups of $\Gamma$ in $\mathcal{Z}$ that are maximal for inclusion. We
consider the JSJ splittings over groups in these three sets.

\subsection{Two-ended edge groups}

We now prove the first part of Theorem~\ref{thm:maintheorem}.

\begin{thm}\label{thm:VC-JSJcomputable} There is an algorithm that takes as
input a presentation for a one-ended hyperbolic group and returns as output the graph
of groups associated to a $\mathcal{VC}$-JSJ decomposition for that group.
This decomposition can be taken to be Bowditch's canonical
decomposition.\end{thm}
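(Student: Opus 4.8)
The plan is to reduce the computation of Bowditch's canonical tree $\Sigma$ to the computation of a maximal splitting, exploiting the fact---recorded in the introduction and due to Bowditch---that the maximal splitting is obtained from $\Sigma$ by refining each flexible vertex. First I would apply Proposition~\ref{prop:maximalsplittingcomputable} to compute a splitting $T$ of $\Gamma$ over virtually cyclic subgroups that is maximal for domination among such splittings (this is where one-endedness of $\Gamma$ is used). By the structure of Bowditch's decomposition, $T$ is obtained from $\Sigma$ by replacing each hanging Fuchsian vertex group $\pi_1 O$ by the finite graph of groups that results from cutting the orbifold $O$ along some maximal collection of disjoint essential simple closed geodesics, the pieces being small orbifolds as classified in Proposition~\ref{prop:characterisationofsmall}, while the rigid vertices and the virtually cyclic vertices of $\Sigma$ survive in $T$ unchanged. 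Hence $\Sigma$ is recovered from $T$ by collapsing exactly those edges of $T$ both of whose endpoints are fragments of flexible vertices.

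The key step is therefore to recognise these fragments. I would prove that a vertex $v$ of $T$ is a fragment of a flexible vertex of $\Sigma$ if and only if $\boundary(\Gamma_v,\widehat{\inc v})$ is homeomorphic to $S^1$: one implication uses Bowditch's characterisation of hanging Fuchsian subgroups as those with circular relative Bowditch boundary, together with the facts that maximal hanging Fuchsian subgroups are the flexible vertex groups of $\Sigma$ and that $T$ refines $\Sigma$; the other uses Proposition~\ref{prop:characterisationofsmall} to check that every small orbifold piece has circular Bowditch boundary relative to its peripheral subgroups. Since $\Gamma_v$ and $\widehat{\inc v}$ can be computed from the graph of groups $T$ (using Lemma~\ref{lem:twoendedsubgroups} to produce the peripheral subgroups), Theorem~\ref{thm:S1boundarycomputable} makes this condition decidable, so the set $\mathcal{F}$ of such vertices is computable. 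The algorithm collapses every edge of $T$ joining two vertices of $\mathcal{F}$. Because $\Sigma$ is bipartite with the hanging Fuchsian vertices on one side and the rigid and virtually cyclic vertices on the other, and because a virtually cyclic vertex never has circular relative boundary, no edge of $\Sigma$ itself joins two $S^1$-type vertices; hence this collapse undoes precisely the refinements and returns $\Sigma$, each merged vertex carrying the fundamental group of the orbifold reassembled from its pieces---a maximal hanging Fuchsian subgroup, maximality following from that of the curve system.

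A final normalisation step may be needed to make the output coincide on the nose with Bowditch's decomposition rather than with some other tree in its deformation space: at each surviving edge $e$ one uses Lemma~\ref{lem:twoendedsubgroups} to compute the maximal virtually cyclic subgroup $\widehat{\Gamma_e}\leq\Gamma$ containing $\Gamma_e$ and, when $\widehat{\Gamma_e}$ properly contains $\Gamma_e$ and is not already an endpoint group of $e$, interposes a new vertex carrying $\widehat{\Gamma_e}$, so as to produce the three-coloured structure of $\Sigma$. Lemma~\ref{lem:twoendedsubgroups} also provides presentations for and recognition of all the groups involved, so the whole procedure is effective and returns a graph of groups.

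I expect the main obstacle to be the verification that this procedure really produces Bowditch's canonical decomposition, that is, that collapsing the $S^1$--$S^1$ edges of a reduced maximal splitting yields exactly $\Sigma$ after normalisation. This relies on two inputs from the literature: first, that the maximal splitting refines the $\mathcal{VC}$-JSJ with the refinement concentrated at the flexible vertices---which one extracts from~\cite{guirardellevitt16} using that $\Sigma$ is universally elliptic (hence dominated by $T$) and that its flexible vertices are hanging Fuchsian and are fully refined by cutting along curves; and second, the precise description of the vertex and edge groups of $\Sigma$ from~\cite{bowditch98} in terms of local cut points, needed to check that the output matches Bowditch's three vertex types and virtually cyclic edge groups. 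Reconciling this boundary-theoretic description with the accessibility-and-domination construction of $T$ is the delicate part of the argument.
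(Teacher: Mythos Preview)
Your overall strategy coincides with the paper's: compute a maximal splitting via Proposition~\ref{prop:maximalsplittingcomputable}, identify and collapse the edges that fail to be universally elliptic, then pass to a canonical representative. However, your criterion for which edges to collapse has a genuine gap.

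You propose to collapse exactly those edges of $T$ both of whose endpoints $v$ satisfy $\boundary(\Gamma_v,\widehat{\inc v})\cong S^1$. But the paper shows (Lemma~\ref{lem:nonuniversallyellipticedges}) that in a reduced maximal splitting the non--universally-elliptic edges are the \emph{internal surface edges} of Definition~\ref{defn:surfaceboundaryedges}, and these allow an endpoint to be an \emph{extended M\"obius strip group}: a virtually cyclic group of $\mathcal{Z}$-type whose peripheral structure is a single index-$2$ subgroup. Such a vertex arises whenever the hanging Fuchsian orbifold is non-orientable and the maximal curve system cuts off a M\"obius band. Since that vertex group is virtually cyclic, its Bowditch boundary relative to the incident edge group is a single point, not a circle, so your $S^1$ test labels it as non-flexible and you fail to collapse the adjacent edge. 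That edge is nonetheless not universally elliptic---on the orbifold there is a simple closed curve crossing the M\"obius band boundary---so your output tree is not universally elliptic and hence not a JSJ tree at all. The paper's proof therefore checks, for each endpoint, the disjunction ``circular relative boundary'' \emph{or} ``virtually cyclic of $\mathcal{Z}$-type with the edge group of index $2$''; you have only the first disjunct. (There is also a maximality condition on $\Gamma_e$ inside the hanging Fuchsian endpoint that you do not verify.)

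A secondary point: your final normalisation step does not obviously produce Bowditch's tree. The paper obtains the canonical decomposition as the \emph{tree of cylinders} of the JSJ tree just constructed, invoking~\cite[Lemma~2.34]{dahmaniguirardel11} for its computability; simply interposing a vertex carrying $\widehat{\Gamma_e}$ at each edge is a piece of that construction but not the whole of it, since one must also amalgamate all edges in a given commensurability cylinder at a single $V_1$ vertex.
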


We first prove the following lemma.

\begin{lem}\label{lem:JSJfrommaximal} The tree obtained from the tree
associated to a maximal splitting by collapsing each edge whose stabiliser is
not universally elliptic is a $\mathcal{VC}$-JSJ tree.\end{lem}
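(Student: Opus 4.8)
The plan is to show that the tree $T'$ obtained from a maximal splitting $T$ by collapsing all edges with non-universally-elliptic stabilisers is both universally elliptic and maximal for domination among universally elliptic $\mathcal{VC}$-trees. The key structural input is Bowditch's description of $\Sigma$: its edge stabilisers and the stabilisers of type-$1$ and type-$3$ vertices are universally elliptic, whereas the type-$2$ (maximal hanging Fuchsian) vertices are exactly where flexibility is concentrated, and a maximal splitting is obtained from $\Sigma$ by further splitting those type-$2$ vertices along their essential simple closed curves. I would set things up so that $T$ dominates $\Sigma$, and the collapse $T \to T'$ undoes precisely the refinement of $\Sigma$ at the flexible vertices.

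\textbf{Universal ellipticity of $T'$.} By construction every edge of $T'$ has stabiliser that is universally elliptic (it survived the collapse), so $T'$ is a universally elliptic $\mathcal{VC}$-tree by definition. The one point to check is that $T'$ is still a $\mathcal{VC}$-tree in the relevant sense, i.e.\ that collapsing produces a tree with virtually cyclic edge stabilisers and no edge inversions; collapsing a $\Gamma$-invariant set of edges of a $\Gamma$-tree always yields a $\Gamma$-tree, and edge stabilisers only grow to vertex stabilisers, so no new edges are created — the edge stabilisers of $T'$ are a subset of those of $T$, hence still virtually cyclic. So $T'$ is a universally elliptic $\mathcal{VC}$-tree.

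\textbf{Maximality for domination.} Let $T''$ be any universally elliptic $\mathcal{VC}$-tree; I must show $T'$ dominates $T''$, i.e.\ every vertex stabiliser of $T'$ is elliptic in $T''$. A vertex $v$ of $T'$ is the image under the collapse map of a connected subtree $S_v \subset T$, and $\Gamma_v$ is generated by the stabilisers of the vertices of $S_v$ together with the stabilisers of the collapsed edges inside $S_v$. The collapsed edges have non-universally-elliptic stabilisers, but those edges lie in $S_v$, so their stabilisers fix $v$ in $T'$; the issue is ellipticity in $T''$, and here is where I use that $T$ was maximal. The point is that the subtree $S_v$, together with the incident (surviving) edge groups, exhibits $\Gamma_v$ as the fundamental group of a graph of groups whose own edge groups are the collapsed edge stabilisers — and by maximality of $T$ this induced splitting of $\Gamma_v$ is the maximal splitting of $\Gamma_v$ relative to its incident edge groups. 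Comparing with Bowditch's $\Sigma$: $v$ corresponds to a vertex of $\Sigma$, and since all collapsed edges had non-universally-elliptic stabilisers, $v$ must correspond to a flexible (type-$2$) vertex of $\Sigma$, so $\Gamma_v$ is a maximal hanging Fuchsian subgroup. For such a $\Gamma_v$ the only splittings relative to its peripheral structure come from essential simple closed curves on the associated orbifold, and these are emphatically \emph{not} universally elliptic — two curves that intersect give incompatible splittings. Hence if $\Gamma_v$ were not elliptic in $T''$, the induced action of $\Gamma_v$ on its minimal subtree of $T''$ would be a non-trivial splitting of $\Gamma_v$ relative to $\inc v$ over a virtually cyclic group, forced to come from a simple closed curve; but then an intersecting curve's stabiliser would fail to be elliptic in $T''$, contradicting universal ellipticity of $T''$ (that edge stabiliser is an edge stabiliser of the maximal splitting $T$, hence of $\Sigma$-refinement, hence is in fact the kind of subgroup that \emph{is} universally elliptic in the ambient sense — more carefully: it is elliptic in $\Sigma$ and in $T$, and one derives a contradiction with $T''$ universally elliptic via the curve-intersection argument of Bowditch and Guirardel–Levitt). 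Therefore $\Gamma_v$ is elliptic in $T''$ for every vertex $v$ of $T'$, so $T'$ dominates $T''$.

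\textbf{Expected main obstacle.} The delicate step is the last one: making rigorous the claim that a maximal hanging Fuchsian vertex group, relative to its peripheral structure, admits \emph{no} non-trivial universally elliptic splitting, so that its image in any universally elliptic tree $T''$ must be elliptic. This is where one genuinely needs the orbifold picture (Lemma~\ref{lem:splitiffnonsmall} and the structure of splittings of surface/orbifold groups over boundary-relative curves) together with the fact that two essential simple closed geodesics can be chosen to intersect, so their associated splittings are mutually non-elliptic — no single splitting can be universally elliptic while being non-trivial at such a vertex. I would also need to confirm the identification of the surviving vertices of $T'$ with the rigid (type-$1$ and type-$3$) vertices of $\Sigma$, which follows because $\Sigma$ itself is universally elliptic, so none of its edges get collapsed, and the refinement of $T$ over $\Sigma$ collapses back down exactly to $\Sigma$; thus $T' = \Sigma$ up to $\Gamma$-equivariant isomorphism, and $\Sigma$ is a $\mathcal{VC}$-JSJ tree by Bowditch's theorem. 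Everything else is bookkeeping with collapse maps and the definition of universal ellipticity.
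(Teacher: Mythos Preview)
Your approach can be made to work, but it is far more elaborate than needed and leans on structural results that the lemma does not require. The paper's proof is five lines and uses nothing beyond the definitions: since $T$ is maximal for domination it dominates any universally elliptic $\mathcal{VC}$-tree $\Sigma$, so there is an equivariant map $f\colon T\to\Sigma$; if an edge $e$ of $T$ lying in a collapsed subtree $S_v\subset T$ were mapped onto an edge of $\Sigma$ then $\Gamma_e$ would sit inside a universally elliptic edge group and hence be universally elliptic itself, contradicting the fact that $e$ was collapsed; therefore $f(S_v)$ is a single vertex of $\Sigma$ and $\Gamma_v$ fixes it. No Bowditch structure theory, no orbifolds, no curve-intersection arguments --- just maximality of $T$ and the fact that universal ellipticity passes to subgroups.

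Your route instead tries to identify the non-trivial collapsed vertices of $T'$ with maximal hanging Fuchsian subgroups and then invoke the surface picture to show such a group is elliptic in any universally elliptic tree. That requires as input Guirardel--Levitt's theorem that flexible vertices in a $\mathcal{VC}$-JSJ are hanging Fuchsian, together with the curve-intersection argument you sketch; both are correct but are exactly the kind of heavy machinery the lemma is meant to \emph{precede}. There is also one genuine error in your last paragraph: you assert $T'=\Sigma$ up to $\Gamma$-equivariant isomorphism, but JSJ trees are unique only up to deformation, not isomorphism --- indeed the paper later passes to the tree of cylinders precisely to single out Bowditch's canonical representative within the deformation space. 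So the argument ``$T'\cong\Sigma$, and $\Sigma$ is a JSJ tree by Bowditch'' does not go through as stated; you would still need the direct domination argument, at which point the detour through $\Sigma$ becomes redundant.
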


\begin{proof} Let $T$ be the tree associated to a maximal splitting and let
$T'$ be the tree obtained by collapsing each edge of $T$ that is not
universally elliptic. Then certainly $T'$ is universally elliptic, so it is
sufficient to show that if $\Sigma$ is another universally elliptic
$\Gamma$-tree then $T'$ dominates $\Sigma$.

The tree $\Sigma$ can be refined to dominate $T$, so there exists a map $f
\colon T \to \Sigma$.  Let $v$ be a vertex of $T'$ and let $S$ be a
component of its preimage in $T$.  Then $f\restricted{S}$ is constant: if
an edge $e$ in $S$ is mapped into an edge $e'$ in $\Sigma$ then $\Gamma_e \leq
\Gamma_{e'}$, which is universally elliptic.  But then the image of $S$ in
$T'$ contains more than a single vertex.  Therefore $\Gamma_v$ fixes the
vertex $f(S)$ in $\Sigma$, so is elliptic with respect to $\Sigma$. This shows that
$T'$ dominates $\Sigma$.\end{proof}

We must now identify the edges in the tree associated to the maximal splitting
that are not maximally elliptic. We make the following definitions.

\begin{defn}\label{defn:mobiusstripgroup} An \emph{extended M\"obius strip
group} is a virtually cyclic group of $\mathcal{Z}$ type with peripheral
structure consisting of a single index 2 subgroup.\end{defn}

\begin{defn}\label{defn:surfaceboundaryedges} We say that an edge $e$
connecting vertices $v_1$ and $v_2$ of a $\Gamma$-tree is a \emph{internal
surface edge} if, for each $i$, either $\Gamma_{v_i}$ is a hanging Fuchsian
group and $\Gamma_e$ is maximal among virtually cyclic subgroups of
$\Gamma_{v_i}$, or $\Gamma_{v_i}$ is an extended M\"obius strip group and
$\Gamma_e \leq \Gamma_{v_i}$ is the peripheral subgroup of $\Gamma_{v_i}$.
\end{defn}

\begin{lem}\label{lem:nonuniversallyellipticedges} If $T$ is reduced (that
is, no proper collapse of $T$ dominates $T$) then the edges of
$T$ that are not universally elliptic are precisely the internal surface
edges.\end{lem}

\begin{proof} Let $T'$ be the tree obtained by collapsing each edge of
$T$ that is not universally elliptic as in
Lemma~\ref{lem:JSJfrommaximal}, so $T'$ is a JSJ tree and there is a
collapse map from $T$ to $T'$. The edges of $T$ that are not
universally elliptic are precisely those edges that are mapped to flexible
vertices of $T'$ under the collapse map; by~\cite[Theorem
6.2]{guirardellevitt16} all flexible vertices of $T'$ are hanging
Fuchsian vertices. 

Any splitting of a hanging Fuchsian group is dual to a family of curves on the
associated orbifold, so any edge in such a splitting is an internal surface
edge, so all edges that are not universally elliptic are internal surface
edges.

Conversely, let $e$ be an internal surface edge. Let $T'$ be the tree
obtained by collapsing each edge in the orbit of $e$; let $v$ be the vertex of
$T'$ in the image of $e$. Then $T$ is obtained from $T'$ by
refining at $v$. The tree $T$ was assumed to be reduced and $v$ is a
hanging Fuchsian vertex so this refinement is dual to an essential simple
closed curve $\ell$ on the associated orbifold $Q$. Then $Q$ contains another
essential simple closed curve $\ell'$ that is not homotopic to a curve disjoint
from $\ell$. Refine $T'$ at $v$ dual to $\ell'$ to obtain a tree
$T''$; then $\Gamma_e$ is not elliptic with respect to $T''$.
\end{proof}

We now have sufficient tools to prove the computability of a $\mathcal{VC}$-JSJ
for a given hyperbolic group $G$. In~\cite{bowditch98}, Bowditch defines a
canonical JSJ in the class of all $\mathcal{VC}$-JSJs of a given hyperbolic
group. In the language of~\cite{guirardellevitt16} this is the decomposition
corresponding to the \emph{tree of cylinders} of any other $\mathcal{VC}$-JSJ.

\begin{defn} Let $T$ be a $\mathcal{VC}$-tree. Define the \emph{commensurability}
equivalence relation $\sim$ on $\mathcal{VC}$ by letting $A \sim B$ if
and only if $A$ and $B$ lie in the same maximal virtually cyclic subgroup of
$\Gamma$. Also denote by $\sim$ the equivalence relation on the set of
edges of $T$ defined by letting $e \sim e'$ if and only if $\Gamma_e \sim
\Gamma_{e'}$. A \emph{cylinder} is a subset $Y\subset T$ that is the union of
all edges in a $\sim$-equivalence class.\end{defn}

\begin{defn} Let $T$ be a $\mathcal{VC}$-tree. The corresponding \emph{tree of
  cylinders} $T_c$ is a bipartite tree with vertex set $V_1 \disjointunion
  V_2$, where $V_1$ is the set of vertices of $T$ that lie in at least two
  cylinders and $V_2$ is the set of cylinders in $T$. A vertex $v \in V_1$ is
connected by an edge to $Y \in V_2$ if and only if $v \in Y$.\end{defn}

\begin{proof}[Proof of Theorem~\ref{thm:VC-JSJcomputable}] First compute a
maximal splitting of the group over virtually cyclic subgroups by
Theorem~\ref{prop:maximalsplittingcomputable}. Let $T$ be the associated
tree. By construction $T$ is reduced; in any case, $T$ can easily be made
reduced using the processes of Lemma~\ref{lem:twoendedsubgroups}. For each
edge $e$ connecting vertices $v_1$ and $v_2$ of the graph of groups
$T/\Gamma$ determine whether $\Gamma_e$ is maximal in $\Gamma$ using the
algorithm of Lemma~\ref{lem:twoendedsubgroups} and whether the two vertex
groups $\Gamma_{v_1}$ and $\Gamma_{v_2}$ have circular boundary relative to
their incident edge groups by Theorem~\ref{thm:S1boundarycomputable}. Check
also whether each of $\Gamma_{v_1}$ and $\Gamma_{v_2}$ is virtually cyclic of
$\mathcal{Z}$-type, and, if it is, whether or not $\Gamma_e$ has index 2{} in
that group. One of these possibilities is the case if and only if $e$ is not
universally elliptic by Lemma~\ref{lem:nonuniversallyellipticedges}; collapse
all edges where this is the case.

Bowditch's canonical decomposition is the graph of cylinders of the
decomposition obtained in this way. The operation of replacing a decomposition
with the decomposition associated to its tree of cylinders can be done
algorithmically using~\ref{lem:twoendedsubgroups}. This is result the content
of~\cite[Lemma 2.34]{dahmaniguirardel11}; note that while the result is stated
for a $\mathcal{Z}$-tree, replacing this with a $\mathcal{VC}$-tree makes no
difference to the proof.\end{proof}

\subsection{\texorpdfstring{$\mathcal{Z}$}{Z} edge groups}

We now prove the second part of Theorem~\ref{thm:maintheorem}.

\begin{thm} There is an algorithm that takes as input a presentation for a
one-ended hyperbolic group and returns the graph of groups associated to a
$\mathcal{Z}$-JSJ decomposition for that group.\end{thm}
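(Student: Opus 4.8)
The plan is to obtain the $\mathcal{Z}$-JSJ from Bowditch's canonical $\mathcal{VC}$-JSJ, which is computable by Theorem~\ref{thm:VC-JSJcomputable}. Dahmani and Guirardel~\cite{dahmaniguirardel11} make precise the sense in which these two decompositions of a one-ended hyperbolic group are related, and in particular describe how to pass from one to the other; the passage involves only collapsing certain edges (roughly, those whose stabiliser has finite centre, so that the edge witnesses a splitting over an infinite dihedral group that need not survive when the edge groups are restricted to those of type $\mathcal{Z}$), adjusting the peripheral structure at the hanging Fuchsian vertices, reclassifying some vertices, and passing to a tree of cylinders. The algorithm will therefore first compute the $\mathcal{VC}$-JSJ and then carry out this passage.

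What has to be checked is that every ingredient of the passage is effective. All of the manipulations of a virtually cyclic subgroup $E$ of $\Gamma$ that are needed --- deciding whether $E$ is of type $\mathcal{Z}$ or of type $D_\infty$, computing its maximal finite normal subgroup and a presentation for it, and computing the maximal virtually cyclic subgroup, or the maximal subgroup of type $\mathcal{Z}$, of the ambient group containing it --- are provided by Lemma~\ref{lem:twoendedsubgroups}. For a hanging Fuchsian vertex group with its peripheral structure, Theorem~\ref{thm:S1boundarycomputable} together with the orbifold analysis of Section~\ref{sec:fuchsiangroups} (in particular Lemmas~\ref{lem:grouptoorbifold} and~\ref{lem:identifyingsmallorbifolds}) allows us to recover the underlying compact two-dimensional hyperbolic orbifold and hence to determine whether, once the edge groups are restricted to the class $\mathcal{Z}$, the vertex is still flexible and with which peripheral structure. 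Passing to a tree of cylinders is algorithmic by~\cite[Lemma 2.34]{dahmaniguirardel11}, exactly as in the proof of Theorem~\ref{thm:VC-JSJcomputable}. Since $\Gamma$ is hyperbolic we have a solution to its word and conjugacy problems, so all of this can be assembled into a finite procedure.

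I expect the real work to lie in the bookkeeping around the $D_\infty$-type edge groups rather than in any new geometric input. After collapsing all the edges prescribed above and performing whatever adjustment at the hanging Fuchsian vertices the comparison of~\cite{dahmaniguirardel11} requires, one has to verify that the surviving edge groups are genuinely universally elliptic with respect to every $\mathcal{Z}$-tree and that maximality for domination is neither lost nor exceeded --- equivalently, that the flexible vertices of the resulting tree are precisely the hanging Fuchsian vertices that remain flexible over $\mathcal{Z}$, with no rigid vertex having been over-collapsed. Making this precise amounts to transcribing the comparison of these JSJ decompositions from~\cite{dahmaniguirardel11}, together with the general JSJ machinery of~\cite{guirardellevitt16}, into an explicit finite procedure on graphs of groups and confirming at each step that the data it requires are among those produced by Lemma~\ref{lem:twoendedsubgroups}, Theorem~\ref{thm:S1boundarycomputable} and Lemma~\ref{lem:identifyingsmallorbifolds}.
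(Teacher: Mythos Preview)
Your overall strategy---compute the $\mathcal{VC}$-JSJ and then apply the comparison of~\cite{dahmaniguirardel11}---is exactly the paper's. But your description of that comparison is not quite right, and the gap lies precisely where the paper does its remaining work. The passage from the $\mathcal{VC}$-JSJ to the $\mathcal{Z}$-JSJ is not only a collapse: one must first \emph{refine} the tree at each hanging Fuchsian vertex by the so-called \emph{mirrors splitting}, and only afterwards collapse the edges with stabiliser of dihedral type. Your phrase ``adjusting the peripheral structure at the hanging Fuchsian vertices'' suggests a relabelling, whereas what is actually required is to add new edges and vertices, separating off the pieces of the orbifold that carry mirrors (and hence $D_\infty$-type boundary subgroups) from the mirror-free core. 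Without this refinement, simply collapsing dihedral-type edges in the $\mathcal{VC}$-JSJ can destroy information: a hanging Fuchsian vertex whose orbifold has both mirrors and ordinary boundary would be collapsed into its neighbours, losing the $\mathcal{Z}$-splittings it still carries.

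Concretely, the mirrors splitting of a compact hyperbolic $2$-orbifold $Q$ cuts along the boundary of a regular neighbourhood of the union of mirrors and $D_\infty$-boundary components, producing a star-shaped graph of groups whose central vertex is a mirror-free orbifold and whose leaves are annular pieces carrying the mirrors. The one lemma the paper supplies beyond what you cite is that this splitting is computable: one enumerates all such star-shaped graphs of groups (over all mirror-free centres and annular leaves) together with their peripheral structures, and searches for an isomorphism with $(\Gamma_v,\mathcal{H}_v)$ preserving the peripheral structure up to conjugacy. Your references to Lemma~\ref{lem:twoendedsubgroups} and the orbifold analysis of Section~\ref{sec:fuchsiangroups} are indeed the right tools for the surrounding steps, but you should name and establish the computability of the mirrors splitting explicitly. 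Also, no tree-of-cylinders step is needed here; that appeared only to obtain Bowditch's canonical representative among $\mathcal{VC}$-JSJ trees.
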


In~\cite{dahmaniguirardel11} it is shown that the $\mathcal{Z}$-JSJ
decomposition is closely related to the $\mathcal{VC}$-JSJ: a $\mathcal{Z}$-JSJ
tree can be obtained from a $\mathcal{VC}$-JSJ tree $T$ by first refining $T$
by applying the so-called mirrors splitting to each hanging Fuchsian vertex group and
then collapsing each edge with stabiliser of dihedral type. 
The second of these processes can be done algorithmically using the part of the
algorithm of Lemma~\ref{lem:twoendedsubgroups} that determines whether or not a
given virtually cyclic group is of dihedral type.  Therefore we must now show
that the mirrors splitting is computable.

Recall the definition of the mirrors splitting of the fundamental group of a
compact 2-dimensional orbifold $Q$ from~\cite{dahmaniguirardel11}. 

\begin{defn} Let $N$ be
a regular neighbourhood of the union of the mirrors and $D_\infty$-boundary
components of $Q$ that does not contain any cone point of $Q$. If $Q - N$ is an
annulus or a disc with at most one cone point then the \emph{mirrors splitting} of
$\pi_1Q$ is defined to be trivial; otherwise it is the splitting obtained by
cutting $Q$ along each component of $\boundary N$. \end{defn}

If the mirrors splitting is non-trivial then the graph of groups associated to the splitting is a star; the group at the central
vertex is the fundamental group of an orbifold with no mirrors and the group at
each leaf is the fundamental group of an orbifold with no cone points and
underlying surface an annulus, one of whose topological boundary components is
a circular orbifold boundary component and the other a union of interval
boundary components and at least one mirror. If $\Gamma$ is any hyperbolic
group with a collection $\mathcal{H}$ of virtually cyclic subgroups such that
$\boundary(\Gamma, \widehat{\mathcal{H}})$ is homeomorphic to a circle then the mirrors
splitting of $\Gamma$ relative to $\mathcal{H}$ is defined to be the splitting
induced by the mirrors splitting of the quotient $\Gamma$ by a maximal finite
normal subgroup of $\Gamma$ as in Proposition~\ref{lem:grouptoorbifold}.

\begin{lem} The mirrors splitting of a hyperbolic group $\Gamma$ with a set
$\mathcal{H}$ of virtually cyclic subgroups such that $\boundary(\Gamma,
\widehat{\mathcal{H}})$ is homeomorphic to a circle is computable.\end{lem}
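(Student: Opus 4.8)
The plan is to reduce to the orbifold case and then read off the mirrors splitting combinatorially. First I would apply the algorithm of Lemma~\ref{lem:grouptoorbifold} to the pair $(\Gamma, \mathcal{H})$, obtaining a hyperbolic group $\Gamma'$ with a collection $\mathcal{H}'$ of virtually cyclic subgroups, together with the quotient map $\Gamma \to \Gamma'$, which is computable since its kernel is the maximal finite normal subgroup of $\Gamma$. By that lemma $\Gamma'$ is the fundamental group of a compact hyperbolic $2$-orbifold $Q$ and $\mathcal{H}'$ is a set of representatives of the boundary subgroups of $Q$, and by definition the mirrors splitting of $\Gamma$ relative to $\mathcal{H}$ is the splitting induced by the mirrors splitting of $\pi_1Q$ relative to its peripheral structure. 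So it suffices to compute the latter and pull it back along $\Gamma \to \Gamma'$.

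Second, I would recover $Q$ together with its peripheral structure. Compact hyperbolic $2$-orbifolds with geodesic boundary, equipped with a choice of conjugacy-class representatives of their boundary subgroups, form a recursively enumerable family, each member coming with an explicit presentation of its orbifold fundamental group and an explicit peripheral structure, the combinatorial data being the underlying surface, the cone points with their orders, the mirrors and corner reflectors, and the boundary components. In parallel with this enumeration I would enumerate all assignments on generators defining homomorphisms $\pi_1Q \to \Gamma'$ and $\Gamma' \to \pi_1Q$; whether such an assignment extends to a homomorphism is decidable using a solution to the word problem in the target, which exists since both groups are hyperbolic. The algorithm halts when it finds a mutually inverse pair of such homomorphisms carrying the peripheral structures to one another up to conjugacy. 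This search terminates because, by Lemma~\ref{lem:grouptoorbifold}, $\Gamma'$ genuinely is $\pi_1Q$ for some $Q$ in the list, with matching peripheral structure.

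Third, having identified $Q$, computing the mirrors splitting is a finite combinatorial task carried out on the orbifold data: form a regular neighbourhood $N$ of the union of the mirrors and $D_\infty$-boundary components of $Q$ avoiding all cone points, test whether $Q - N$ is an annulus or a disc with at most one cone point (a decidable condition on the combinatorial type), and if so return the trivial splitting; otherwise cut $Q$ along the components of $\boundary N$ and write down the resulting star-shaped graph of groups, whose central vertex group is the orbifold fundamental group of the mirrorless piece $Q - N$, whose leaf groups are the fundamental groups of the annular pieces of $N$, and whose edge groups are the (virtually cyclic) fundamental groups of the components of $\boundary N$ — all of these admitting presentations and inclusion maps computable from the orbifold structure. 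Finally I would transport this graph of groups back to $\Gamma$ by pulling back each vertex and edge group along $\Gamma \to \Gamma'$, obtaining the mirrors splitting of $\Gamma$ relative to $\mathcal{H}$.

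The main obstacle is the second step, namely certifying that the orbifold produced by the enumeration is the correct one. The resolution is that for a bounded Fuchsian group the underlying orbifold with its boundary pattern is determined up to homeomorphism by the pair consisting of the group and its peripheral structure, so the mirrors splitting of $\pi_1Q$ does not depend on which realisation the enumeration happens to output; and an explicit mutually inverse pair of peripheral-structure-preserving isomorphisms is precisely a certificate of correctness that can be verified in finite time using the word problem in these hyperbolic groups.
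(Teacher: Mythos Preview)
Your proposal is correct and follows the same enumerate-and-verify strategy as the paper. The minor difference is that you enumerate compact hyperbolic $2$-orbifolds, locate the one realising $(\Gamma',\mathcal{H}')$, and then compute its mirrors splitting from the combinatorial orbifold data, whereas the paper enumerates the mirrors splittings themselves (as star-shaped graphs of groups, together with the degenerate trivial cases) and searches directly for a peripheral-structure-preserving isomorphism between $\Gamma$ and the fundamental group of one of these. Your route is perhaps more conceptually natural but requires the extra step of extracting the mirrors splitting from the orbifold and leans explicitly on the fact that the orbifold is determined by the group with its peripheral structure; the paper's route sidesteps that intermediate identification by enumerating the outputs directly, though it relies on the same uniqueness fact implicitly (otherwise the algorithm could halt on the mirrors splitting of the wrong orbifold). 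Both approaches are equivalent in substance.
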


\begin{proof} Using Proposition~\ref{lem:grouptoorbifold} it is enough to show
  that the mirrors splitting is computable in the case where $\Gamma$ is
  bounded Fuchsian and $\mathcal{H}$ is the a collection of representatives of
  peripheral subgroups of $\Gamma$. To do this we enumerate all mirrors
  splittings: for each non-negative integer $k$ enumerate all fundamental
  groups of compact orbifolds without mirrors and with at least $k$ boundary
  components and all $k$-tuples of fundamental groups of orbifolds homeomorphic
  to an annulus with no cone points and such that one topological boundary
  component of the orbifold is a circular orbifold boundary component. In each
  case form the graph of groups in which the underlying graph is a $k$-pointed
  star, the group at the central vertex is the fundamental group of the
  orbifold without mirrors, the group at each leaf is the fundamental group of
  an orbifold homeomorphic to an annulus and the group at each edge is infinite
  cyclic and is identified with the fundamental group of a circular orbifold
  boundary component of each of the orbifolds associated to the end points of
  that edge. Compute the fundamental group of each such graph of groups and
  record also the peripheral structure consisting of conjugacy class
  representatives of the fundamental groups of components of the orbifold
  boundary of the orbifold.

  Also enumerate all groups with trivial mirrors splitting, i.e.\ fundamental
  groups of orbifolds homeomorphic as topological spaces to a disc with at most
  one cone point, or homeomorphic to an annulus with no cone points and such
  that one topological boundary component is a circular orbifold boundary
  component.

  In parallel enumerate all homomorphisms from the fundamental groups of these
  graphs of groups to $\Gamma$ and all homomorphisms from $\Gamma$ to the
  fundamental groups of these graphs of groups. Some such pair of homomorphisms
  is an inverse pair that preserves the peripheral structure up to conjugacy.
  On finding this pair the algorithm returns the associated mirrors splitting.
\end{proof}

\subsection{\texorpdfstring{$\mathcal{Z}_\text{max}$}{Zmax} edge groups}

In~\cite{dahmaniguirardel11} it is shown that the $\mathcal{Z}_\text{max}$-JSJ
decomposition can be obtained from a $\mathcal{Z}$-JSJ decomposition by
performing the so-called $\mathcal{Z}_\text{max}$-fold. We note that this can
be done algorithmically, which completes the proof of the final part of
Theorem~\ref{thm:maintheorem}, which we restate here as Theorem~\ref{thm:ZmaxJSJ}.

\begin{thm}\label{thm:ZmaxJSJ} There is an algorithm that takes as input a presentation for a
one-ended hyperbolic group and outputs the graph of groups associated to a
$\mathcal{Z}_\text{max}$-JSJ decomposition for that group.\end{thm}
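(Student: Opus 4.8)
The plan is to reduce to the preceding theorem by exhibiting the passage from a $\mathcal{Z}$-JSJ decomposition to a $\mathcal{Z}_\text{max}$-JSJ decomposition as an algorithmic process. First I would compute a $\mathcal{Z}$-JSJ decomposition of the given one-ended hyperbolic group $\Gamma$, which is possible by the preceding theorem, obtaining it as a finite graph of groups in which every vertex group and edge group carries a finite presentation and the edge injections are recorded explicitly, so that each edge stabiliser is available as a finitely generated subgroup of each incident vertex group. The key fact, due to~\cite{dahmaniguirardel11}, is that a $\mathcal{Z}_\text{max}$-JSJ tree is obtained from such a decomposition by applying the $\mathcal{Z}_\text{max}$-fold; this is a local operation that, at each vertex $v$, groups the edges incident at $v$ according to commensurability of their stabilisers in $\Gamma_v$ and, for each such class, replaces the edges in that class by a single edge whose stabiliser is the maximal subgroup of type $\mathcal{Z}$ of $\Gamma_v$ containing those stabilisers, modifying the incidence data of the tree accordingly. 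So it suffices to show that this fold can be performed algorithmically.

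To do so I would use Lemma~\ref{lem:twoendedsubgroups} throughout, together with the uniform solvability of the word and conjugacy problems in hyperbolic groups. Each vertex group $\Gamma_v$ of the $\mathcal{Z}$-JSJ is a vertex group of a splitting of $\Gamma$ over virtually cyclic, hence quasi-convex, subgroups and is therefore itself hyperbolic, with a presentation supplied by the graph of groups; so the algorithm of Lemma~\ref{lem:twoendedsubgroups} may be run with $\Gamma_v$ as the ambient group, exactly as the earlier parts of this paper run similar algorithms inside vertex groups. For each edge $e$ incident at $v$ I would apply the part of that algorithm that returns a generating set for the maximal virtually cyclic subgroup of $\Gamma_v$ containing $\Gamma_e$, and, should the type-detecting part of the same algorithm report that this subgroup has type $D_\infty$, pass to its unique index-two subgroup of type $\mathcal{Z}$; this produces, for every incident edge, the maximal subgroup of type $\mathcal{Z}$ of $\Gamma_v$ containing its stabiliser. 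Then, using the conjugacy problem in $\Gamma_v$ and the presentations just obtained, I would decide for each pair of incident edges whether their associated maximal $\mathcal{Z}$-subgroups are conjugate in $\Gamma_v$, which determines the partition of the incident edges demanded by the fold. Finally I would assemble the new graph of groups by carrying out the combinatorial identifications prescribed by the fold, using Lemma~\ref{lem:twoendedsubgroups} once more to record presentations and generating sets for the new vertex and edge groups, passing first to a reduced representative of the tree if necessary as in the proof of Theorem~\ref{thm:VC-JSJcomputable}. That the description of the $\mathcal{Z}_\text{max}$-fold in~\cite{dahmaniguirardel11} is explicit enough for this to be possible plays the same role here as the appeal to~\cite[Lemma 2.34]{dahmaniguirardel11} does in that proof.

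The correctness of the output then follows from~\cite{dahmaniguirardel11}, which proves that the result of the $\mathcal{Z}_\text{max}$-fold is a $\mathcal{Z}_\text{max}$-JSJ. I expect the only genuine subtlety to be the routine one already exploited several times in this paper, namely that the vertex groups of the $\mathcal{Z}$-JSJ must be handed to the subroutines as hyperbolic groups so that Lemma~\ref{lem:twoendedsubgroups} and the word and conjugacy problems are available inside them; once that is arranged the $\mathcal{Z}_\text{max}$-fold is a purely finite bookkeeping exercise and contributes no essential difficulty beyond the computability of the $\mathcal{Z}$-JSJ, which is where the substantive work already lies.
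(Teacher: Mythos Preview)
Your proposal is correct and takes essentially the same approach as the paper: compute the $\mathcal{Z}$-JSJ using the preceding theorem and then algorithmically perform the $\mathcal{Z}_\text{max}$-fold of~\cite{dahmaniguirardel11} using Lemma~\ref{lem:twoendedsubgroups}, deferring correctness to that reference. The paper's description of the fold differs slightly in that it computes the maximal subgroup $\widehat\Gamma_e$ inside the ambient group $\Gamma$ rather than inside $\Gamma_v$, and realises the fold as an iteration of elementary folds that enlarge edge and terminal vertex groups without changing the underlying graph (rather than merging commensurable incident edges as you describe), but since both accounts ultimately rely on~\cite{dahmaniguirardel11} for the precise operation this is cosmetic.
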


\begin{lem} There is an algorithm that takes as input a graph of groups
decomposition of a hyperbolic group $\Gamma$ over virtually cyclic subgroups
and returns the graph of groups associated to the
$\mathcal{Z}_\text{max}$-fold of the associated $\Gamma$-tree.\end{lem}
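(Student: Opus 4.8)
The plan is to carry out, step by step, the combinatorial construction of the $\mathcal{Z}_\text{max}$-fold given in~\cite{dahmaniguirardel11}, checking that each of its finitely many moves can be performed effectively. The only non-routine tools required are the algorithm of Lemma~\ref{lem:twoendedsubgroups}, for computations with virtually cyclic subgroups, and the solvability of the word, conjugacy and simultaneous conjugacy problems in the hyperbolic group $\Gamma$.

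Recall that, given a $\Gamma$-tree $T$ with virtually cyclic edge stabilisers, the $\mathcal{Z}_\text{max}$-fold is obtained by enlarging each edge stabiliser $\Gamma_e$ to the unique maximal subgroup $Z_e$ of type $\mathcal{Z}$ containing it, folding together edges at a common vertex whose enlarged stabilisers coincide, and collapsing any edge whose enlarged stabiliser fails to stabilise an endpoint, iterated until the graph of groups stabilises; \cite{dahmaniguirardel11} shows that this happens after finitely many rounds. The first task is therefore to compute $Z_e$ for each edge $e$ of the input graph of groups $G$. Applying Lemma~\ref{lem:twoendedsubgroups} to a generating set of $\Gamma_e$ returns the type of $\Gamma_e$, a presentation for it, and a generating set for the maximal virtually cyclic subgroup $\widehat{\Gamma_e}$ of $\Gamma$ containing $\Gamma_e$, together with a presentation for $\widehat{\Gamma_e}$. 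From this I obtain $Z_e$: it is $\widehat{\Gamma_e}$ itself when $\widehat{\Gamma_e}$ is of type $\mathcal{Z}$, and otherwise it is the index-two subgroup of $\widehat{\Gamma_e}$ that is the kernel of the canonical homomorphism $\widehat{\Gamma_e} \to D_\infty \to \integers/2$, a generating set for which can be read off from the presentation of $\widehat{\Gamma_e}$. (Edges whose stabilisers are not of type $\mathcal{Z}$ are collapsed, since they cannot survive into a $\mathcal{Z}_\text{max}$-decomposition.)

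Next, for each edge $e$ at a vertex $v$ I would decide whether $Z_e$ stabilises $v$ in $T$. This reduces to asking whether $Z_e$ is conjugate into $\Gamma_v$, equivalently whether a generator of the loxodromic part of $Z_e$ is elliptic in the splitting, which is decidable for the decompositions of interest here since their vertex groups are quasi-convex, hence hyperbolic, and the Bass--Serre tree of $G$ is then computable. With this information in hand I carry out one round of moves: where $Z_e$ stabilises both endpoints of $e$ I replace $\Gamma_e$ by $Z_e$, expressing its generators in those of each incident vertex group by solving the membership problem for the quasi-convex subgroups involved; where $Z_e$ fails to stabilise an endpoint I collapse $e$, forming the resulting amalgamated-product or HNN-extension vertex group and recording a presentation for it; and where two edges at a common vertex have conjugate enlarged stabilisers I identify them, the relevant collision being detected with a solution to the simultaneous conjugacy problem. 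Each move produces a new finite graph of groups over virtually cyclic subgroups, so I re-run the procedure; it halts when a complete pass produces no change, and the resulting graph of groups is that of the $\mathcal{Z}_\text{max}$-fold of $T$.

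The hard part is not any single computation, all of which are routine given the machinery already developed, but the bookkeeping and the verification that the output is genuinely the $\mathcal{Z}_\text{max}$-fold of~\cite{dahmaniguirardel11}: one must track carefully how the underlying graph, the vertex groups and the edge monomorphisms change under iterated folding and collapsing, and one must confirm that the enlarged edge monomorphisms remain injective, that collapsing an edge does not obscure a newly created non-maximal edge stabiliser, and that the iteration terminates on the given input. Each of these points is a consequence of the structural results of~\cite{dahmaniguirardel11}, so the remaining work is to apply them correctly rather than to prove anything new.
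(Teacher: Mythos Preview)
Your proposal describes a procedure that is substantially more elaborate than the paper's, and it appears to misidentify what the $\mathcal{Z}_\text{max}$-fold actually does. The paper's proof iterates a single elementary fold and never changes the underlying graph: one finds an edge $e$ with $\widehat{\Gamma_e} \leq \Gamma_{o(e)}$, replaces $\Gamma_e$ by $\widehat{\Gamma_e}$, and enlarges the terminal vertex group to $\langle \widehat{\Gamma_e}, \Gamma_{t(e)}\rangle$; one repeats until every edge group is maximal. The only computation needed is that of $\widehat{\Gamma_e}$ via Lemma~\ref{lem:twoendedsubgroups}. No ellipticity tests, no membership problems in vertex groups, no simultaneous conjugacy checks, and no edge identifications are required.

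Your central move---collapsing $e$ when $Z_e$ fails to stabilise an endpoint---is not what the fold does. When $\widehat{\Gamma_e}$ is not already contained in $\Gamma_{t(e)}$, the operation \emph{enlarges} $\Gamma_{t(e)}$ to absorb it; it does not delete the edge. Collapsing there would lose the splitting along $e$ altogether, yielding a tree strictly coarser than the $\mathcal{Z}_\text{max}$-fold. Similarly, the step in which you collapse edges whose stabilisers are not of type $\mathcal{Z}$ belongs to the earlier passage from the $\mathcal{VC}$-JSJ to the $\mathcal{Z}$-JSJ, not to the $\mathcal{Z}_\text{max}$-fold itself; conflating the two obscures the fact that the fold leaves the underlying graph unchanged. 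Once you use the paper's simpler description, the algorithmic content reduces to repeated applications of Lemma~\ref{lem:twoendedsubgroups} and the bookkeeping becomes straightforward.
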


\begin{proof} This can be done by iterating some simple folds described
in~\cite{dahmaniguirardel11}. We describe this fold at the level of the tree
$T$. Take some edge $e$ in $T$ such that the maximal subgroup $\hat\Gamma_e$
of $\Gamma$ containing $\Gamma_e$ is a subgroup of $\Gamma_{o(e)}$ where $o(e)$ is
the origin of $e$. Then take the quotient of $T$ by the $\Gamma$-equivariant
equivalence relation generated by $e \sim \hat\Gamma_e \cdot e$. Note that this
does not change the underlying graph of the associated graph of groups.

At the level of the graph of groups this fold is achieved by taking an edge $e$
in the graph such that $\hat \Gamma_e$ is a subgroup of $\Gamma_{o(e)}$,
replacing the group at the edge $e$ by $\hat\Gamma_e$ and replacing the group
at the terminal vertex $t(e)$ of $e$ by $\langle\hat\Gamma_e,
\Gamma_{t(e)}\rangle$. Repeat this process until there is no edge $e$ such that
$\Gamma_e \neq \hat\Gamma_e$. \end{proof}


\begin{thebibliography}{10}

\bibitem{bestvinafeighn91}
Mladen Bestvina and Mark Feighn.
\newblock Bounding the complexity of simplicial group actions on trees.
\newblock {\em Invent. Math.}, 103(3):449--469, 1991.

\bibitem{bestvinamess91}
Mladen Bestvina and Geoffrey Mess.
\newblock The boundary of negatively curved groups.
\newblock {\em J. Amer. Math. Soc.}, 4(3):469--481, 1991.

\bibitem{bowditch99b}
B.~H. Bowditch.
\newblock Boundaries of geometrically finite groups.
\newblock {\em Math. Z.}, 230(3):509--527, 1999.

\bibitem{bowditch99a}
B.~H. Bowditch.
\newblock Connectedness properties of limit sets.
\newblock {\em Trans. Amer. Math. Soc.}, 351(9):3673--3686, 1999.

\bibitem{bowditch99c}
B.~H. Bowditch.
\newblock Convergence groups and configuration spaces.
\newblock In {\em Geometric group theory down under ({C}anberra, 1996)}, pages
  23--54. de Gruyter, Berlin, 1999.

\bibitem{bowditch01}
B.~H. Bowditch.
\newblock Peripheral splittings of groups.
\newblock {\em Trans. Amer. Math. Soc.}, 353(10):4057--4082, 2001.

\bibitem{bowditch12}
B.~H. Bowditch.
\newblock Relatively hyperbolic groups.
\newblock {\em Internat. J. Algebra Comput.}, 22(3):1250016, 66, 2012.

\bibitem{bowditch98}
Brian~H. Bowditch.
\newblock Cut points and canonical splittings of hyperbolic groups.
\newblock {\em Acta Math.}, 180(2):145--186, 1998.

\bibitem{bridsonhaefliger99}
Martin~R. Bridson and Andr{\'e} Haefliger.
\newblock {\em Metric spaces of non-positive curvature}, volume 319 of {\em
  Grundlehren der Mathematischen Wissenschaften [Fundamental Principles of
  Mathematical Sciences]}.
\newblock Springer-Verlag, Berlin, 1999.

\bibitem{cashenmacura11}
Christopher~H. Cashen and Nata{\v{s}}a Macura.
\newblock Line patterns in free groups.
\newblock {\em Geom. Topol.}, 15(3):1419--1475, 2011.

\bibitem{cassonjungreis94}
Andrew Casson and Douglas Jungreis.
\newblock Convergence groups and {S}eifert fibered {$3$}-manifolds.
\newblock {\em Invent. Math.}, 118(3):441--456, 1994.

\bibitem{dahmanitouikan13}
F.~{Dahmani} and N.~{Touikan}.
\newblock {Isomorphisms using Dehn fillings: the splitting case}.
\newblock {\em ArXiv e-prints}, November 2013, 1311.3937.

\bibitem{dahmani08}
Fran{\c{c}}ois Dahmani.
\newblock Finding relative hyperbolic structures.
\newblock {\em Bull. Lond. Math. Soc.}, 40(3):395--404, 2008.

\bibitem{dahmanigroves08a}
Fran{\c{c}}ois Dahmani and Daniel Groves.
\newblock Detecting free splittings in relatively hyperbolic groups.
\newblock {\em Trans. Amer. Math. Soc.}, 360(12):6303--6318, 2008.

\bibitem{dahmaniguirardel11}
Fran{\c{c}}ois Dahmani and Vincent Guirardel.
\newblock The isomorphism problem for all hyperbolic groups.
\newblock {\em Geom. Funct. Anal.}, 21(2):223--300, 2011.

\bibitem{dunwoodysageev99}
M.~J. Dunwoody and M.~E. Sageev.
\newblock J{SJ}-splittings for finitely presented groups over slender groups.
\newblock {\em Invent. Math.}, 135(1):25--44, 1999.

\bibitem{manning15}
J.~{Fox Manning}.
\newblock {The Bowditch boundary of $(G,\mathcal{H})$ when $G$ is hyperbolic}.
\newblock {\em ArXiv e-prints}, April 2015, 1504.03630.

\bibitem{fujiwarapapsoglu06}
K.~Fujiwara and P.~Papasoglu.
\newblock J{SJ}-decompositions of finitely presented groups and complexes of
  groups.
\newblock {\em Geom. Funct. Anal.}, 16(1):70--125, 2006.

\bibitem{gabai92}
David Gabai.
\newblock Convergence groups are {F}uchsian groups.
\newblock {\em Ann. of Math. (2)}, 136(3):447--510, 1992.

\bibitem{gerasimov}
V.~Gerasimov.
\newblock Detecting connectedness of the boundary of a hyperbolic group.
\newblock Unpublished.

\bibitem{groff13}
Bradley~W. Groff.
\newblock Quasi-isometries, boundaries and {JSJ}-decompositions of relatively
  hyperbolic groups.
\newblock {\em J. Topol. Anal.}, 5(4):451--475, 2013.

\bibitem{grovesmanning08}
Daniel Groves and Jason~Fox Manning.
\newblock Dehn filling in relatively hyperbolic groups.
\newblock {\em Israel J. Math.}, 168:317--429, 2008.

\bibitem{guirardellevitt16}
V.~{Guirardel} and G.~{Levitt}.
\newblock {JSJ decompositions of groups}.
\newblock {\em ArXiv e-prints}, February 2016, 1602.05139.

\bibitem{jacoshalen79}
William Jaco and Peter~B. Shalen.
\newblock Seifert fibered spaces in {$3$}-manifolds.
\newblock In {\em Geometric topology ({P}roc. {G}eorgia {T}opology {C}onf.,
  {A}thens, {G}a., 1977)}, pages 91--99. Academic Press, New York-London, 1979.

\bibitem{johannson79}
Klaus Johannson.
\newblock {\em Homotopy equivalences of {$3$}-manifolds with boundaries},
  volume 761 of {\em Lecture Notes in Mathematics}.
\newblock Springer, Berlin, 1979.

\bibitem{kapovich96}
Ilya Kapovich.
\newblock Detecting quasiconvexity: algorithmic aspects.
\newblock In {\em Geometric and computational perspectives on infinite groups
  ({M}inneapolis, {MN} and {N}ew {B}runswick, {NJ}, 1994)}, volume~25 of {\em
  DIMACS Ser. Discrete Math. Theoret. Comput. Sci.}, pages 91--99. Amer. Math.
  Soc., Providence, RI, 1996.

\bibitem{levitt05}
Gilbert Levitt.
\newblock Automorphisms of hyperbolic groups and graphs of groups.
\newblock {\em Geom. Dedicata}, 114:49--70, 2005.

\bibitem{millerneumannswarup96}
C.~F. Miller, III, Walter~D. Neumann, and G.~A. Swarup.
\newblock Some examples of hyperbolic groups.
\newblock In {\em Geometric group theory down under ({C}anberra, 1996)}, pages
  195--202. de Gruyter, Berlin, 1999.

\bibitem{paulin91}
Fr{\'e}d{\'e}ric Paulin.
\newblock Outer automorphisms of hyperbolic groups and small actions on {${\bf
  R}$}-trees.
\newblock In {\em Arboreal group theory ({B}erkeley, {CA}, 1988)}, volume~19 of
  {\em Math. Sci. Res. Inst. Publ.}, pages 331--343. Springer, New York, 1991.

\bibitem{ripssela97}
E.~Rips and Z.~Sela.
\newblock Cyclic splittings of finitely presented groups and the canonical
  {JSJ} decomposition.
\newblock {\em Ann. of Math. (2)}, 146(1):53--109, 1997.

\bibitem{scott83}
Peter Scott.
\newblock The geometries of {$3$}-manifolds.
\newblock {\em Bull. London Math. Soc.}, 15(5):401--487, 1983.

\bibitem{sela97}
Z.~Sela.
\newblock Structure and rigidity in ({G}romov) hyperbolic groups and discrete
  groups in rank {$1$} {L}ie groups. {II}.
\newblock {\em Geom. Funct. Anal.}, 7(3):561--593, 1997.

\bibitem{touikan09}
N.~W.~M. {Touikan}.
\newblock {Detecting geometric splittings in finitely presented groups}.
\newblock {\em ArXiv e-prints}, June 2009, 0906.3902.

\bibitem{tukia88}
Pekka Tukia.
\newblock Homeomorphic conjugates of {F}uchsian groups.
\newblock {\em J. Reine Angew. Math.}, 391:1--54, 1988.

\bibitem{wilder49}
Raymond~Louis Wilder.
\newblock {\em Topology of {M}anifolds}.
\newblock American Mathematical Society Colloquium Publications, vol. 32.
  American Mathematical Society, New York, N. Y., 1949.

\end{thebibliography}
\end{document}